\documentclass[12pt,twoside]{amsart}
\usepackage{mathrsfs}
\usepackage{txfonts}
\usepackage{bbm}
\usepackage{amsfonts}
\usepackage{amsmath}
\usepackage{cite}
\numberwithin{equation}{section}
\usepackage{esint}
\usepackage{amssymb}
\usepackage{wasysym}
\usepackage{psfrag}
\usepackage{graphics,epsfig,amsmath}
\setlength{\unitlength}{1cm}
\usepackage{comment}
\usepackage{enumerate}
\usepackage{times}
\usepackage{fancyhdr}

\newtheorem{Theorem}{Theorem}[section]
\newtheorem{Lemma}[Theorem]{Lemma}
\newtheorem{Proposition}[Theorem]{Proposition}
\newtheorem{Definition}[Theorem]{Definition}

\newtheorem{Corollary}[Theorem]{Corollary}
\setcounter{secnumdepth}{4}
\usepackage{metalogo}
\usepackage[cmyk]{xcolor}
\selectfont
\usepackage{mathtools}
\mathtoolsset{showonlyrefs}
\renewcommand{\epsilon}{\varepsilon}

\newcommand{\R}{\mathbb{R}}
\newcommand{\N}{\mathbb{N}}

\newcommand{\C}{\mathbb{C}}

\renewcommand{\le}{\leqslant}

\renewcommand{\ge}{\geqslant}

\allowdisplaybreaks

\usepackage{geometry}
\geometry{
	textwidth  = 170mm ,
	textheight = 220mm ,
	includehead        ,
	hcentering         ,
	vcentering
}

\title[Mixed order Schr\"{o}dinger equation]{Qualitative properties of positive solutions \\ of a mixed order nonlinear Schr\"{o}dinger equation}

\author[S. Dipierro]{Serena Dipierro}
\address{Department of Mathematics and Statistics, University of Western Australia, 35 Stirling Highway, WA 6009 Crawley, Australia}
\email{serena.dipierro@uwa.edu.au}

\author[X. Su]{Xifeng Su}
\address{School of Mathematical Sciences, Laboratory of Mathematics and Complex Systems (Ministry of Education)\\
	Beijing Normal University,
	No. 19, XinJieKouWai St., HaiDian District, Beijing 100875, P. R. China}
\email{xfsu@bnu.edu.cn, billy3492@gmail.com}

\author[E. Valdinoci]{Enrico Valdinoci}
\address{Department of Mathematics and Statistics, University of Western Australia, 35 Stirling Highway, WA 6009 Crawley, Australia}
\email{enrico.valdinoci@uwa.edu.au}

\author[J. Zhang]{Jiwen Zhang}
\address{School of Mathematical Sciences, Laboratory of Mathematics and Complex Systems (Ministry of Education)\\
	Beijing Normal University,
	No. 19, XinJieKouWai St., HaiDian District, Beijing 100875, P. R. China}
\address{Department of Mathematics and Statistics, University of Western Australia, 35 Stirling Highway, WA 6009 Crawley, Australia}
\email{jwzhang826@mail.bnu.edu.cn, jiwen.zhang@uwa.edu.au}

\subjclass[2020]{35A08, 35B06, 35B09, 35B40, 35J10.}
\keywords{Mixed order operators, regularity theory, power-type decay, heat kernel.}

\begin{document}
	\maketitle
	
	\begin{abstract}
	In this paper, we deal with the following  mixed local/nonlocal Schr\"{o}dinger equation 
		 \begin{equation*}
		 	\left\{
		 	\begin{array}{ll}
		 		- \Delta u +  (-\Delta)^s u+u = u^p \quad \hbox{in $\mathbb{R}^n$,} \\
		 		u>0 \quad \hbox{in $\mathbb{R}^n$,}\\
		 		\lim\limits_{|x|\to+\infty}u(x)=0,\\
		 	\end{array}
		 \right.
		 \end{equation*}
where~$n\geqslant2$, $s\in (0,1)$ and~$p\in\left(1,\frac{n+2}{n-2}\right)$.
		 
		The existence of { positive} solutions for the above problem is proved, relying on some new regularity results.
		In addition, we study the power-type decay and the radial symmetry properties of such solutions.
		
The methods make use also of some basic properties of the heat kernel and the Bessel kernel  associated with  the  operator~$- \Delta  +  (-\Delta)^s$: in this context, we provide self-contained proofs of these results based on Fourier analysis techniques.
	\end{abstract}
	
\tableofcontents
	
	\section{Introduction}
	
	In this paper, 
we are concerned with qualitative properties of solutions to the following mixed local/nonlocal Schr\"{o}dinger equation
	\begin{equation}\label{Maineq}
			- \Delta u +  (-\Delta)^s u+u = u^p \quad \hbox{in $\mathbb{R}^n$,}
			\end{equation}
satisfying~$u>0$ in~$\mathbb{R}^n$ and
$$  \lim\limits_{|x|\to+\infty}u(x)=0.$$
	Here above, for any~$s\in(0,1)$, the fractional Laplacian is defined as
	\begin{equation*}
		(-\Delta )^{s} u(x)=c_{n,s}\,{\text P.V.}\int_{\mathbb{R}^{n}} \frac{u(x)-u(y)}{|x-y|^{n+2s}}\, dy,
	\end{equation*}
	where~$ c_{n,s}>0$ is a suitable normalization constant, whose explicit value does not play a role here, and~${\text P.V.}$ means that the integral is taken in the Cauchy principal value sense.

	Moreover, we suppose that~$n\ge2$ and
	\begin{equation}\label{duiwegfufvckuqfhoq496712123hew}
	p\in\left(1,\frac{n+2}{n-2}\right)\; {\mbox{ if }} n>2\qquad
	{\mbox{and}} \qquad p\in(1,+\infty)\; {\mbox{ if }} n=2.\end{equation}
	
We recall that, on the one hand, the Schr\"{o}dinger equation is the fundamental equation of physics for describing quantum mechanics. 
		Feynman and Hibbs~\cite{feynman1963quantum} 
	 formulated the non-relativistic quantum mechanics as a path integral over the Brownian
		paths, and this background leads to standard (non-fractional)
		Schr\"{o}dinger equation. In this setting,
		during the last 30 years there have been important contributions to the analysis of the classical nonlinear Schr\"{o}dinger equation, see e.g.~\cite{MR867665, MR886933,MR969899,
		MR1162728, MR1201323, MR1218300}.
	
On the other hand, a nonlocal version of the Schr\"{o}dinger equation has been introduced by Laskin, see~\cite{MR1755089, MR1948569, MR2932616}. Laskin  
	constructed the fractional path integral and formulated the space fractional quantum
	mechanics on the basis of the L\'{e}vy flights. In the recent years, the fractional Schr\"{o}dinger equation has been studied, under different perspectives, by many authors, see in particular~\cite{MR1111746, MR3002595, MR3070568, MR3207007, MR3121716, MR3393677, MR3530361} and the references therein.

In this framework (see e.g. the appendix in~\cite{MR3393677}) the classical and the fractional 
Schr\"{o}dinger equations arise from a Feynman path integral over ``all possible histories of the system'', subject
to an action functional obtained as the superposition of a
diffusive operator and a potential term: specifically, when the diffusive operator is Gaussian
(i.e., as produced by classical Brownian motions), this procedure returns the classical
Schr\"{o}dinger equation, while when the diffusive process is $2s$-stable for some~$s\in(0,1)$ (e.g., as induced by L\'{e}vy flights),
one obtains
the fractional Schr\"{o}dinger equation corresponding to the fractional Laplacian~$(-\Delta)^s$.

In view of this construction, it is also interesting to consider the case in which the diffusive operator
in the action functional of the Feynman path integral is of ``mixed type'', i.e., rather than possessing a given scaling invariance,
it is obtained as the overlapping of two different diffusive operators, say a Gaussian and a $2s$-stable one
(or, similarly, that the diffusive operator acts alternately in a Gaussian fashion and in a $2s$-stable way for ``infinitesimal times''
with respect to the time scale considered). This superposition of stochastic processes in the quantum action functional give
rise precisely to the equation proposed in~\eqref{Maineq}.
\medskip
		
Our goal in this paper is to investigate existence and qualitative properties of solutions to~\eqref{Maineq}. More precisely, we will obtain results of this type:
	\begin{itemize}
		\item 	We prove the existence of nonnegative weak solutions by exploiting
		 Ekeland's variational principle (see Theorems~\ref{th existence} below). 
		 \item We then obtain H\"{o}lder continuity  and~$C^{1,\alpha}$-regularity of weak solutions based on the $L^p$-theory for the Laplacian and 
some basic properties of the kernel of the mixed Bessel potential (see Theorems~\ref{Theorem :holder regularity} and ~\ref{th:regularity} below). To overcome the difficulties caused by the fact that the equation involves operators of different orders and therefore is not scale-invariant,
		  we  develop a ``piecewise" argument presented  in Appendices~\ref{th properties of h1}-\ref{sce properties of k} which is new in the literature.
		  \item
Based on the~$C^{1,\alpha}$-regularity result, we next
establish a $C^{2,\alpha}$-regularity result  
by combining a suitable truncation method and a covering argument 
(see  Theorem~\ref{th C^2,alpha interior without boundary condition} below).
		  \item Finally, we discuss qualitative properties of classical solutions, such as positivity, power-type decay at infinity, and radial symmetry (see Theorem~\ref{th main theorem} below).
	\end{itemize}

We now state the main results of this paper.

\begin{Theorem}[Existence of nonnegative weak solutions]\label{th existence}
There exists a nontrivial and nonnegative weak solution~$u$ of~\eqref{Maineq}.
\end{Theorem}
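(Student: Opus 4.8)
The plan is to realize the desired solution as a nontrivial critical point of the energy functional naturally attached to~\eqref{Maineq}. I would work in the space $X:=H^1(\mathbb{R}^n)$, which embeds continuously into $H^s(\mathbb{R}^n)$ for every $s\in(0,1)$, so that the quadratic form
\[
\mathcal{Q}(u):=\int_{\mathbb{R}^n}|\nabla u|^2\,dx+\frac{c_{n,s}}{2}\iint_{\mathbb{R}^{2n}}\frac{|u(x)-u(y)|^2}{|x-y|^{n+2s}}\,dx\,dy+\int_{\mathbb{R}^n}u^2\,dx
\]
is well defined, coercive, weakly lower semicontinuous, and equivalent to $\|\cdot\|_{H^1(\mathbb{R}^n)}^2$. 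In order to build positivity into the construction, I would truncate the nonlinearity and consider
\[
\mathcal{J}(u):=\frac12\,\mathcal{Q}(u)-\frac1{p+1}\int_{\mathbb{R}^n}(u_+)^{p+1}\,dx,\qquad u_+:=\max\{u,0\},
\]
which is of class $C^1$ on $X$ by the subcritical condition~\eqref{duiwegfufvckuqfhoq496712123hew} and the Sobolev embedding $H^1(\mathbb{R}^n)\hookrightarrow L^{p+1}(\mathbb{R}^n)$. A nontrivial critical point $u$ of $\mathcal{J}$ solves $-\Delta u+(-\Delta)^s u+u=(u_+)^p$ in the weak sense; testing this identity against $u_-:=\min\{u,0\}$ and using that $\int_{\mathbb{R}^n}\nabla u\cdot\nabla u_-\,dx=\int_{\mathbb{R}^n}|\nabla u_-|^2\,dx$, that $\langle(-\Delta)^s u,u_-\rangle\ge0$ (which follows from an elementary pointwise inequality for the Gagliardo kernel), and that $\int_{\mathbb{R}^n}(u_+)^p u_-\,dx=0$, one obtains $\|u_-\|_{H^1(\mathbb{R}^n)}^2\le0$, hence $u\ge0$ and $u$ is a weak solution of~\eqref{Maineq}. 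The pointwise decay $\lim_{|x|\to\infty}u(x)=0$ is not part of the notion of weak solution and will be recovered from the regularity and decay statements proved later in the paper.

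Next, I would verify the mountain pass geometry of $\mathcal{J}$: one has $\mathcal{J}(0)=0$; since $p+1>2$, there exist $\rho,\beta>0$ with $\mathcal{J}(u)\ge\beta$ whenever $\|u\|_X=\rho$; and for any fixed $v\in X$ with $v_+\not\equiv0$ one has $\mathcal{J}(tv)\to-\infty$ as $t\to+\infty$. This produces a mountain pass level $c>0$, and Ekeland's variational principle (in the form underlying the standard proof of the mountain pass theorem, or equivalently applied on the Nehari manifold $\{u\neq0:\langle\mathcal{J}'(u),u\rangle=0\}$) yields a Palais--Smale sequence $(u_k)\subset X$ with $\mathcal{J}(u_k)\to c$ and $\mathcal{J}'(u_k)\to0$ in $X'$. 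Evaluating $\mathcal{J}(u_k)-\frac1{p+1}\langle\mathcal{J}'(u_k),u_k\rangle=\bigl(\tfrac12-\tfrac1{p+1}\bigr)\mathcal{Q}(u_k)+o(1)$ and using the coercivity of $\mathcal{Q}$, one sees that $(u_k)$ is bounded in $X$; hence, along a subsequence, $u_k\rightharpoonup u$ weakly in $X$, strongly in $L^{p+1}_{\mathrm{loc}}(\mathbb{R}^n)$, and a.e.\ in $\mathbb{R}^n$.

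The main obstacle is the loss of compactness of the embedding $X\hookrightarrow L^{p+1}(\mathbb{R}^n)$ over the unbounded domain $\mathbb{R}^n$, which a priori leaves open the possibility that $u\equiv0$. To rule this out I would exploit the translation invariance of~\eqref{Maineq} together with Lions' concentration--compactness alternative applied to the densities $|u_k|^2$: either $\sup_{y\in\mathbb{R}^n}\int_{B_1(y)}|u_k|^2\,dx\to0$, in which case Lions' vanishing lemma gives $u_k\to0$ in $L^{p+1}(\mathbb{R}^n)$, so that $\mathcal{Q}(u_k)=\langle\mathcal{J}'(u_k),u_k\rangle+\|(u_k)_+\|_{L^{p+1}(\mathbb{R}^n)}^{p+1}\to0$ and $c=\lim\mathcal{J}(u_k)=0$, a contradiction; or there exist $\delta>0$ and points $y_k\in\mathbb{R}^n$ with $\int_{B_1(y_k)}|u_k|^2\,dx\ge\delta$. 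In the second case, the translates $\widetilde u_k:=u_k(\,\cdot+y_k)$ form a Palais--Smale sequence for $\mathcal{J}$ at the same level $c$ and, along a subsequence, converge weakly in $X$ to some $u$ with $\int_{B_1}|u|^2\,dx\ge\delta$, so $u\not\equiv0$; passing to the limit in $\mathcal{J}'(\widetilde u_k)=o(1)$ --- using the weak $L^2$ convergence of $(-\Delta)^{s/2}\widetilde u_k$ for the nonlocal term and a.e.\ plus local strong convergence for the nonlinearity --- gives $\mathcal{J}'(u)=0$. By the discussion in the first paragraph, $u$ is then a nontrivial nonnegative weak solution of~\eqref{Maineq}, which proves the theorem. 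The only points requiring care beyond the classical scheme are the passage to the limit in the Gagliardo bilinear form (settled by weak lower semicontinuity and a.e.\ convergence) and the verification that the non scale-invariant mixed principal part does not disrupt the mountain pass and concentration--compactness arguments, which is the case precisely because $\mathcal{Q}$ remains coercive and weakly lower semicontinuous on $H^1(\mathbb{R}^n)$.
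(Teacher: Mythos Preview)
Your proposal is correct and follows essentially the same approach as the paper: introduce the truncated functional~$\mathscr{F}^+$ (your~$\mathcal{J}$), verify its mountain pass geometry, use Ekeland's principle to produce a bounded Palais--Smale sequence at level~$c>0$, invoke Lions' vanishing lemma to rule out~$u_k\to0$ in~$L^{p+1}$, translate by the concentration centers, and pass to the weak limit to obtain a nontrivial critical point, which is then nonnegative by testing against the negative part. The only cosmetic differences are your sign convention for~$u_-$ and the way you phrase the contradiction in the vanishing case; the paper's Lemma~2.3, Lemma~2.5, and the proof in Section~2.2 match your outline step for step.
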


The result above will be established
by exploiting the Mountain Pass geometry of the functional associated with~\eqref{Maineq}. Additionally, we will show that the energy of the solution in Theorem~\ref{th existence} is precisely at the Mountain Pass level, see Corollary~\ref{aggdopomeglio}.

\medskip
 
We  next  establish some regularity results which provide useful auxiliary tools in the analysis of the equation under consideration. 
For this, 
we notice that  the pseudo-differential operator~$-\Delta+(-\Delta)^{s}$ is the infinitesimal generator of a stochastic process~$X$, where~$X$ is the mixture of the Brownian motion and an independent symmetric $2s$-stable L\'{e}vy process, and, as such, can be characterized using the Fourier Transform~${\mathcal{F}}$ as
$$ {\mathcal{F}}\Big(-\Delta u+(-\Delta)^su\Big)(\xi)= \Big(|\xi|^2+|\xi|^{2s}\Big) \,
{\mathcal{F}}u(\xi).$$
Hence, we can define a heat kernel~$\mathcal{H}(x,t)$  associated with  the above operator  as
\begin{equation}\label{DBSD-1}
	\mathcal{H}(x,t):=\int_{\mathbb{R}^n}e^{-t(|\xi|^{2}+|\xi|^{2s})+2\pi ix\cdot \xi}\,d\xi
\end{equation}
for~$t>0$ and~$x\in\mathbb{R}^n$. 

In this context, the heat kernel~$\mathcal{ H}(x,t)$  may be viewed as a transition density of the L\'{e}vy process~$X$ and,
relying on probabilistic methods,
some properties about the transition density of~$ X $ 
have been established:
for example, Song and Vondra\v{c}ek~\cite{MR2321989} obtained
upper and lower bounds on the transition density of~$ X $ by comparing the transition densities of the Brownian motion and the $2s$-stable process.
	 
	 On a related note, Bogdan, Grzywny, and Ryznar~\cite{zbMATH06282052}	obtained general bounds on the transition density of a pure-jump  L\'{e}vy process  based on Laplace transform arguments (by adapting this method appropriately, one can find a  sharp upper bound on the transition density of~$X$, but no  lower bound since the L\'{e}vy-Khintchine exponent of the operator~$-\Delta+(-\Delta)^{s}$ is not strictly between~$0$ and~$2$). 

Furthermore,	  
	  Cygan, Grzywny and Trojan~\cite{MR3646773} 
	   provided the asymptotic behaviors of the transition density of a L\'{e}vy process whose 
	L\'{e}vy-Khintchine exponent is strictly and regularly varying between the indexes~$0$ and~$2$
	(from these methods, since in our framework
	the index of the stochastic process at zero is~$2s$ and at infinity
	is~$2$, only  the asymptotic formula for the tail transition density of~$X$ can be obtained).

Additionally, the  L\'{e}vy process~$X$ may be considered as a subordinate Brownian motion, and thus, with the aid of suitable Tauberian Theorems developed by~\cite{bingham1989regular}, the asymptotic behaviors of the Green function of~$X$ near zero and infinity were established by Rao, Song
and Vondra\v{c}ek~\cite{MR2238934}.

In this paper we will revisit these results, framing them in the setting needed for our purposes, and provide a new set of
proofs making use of Fourier analysis techniques (see Theorem~\ref{th properties of h}). These properties will serve as an essential step to investigate some basic features of the  Bessel kernel~$\mathcal{ K}$, defined as
\begin{equation}\label{definition of K}\mathcal{K}(x):=\int_{0}^{+\infty} e^{-t} \,\mathcal{H}(x,t) \,dt.
\end{equation}
In turn, these
properties of~$\mathcal{ K}$ (see Theorem~\ref{th properties of k}) will be pivotal to establish a series of regularity results for the mixed operator which can be described as follows.\medskip

The first regularity result that we present deals with
the H\"{o}lder continuity of weak solutions of problem~\eqref{Maineq}:
 
 \begin{Theorem}[H\"{o}lder continuity]\label{Theorem :holder regularity}
 	Let~$u$ be a weak solution of~\eqref{Maineq}.
 	
 	Then, $u\in C^{0,\mu}(\mathbb{R}^n)$ for some~$\mu\in(0,1)$ and 	\begin{equation*}
 		\|u\|_{C^{0,\mu}(\mathbb{R}^n)}\leqslant C,
 	\end{equation*}
 	for some~$C>0$, depending only on~$n$, $s$, $p$ and~$\|u\|_{H^1(\mathbb{R}^n)}$.
 \end{Theorem}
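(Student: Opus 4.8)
The plan is to bootstrap the regularity of a weak solution~$u$ by repeatedly convolving the equation with the Bessel kernel~$\mathcal K$ and iterating on Lebesgue exponents. First I would recast the equation in integral form: since~$(-\Delta+(-\Delta)^s+1)u=u^p$ and~$\mathcal K$ is the fundamental solution of this operator (its Fourier transform being~$(|\xi|^2+|\xi|^{2s}+1)^{-1}$ by~\eqref{definition of K} and~\eqref{DBSD-1}), a weak solution satisfies~$u=\mathcal K * u^p$. From the existence theory we know~$u\in H^1(\mathbb R^n)$, hence by Sobolev embedding~$u\in L^{2^*}(\mathbb R^n)$, so~$u^p\in L^{2^*/p}(\mathbb R^n)$; note~$2^*/p>1$ precisely because~$p<\frac{n+2}{n-2}=\frac{2^*}{2}\cdot\frac{n}{n-2}$, actually we need~$p<2^*-1$ which is weaker than the subcriticality hypothesis~\eqref{duiwegfufvckuqfhoq496712123hew}, so this is fine.

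The core of the argument is a Moser-type iteration on the integrability of~$u$. Using the properties of~$\mathcal K$ from Theorem~\ref{th properties of k} — in particular that~$\mathcal K$ behaves like the Bessel kernel near the origin (so it lies in~$L^q$ for~$q<\frac{n}{n-2}$, coming from the local~$-\Delta$ part) and decays fast at infinity — one has a mapping property of the form: if~$f\in L^r(\mathbb R^n)$ then~$\mathcal K*f\in L^{r'}(\mathbb R^n)$ for suitable~$r'>r$ governed by Young's convolution inequality, and the gain in exponent is uniform as long as one stays below the critical threshold. Starting from~$u\in L^{2^*}$ I would feed~$u^p$ into the convolution, obtain a higher exponent for~$u$, raise to the power~$p$ again, and repeat. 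The key point, which must be checked quantitatively, is that since~$p<\frac{n+2}{n-2}$ the iteration does not stall but reaches every finite exponent in finitely many steps, so~$u\in L^q(\mathbb R^n)$ for all~$q\in[2^*,+\infty)$, and then~$u^p\in L^q$ for all such~$q$ as well. Each bound is quantitative in~$\|u\|_{H^1}$, so we retain the claimed dependence of the constant.

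Once~$u^p\in L^q(\mathbb R^n)$ for some~$q>n/(2-\epsilon)$ — concretely, any~$q>n/2$ suffices when paired with the local estimates — I would invoke the~$L^p$-theory for the Laplacian alluded to in the introduction: write~$-\Delta u=u^p-(-\Delta)^s u-u$, control the nonlocal term~$(-\Delta)^s u$ (which is an order-$2s<2$ operator, hence dominated after the Lebesgue gains already obtained, or handled directly via the kernel~$\mathcal K$), and apply Calderón–Zygmund estimates to deduce~$u\in W^{2,q}_{loc}$, whence Morrey's embedding gives~$u\in C^{0,\mu}_{loc}$ with~$\mu=2-n/q\in(0,1)$. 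Alternatively, and perhaps more cleanly in this mixed setting, one can read the Hölder continuity directly off the representation~$u=\mathcal K*u^p$: once~$u^p\in L^q$ with~$q$ large, the smoothing of the Bessel kernel (again Theorem~\ref{th properties of k}, which presumably records that~$\mathcal K\in W^{1,1}$ or at least that~$\mathcal K$ and its first-order difference quotients are integrable) yields a modulus of continuity for the convolution. The uniformity in~$x$ — i.e.\ a global~$C^{0,\mu}(\mathbb R^n)$ bound rather than merely interior — follows because the estimates are translation invariant and the~$L^q$ norms are global, together with~$u(x)\to0$ at infinity.

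The main obstacle I anticipate is precisely the lack of scale invariance of~$-\Delta+(-\Delta)^s$, which the authors flag explicitly: one cannot run the usual scaling-based Moser iteration, so every convolution estimate must be proved by hand splitting~$\mathcal K$ into its near-origin piece (where the~$-\Delta$ singularity dominates, dictating the Lebesgue exponent ceiling) and its far-field piece (where exponential decay makes everything harmless), and then tracking constants through the iteration without the luxury of rescaling. Controlling the fractional term~$(-\Delta)^s u$ during the bootstrap — showing it does not outrun the gains obtained for~$u^p$ — is the delicate bookkeeping step; here I would lean on the fact that~$(-\Delta)^s\mathcal K$ inherits an integrable kernel (a consequence of the heat-kernel bounds in Theorem~\ref{th properties of h}), so that~$(-\Delta)^s u=\big((-\Delta)^s\mathcal K\big)*u^p$ obeys the same Young-type estimate as~$u$ itself, only with a slightly worse but still subcritical kernel exponent. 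Modulo that care, the scheme is a standard elliptic bootstrap adapted to the mixed-order representation formula.
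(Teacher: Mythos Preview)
Your strategy is a genuine alternative to the paper's, and the bootstrap half of it is sound. The paper does \emph{not} iterate Young's inequality on the global representation $u=\mathcal K\ast u^p$. Instead it localizes: it introduces cutoffs $\phi_j$ supported in shrinking balls, defines $u_j:=\mathcal K\ast(\phi_j u^p)$, applies a $W^{2,\gamma}$ estimate for the full mixed operator (Lemma~\ref{lemma embedding of X^p}) to $u_j$, and controls $u-u_j$ on a smaller ball using the smoothness of $\mathcal K$ away from the origin. The iteration then improves the \emph{local} Lebesgue exponent of $u$ on nested balls, and a covering argument globalizes. Your global Young iteration, using only $\mathcal K\in L^\gamma$ for $\gamma<\tfrac{n}{n-2}$ from Theorem~\ref{th properties of k}(d), is cleaner in the bootstrap phase and does reach $u^p\in L^q$ for every finite $q$ (indeed $u\in L^\infty$) in finitely many steps, precisely because $p<\tfrac{n+2}{n-2}$ makes the recursion $1/q_{k+1}=p/q_k-2/n$ diverge to $-\infty$.

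Two points deserve care, however. First, the identity $u=\mathcal K\ast u^p$ for a \emph{weak} solution presupposes uniqueness for the linear problem in the class $H^1\cap L^q$; this is true but needs a line of justification. Second, and more substantively, your passage from $u^p\in L^q$ to $u\in C^{0,\mu}$ is where the real work hides. Your Option~A (write $-\Delta u=u^p-(-\Delta)^s u-u$ and apply classical Calder\'on--Zygmund) requires $(-\Delta)^s u\in L^q$, which you propose to get from $(-\Delta)^s u=\big((-\Delta)^s\mathcal K\big)\ast u^p$; but showing $(-\Delta)^s\mathcal K\in L^1$ (equivalently, that $|\xi|^{2s}/(1+|\xi|^{2s}+|\xi|^2)$ is an $L^1$ Fourier multiplier) is exactly the content of the paper's Lemma~\ref{lemma embedding of X^p}, whose proof occupies a page of contour-shift estimates. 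Your Option~B needs a bound on $\nabla\mathcal K$ near the origin, which Theorem~\ref{th properties of k} does not supply (part (c) covers only $|x|\ge1$). So your route ultimately funnels through the same Fourier-analytic lemma as the paper's---you have reorganized the argument, not bypassed its hard step. If you are willing to quote Lemma~\ref{lemma embedding of X^p} as a black box at the end (once $u^p\in L^q$ with $q>n/2$, it gives $u\in W^{2,q}(\mathbb R^n)\hookrightarrow C^{0,\mu}(\mathbb R^n)$ directly), then your proof is complete and in fact shorter than the paper's, since the localization becomes unnecessary.
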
	

We also establish a~$C^{1,\alpha}$-regularity result:

\begin{Theorem}[$C^{1,\alpha}$-regularity]\label{th:regularity}
	Let~$u$ be a bounded weak solution of~\eqref{Maineq}.
	
	Then, $u\in C^{1,\alpha}(\mathbb{R}^n)$ for any~$\alpha\in(0,1)$
and	\begin{equation*}
		\|u\|_{C^{1,\alpha}(\mathbb{R}^n)}\leqslant c\, \left(\|u\|_{L^\infty(\mathbb{R}^n)}+\|u\|^p_{L^\infty(\mathbb{R}^n)}\right),
	\end{equation*}
	for some~$c>0$, depending only on~$n$, $s$ and~$p$.
	
	Moreover, 
	$$ \lim_{|x|\to+\infty} u(x)=0.$$
\end{Theorem}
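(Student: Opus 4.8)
The plan is to recast~\eqref{Maineq} as an integral equation driven by the Bessel kernel~$\mathcal K$ of~$-\Delta+(-\Delta)^s+1$, and then read off the regularity and the decay from the pointwise bounds on~$\mathcal K$ and its derivatives furnished by Theorem~\ref{th properties of k}. First I would establish the representation formula. Since a weak solution lies in~$H^1(\mathbb{R}^n)$ and is bounded by hypothesis, and since~$p$ is subcritical, $u^p\in L^2(\mathbb{R}^n)\cap L^\infty(\mathbb{R}^n)$. By~\eqref{DBSD-1} and~\eqref{definition of K} and Fubini's Theorem one has~$\widehat{\mathcal K}(\xi)=(1+|\xi|^2+|\xi|^{2s})^{-1}$, so taking the Fourier transform of the weak formulation of~\eqref{Maineq} yields~$\widehat u=\widehat{\mathcal K}\,\widehat{u^p}$; as~$\mathcal K\in L^1(\mathbb{R}^n)$, this gives
\[
u=\mathcal K*u^p \qquad\text{a.e. in }\mathbb{R}^n .
\]

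\textbf{Regularity.} From Theorem~\ref{th properties of k} I would use that~$\mathcal K(x)$ is comparable to~$|x|^{2-n}$ near the origin (to~$\log(1/|x|)$ when~$n=2$), that~$|\nabla\mathcal K(x)|\lesssim|x|^{1-n}$ there, that~$|\nabla^2\mathcal K(x)|\lesssim|x|^{-n}$ there, and that~$\mathcal K$ together with its derivatives decays fast at infinity. In particular~$\mathcal K,\nabla\mathcal K\in L^1(\mathbb{R}^n)$ (the bound~$|x|^{1-n}$ being integrable near~$0$ since~$n\ge2$), so differentiation under the integral gives~$\nabla u=(\nabla\mathcal K)*u^p$; hence~$u\in C^1(\mathbb{R}^n)$ with~$\|u\|_{L^\infty(\mathbb{R}^n)}+\|\nabla u\|_{L^\infty(\mathbb{R}^n)}\le(\|\mathcal K\|_{L^1}+\|\nabla\mathcal K\|_{L^1})\,\|u\|_{L^\infty(\mathbb{R}^n)}^p$. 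To upgrade to~$C^{1,\alpha}$ for every~$\alpha\in(0,1)$ I would bound, for~$x\ne y$ and~$r:=|x-y|$,
\[
\nabla u(x)-\nabla u(y)=\int_{\mathbb{R}^n}\big(\nabla\mathcal K(x-z)-\nabla\mathcal K(y-z)\big)\,u(z)^p\,dz
\]
by splitting the domain into~$\{|x-z|<2r\}$ and~$\{|x-z|\ge 2r\}$: on the near region one bounds each gradient separately by~$|x-z|^{1-n}$, resp.~$|y-z|^{1-n}$, and integrates to get a contribution~$\lesssim r\,\|u\|_{L^\infty}^p$, while on the far region the mean value inequality and the bound on~$\nabla^2\mathcal K$ give~$|\nabla\mathcal K(x-z)-\nabla\mathcal K(y-z)|\lesssim r\,|x-z|^{-n}$, and the integral~$\int_{|x-z|\ge 2r}|x-z|^{-n}\,dz$, controlled thanks to the decay of~$\nabla^2\mathcal K$ at infinity, is~$\le C(1+|\log r|)$, for a contribution~$\lesssim r\,(1+|\log r|)\,\|u\|_{L^\infty}^p$. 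Thus~$[\nabla u]_{C^{0,\alpha}(\mathbb{R}^n)}\le C_\alpha\,\|u\|_{L^\infty(\mathbb{R}^n)}^p$ for every~$\alpha\in(0,1)$, which together with the previous bound yields~$\|u\|_{C^{1,\alpha}(\mathbb{R}^n)}\le c\,(\|u\|_{L^\infty(\mathbb{R}^n)}+\|u\|^p_{L^\infty(\mathbb{R}^n)})$. Alternatively, once~$u\in C^{1,\beta}$ for some~$\beta>2s-1$ is known (which already follows from the estimate above), one may rewrite the equation as~$-\Delta u=u^p-u-(-\Delta)^s u$, check that the right-hand side lies in~$L^\infty(\mathbb{R}^n)$, and conclude through the interior~$L^q$-theory for the Laplacian, the Sobolev embedding, and a covering argument based on the translation invariance of the equation.

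\textbf{Decay at infinity.} Using again~$u=\mathcal K*u^p$, fix~$\varepsilon>0$ and split~$u^p=\chi_{B_R}u^p+\chi_{B_R^c}u^p$. Since~$u\in L^2(\mathbb{R}^n)\cap L^\infty(\mathbb{R}^n)\subset L^{pn}(\mathbb{R}^n)$, one has~$\|u\|_{L^{pn}(B_R^c)}\to0$ as~$R\to+\infty$, and Young's inequality together with~$\mathcal K\in L^{n/(n-1)}(\mathbb{R}^n)$ (which holds by the near-origin and tail bounds recalled above) gives~$\|\mathcal K*(\chi_{B_R^c}u^p)\|_{L^\infty(\mathbb{R}^n)}\le\|\mathcal K\|_{L^{n/(n-1)}}\,\|u\|_{L^{pn}(B_R^c)}^p\le\varepsilon/2$ for~$R$ large. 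Fixing such an~$R$, for~$|x|\ge 2R$ and~$z\in B_R$ we have~$|x-z|\ge|x|/2$, so~$\mathcal K*(\chi_{B_R}u^p)(x)\le\|u\|_{L^\infty}^p\,|B_R|\,\sup_{|w|\ge|x|/2}\mathcal K(w)\to0$ as~$|x|\to+\infty$; hence~$u(x)\le\varepsilon$ for~$|x|$ large.

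I expect the main obstacle to be entirely contained in the kernel estimates of Theorem~\ref{th properties of k}: one must reconcile the classical second-order singularity of~$\mathcal K$ (and of~$\nabla\mathcal K$, $\nabla^2\mathcal K$) at the origin with the fractional-type tail at infinity, obtaining quantitative bounds despite the absence of scale invariance of~$-\Delta+(-\Delta)^s$. Once these are in hand, the steps above are standard potential-theoretic estimates, and the ``piecewise'' mechanism announced in the introduction is precisely what makes the kernel analysis go through.
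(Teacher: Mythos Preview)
Your potential-theoretic strategy is natural, but there is a genuine gap: you attribute to Theorem~\ref{th properties of k} the near-origin estimates $|\nabla\mathcal K(x)|\lesssim|x|^{1-n}$ and $|\nabla^2\mathcal K(x)|\lesssim|x|^{-n}$, yet that theorem supplies derivative bounds only in the region $|x|\ge1$ (item~(c)). Without the near-origin gradient and Hessian control, your claim that $\nabla\mathcal K\in L^1(\mathbb R^n)$ and your near/far splitting for the H\"older seminorm of $\nabla u$ are unsupported. These bounds are plausible (the symbol $(1+|\xi|^{2s}+|\xi|^2)^{-1}$ behaves like $|\xi|^{-2}$ at high frequency, so $\mathcal K$ should inherit Newtonian-type singularities), but establishing them is itself a piece of hard analysis comparable to what is already in the appendix, and you have not carried it out.

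The paper sidesteps this issue entirely. It introduces a cut-off $\phi_1$ supported in $B_2$ and splits $u=u_1+(u-u_1)$, where $u_1$ solves the linear equation with right-hand side $\phi_1 u^p$. For the difference $u-u_1=\mathcal K\ast\big((1-\phi_1)u^p\big)$ one only needs the smoothness and decay of $\mathcal K$ \emph{away from the origin}, which is exactly what item~(c) provides, to bound $u-u_1$, $\nabla(u-u_1)$, and $D^2(u-u_1)$ on $B_{1/2}$. For $u_1$, since $\phi_1u^p\in L^q(\mathbb R^n)$ for every $q\ge1$, the paper invokes Lemma~\ref{lemma embedding of X^p} (a $W^{2,q}$ estimate proved via the $L^p$-boundedness of the multiplier $(1+|\xi|^2)/(1+|\xi|^{2s}+|\xi|^2)$, i.e.\ Calder\'on--Zygmund input rather than pointwise kernel bounds) to get $u_1\in W^{2,q}(\mathbb R^n)$ for all $q$, hence $u_1\in C^{1,\alpha}$ by Sobolev embedding. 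A covering by translated balls then globalizes the estimate. Your decay argument, by contrast, is fine and uses only the integrability of $\mathcal K$ that item~(d) actually provides.
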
	

{F}rom Theorem~\ref{th:regularity}, we will also deduce the~$C^{2,\alpha}$-regularity result below.

	\begin{Theorem}[$ C^{2,\alpha} $-regularity]\label{th C^2,alpha interior without boundary condition}
		Let~$ u$ be a bounded weak solution of~\eqref{Maineq}.
		
		Then, $u\in C^{2,\alpha}(\mathbb{R}^n)$ for any~$\alpha\in(0,1)$
		and
		\begin{equation*}
			\|u\|_{C^{2,\alpha}(\mathbb{R}^n)}\leqslant C\max\left\{\|u\|^{p^2}_{L^\infty(\mathbb{R}^n)},  \|u\|_{L^\infty(\mathbb{R}^n)}\right\},
		\end{equation*}
for some~$ C>0 $, depending only on~$ n$, $s$, $p$ and~$\alpha$.
	\end{Theorem}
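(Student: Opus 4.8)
The plan is to bootstrap from the $C^{1,\alpha}$-regularity already established in Theorem~\ref{th:regularity}. Starting from a bounded weak solution~$u$, Theorem~\ref{th:regularity} gives~$u\in C^{1,\alpha}(\mathbb{R}^n)$ for every~$\alpha\in(0,1)$, with a quantitative bound in terms of~$\|u\|_{L^\infty}+\|u\|_{L^\infty}^p$. The idea is then to rewrite the equation~\eqref{Maineq} as a perturbed Poisson problem for the classical Laplacian,
\[
-\Delta u = u^p - u - (-\Delta)^s u =: f_u \qquad \text{in } \mathbb{R}^n,
\]
and to show that the right-hand side~$f_u$ lies in~$C^{0,\alpha}$ locally, with uniform bounds. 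The nonlinear term~$u^p$ is~$C^{0,\alpha}$ because~$u$ is~$C^{1,\alpha}$ (hence~$C^{0,1}$) and bounded, and a composition estimate applies; the linear term~$u$ is obviously~$C^{0,\alpha}$. The delicate contribution is the nonlocal term~$(-\Delta)^s u$: here one uses that~$u\in C^{1,\alpha}(\mathbb{R}^n)$ together with~$u\in L^\infty(\mathbb{R}^n)$ to conclude that~$(-\Delta)^s u$ is well-defined pointwise and belongs to~$C^{0,\beta}(\mathbb{R}^n)$ for a suitable exponent~$\beta$ — in fact, since~$2s<1+\alpha$ one may take~$\beta$ up to~$1+\alpha-2s$ if that is less than~$1$, which is nonempty and gives Hölder regularity of~$(-\Delta)^s u$; for~$n=2$ or whenever~$2s\ge 1+\alpha$ one repeats the step with a larger~$\alpha$ first. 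This is the standard Schauder-type gain for the fractional Laplacian acting on Hölder spaces.

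Once~$f_u\in C^{0,\alpha}_{\mathrm{loc}}(\mathbb{R}^n)$ with a uniform bound controlled by~$\max\{\|u\|_{L^\infty}^{p},\|u\|_{L^\infty}\}$ on every ball, interior Schauder estimates for~$-\Delta$ yield~$u\in C^{2,\alpha}_{\mathrm{loc}}(\mathbb{R}^n)$, and the local estimates on unit balls are uniform in the center because the right-hand side bound and the~$C^{1,\alpha}$ bound are translation-invariant. A covering argument then upgrades this to a global estimate
\[
\|u\|_{C^{2,\alpha}(\mathbb{R}^n)} \leqslant C\bigl(\|f_u\|_{C^{0,\alpha}(\mathbb{R}^n)} + \|u\|_{L^\infty(\mathbb{R}^n)}\bigr),
\]
and tracking the powers of~$\|u\|_{L^\infty}$ through the composition~$u\mapsto u^p$ and then through one more application of the~$(-\Delta)^s$ estimate (which itself sees~$\|u\|_{C^{1,\alpha}}\lesssim \|u\|_{L^\infty}+\|u\|_{L^\infty}^p$) produces the factor~$\max\{\|u\|_{L^\infty}^{p^2},\|u\|_{L^\infty}\}$ claimed in the statement — the exponent~$p^2$ arises precisely because the nonlinearity is composed with a quantity that already scales like~$\|u\|_{L^\infty}^p$.

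The main obstacle I expect is making the nonlocal term cooperate with the classical Schauder theory in a scale-consistent way: because the operator~$-\Delta+(-\Delta)^s$ is not scale-invariant, one cannot simply rescale to unit size, so the estimates for~$(-\Delta)^s u$ must be proved directly at unit scale and shown to be translation-uniform. Concretely, the step requiring care is the decomposition of~$(-\Delta)^s u(x)$ into a near-field integral over~$B_1(x)$, controlled by the~$C^{1,\alpha}$ seminorm of~$u$ near~$x$, and a far-field integral over~$\mathbb{R}^n\setminus B_1(x)$, controlled by~$\|u\|_{L^\infty}$; one then shows both pieces are Hölder continuous in~$x$ with the correct uniform constants. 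This is exactly the kind of ``piecewise'' estimate the authors advertise in Appendices~\ref{th properties of h1}--\ref{sce properties of k}, so I would invoke the kernel bounds for~$\mathcal{K}$ from Theorem~\ref{th properties of k} to carry it out cleanly, and the rest is a routine bootstrap plus covering.
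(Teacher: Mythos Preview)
Your approach is correct and it differs from the paper's. The paper does \emph{not} directly estimate~$(-\Delta)^s u$ in~$C^{0,\beta}$ from the~$C^{1,\alpha}$ bound and then apply Schauder. Instead it mollifies to~$u_\epsilon$, localizes via a cutoff~$v_\epsilon=\phi^R u_\epsilon$, and uses an interpolation trick: the fractional term~$(-\Delta)^s v_\epsilon$ is bounded in~$C^{0,\alpha}$ by~$|u_\epsilon|'_{2,\alpha_0}$ with~$\alpha_0<\alpha$, which is then controlled by~$\delta\,|u_\epsilon|'_{2,\alpha}+C_\delta\|u_\epsilon\|_{L^\infty}$. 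This yields a self-referential inequality on concentric balls, closed by an absorption lemma (Proposition~\ref{pro final estimate}, imported from~\cite{SVWZ23}), with the conclusion passed to~$u$ via Arzel\`a--Ascoli. The payoff is that all~$\alpha\in(0,1)$ are handled in a single step, uniformly in~$s$, at the cost of invoking the external machinery from~\cite{SVWZ23}. Your route is more elementary and self-contained, but for~$s\ge 1/2$ you only get~$(-\Delta)^s u\in C^{0,\beta}$ with~$\beta<2-2s<1$, so a finite bootstrap (each pass gaining~$2-2s$ in the H\"older exponent) is genuinely required; your remark about ``repeating the step'' acknowledges this but understates how many iterations may be needed as~$s\to 1$. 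The quantitative bound survives the iteration since the nonlinear term~$u^p$ can always be estimated via~$\|u\|_{L^\infty}^{p-1}\|u\|_{C^{0,\beta}}$ with the H\"older factor controlled by the fixed~$C^{1,\alpha}$ bound from Theorem~\ref{th:regularity}, so no compounding of powers occurs.

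One small correction: your last paragraph invokes the Bessel kernel~$\mathcal{K}$ from Theorem~\ref{th properties of k} for the near/far decomposition of~$(-\Delta)^s u$. That kernel inverts the \emph{full} operator~$-\Delta+(-\Delta)^s+1$ and plays no role here; the decomposition you need is just the elementary split of the fractional Laplacian integral against~$|z|^{-n-2s}$, with the near piece controlled by the second-order Taylor remainder (hence by~$[\nabla u]_{C^{0,\alpha}}$) and the far piece by~$\|u\|_{L^\infty}$. This is a standard Silvestre-type estimate and does not rely on the paper's appendices.
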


Finally, we  obtain the power-type decay at infinity and the radial symmetry of classical positive solutions to~\eqref{Maineq} by comparison arguments and the method of moving plane, respectively:

\begin{Theorem}[Qualitative properties of positive solutions]\label{th main theorem}
The problem in~\eqref{Maineq} admits a classical, positive, radially symmetric solution~$u$.

Furthermore, $u$ has a power-type decay at infinity, that is, there exist constants~$0<C_1\leqslant C_2$ such that, for all~$|x|\ge1$, 
	\[ \frac{C_1}{|x|^{n+2s}}\leqslant u(x)\leqslant \frac{C_2}{|x|^{n+2s}}.
	\]
\end{Theorem}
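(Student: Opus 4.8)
The plan is to combine the existence/regularity machinery already established (Theorems~\ref{th existence}--\ref{th C^2,alpha interior without boundary condition}) with a strong maximum principle, a barrier (comparison) argument based on the Bessel kernel~$\mathcal{K}$, and the method of moving planes. I would organize the argument in four stages: (i) upgrade the weak solution from Theorem~\ref{th existence} to a classical positive solution; (ii) prove the upper bound~$u(x)\le C_2|x|^{-(n+2s)}$; (iii) prove the matching lower bound~$u(x)\ge C_1|x|^{-(n+2s)}$; and (iv) deduce radial symmetry by moving planes. Throughout I would lean on the key fact, already extracted in Theorem~\ref{th properties of k}, that the Bessel kernel~$\mathcal{K}$ itself decays like~$|x|^{-(n+2s)}$ at infinity (this is the hallmark of the nonlocal term dominating the tail behaviour, exactly as for~$(-\Delta)^s+1$).

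\emph{Step 1: from weak to classical positive solution.} Let~$u$ be the nontrivial nonnegative weak solution of Theorem~\ref{th existence}. By Theorem~\ref{Theorem :holder regularity} it is bounded and H\"older continuous, hence by Theorems~\ref{th:regularity} and~\ref{th C^2,alpha interior without boundary condition} it lies in~$C^{2,\alpha}(\mathbb{R}^n)$, so it solves~\eqref{Maineq} pointwise in the classical sense and~$u(x)\to0$ as~$|x|\to\infty$. To upgrade~$u>0$ from~$u\ge0$, I would use the representation~$u=\mathcal{K}*(u^p+\text{lower order})$ — more precisely, rewriting~\eqref{Maineq} as~$(-\Delta+(-\Delta)^s+1)u=u^p$ and convolving with~$\mathcal{K}$ — together with the strict positivity of~$\mathcal{K}$ from Theorem~\ref{th properties of k}: since~$u\not\equiv0$ and~$u^p\ge0$, the convolution~$\mathcal{K}*u^p$ is strictly positive everywhere, whence~$u>0$ on~$\mathbb{R}^n$. (Alternatively one can invoke a strong maximum principle for the mixed operator.)

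\emph{Steps 2--3: two-sided power decay.} For the upper bound, since~$u(x)\to0$, there is~$R>0$ with~$u^p\le \tfrac12 u$ on~$\{|x|\ge R\}$, so~$(-\Delta+(-\Delta)^s+1)u\le u^p\le\tfrac12 u$ outside~$B_R$ while~$u\le\|u\|_\infty$ inside. I would then build a supersolution of the form~$w:=A\,\mathcal{K}$ (or~$w:=A\min\{1,|x|^{-(n+2s)}\}$), using that~$(-\Delta+(-\Delta)^s+1)\mathcal{K}=\delta_0\ge0$ away from the origin and the decay/positivity estimates of Theorem~\ref{th properties of k} to absorb the zeroth-order term on~$\{|x|\ge R\}$; choosing~$A$ large enough that~$w\ge u$ on~$B_R$ and that the differential inequality for~$w$ dominates that for~$u$ on~$\mathbb{R}^n\setminus B_R$, the comparison principle for~$-\Delta+(-\Delta)^s+1$ gives~$u\le w\le C_2|x|^{-(n+2s)}$. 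The lower bound is the dual argument: from~$u=\mathcal{K}*u^p\ge \mathcal{K}*(u^p\chi_{B_1})\ge c\,\mathcal{K}(x)\cdot\|u^p\|_{L^1(B_1)}$ for~$|x|$ large — using that~$\mathcal{K}(x-y)\ge c\,\mathcal{K}(x)$ for~$y\in B_1$ and~$|x|\ge 2$, again from the precise asymptotics of~$\mathcal{K}$ in Theorem~\ref{th properties of k} — and since~$\|u^p\|_{L^1(B_1)}>0$ by Step~1, we obtain~$u(x)\ge C_1|x|^{-(n+2s)}$.

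\emph{Step 4: radial symmetry.} Having a classical positive solution decaying at infinity, I would run the method of moving planes. For a direction~$e\in S^{n-1}$ and~$\lambda\in\mathbb{R}$, set~$u_\lambda(x):=u(x_\lambda)$ with~$x_\lambda$ the reflection of~$x$ across~$\{x\cdot e=\lambda\}$, and~$w_\lambda:=u_\lambda-u$. Because the full operator~$-\Delta+(-\Delta)^s$ is translation- and reflection-invariant, $w_\lambda$ satisfies a linear inequality of the form~$(-\Delta+(-\Delta)^s)w_\lambda + w_\lambda = c_\lambda(x)w_\lambda$ in the half-space~$\Sigma_\lambda=\{x\cdot e>\lambda\}$, but with the nonlocal term one must also control the contribution of~$w_\lambda$ outside~$\Sigma_\lambda$ using the antisymmetry~$w_\lambda(x_\lambda)=-w_\lambda(x)$; this is by now standard in the nonlocal moving-planes literature. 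Starting the procedure from~$\lambda\to+\infty$ (where~$w_\lambda>0$ by the decay of~$u$) and using a maximum principle in unbounded domains together with the decay to exclude interior negative minima, one slides~$\lambda$ down to a critical position, obtaining symmetry of~$u$ about some hyperplane orthogonal to~$e$; as~$e$ is arbitrary and~$u$ is positive with a single maximum, $u$ is radially symmetric (and decreasing) about a point, which after translation we take to be the origin.

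\emph{Main obstacle.} The delicate point is Steps~2--3: constructing the barriers and applying the comparison principle for the \emph{nonlocal} operator~$-\Delta+(-\Delta)^s+1$ requires care, since the candidate supersolution~$\mathcal{K}$ (or~$|x|^{-(n+2s)}$) is only a supersolution away from the origin and the nonlocal term ``sees'' its behaviour globally; one must check that~$(-\Delta)^s$ applied to the polynomial tail indeed produces a term of the same order~$|x|^{-(n+2s)}$ (not worse), which is precisely where the fine estimates on~$\mathcal{K}$ from Theorem~\ref{th properties of k} are indispensable. In the moving-planes step, the analogous nonlocal difficulty — controlling the exterior contribution — is handled by the standard antisymmetry trick, so the heart of the matter is really the sharp two-sided control of~$\mathcal{K}$, which the earlier part of the paper has been built to supply.
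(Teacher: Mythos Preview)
Your outline matches the paper's architecture (positivity, two-sided decay via barriers, moving planes), and most of it is sound, but the upper-bound barrier has a real gap.

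\textbf{The gap.} The candidate supersolution $w=A\mathcal K$ does not work. Away from the origin $(-\Delta+(-\Delta)^s+1)\mathcal K=0$, so for the half-coefficient problem that $u$ sub-solves once $u^p\le\tfrac12 u$ on $\{|x|>R\}$ one has $(-\Delta+(-\Delta)^s+\tfrac12)\mathcal K=-\tfrac12\mathcal K<0$: $\mathcal K$ is a \emph{sub}solution there, and scaling by $A$ does not repair the sign. Consequently the comparison principle gives nothing. The paper's fix is to introduce the modified kernel $\mathcal K_{1/2}=\mathcal F^{-1}\big((\tfrac12+|\xi|^2+|\xi|^{2s})^{-1}\big)$ (Lemma~\ref{lemma K_a}), which enjoys the same $|x|^{-(n+2s)}$ decay, and to take $v=\mathcal K_{1/2}\ast\chi_{B_{1/2}}$ as the barrier (Lemma~\ref{lemma supersection}); then $(-\Delta+(-\Delta)^s+\tfrac12)v=0$ outside $B_1$ with the correct sign, and $g=v-\gamma u$ cannot have a negative minimum. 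Your alternative $A\min\{1,|x|^{-(n+2s)}\}$ is not obviously a supersolution either: computing the sign of $(-\Delta)^s$ on such a truncated power requires a separate argument that you have not supplied.

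\textbf{Differences that are not gaps.} For positivity you use $u=\mathcal K\ast u^p>0$; the paper instead argues at a hypothetical zero $x_0$ that $\Delta u(x_0)\ge0$ and $(-\Delta)^s u(x_0)<0$, contradicting the equation (Theorem~\ref{th: u>0}). For the lower bound your direct convolution estimate $u\ge \mathcal K\ast(u^p\chi_{B_1})\ge c\,\mathcal K$ is valid and slightly more economical than the paper's barrier $\omega=\mathcal K\ast\chi_{B_{1/2}}$ plus comparison (Lemma~\ref{lemma subsection}). For symmetry you sketch the differential moving-planes; the paper runs the integral version through the representation $u=\mathcal K\ast u^p$, using an $L^q$ smallness bound (Lemma~\ref{lemma 6.2}) to force $|\Sigma_\lambda^-|=0$. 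Either route is standard; the integral one dovetails with the kernel machinery already in place.
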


\medskip

The rest of this paper is organized as follows.
In Section~\ref{sec:preliminary}, we introduce the functional framework that we work in, and  then obtain the existence of nonnegative weak solutions 
as given by Theorem~\ref{th existence}. 

Section~\ref{sec: C^alpha-regularity} is devoted to 
the proofs of the regularity results in Theorems~\ref{Theorem :holder regularity}, \ref{th:regularity} and~\ref{th C^2,alpha interior without boundary condition}. 

In Section~\ref{sec: Qualitative properties of positive solution}, we establish
Theorem~\ref{th main theorem}.

Several properties of the heat kernel~\eqref{DBSD-1} and the Bessel kernel~\eqref{definition of K} are established in Appendices~\ref{th properties of h1} and~\ref{sce properties of k}.  

	\section{Existence of weak solutions }\setcounter{equation}{0}\label{sec:preliminary}

The aim of this section is to establish Theorem~\ref{th existence}. For this, we will first
introduce the functional setting and provide some basic definitions. The
proof of Theorem~\ref{th existence} will then occupy the forthcoming Section~\ref{sec:fguwewiorg}.

\subsection{Functional framework}
Let~$n\geqslant2$ and~$s\in(0,1)$.

As usual, the norm in~$ L^p(\mathbb{R}^n) $ will be denoted by~$ \|u\|_{L^p(\R^n)} $
and the norm in~$H^1(\R^n)$ will be denoted by~$\|u\|_{H^1(\R^n)}=
\|u\|_{L^2(\R^n)}+\|\nabla u\|_{L^2(\R^n)}$.

We also recall the so-called Gagliardo (semi)norm of~$ u $, defined as 
$$
	[u]_{H^s(\mathbb{R}^n)}:=\left(\iint_{\mathbb{R}^{2n}}\frac{|u(x)-u(y)|^2}{|x-y|^{n+2s}}\, dx\,dy\right)^{\frac{1}{2}}.
$$

We point out that the Gagliardo (semi)norm is controlled
by the~$H^1$-norm, according to the following observation:

\begin{Lemma}\label{lemma:H^1=H^1,s} Let~$s\in(0,1)$.

Then, there exists a constant~$ C>0 $, depending only on~$n$ and~$s$, such that,
for all~$  u\in H^{1}(\mathbb{R}^n)  $,
$$
		[u]^2_{H^s(\mathbb{R}^n)}\leqslant C \Big(\|u\|^2_{L^2(\R^n)} +\|\nabla u\|^2_{L^2(\R^n)}\Big).
$$
\end{Lemma}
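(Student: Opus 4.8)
The plan is to split the Gagliardo double integral according to whether the two points are near or far from each other, estimating the far contribution by the $L^2$-norm of $u$ and the near contribution by the $L^2$-norm of $\nabla u$. Concretely, I would write
$$ [u]_{H^s(\R^n)}^2 = \underbrace{\iint_{\{|x-y|<1\}} \frac{|u(x)-u(y)|^2}{|x-y|^{n+2s}}\,dx\,dy}_{=:I_1} \;+\; \underbrace{\iint_{\{|x-y|\ge1\}} \frac{|u(x)-u(y)|^2}{|x-y|^{n+2s}}\,dx\,dy}_{=:I_2}, $$
and bound $I_1$ and $I_2$ separately.

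For $I_2$, I would use the elementary inequality $|u(x)-u(y)|^2 \le 2|u(x)|^2 + 2|u(y)|^2$, exploit the symmetry under $x\leftrightarrow y$, and perform the change of variables $z=x-y$ in the inner integral. This gives
$$ I_2 \;\le\; 4\,\|u\|_{L^2(\R^n)}^2 \int_{\{|z|\ge1\}} \frac{dz}{|z|^{n+2s}}, $$
where the last integral is finite precisely because $2s>0$.

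For $I_1$, changing variables $z=x-y$ yields $I_1 = \int_{\{|z|<1\}} |z|^{-n-2s}\big(\int_{\R^n}|u(y+z)-u(y)|^2\,dy\big)\,dz$. The key input here is the translation estimate $\|u(\cdot+z)-u\|_{L^2(\R^n)}\le |z|\,\|\nabla u\|_{L^2(\R^n)}$ for every $z\in\R^n$, which I would first establish for $u\in C_c^\infty(\R^n)$ by writing $u(y+z)-u(y)=\int_0^1 \nabla u(y+tz)\cdot z\,dt$ and applying Minkowski's integral inequality, and then extend to all $u\in H^1(\R^n)$ by density. Substituting, $I_1\le \|\nabla u\|_{L^2(\R^n)}^2\int_{\{|z|<1\}}|z|^{2-n-2s}\,dz$, and this integral converges precisely because $n+2s-2<n$, i.e.\ $s<1$. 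Adding the two bounds gives the statement with a constant $C=C(n,s)$. The only genuinely delicate point is the justification of the translation estimate for a general $H^1$ function (density of $C_c^\infty(\R^n)$ in $H^1(\R^n)$, together with continuity of translations in $L^2$); the remainder is the routine evaluation of two radial integrals, whose convergence is exactly what encodes the hypothesis $s\in(0,1)$ --- integrability at infinity needs $2s>0$, integrability near the origin needs $2s<2$.

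As an alternative, one could argue entirely via Fourier analysis: by Plancherel's theorem $[u]_{H^s(\R^n)}^2$ equals, up to a dimensional constant, $\int_{\R^n}|\xi|^{2s}\,|\widehat u(\xi)|^2\,d\xi$, while $\|u\|_{L^2(\R^n)}^2+\|\nabla u\|_{L^2(\R^n)}^2$ is comparable to $\int_{\R^n}(1+|\xi|^2)\,|\widehat u(\xi)|^2\,d\xi$; the inequality then reduces to the pointwise bound $|\xi|^{2s}\le 1+|\xi|^2$, which holds for all $\xi\in\R^n$ whenever $s\in(0,1)$. Given the emphasis on Fourier techniques elsewhere in the paper, this route is arguably the most natural, modulo the bookkeeping of the normalisation constants in the definition of $(-\Delta)^s$.
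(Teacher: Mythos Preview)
Your proof is correct and follows essentially the same route as the paper's own argument: the same near/far splitting at scale $1$, the same $L^2$ bound on the far piece via $|u(x)-u(y)|^2\le 2|u(x)|^2+2|u(y)|^2$, and the same gradient bound on the near piece via the fundamental theorem of calculus (the paper writes this step directly as $|\int_0^1\nabla u(x+tz)\cdot z\,dt|^2$ and applies H\"older, which is your translation estimate unpacked). Your additional remark about the density justification and the Fourier alternative are both reasonable side comments, but the core argument matches the paper.
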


\begin{proof}
Utilizing the H\"{o}lder inequality, we see that
	\begin{equation*}
		\begin{split}
			[u]^2_{H^s(\mathbb{R}^n)}&=\iint_{\mathbb{R}^{2n}}\frac{|u(x)-u(y)|^2}{|x-y|^{n+2s}}\, dx\,dy\\
			&=\int_{\mathbb{R}^n}\int_{|z|< 1}\frac{|u(x+z)-u(x)|^2}{|z|^{n+2s}}\, dx\,dz
			+\int_{\mathbb{R}^n}\int_{|z|\geqslant 1}\frac{|u(x+z)-u(x)|^2}{|z|^{n+2s}}\, dx\,dz\\
			&\leqslant\int_{\mathbb{R}^n}\int_{|z|< 1}\frac{|\int_{0}^{1}\nabla u(x+tz) z\,dt|^2}{|z|^{n+2s}}\, dx\,dz
			+2\int_{\mathbb{R}^n}\int_{|z|\geqslant 1}\frac{|u(x+z)|^2+|u(x)|^2}{|z|^{n+2s}}\, dx\,dz\\
			&\leqslant  \|\nabla u\|^2_{L^2(\R^n)}\int_{|z|< 1}\frac{dz}{|z|^{n+2s-2}}
			+4\|u\|^2_{L^2(\R^n)}\int_{|z|\geqslant 1}\frac{dz}{|z|^{n+2s}}\\
			&\leqslant C\left(\|u\|^2_{L^2(\R^n)}+\|\nabla u\|^2_{L^2(\R^n)}\right),
		\end{split}
	\end{equation*}
	as desired.
\end{proof}

In light of Lemma~\ref{lemma:H^1=H^1,s}, given~$s\in(0,1)$, we will equip~$H^1(\R^n)$ with the equivalent norm
\begin{equation*} \|u\|_s:=\Big(
\|u\|^2_{L^2(\R^n)}+\|\nabla u\|^2_{L^2(\R^n)}+[u]^2_{H^s(\mathbb{R}^n)}\Big)^{\frac12}\end{equation*}
and the scalar product
$$ \langle u,v\rangle_s:=\int_{\R^n}\Big(u(x)v(x)+\nabla u(x)\cdot\nabla v(x)\Big)\,dx
+\iint_{\R^{2n}}\frac{(u(x)-u(y))(v(x)-v(y))}{|x-y|^{n+2s}}\,dx\,dy.$$

\begin{Definition}[Weak solution] \label{defweaksol123}
	We say that~$ u\in H^{1}(\mathbb{R}^n) $ is a weak solution of~\eqref{Maineq}
	if, for all~$ v \in H^1(\mathbb{R}^n)$,
	\begin{equation*}
				\int_{\mathbb{R}^n}\Big( u(x) v(x)+	
\nabla u(x)\cdot \nabla v(x)\Big)\, dx+\iint_{\mathbb{R}^{2n}}\frac{(u(x)-u(y)(v(x)-v(y))}{|x-y|^{n+2s}}\, dx\,dy
			=\int_{\mathbb{R}^n}|u(x)|^{p-1} u(x) v(x)\,dx.
	\end{equation*}
\end{Definition}

We consider the functional~$\mathscr{F}:H^{1}(\mathbb{R}^n) \to\R$ defined as
\begin{equation*}\begin{split}
	\mathscr{F}(u):=\;&\frac12\|u\|^2_{s}-\frac{1}{p+1}\int_{\mathbb{R}^n} |u(x)|^{p+1}\, dx
\\	=\;&\frac{1}{2}\int_{\mathbb{R}^n}\Big( u^2(x)+|\nabla u(x)|^{2 }\Big)\,dx+\frac{1}{2}\iint_{\mathbb{R}^{2n}}\frac{|u(x)-u(y)|^{2}}{|x-y|^{n+2s}}\, dx\,dy -\frac{1}{p+1}\int_{\mathbb{R}^n} |u(x)|^{p+1}\, dx.  \end{split}
\end{equation*}
We point out that, in light of Lemma~\ref{lemma:H^1=H^1,s} and the classical Sobolev embedding, the functional~$\mathscr{F}$ is finite for all~$ u\in H^{1}(\mathbb{R}^n) $.

Furthermore, we have that~$ \mathscr{F} \in C^{1}(H^{1}(\mathbb{R}^n)) $ and,
for all~$ \varphi \in H^{1}(\mathbb{R}^n) $,
\begin{eqnarray*}&&
		\left\langle \nabla\mathscr{F}(u),\varphi \right\rangle \\&=&
		\langle u,\varphi\rangle_s-\int_{\mathbb{R}^n}| u(x)|^{p-1} u(x)\varphi(x)\, dx\\&=&
		\int_{\mathbb{R}^n} \Big(u(x) \varphi(x)+ \nabla u(x)\cdot\nabla \varphi(x)\Big)\, dx  + \iint_{\mathbb{R}^{2n}}\frac{(u(x)-u(y))(\varphi(x)-\varphi(y))}{|x-y|^{n+2s}}\, dx\,dy-\int_{\mathbb{R}^n}| u(x)|^{p-1} u(x)\varphi(x)\, dx.
\end{eqnarray*} 
As a consequence, being a weak solution of~\eqref{Maineq} according to Definition~\ref{defweaksol123} is equivalent to being a critical point of the functional~$\mathscr{F}$.

\medskip

Now we introduce an auxiliary problem.
For this, we use the notation~$ u^+:=\max\left\{u,0\right\} $ and~$ u^-:=\max\left\{-u,0\right\} $, so that~$u=u^+-u^-$, and we consider the following problem 
\begin{equation}\label{auxproblem123}
	\left\{
	\begin{array}{ll}
			-\Delta u+(-\Delta )^su+u=( u^+)^p \quad \text{in } \mathbb{R}^n \\
		u\geqslant0 \quad \hbox{in $\mathbb{R}^n$,}\\
		\lim\limits_{|x|\to+\infty}u(x)=0.\\
	\end{array}
	\right.
\end{equation}

The corresponding functional~$\mathscr{F}^+:H^{1}(\mathbb{R}^n)\to\R$ is given by
\begin{equation}\label{fgeiwrtfdnwjg0987654iuytre}\begin{split}
	\mathscr{F}^+(u):=\;&
	\frac12\|u\|^2_s -\frac{1}{p+1}\int_{\mathbb{R}^n}( u^+)^{p+1}(x)\, dx\\
=\;&	
	\frac{1}{2}\int_{\mathbb{R}^n}\Big( u^2(x)+|\nabla u(x)|^{2 }\Big)\,dx+\frac{1}{2}\iint_{\mathbb{R}^{2n}}\frac{|u(x)-u(y)|^{2}}{|x-y|^{n+2s}}\, dx\,dy -\frac{1}{p+1}\int_{\mathbb{R}^n}( u^+)^{p+1}(x)\, dx.\end{split}
\end{equation} 
We point out that~$ \mathscr{F}^+ \in C^{1}(H^{1}(\mathbb{R}^n)) $ and,
for all~$ \varphi \in H^{1}(\mathbb{R}^n) $,
\begin{equation*}\begin{split}
&		\left\langle \nabla\mathscr{F}^+(u),\varphi \right\rangle\\
=\;&\langle u,\varphi\rangle_s
-\int_{\mathbb{R}^n}( u^+)^p(x)\varphi(x)\, dx
\\ =\;&\int_{\mathbb{R}^n} \Big(u(x) \varphi(x)+ \nabla u(x)\cdot\nabla \varphi(x)\Big)\, dx  + \iint_{\mathbb{R}^{2n}}\frac{(u(x)-u(y))(\varphi(x)-\varphi(y))}{|x-y|^{n+2s}}\, dx\,dy-\int_{\mathbb{R}^n}( u^+)^p(x)\varphi(x)\, dx.\end{split}
\end{equation*} 
Accordingly, being a weak solution of~\eqref{auxproblem123} is equivalent
to being a critical point of~$\mathscr{F}^+(u)$.
\medskip

The role of this auxiliary functional is made explicit by the following observation:

\begin{Lemma}\label{lemma:pivotal}
Let~$u$ be a critical point for~$ \mathscr{F}^+$. 

Then,
\begin{equation}\label{pippoclaim1}
{\mbox{$u\geqslant0$ a.e. in~$\mathbb{R}^n$.}}\end{equation}
Moreover, \begin{equation}\label{pippoclaim2}
{\mbox{$u$ is a critical point for~$ \mathscr{F}$.}}\end{equation}
\end{Lemma}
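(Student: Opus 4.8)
The plan is to test the equation for $\mathscr{F}^+$ against the negative part of $u$ and then observe that on the set where $u\geqslant0$ the two functionals have the same Euler--Lagrange equation.

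\textbf{Step 1: proving \eqref{pippoclaim1}.} Since $u\in H^1(\mathbb{R}^n)$, we have $u^-\in H^1(\mathbb{R}^n)$ as well, so $\varphi:=-u^-=\min\{u,0\}$ is an admissible test function in $\langle\nabla\mathscr{F}^+(u),\varphi\rangle=0$. I would compute each term of $\langle u,\varphi\rangle_s$ with this choice. For the local part, $\int u\varphi = -\int (u^-)^2$ and $\int\nabla u\cdot\nabla\varphi = -\int|\nabla u^-|^2$, using that $\nabla u = \nabla u^+ -\nabla u^-$ and $\nabla u^+\cdot\nabla u^- = 0$ a.e. For the nonlocal part the key pointwise inequality is
$$\big(u(x)-u(y)\big)\big(u^-(x)-u^-(y)\big)\leqslant -\big(u^-(x)-u^-(y)\big)^2,$$
which can be checked by splitting into the cases according to the signs of $u(x)$ and $u(y)$ (this is the standard fact that $t\mapsto t^-$ is a $1$-Lipschitz contraction with the right monotonicity); integrating against the positive kernel $|x-y|^{-n-2s}$ gives $\iint \frac{(u(x)-u(y))(\varphi(x)-\varphi(y))}{|x-y|^{n+2s}}\,dx\,dy \leqslant -[u^-]_{H^s(\mathbb{R}^n)}^2$. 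Finally, on the right-hand side, $(u^+)^p\varphi = -(u^+)^p u^- = 0$ pointwise since $u^+$ and $u^-$ have disjoint supports. Putting this together, $0=\langle\nabla\mathscr{F}^+(u),-u^-\rangle \leqslant -\|u^-\|_{L^2}^2 -\|\nabla u^-\|_{L^2}^2 - [u^-]_{H^s(\mathbb{R}^n)}^2 = -\|u^-\|_s^2$, forcing $u^-=0$, i.e.\ $u\geqslant0$ a.e.

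\textbf{Step 2: proving \eqref{pippoclaim2}.} Once we know $u\geqslant0$ a.e., we have $u^+=u$ a.e., hence $(u^+)^{p+1}=|u|^{p+1}$ and $(u^+)^p = |u|^{p-1}u$ as $L^1_{\mathrm{loc}}$ functions. Comparing the formulas for $\mathscr{F}^+$ and $\mathscr{F}$ in the excerpt, the nonlinear terms coincide, so $\mathscr{F}^+(u)=\mathscr{F}(u)$; more importantly, for every $\varphi\in H^1(\mathbb{R}^n)$,
$$\langle\nabla\mathscr{F}(u),\varphi\rangle = \langle u,\varphi\rangle_s - \int_{\mathbb{R}^n}|u|^{p-1}u\,\varphi\,dx = \langle u,\varphi\rangle_s - \int_{\mathbb{R}^n}(u^+)^p\varphi\,dx = \langle\nabla\mathscr{F}^+(u),\varphi\rangle = 0.$$
Hence $u$ is a critical point of $\mathscr{F}$, which is \eqref{pippoclaim2}.

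\textbf{Main obstacle.} The only genuinely non-routine point is the pointwise inequality for the nonlocal bilinear form, i.e.\ justifying that testing the fractional term against $-u^-$ produces something bounded above by $-[u^-]_{H^s(\mathbb{R}^n)}^2$; everything else is bookkeeping with the chain rule in $H^1$ and the disjointness of the supports of $u^+$ and $u^-$. I would handle that inequality by a short case analysis on the four sign combinations of $(u(x),u(y))$, noting that in each case $(a-b)(a^--b^-)+(a^--b^-)^2\leqslant0$. One should also record that $u^+,u^-\in H^1(\mathbb{R}^n)$ with $[u^\pm]_{H^s}<\infty$, which follows from Lemma~\ref{lemma:H^1=H^1,s} applied to $u^\pm$.
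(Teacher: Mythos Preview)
Your approach is exactly the paper's: test the Euler--Lagrange equation against the negative part, use that $(u^+)^p u^-=0$, and control the nonlocal cross term to conclude $\|u^-\|_s=0$; then observe that $u\geqslant0$ makes the two functionals coincide. One slip: with $\varphi=-u^-$ all three pieces of $\langle u,\varphi\rangle_s$ come out with the \emph{opposite} sign to what you wrote (e.g.\ $\int u\varphi=+\int(u^-)^2$), so the final chain should read $0=\langle\nabla\mathscr F^+(u),-u^-\rangle\geqslant\|u^-\|_s^2$, which still forces $u^-=0$; equivalently, test directly with $u^-$ as the paper does and obtain $0=\langle\nabla\mathscr F^+(u),u^-\rangle\leqslant-\|u^-\|_s^2$.
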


\begin{proof}
We observe that~$u^-\in H^1(\R^n)$ and therefore
\begin{eqnarray*}
	0&=&\left\langle \nabla\mathscr{F}^{+}(u),u^{-}  
	\right\rangle \\
	&=& \int_{\mathbb{R}^n} \Big(u(x) u^-(x)+ \nabla u(x)\cdot\nabla u^-(x)\Big)\, dx  + \iint_{\mathbb{R}^{2n}}\frac{(u(x)-u(y))(u^-(x)-u^-(y))}{|x-y|^{n+2s}}\, dx\,dy\\&&\qquad -\int_{\mathbb{R}^n}( u^+)^p(x)u^-(x)\, dx\\	
	&=& -\int_{\mathbb{R}^n} \Big((u^-)^2(x)+ |\nabla u^-(x)|^2\Big)\, dx  - \iint_{\mathbb{R}^{2n}}\frac{(u^-(x)-u^-(y))^2}{|x-y|^{n+2s}}\, dx\,dy
	\\&&\qquad- \iint_{\mathbb{R}^{2n}}\frac{u^+(x)u^-(y)+u^-(x)u^+(y)}{|x-y|^{n+2s}}\, dx\,dy\\
	&=&-\|u^-\|^2_s
-\iint_{\mathbb{R}^{2n}}\frac{u^+(x)u^-(y)+u^+(y)u^-(x)}{|x-y|^{n+2s}} \, dx\,dy\le- \|u^{-}\|_s^{2}.
\end{eqnarray*}	 
This implies that~$u^-=0$ a.e. in~$\R^n$, thus establishing~\eqref{pippoclaim1}.

As a consequence of~\eqref{pippoclaim1}, recalling also the definitions of the functionals~$\mathscr{F}$ and~$\mathscr{F}^+$,
we obtain~\eqref{pippoclaim2}.
\end{proof}

In view of Lemma~\ref{lemma:pivotal}, we will now focus on critical points for the functional~$\mathscr{F}^+.$ Let
\[  \Gamma:=\left\{\gamma\in C([0,1],H^{1}(\mathbb{R}^n)) \;{\mbox{ s.t. }}\; \gamma(0)=0\; {\mbox{ and }}\; \mathscr{F}^+(\gamma(1))<0\right\} \]
and define
\begin{equation}\label{fhueowt6943gvksholierghrlif09876}
	c:=\inf\limits_{\gamma\in\Gamma}\sup\limits_{t\in[0,1]}\mathscr{F}^+(\gamma(t)).
\end{equation}

We make the following observation:

\begin{Lemma}
We have that~$\Gamma\not=\varnothing$ and
\begin{equation}\label{fhueowt6943gvksholierghrlif09876BIS}c>0.\end{equation} \end{Lemma}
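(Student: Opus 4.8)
The plan is to verify the standard Mountain Pass geometry for the functional $\mathscr{F}^+$ defined in~\eqref{fgeiwrtfdnwjg0987654iuytre}. There are two things to show: that $\Gamma$ is nonempty, and that the min-max level $c$ is strictly positive.

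\emph{Nonemptiness of $\Gamma$.} First I would fix any nontrivial nonnegative function $\varphi_0\in H^1(\mathbb{R}^n)$, for instance a smooth compactly supported bump with $\varphi_0\geqslant0$ and $\varphi_0\not\equiv0$. For $t>0$, using that $(t\varphi_0)^+=t\varphi_0$, one computes
\begin{equation*}
\mathscr{F}^+(t\varphi_0)=\frac{t^2}{2}\|\varphi_0\|_s^2-\frac{t^{p+1}}{p+1}\int_{\mathbb{R}^n}\varphi_0^{p+1}(x)\,dx.
\end{equation*}
Since $p+1>2$ and $\int\varphi_0^{p+1}>0$, the negative term dominates as $t\to+\infty$, so $\mathscr{F}^+(t\varphi_0)\to-\infty$; in particular there is $T>0$ with $\mathscr{F}^+(T\varphi_0)<0$. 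Then the path $\gamma(t):=tT\varphi_0$, $t\in[0,1]$, is continuous from $[0,1]$ into $H^1(\mathbb{R}^n)$, satisfies $\gamma(0)=0$ and $\mathscr{F}^+(\gamma(1))<0$, hence $\gamma\in\Gamma$ and $\Gamma\neq\varnothing$.

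\emph{Positivity of $c$.} The key step is to show that $\mathscr{F}^+$ has a strict local minimum structure at the origin: there exist $\rho>0$ and $\beta>0$ such that $\mathscr{F}^+(u)\geqslant\beta$ whenever $\|u\|_s=\rho$. To see this, I would bound the nonlinear term using the Sobolev embedding $H^1(\mathbb{R}^n)\hookrightarrow L^{p+1}(\mathbb{R}^n)$ — valid since $p+1\in(2,2^*]$ under the restriction~\eqref{duiwegfufvckuqfhoq496712123hew}, and recalling that $\|\cdot\|_s$ is equivalent to $\|\cdot\|_{H^1(\mathbb{R}^n)}$ by Lemma~\ref{lemma:H^1=H^1,s} — to obtain
\begin{equation*}
\int_{\mathbb{R}^n}(u^+)^{p+1}(x)\,dx\leqslant\int_{\mathbb{R}^n}|u(x)|^{p+1}\,dx\leqslant C_S\|u\|_s^{p+1},
\end{equation*}
so that $\mathscr{F}^+(u)\geqslant\frac12\|u\|_s^2-\frac{C_S}{p+1}\|u\|_s^{p+1}=\|u\|_s^2\bigl(\frac12-\frac{C_S}{p+1}\|u\|_s^{p-1}\bigr)$. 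Choosing $\rho>0$ small enough that $\frac{C_S}{p+1}\rho^{p-1}\leqslant\frac14$ gives $\mathscr{F}^+(u)\geqslant\frac14\rho^2=:\beta>0$ on the sphere $\|u\|_s=\rho$. Now for an arbitrary $\gamma\in\Gamma$, since $\|\gamma(0)\|_s=0<\rho$ while $\mathscr{F}^+(\gamma(1))<0\leqslant\beta$ forces $\|\gamma(1)\|_s\neq\rho$, and in fact $\|\gamma(1)\|_s>\rho$ would be needed — more robustly, by continuity of $t\mapsto\|\gamma(t)\|_s$ and the intermediate value theorem there exists $t_\gamma\in(0,1]$ with $\|\gamma(t_\gamma)\|_s=\rho$, whence $\sup_{t\in[0,1]}\mathscr{F}^+(\gamma(t))\geqslant\mathscr{F}^+(\gamma(t_\gamma))\geqslant\beta$. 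Taking the infimum over $\gamma\in\Gamma$ yields $c\geqslant\beta>0$, which is~\eqref{fhueowt6943gvksholierghrlif09876BIS}.

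\emph{Main obstacle.} The only genuinely delicate point is making sure the intermediate-value argument is airtight: one needs $\mathscr{F}^+(\gamma(1))<0$ together with the fact that on the small sphere $\mathscr{F}^+\geqslant\beta>0$ to guarantee that the endpoint $\gamma(1)$ lies strictly outside the ball $\{\|u\|_s<\rho\}$ (it cannot lie on the sphere since $\mathscr{F}^+<0$ there, and it cannot lie inside the closed ball while having negative energy is compatible, so one argues that the continuous path starting at $0$ and reaching a point of negative energy must cross the sphere where $\mathscr{F}^+$ is positive). Everything else is a routine application of the Sobolev inequality and the equivalence of norms from Lemma~\ref{lemma:H^1=H^1,s}.
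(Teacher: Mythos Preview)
Your proof is correct and follows essentially the same approach as the paper: both use the Sobolev embedding to establish the lower bound $\mathscr{F}^+(u)\geqslant\|u\|_s^2\bigl(\tfrac12-\tfrac{C}{p+1}\|u\|_s^{p-1}\bigr)$ near the origin, and both use unboundedness from below along a ray to produce an admissible path. You are more explicit than the paper in constructing a concrete path and in spelling out the intermediate-value argument (which the paper leaves implicit), though your ``Main obstacle'' paragraph is somewhat garbled in wording---the clean statement is simply that $\mathscr{F}^+(u)\geqslant0$ on the whole closed ball $\{\|u\|_s\leqslant\rho\}$, so $\mathscr{F}^+(\gamma(1))<0$ forces $\|\gamma(1)\|_s>\rho$.
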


\begin{proof}
Since~$p>1$, we have that~$\mathscr{F^+}$ is unbounded from below and therefore~$\Gamma\not=\varnothing$. 	

Moreover, given~$u\in H^1(\mathbb{R}^n)$ with~$u\not\equiv 0$, by the Sobolev inequality one has that
	\begin{equation*}
			\mathscr{F^+}(u)
			=\frac{1}{2}\|u\|^2_{s}-\frac{1}{p+1}\|u\|_{L^{p+1}({\mathbb{R}}^n)}^{p+1}
			\geqslant \|u\|^2_{s}\left(\frac{1}{2}-\frac{\widetilde c}{p+1}\|u\|_{s}^{p-1}\right),
	\end{equation*}
	where~$\widetilde c>0$ depends on~$n$ and~$p$.
	
Hence, if~$\|u\|_s$ is sufficiently small, say~$\|u\|_s\in\left(0,\left(\frac{p+1}{4\widetilde c}\right)^{\frac{1}{p-1}}\right)$,
we deduce that
$$\mathscr{F^+}(u)\geqslant \frac{1}{4}\|u\|_s^2>0,$$
which gives~\eqref{fhueowt6943gvksholierghrlif09876BIS}.
\end{proof}

We remark that~$0$ is a local minimum of~$\mathscr{F}^+$ rather than a global minimum. Thus the functional~$\mathscr{F}^+$ fulfills the geometric hypothesis of the Mountain Pass Theorem, but
the Palais-Smale condition is not necessarily satisfied.
To overcome this issue, we will make use of the following result, established in~\cite{MR1181725},
which allows us to prove that~$\mathscr{F}^+$ (and therefore~$\mathscr{F}$)
has a positive critical value, thus entailing Theorem~\ref{th existence}.

\begin{Lemma}\label{compact}(\cite[Lemma~2.18]{MR1181725})
	Let~$n\geqslant2$. Assume that~$\left\{u_k\right\}$ is bounded in~$H^{1}(\mathbb{R}^n)$ and that there exists~$R > 0$ such
	that
	\begin{equation*}
		\liminf_{k\to+\infty}\sup\limits_{y\in\mathbb{R}^n}\int_{B_{R}(y)}|u_k(x)|^2\, dx=0.
	\end{equation*}

Then, $u_k\to 0$ in~$L^q(\mathbb{R}^n)$ for all~$q\in(2,2^*)$.
\end{Lemma}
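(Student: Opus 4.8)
The plan is to deduce the statement from a single interpolation inequality valid on all of~$\mathbb{R}^n$. Concretely, I would first show that there exist constants~$C>0$ and~$\beta>0$, depending only on~$n$, $q$ and~$R$, such that for every~$u\in H^1(\mathbb{R}^n)$ one has
\[ \|u\|_{L^q(\mathbb{R}^n)}^q\leqslant C\left(\sup_{y\in\mathbb{R}^n}\int_{B_R(y)}|u(x)|^2\,dx\right)^{\beta}\|u\|_{H^1(\mathbb{R}^n)}^{q-2\beta}. \]
Once this inequality is available the conclusion follows at once: applying it to~$u=u_k$ and using that~$\sup_k\|u_k\|_{H^1(\mathbb{R}^n)}<+\infty$ together with~$\beta>0$, the right-hand side tends to~$0$ along the subsequence realising the~$\liminf$ in the hypothesis, so that~$u_k\to0$ in~$L^q(\mathbb{R}^n)$.

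To prove the displayed inequality I would cover~$\mathbb{R}^n$ by a countable family of balls~$\{B_R(y_i)\}_{i\in\mathbb{N}}$ with centres on a lattice of mesh comparable to~$R$, so that~$\mathbb{R}^n=\bigcup_i B_R(y_i)$ and each point of~$\mathbb{R}^n$ belongs to at most~$N=N(n)$ of these balls. Then I would fix an auxiliary exponent~$\rho\in(q,2^*)$ (a nonempty range since~$q<2^*$, where~$2^*$ denotes the critical Sobolev exponent, understood as~$+\infty$ when~$n=2$) and let~$a\in(0,1)$ be defined by~$\frac1q=\frac{1-a}{2}+\frac{a}{\rho}$; taking~$\rho$ close enough to~$q$ one may in addition arrange that~$aq\geqslant2$, since~$aq\to q>2$ as~$\rho\to q^+$. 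On each ball, H\"older's inequality gives~$\|u\|_{L^q(B_R(y_i))}\leqslant\|u\|_{L^2(B_R(y_i))}^{1-a}\|u\|_{L^\rho(B_R(y_i))}^{a}$, and the Sobolev embedding~$H^1(B_R(y_i))\hookrightarrow L^\rho(B_R(y_i))$, whose constant is independent of~$i$ by translation invariance, gives~$\|u\|_{L^\rho(B_R(y_i))}\leqslant C\|u\|_{H^1(B_R(y_i))}$. Raising this to the power~$q$, estimating~$\|u\|_{L^2(B_R(y_i))}^{(1-a)q}\leqslant\big(\sup_{y}\int_{B_R(y)}|u|^2\big)^{(1-a)q/2}$ and~$\|u\|_{H^1(B_R(y_i))}^{aq}\leqslant\|u\|_{H^1(\mathbb{R}^n)}^{aq-2}\|u\|_{H^1(B_R(y_i))}^{2}$ (here the choice~$aq\geqslant2$ is used), and finally summing over~$i$ while invoking the bounded overlap of the covering (so that~$\sum_i\|u\|_{H^1(B_R(y_i))}^2\leqslant C\|u\|_{H^1(\mathbb{R}^n)}^2$ and~$\int_{\mathbb{R}^n}|u|^q\leqslant\sum_i\int_{B_R(y_i)}|u|^q$), one obtains the claimed estimate with~$\beta=(1-a)q/2>0$.

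The one place where genuine care is needed is the bookkeeping of exponents in this last step: the auxiliary exponent~$\rho$ must be chosen so that, at the same time, the exponent of~$\sup_{y}\int_{B_R(y)}|u|^2$ stays strictly positive and the exponent of the local norm~$\|u\|_{H^1(B_R(y_i))}$ is at least~$2$ --- it is precisely the latter that permits the sum over the covering balls to be controlled using only the uniform bound on~$\|u\|_{H^1(\mathbb{R}^n)}$ and the finite multiplicity of the covering. The remaining ingredients --- the H\"older interpolation inequality, the Sobolev inequality on a ball, and the existence of a lattice covering with bounded overlap --- are all standard.
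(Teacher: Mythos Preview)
The paper does not supply its own proof of this lemma; it simply quotes the result from~\cite[Lemma~2.18]{MR1181725} and uses it as a black box. Your argument is correct and is in fact the standard proof of this Lions-type vanishing lemma: the lattice covering with bounded overlap, the local H\"older interpolation between~$L^2$ and~$L^\rho$, the local Sobolev embedding, and the careful choice of~$\rho$ so that~$aq\geqslant2$ (allowing the sum~$\sum_i\|u\|_{H^1(B_R(y_i))}^2$ to close) are exactly the ingredients one finds in the cited reference and in Lions' original work. Your observation that the hypothesis only gives convergence to zero along the subsequence realising the~$\liminf$ is also correct; this is all that is needed in the paper's application (the contradiction argument in the proof of Theorem~\ref{th existence}).
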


 \subsection{Existence of weak solutions}\label{sec:fguwewiorg}

With the preliminary work done so far, we can now complete the proof of Theorem~\ref{th existence}.

\begin{proof}[Proof of Theorem~\ref{th existence}]
	{We recall the definition of the auxiliary functional~$\mathscr{F}^+$
	in~\eqref{fgeiwrtfdnwjg0987654iuytre}.
	In light of Lemma~\ref{lemma:pivotal}, in order to find a nonnegative weak solution of~\eqref{Maineq}, we focus on finding a nontrivial critical point
	for the auxiliary functional~$\mathscr{F}^+$. 

We claim that
\begin{equation}\label{gdttrryclaim1}
{\mbox{the functional~$\mathscr{F}^+$ has a nontrivial critical point.}}\end{equation}
To prove this, we use the Ekeland's variational principle (see e.g.~\cite{MR982267}) and obtain that
there exists a sequence~$u_k$ such that, as~$k\to+\infty$,
	\begin{equation}\label{ekeland}
		\mathscr{F}^+(u_k)\to c \qquad\text{and}\qquad  \nabla\mathscr{F^+}(u_k)\to 0,
	\end{equation}
	where~$c$ is given in~\eqref{fhueowt6943gvksholierghrlif09876}.
	
We notice that, from~\eqref{ekeland}, for~$k$ large enough,
\begin{equation}\label{kjhgfdqwertyu2345678}
		c+1
		\geqslant\mathscr{F}^+(u_k)-\frac{1}{p+1}\left\langle \nabla\mathscr{F}^+(u_k),u_k\right\rangle
		=\left(\frac{1}{2}-\frac{1}{p+1}\right)\|u_k\|^2_{s}.
	\end{equation}
It follows that~$\left\{u_k\right\}$ is bounded in~$H^1(\mathbb{R}^n)$,
and therefore, up to a subsequence, $\left\{u_k\right\}$
converges to some~$u\in H^1(\mathbb{R}^n)$
weakly in~$H^1(\mathbb{R}^n)$ and strongly in~$L^q_{loc}(\mathbb{R}^n)$
for all~$q\in(1,2^*)$.

As a result, for any~$\varphi\in C^\infty_0(\mathbb{R}^n)$,
\begin{equation}\label{alsoter254854896}
	\begin{split}&0=\lim_{k\to+\infty}
	\left\langle\nabla\mathscr{F}^+(u_k),\varphi\right\rangle=\lim_{k\to+\infty}\left(
	\left\langle u_k,\varphi \right\rangle_{s}-\int_{\mathbb{R}^n}(u_k^+)^p\varphi\, dx\right)\\
	&\qquad=\left\langle u,\varphi \right\rangle_{s}-\int_{\mathbb{R}^n}(u^+)^p\varphi\, dx= \left\langle\nabla\mathscr{F}^+(u),\varphi\right\rangle,
\end{split}
\end{equation}
which entails that~$\nabla\mathscr{F}^+(u)=0$.

Now, for every~$R>0$, we set
$$ \alpha:=\liminf_{k\to+\infty} \sup_{y\in\R^n}\int_{B_R(y)}|u_k(x)|^2\,dx$$
and we claim that
\begin{equation}\label{uneq00}
\alpha\in(0,+\infty).
\end{equation}		
Indeed, clearly~$\alpha\in[0,+\infty]$. Moreover, since~$u_k$ converges strongly 
in~$L^2_{loc}(\mathbb{R}^n)$ to some~$u\in H^1(\R^n)$, we have that~$\alpha\in[0,+\infty)$.

Hence, it remains to show that~$\alpha>0$. For this, we argue by contradiction and suppose
that~$\alpha=0$. Then,  
we deduce from Lemma~\ref{compact} that~$u_k\to 0~$ in~$ L^{p+1}(\mathbb{R}^n)$. As a consequence of this, and recalling~\eqref{ekeland}, we have that
	\begin{equation*}
		\begin{split}
			&c=
			\lim_{k\to+\infty}\left(\mathscr{F}^+(u_k)-\frac{1}{2}\left\langle\nabla\mathscr{F}^+(u_k),u_k\right\rangle\right)
			=\left(\frac{1}{2}-\frac{1}{p+1}\right)\lim_{k\to+\infty}
			\int_{\mathbb{R}^n}(u_k^+)^{p+1}(x)\,dx\\
			&\qquad=\left(\frac{1}{2}-\frac{1}{p+1}\right)\lim_{k\to+\infty}\|u_k^+\|^{p+1}_{L^{p+1}(\mathbb{R}^n)}=0,
			\end{split}
	\end{equation*}
	in contradiction with~\eqref{fhueowt6943gvksholierghrlif09876BIS}.
	This establishes~\eqref{uneq00}.
	
It now follows from~\eqref{uneq00} that
we have that there exists a sequence~$y_k\in\mathbb{R}^n$ such that, for all~$k\in\N$ (and possibly
taking a subsequence of~$u_k$),
	\begin{equation}\label{uneq0}
		\int_{B_{R}(y_k)}|u_k(x)|^2\, dx>\frac{\alpha}2.
	\end{equation}

In light of~\eqref{uneq0}, we define
\begin{equation}\label{kjhgfdmnbvcxz09876543}
\bar{u}_k(x):=u_k(x+y_k)\end{equation}
and we see that~$\|\bar{u}_k\|_{s}=\|{u}_k\|_{s}$. Therefore,
 \begin{equation*}
 \mathscr{F^+}(\bar{u}_k)=\frac{1}{2}\|u_k\|_{s}^2 -\frac{1}{p+1}\int_{\mathbb{R}^n}( u_k^+)^{p+1}(x)\, dx= \mathscr{F^+}({u}_k)
 \end{equation*}
and, for all~$\varphi\in H^1(\R^n)$,
\begin{eqnarray*}
&&\left\langle  \nabla\mathscr{F}^+(\bar{u}_k),\varphi \right\rangle=
\left\langle \bar{u}_k,\varphi \right\rangle_s-\int_{\R^n} (\bar{u}_k^+)^{p}(x)\varphi(x)\,dx\\
&&\qquad=\left\langle {u}_k,\varphi(\cdot-y_k) \right\rangle_s-\int_{\R^n} ({u}_k^+)^{p}(x)\varphi(x-y_k)\,dx=\left\langle  \nabla\mathscr{F}^+({u}_k),\varphi(\cdot-y_k) \right\rangle,
\end{eqnarray*}
and thus, in view of~\eqref{ekeland},
we have that, as~$k\to+\infty$,
\begin{equation}\label{kjhgfdmnbvcxz0987654300}
\mathscr{F}^+(\bar{u}_k)\to c\qquad{\mbox{and}}\qquad \nabla\mathscr{F^+}(\bar{u}_k)\to 0.\end{equation}

Now, using~\eqref{kjhgfdqwertyu2345678} with~$u_k$ replaced by~$\bar{u}_k$, we conclude that
there exists~$\bar{u}\in H^1(\mathbb{R}^n)$ such that
\begin{equation}\label{kjhgfdmnbvcxz098765430}
{\mbox{$\bar{u}_k$ converges to~$\bar{u}$
weakly in~$ H^1(\mathbb{R}^n)$ and strongly
in~$ L^{q}_{loc}(\mathbb{R}^n)$ for all~$q\in(1,2^*)$.}}\end{equation}

Moreover, by~\eqref{alsoter254854896} with~$u_k$ replaced by~$\bar{u}_k$,
we deduce that~$\nabla \mathscr{F}^+(\bar{u})=0 $, and hence~$\bar{u}$
is a critical point for~$ \mathscr{F}^+$. Accordingly, to complete the proof
of the claim in~\eqref{gdttrryclaim1}, it only remains to check that
\begin{equation}\label{gdttrryclaim10}
\bar{u}\not\equiv0.
\end{equation} 
To this end, we recall~\eqref{uneq0} and change variable~$z:=x-y_k$ to get that
\begin{eqnarray*}
\frac\alpha2<\int_{B_{R}(y_k)}|u_k(x)|^2\, dx=\int_{B_{R}(y_k)}|\bar{u}_k(x-y_k)|^2\, dx
=\int_{B_{R}}|\bar{u}_k(z)|^2\, dz
\end{eqnarray*}
Since~$ \bar{u}_k$ converges to~$\bar{u}$
strongly in~$L^2_{loc}(\mathbb{R}^n) $, this implies that
$$ \frac\alpha2\le \int_{B_{R}}|\bar{u}(z)|^2\, dz,$$
which gives~\eqref{gdttrryclaim10}, thus completing the proof of~\eqref{gdttrryclaim1}.}
\end{proof}

In particular, we can show that
the nontrivial critical point for~$\mathscr{F}^+$ found in
the proof of Theorem~\ref{th existence} is
at the critical level~$c$
	given by~\eqref{fhueowt6943gvksholierghrlif09876}:

\begin{Corollary}\label{aggdopomeglio}
Let~$\bar u$ be the nontrivial critical point for~$\mathscr{F}^+$,
as given by~\eqref{gdttrryclaim1}, and let~$c$ be as in~\eqref{fhueowt6943gvksholierghrlif09876}.

Then,
\begin{equation}\label{gdttrryclaim2}
\mathscr{F}^+(\bar{u})=c.
\end{equation}
\end{Corollary}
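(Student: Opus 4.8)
The plan is to establish the two standard inequalities $\mathscr{F}^+(\bar u)\le c$ and $\mathscr{F}^+(\bar u)\ge c$. The first one is the easy direction: since $\bar u$ is a critical point of $\mathscr{F}^+$, testing $\langle\nabla\mathscr{F}^+(\bar u),\bar u\rangle=0$ gives $\|\bar u\|_s^2=\int_{\R^n}(\bar u^+)^{p+1}\,dx$, hence $\mathscr{F}^+(\bar u)=\left(\tfrac12-\tfrac1{p+1}\right)\|\bar u\|_s^2$. On the other hand, the same Nehari-type identity lets one check that the curve $\gamma(t):=t\,T\bar u$ for suitably large $T$ lies in $\Gamma$ and that $\sup_{t\in[0,1]}\mathscr{F}^+(t\,T\bar u)$ is attained at the point where $t\,T\bar u$ sits on the Nehari manifold, which by a direct computation equals exactly $\left(\tfrac12-\tfrac1{p+1}\right)\|\bar u\|_s^2=\mathscr{F}^+(\bar u)$; since $c=\inf_{\Gamma}\sup$, this yields $c\le\mathscr{F}^+(\bar u)$.

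For the reverse inequality $\mathscr{F}^+(\bar u)\le c$, I would use weak lower semicontinuity together with the fact that $\mathscr{F}^+(\bar u)$ can be written using only the $\|\cdot\|_s$-norm. From the translated sequence $\bar u_k$ we have, by \eqref{kjhgfdmnbvcxz0987654300}, that $\mathscr{F}^+(\bar u_k)\to c$ and $\nabla\mathscr{F}^+(\bar u_k)\to 0$, so $\langle\nabla\mathscr{F}^+(\bar u_k),\bar u_k\rangle\to 0$, which combined with \eqref{kjhgfdqwertyu2345678} gives
\begin{equation*}
c=\lim_{k\to+\infty}\left(\mathscr{F}^+(\bar u_k)-\frac12\langle\nabla\mathscr{F}^+(\bar u_k),\bar u_k\rangle\right)=\left(\frac12-\frac1{p+1}\right)\lim_{k\to+\infty}\|\bar u_k^+\|^{p+1}_{L^{p+1}(\R^n)}.
\end{equation*}
Thus it suffices to show $\|\bar u^+\|^{p+1}_{L^{p+1}(\R^n)}\le\lim_{k}\|\bar u_k^+\|^{p+1}_{L^{p+1}(\R^n)}$, since $\mathscr{F}^+(\bar u)=\left(\tfrac12-\tfrac1{p+1}\right)\|\bar u^+\|^{p+1}_{L^{p+1}(\R^n)}$ by the Nehari identity for $\bar u$. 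The main obstacle is precisely this $L^{p+1}$ lower semicontinuity, because the convergence $\bar u_k\to\bar u$ is only weak in $H^1$ and strong in $L^q_{loc}$, not strong in $L^{p+1}(\R^n)$ globally.

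To overcome this I would invoke the Brezis–Lieb lemma (or, more elementarily, Fatou's lemma after passing to a subsequence converging a.e., which follows from the local strong convergence together with a diagonal argument): along a subsequence $\bar u_k\to\bar u$ a.e. in $\R^n$, hence $|\bar u_k^+|^{p+1}\to|\bar u^+|^{p+1}$ a.e., and Fatou gives $\int_{\R^n}(\bar u^+)^{p+1}\,dx\le\liminf_k\int_{\R^n}(\bar u_k^+)^{p+1}\,dx$, which is exactly what is needed. Putting the two inequalities together yields \eqref{gdttrryclaim2}. One small point to be careful about: the subsequence producing a.e. convergence must be compatible with the subsequence along which $\mathscr{F}^+(\bar u_k)\to c$; since the limit $c$ is the same along every subsequence, this causes no difficulty.
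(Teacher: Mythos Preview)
Your proof is correct and follows essentially the same strategy as the paper. For $c\le\mathscr{F}^+(\bar u)$ you both use the path $t\mapsto tT\bar u$ and the fact that the fibering map $t\mapsto\mathscr{F}^+(t\bar u)$ attains its maximum at $t=1$; for $\mathscr{F}^+(\bar u)\le c$ you both reduce to showing $\int_{\R^n}(\bar u^+)^{p+1}\le\liminf_k\int_{\R^n}(\bar u_k^+)^{p+1}$. The only (minor) difference is in how this last lower semicontinuity is obtained: the paper restricts to balls $B_r$, uses the strong $L^{p+1}_{loc}$ convergence to pass to the limit on each $B_r$, and then lets $r\to\infty$, whereas you extract an a.e.\ convergent subsequence and apply Fatou. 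These are interchangeable standard devices. One cosmetic remark: your opening paragraph mislabels which inequality is ``first'' and which is ``reverse''; the mathematics is unaffected, but you should straighten out the exposition.
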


\begin{proof}
{We first check that
\begin{equation}\label{10gdttrryclaim1}
\mathscr{F^+}(\bar{u})\geqslant c.\end{equation}
Indeed, we observe that
$$0=\left\langle\nabla \mathscr{F}^+(\bar{u}),\bar{u}\right\rangle
=\|\bar{u}\|^2_{s}-\|\bar{u}^+\|^{p+1}_{L^{p+1}(\mathbb{R}^n)} ,$$ 
and therefore, in light of~\eqref{gdttrryclaim10},
we deduce that~$\bar{u}^+\not\equiv 0$. 

Moreover, for every~$t>0$,
\begin{eqnarray*}
&&\mathscr{F^+}(t\bar{u})
=\frac{t^2}2\|\bar u\|^2_s -\frac{t^{p+1}}{p+1}\int_{\mathbb{R}^n}( \bar{u}^+)^{p+1}(x)\, dx,
\end{eqnarray*}
and thus, since~$p+1>2$, we have that~$\mathscr{F^+}(t\bar{u})\to-\infty $ as~$t\to+\infty$. Hence, there exists some~$T>0$ such that~$\mathscr{F^+}(T\bar{u})<0 $. 

As a consequence, we consider the path~$\gamma_{\bar{u}}(t):=tT\bar{u}$ for all~$t\in[0,1]$ and notice that~$\gamma_{\bar{u}}\in\Gamma$.  Thus,
 \begin{equation}\label{c<}
 	c\leqslant \sup_{t\in[0,1]}\mathscr{F^+}(\gamma_{\bar{u}}(t))=\sup_{t\in[0,1]}\mathscr{F^+}(tT\bar{u}).
 \end{equation}
In addition, let us define~$h_u(t):=\mathscr{F^+}(t\bar u)$ for all~$t>0$. In this way, we compute 
 \[ h_{\bar{u}}'(t)=\left\langle\nabla\mathscr{F^+}(t\bar{u}),\bar{u}\right\rangle= t\|\bar{u}\|^2_{s}-t^p\|\bar{u}^+\|^{p+1}_{L^{p+1}(\mathbb{R}^n)} \]
 and we see that~$h_{\bar{u}}$ has a unique maximum~$t_{\bar{u}}>0$. 
 
Thus, we have that~$\left\langle\nabla \mathscr{F^+}(t_{\bar{u}}\bar{u}),t_{\bar{u}}\bar{u}\right\rangle=0$, which implies that~$t_{\bar{u}}=1$. Accordingly, we have that~$ \mathscr{F^+}(\bar{u})\geqslant \mathscr{F^+}(t\bar{u})$ for all~$t>0$. By combining this information with~\eqref{c<}, we obtain~\eqref{10gdttrryclaim1}.
 
 We now check that
\begin{equation}\label{11gdttrryclaim1}
\mathscr{F^+}(\bar{u})\leqslant c.\end{equation}
For this, we recall the definition of the sequence~$\bar u_k$ in~\eqref{kjhgfdmnbvcxz09876543} and we observe that, for every~$r>0$,
\begin{equation*}
\mathscr{F}^+(\bar{u}_k)-\frac{1}{2}\left\langle\nabla\mathscr{F}^+(\bar{u}_k),\bar{u}_k\right\rangle=\left(\frac{1}{2}-\frac{1}{p+1}\right)\int_{\mathbb{R}^n}(\bar{u}_k^+)^{p+1}(x)\, dx\geqslant\left(\frac{1}{2}-\frac{1}{p+1}\right)\int_{B_{r}}(\bar{u}_k^+)^{p+1}(x)\, dx.
\end{equation*}
Taking the limit as~$k\to+\infty$ and recalling~\eqref{kjhgfdmnbvcxz0987654300} and~\eqref{kjhgfdmnbvcxz098765430}, we get that
\begin{equation*}
	c\geqslant \left(\frac{1}{2}-\frac{1}{p+1}\right)\int_{B_{r}}(\bar{u}^+)^{p+1}(x)\, dx.
\end{equation*}
Since this holds true for all~$r>0$, we conclude that
\begin{equation*}
	c\geqslant \left(\frac{1}{2}-\frac{1}{p+1}\right)\int_{\R^n}(\bar{u}^+)^{p+1}(x)\, dx
	=\mathscr{F}^+(\bar{u})-\frac{1}{2}\left\langle\nabla\mathscr{F}^+(\bar{u}),\bar{u}\right\rangle=\mathscr{F}^+(\bar{u}),
\end{equation*}
which gives~\eqref{11gdttrryclaim1}.

{F}rom~\eqref{10gdttrryclaim1} and~\eqref{11gdttrryclaim1}, we obtain~\eqref{gdttrryclaim2}, as desired.}
\end{proof}

\section{Regularity of weak solutions}\label{sec: C^alpha-regularity}

In this section we focus on the H\"{o}lder continuity, $C^{1,\alpha}$ and $C^{2,\alpha}$-regularity results
for weak solutions of~\eqref{Maineq} amd prove Theorems~\ref{Theorem :holder regularity}, \ref{th:regularity} and~\ref{th C^2,alpha interior without boundary condition}.
These regularity  estimates will be the basis for the qualitative analysis we carry out in the next section.

To this end, we start with some basic properties of the heat kernel~\eqref{DBSD-1} and the Bessel kernel~\eqref{definition of K} which are essential
for proving Theorems~\ref{Theorem :holder regularity} and~\ref{th:regularity}, as summarized by the following two results.

\begin{Theorem}[Properties of the heat kernel]\label{th properties of h}
	Let~$n\geqslant 1$ and~$s\in(0,1)$. Let~$\mathcal{ H}$ be as defined in~\eqref{DBSD-1}. 

	Then,
	\begin{itemize}
		\item $\mathcal{H}$ is nonnegative, radially symmetric and nonincreasing with respect to~$r=|x|$.
		\item There exist positive constants~$C_1$ and~$C_2$ such that
		\begin{eqnarray*}
&&			\mathcal{H}(x,t)\leqslant C_1 \left\{\frac{t}{|x|^{n+2s}}\vee \frac{t^s}{|x|^{n+2s}}\right\}\wedge \left\{ t^{-\frac{n}{2s}}\wedge  t^{-\frac{n}{2}}\right\}
	\\		\mbox{and } \quad &&		\mathcal{H}(x,t)\geqslant C_2\begin{cases}
				\frac{t}{|x|^{n+2s}} \qquad&\mbox{if }\; 1<t<|x|^{2s} ,\\
				e^{\frac{\pi|x|^2}{t}}t^{-\frac{n}{2}} \qquad&\mbox{if }\; |x|^2<t<|x|^{2s}<1,
			\end{cases}
		\end{eqnarray*}
		where~$a\wedge b:=\min\left\{a,b\right\}$ and~$a\vee b:=\max\left\{a,b\right\}$.
	\end{itemize}
\end{Theorem}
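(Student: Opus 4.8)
The plan is to exploit the factorisation $e^{-t(|\xi|^2+|\xi|^{2s})}=e^{-t|\xi|^2}\,e^{-t|\xi|^{2s}}$ in two complementary guises. First, it shows that $\mathcal H(\cdot,t)=\mathcal G_t*\mathcal P_t$, where $\mathcal G_t(x):=\int_{\R^n}e^{-t|\xi|^2+2\pi ix\cdot\xi}\,d\xi=(\pi/t)^{n/2}e^{-\pi^2|x|^2/t}$ is the Gaussian kernel and $\mathcal P_t(x):=\int_{\R^n}e^{-t|\xi|^{2s}+2\pi ix\cdot\xi}\,d\xi$ the $2s$-stable one. Secondly, by the subordination identity $e^{-t|\xi|^{2s}}=\int_0^{+\infty}e^{-u|\xi|^2}\,\sigma_t(u)\,du$, with $\sigma_t\ge0$ the density of the $s$-stable subordinator (so $\int_0^{+\infty}\sigma_t=1$, $\sigma_t(u)=t^{-1/s}\sigma_1(t^{-1/s}u)$, $\sigma_1$ decays faster than any power as $u\to0^+$, and $\sigma_1(u)\asymp u^{-1-s}$ as $u\to+\infty$), one obtains
\[\mathcal H(x,t)=\int_0^{+\infty}\mathcal G_{t+u}(x)\,\sigma_t(u)\,du .\]
All the elementary facts about $\mathcal G$, $\mathcal P$ and $\sigma$ used below — nonnegativity, radial monotonicity, unit mass, and the two-sided tail $\mathcal P_t(z)\asymp t\,|z|^{-n-2s}$ for $|z|\ge t^{1/(2s)}$ — are to be obtained by plain Fourier/Laplace analysis, which keeps the proof self-contained; in particular the stable-kernel bounds follow from the analogous representation $\mathcal P_t=\int_0^{+\infty}\mathcal G_u(\cdot)\,\sigma_t(u)\,du$ by the same scheme as for $\mathcal H$.

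The first bullet is then immediate from the subordination representation: since each $\mathcal G_\tau$ is nonnegative, radial and nonincreasing in $|x|$, and $\sigma_t\ge0$, the integral $\int_0^{+\infty}\mathcal G_{t+u}(x)\sigma_t(u)\,du$ shares these three properties (radial symmetry is also clear directly from \eqref{DBSD-1}, whose integrand depends on $\xi$ only through $|\xi|$). For the upper bound I would estimate the two competing quantities separately. The on-diagonal one comes from radial monotonicity together with
\[\mathcal H(x,t)\le\mathcal H(0,t)=\int_{\R^n}e^{-t(|\xi|^2+|\xi|^{2s})}\,d\xi\le\min\left\{\int_{\R^n}e^{-t|\xi|^2}\,d\xi,\ \int_{\R^n}e^{-t|\xi|^{2s}}\,d\xi\right\}=c\,\big(t^{-n/2}\wedge t^{-n/(2s)}\big),\]
obtained by dropping $|\xi|^{2s}\ge0$, respectively $|\xi|^2\ge0$, and rescaling. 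The off-diagonal one is most transparent from $\mathcal H(\cdot,t)=\mathcal G_t*\mathcal P_t$ — note the time parameter is here \emph{fixed}, not integrated, which is what makes the estimate clean: splitting $\R^n=\{|y|\le|x|/2\}\cup\{|y|>|x|/2\}$, on the first region $|x-y|\ge|x|/2$ gives $\mathcal P_t(x-y)\le Ct|x-y|^{-n-2s}\le c\,t|x|^{-n-2s}$, so by $\|\mathcal G_t\|_{L^1}=1$ that region contributes $\le c\,t|x|^{-n-2s}$; on the second region $\mathcal G_t(y)\le(\pi/t)^{n/2}e^{-\pi^2|x|^2/(4t)}\le c\,t^s|x|^{-n-2s}$ (bounding the exponential by $(t/|x|^2)^{(n+2s)/2}$), so by $\|\mathcal P_t\|_{L^1}=1$ that region contributes $\le c\,t^s|x|^{-n-2s}$; hence $\mathcal H(x,t)\le c\,(t+t^s)|x|^{-n-2s}\le C_1\,(t\vee t^s)|x|^{-n-2s}$.

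For the lower bounds the subordination formula is used quantitatively, by retaining only a well-chosen slice of the $u$-integral. When $|x|^2<t<|x|^{2s}<1$ I would keep $u\in(0,t)$: there $t+u\asymp t$, so $\mathcal G_{t+u}(x)\ge c\,t^{-n/2}e^{-\pi^2|x|^2/t}$, which is $\ge c'\,t^{-n/2}$ since $|x|^2<t$ and in particular yields the asserted Gaussian-type lower bound, while $\int_0^t\sigma_t(u)\,du$ equals, after rescaling, the $\sigma_1$-mass of $(0,t^{1-1/s})\supset(0,1)$ and is therefore bounded below by a positive constant. When $1<t<|x|^{2s}$ (so that also $t<|x|^{2s}<|x|^2$, hence $|x|>1$) I would instead keep $u\in(|x|^2,2|x|^2)$: there $t+u\asymp|x|^2$, so $\mathcal G_{t+u}(x)\ge c\,|x|^{-n}$, and since the hypothesis $t<|x|^{2s}$ is exactly $|x|^2>t^{1/s}$, this range of $u$ lies in the heavy-tail regime of $\sigma_t$, whence $\sigma_t(u)\ge c\,t\,u^{-1-s}\ge c\,t\,|x|^{-2-2s}$ there; integrating over an interval of length $\asymp|x|^2$ gives $\mathcal H(x,t)\ge c\,t\,|x|^{-n-2s}$.

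The genuine obstacle is not this bookkeeping but supplying, by self-contained Fourier/Laplace arguments, the uniform kernel estimates that feed the above — above all the two-sided tail asymptotics $\sigma_1(u)\asymp u^{-1-s}$ (equivalently $\mathcal P_t(z)\asymp t|z|^{-n-2s}$ for $|z|\gtrsim t^{1/(2s)}$), together with the super-polynomial decay of $\sigma_1$ near the origin. These are to be obtained by expanding $e^{-t|\xi|^{2s}}$ (respectively the Laplace transform $e^{-\lambda^s}$) about the origin and inverting term by term while controlling the remainder, and this step is markedly more delicate than in the homogeneous case precisely because the symbol $|\xi|^2+|\xi|^{2s}$ carries no scaling invariance: $|\xi|^{2s}$ dominates for $|\xi|\lesssim1$ and $|\xi|^2$ for $|\xi|\gtrsim1$, which is the source of the split form of the bounds ($t\vee t^s$, $t^{-n/2}\wedge t^{-n/(2s)}$) and forces the piecewise-in-frequency treatment referred to in the Introduction.
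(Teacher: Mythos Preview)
Your argument is correct, but the route is genuinely different from the paper's.

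\textbf{What the paper does.} The paper never factorises through the stable kernel or a subordinator. For the first bullet it constructs $\mathcal H(\cdot,t)$ as a distributional limit of iterated convolutions $f_k*g^a$ of explicit nonnegative, radial, nonincreasing $L^1$ functions, and appeals to the stability of these properties under convolution. For the bounds it introduces the two-parameter kernel $\mathcal H(x,t_1,t_2)=\mathcal F^{-1}\big(e^{-t_1|\xi|^{2s}-t_2|\xi|^2}\big)(x)$ with the rescalings $\mathcal H(x,t,t)=t^{-n/(2s)}\mathcal H(t^{-1/(2s)}x,1,t^{1-1/s})=t^{-n/2}\mathcal H(t^{-1/2}x,t^{1-s},1)$, and proves the key uniform-in-$\eta$ asymptotics
\[
\lim_{|x|\to\infty}|x|^{n+2s}\mathcal H(x,1,\eta)=\lim_{|x|\to\infty}\eta^{-1}|x|^{n+2s}\mathcal H(x,\eta,1)=2^{n+2s}\pi^{n/2-1}s\sin(\pi s)\,\Gamma\!\left(\tfrac n2+s\right)\Gamma(s)
\]
by writing the radial Fourier inversion in terms of Bessel functions, integrating by parts, replacing $J_{n/2}$ by $H^{(1)}_{n/2}$, and shifting the contour to $\arg z=\pi/6$ and then $\pi/2$ via the Residue Theorem. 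The lower bound in the regime $|x|^2<t<|x|^{2s}<1$ is obtained by a direct complex Gaussian calculation showing $\mathcal H(x,t)/(e^{-\pi|x|^2/t}t^{-n/2})\to \tfrac12\omega_n\Gamma(n/2)$.

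\textbf{What you do.} You use $\mathcal H(\cdot,t)=\mathcal G_t*\mathcal P_t=\int_0^\infty \mathcal G_{t+u}(\cdot)\,\sigma_t(u)\,du$ and read everything off from Gaussian estimates plus the stable/subordinator inputs $\mathcal P_t(z)\asymp t|z|^{-n-2s}$ and $\sigma_1(u)\asymp u^{-1-s}$ at infinity. This is the probabilistic route (in the spirit of Song--Vondra\v cek, which the paper cites), and it is cleaner and more modular once those inputs are in hand.

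\textbf{Trade-off.} Your scheme relocates the hard analysis rather than avoiding it: the two-sided tail $\sigma_1(u)\asymp u^{-1-s}$ (equivalently the stable-kernel tail) is itself a Fourier/Laplace inversion of comparable depth to the paper's contour computation, as you correctly flag. The paper's stated goal is precisely to give a self-contained Fourier-analytic proof that bypasses subordination; in return it obtains the explicit limiting constant and a ``piecewise'' machinery (uniform asymptotics in the auxiliary parameter $\eta$) that it reuses later for the Bessel kernel $\mathcal K$ and for the auxiliary kernel $\mathcal I$ in the $W^{2,p}$ theory. Your approach would need a separate argument, or a further subordination identity, to cover those companion kernels.
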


\begin{Theorem}[Properties of the Bessel kernel]\label{th properties of k}
	Let~$n\geqslant 1$ and~$s\in(0,1)$. Let~$\mathcal{ K}$ be as defined in~\eqref{definition of K}.
	
	Then,
	\begin{itemize}
		\item[(a)] $\mathcal{K}$ is positive, radially symmetric, smooth in~$\mathbb{R}^n\setminus\left\{0\right\}$ and nonincreasing  with respect to~$r=|x|$.
		\item[(b)] There exist positive constants~$C_3$ and~$C_4$ such that, if~$|x|\geqslant 1$,
		$$
			\frac{C_3}{|x|^{n+2s}}\leqslant\mathcal{K}(x)\leqslant \frac{1}{C_3|x|^{n+2s}}
		$$
	and, if~$|x|\leqslant 1$,
		\begin{equation*}
			\frac{C_4}{|x|^{n-2}}\leqslant\mathcal{K}(x)\leqslant\frac1{C_4}\begin{cases}
				{|x|^{2-n}} \qquad &\text{ if }{n\geqslant 3},\\
				1+|\ln|x|| &\text{ if } n=2,\\
				1&\text{ if } n=1.
			\end{cases} 
		\end{equation*}
		\item[(c)] There exists a positive constant~$C_5$ such that, if~$|x|\ge1$,
		$$
			|\nabla\mathcal{K}(x)|\leqslant \frac{C_5}{|x|^{n+2s+1}}\qquad \text{and} \qquad |D^2\mathcal{ K}(x)|\leqslant \frac{C_5}{|x|^{n+2s+2}}.
		$$
				\item[(d)] If~$n\geqslant 3$, then~$\mathcal{K}\in L^q(\mathbb{R}^n)$ for all~$q\in [1,\frac{n}{n-2})$. If~$n=1$, $2$, then~$\mathcal{K}\in L^q(\mathbb{R}^n)$ for all~$q\in[1,+\infty)$.
		\item[(e)] If~$q\in[1,+\infty)$ and~$f\in L^q(\mathbb{R}^n)$, then the function~$u:=\mathcal{K}\ast f$ is a solution of $$
		- \Delta u +  (-\Delta)^s u+u = f \quad \text{ in }  \mathbb{R}^n.$$
	\end{itemize}
\end{Theorem}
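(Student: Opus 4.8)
The overall plan is to read off (a)--(e) from the two representations of~$\mathcal K$ at our disposal: the heat semigroup formula~\eqref{definition of K} together with the bounds on~$\mathcal H$ in Theorem~\ref{th properties of h}, and the Fourier description which, by Fubini in~\eqref{definition of K} and~\eqref{DBSD-1}, identifies~$\mathcal K$ with the tempered distribution whose Fourier transform is the symbol~$m(\xi):=(1+|\xi|^2+|\xi|^{2s})^{-1}$. I would treat the parts in the order (a), (b), (c), (d), (e). For the qualitative statements in (a): nonnegativity, radial symmetry and monotonicity in~$r=|x|$ of~$\mathcal K$ are inherited immediately from the corresponding properties of~$\mathcal H(\cdot,t)$ (first bullet of Theorem~\ref{th properties of h}) upon integrating against the positive weight~$e^{-t}\,dt$, and strict positivity follows because for each fixed~$x\ne0$ the explicit lower bounds of Theorem~\ref{th properties of h} are strictly positive on a nonempty range of~$t$ (the knife-edge value~$|x|=1$ being recovered afterwards by continuity). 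For smoothness of~$\mathcal K$ on~$\mathbb R^n\setminus\{0\}$ I would use~$\widehat{\mathcal K}=m$: the symbol~$m$ is~$C^\infty$ away from the origin with~$|\partial^\beta m(\xi)|\le C_\beta(1+|\xi|)^{-2-|\beta|}$ for~$|\xi|\ge1$ and bounded near the origin, so differentiating~$e^{2\pi i x\cdot\xi}$ in~$x$ and integrating by parts in~$\xi$ controls~$|x|^{2N}D_x^\gamma\mathcal K(x)$ for all~$N$ and~$\gamma$; the only subtlety, the non-smoothness of~$|\xi|^{2s}$ at~$\xi=0$, is confined to a single point at which~$m$ is still continuous with sufficiently many derivatives and is handled by splitting the~$\xi$-integral into~$\{|\xi|\le1\}$ and~$\{|\xi|\ge1\}$.

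For (b), all four bounds come from inserting the bounds on~$\mathcal H$ of Theorem~\ref{th properties of h} into~\eqref{definition of K} and computing elementary one-dimensional integrals, splitting the~$t$-integration at the thresholds~$t=|x|^2$, $t=1$, $t=|x|^{2s}$ dictated by the dichotomies in those bounds. For the upper bound with~$|x|\ge1$ one uses~$\mathcal H\le C_1 t^s|x|^{-(n+2s)}$ on~$(0,1)$, $\mathcal H\le C_1 t|x|^{-(n+2s)}$ on~$(1,|x|^{2s})$ and~$\mathcal H\le C_1 t^{-n/2s}$ on~$(|x|^{2s},\infty)$; the first two integrals are~$O(|x|^{-(n+2s)})$ thanks to the weight~$e^{-t}$ and the last is exponentially small. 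For~$|x|\le1$ one uses~$\mathcal H\le C_1 t^s|x|^{-(n+2s)}$ on~$(0,|x|^2)$, $\mathcal H\le C_1 t^{-n/2}$ on~$(|x|^2,1)$ and~$\mathcal H\le C_1 t^{-n/2s}$ on~$(1,\infty)$, which reproduces exactly the~$|x|^{2-n}$ behaviour for~$n\ge3$, the~$1+|\ln|x||$ behaviour for~$n=2$ and boundedness for~$n=1$. For the lower bounds: when~$|x|$ is large, $\mathcal H\ge C_2 t|x|^{-(n+2s)}$ on~$(1,|x|^{2s})$, integrated over~$(1,2)$, gives~$\mathcal K(x)\ge c|x|^{-(n+2s)}$; when~$|x|$ is small, $\mathcal H\ge C_2 e^{\pi|x|^2/t}t^{-n/2}\ge C_2 t^{-n/2}$ on~$(|x|^2,|x|^{2s})$, integrated over that interval, gives the claimed bounds via~$\int_{|x|^2}^{|x|^{2s}}t^{-n/2}\,dt\asymp|x|^{2-n}$ for~$n\ge3$ and~$\asymp|\ln|x||$ for~$n=2$. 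This leaves only the compact annuli~$\{1\le|x|\le c_0\}$ and~$\{c_0\le|x|\le1\}$, on which~$\mathcal K$ is continuous and strictly positive by (a), hence bounded below by a positive constant, while there~$|x|^{-(n+2s)}$ and~$|x|^{2-n}$ are comparable to~$1$; so the estimates extend to those annuli after shrinking~$C_3,C_4$.

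The main obstacle is part (c): Theorem~\ref{th properties of h} carries no information on the~$x$-derivatives of~$\mathcal H$, and a naive use of interior regularity (writing~$-\Delta\mathcal K=-(-\Delta)^s\mathcal K-\mathcal K$ away from the origin and applying Schauder estimates on balls~$B_1(x_0)$ with~$|x_0|$ large) only yields the weaker decay~$|\nabla\mathcal K|,|D^2\mathcal K|\lesssim|x|^{-(n+2s)}$, because the right-hand side of the equation is itself only~$O(|x|^{-(n+2s)})$ at~$x_0$. To obtain the sharp rates I would instead extract the leading behaviour of~$\mathcal K$ at infinity from the symbol. Expanding~$m(\xi)=(1+|\xi|^2+|\xi|^{2s})^{-1}=1-|\xi|^{2s}-|\xi|^2+\dots$ near~$\xi=0$, one isolates the leading non-smooth term~$-|\xi|^{2s}$, whose inverse Fourier transform agrees, away from the origin, with a positive constant multiple of~$|x|^{-(n+2s)}$, and absorbs the constant, the smooth part~$-|\xi|^2$, the higher-order non-smooth terms (with leading exponent~$\min\{4s,2s+2\}>2s$), and the high-frequency part~$(1-\chi)m$ of~$m$ into a remainder that is smooth for~$|x|\ge1$ and, together with all of its derivatives, decays strictly faster than the leading term; this gives~$\mathcal K(x)=c_*|x|^{-(n+2s)}+O(|x|^{-(n+4s)})$ for~$|x|\ge1$, and differentiating term by term yields~$|\nabla\mathcal K(x)|\le C|x|^{-(n+2s+1)}$ and~$|D^2\mathcal K(x)|\le C|x|^{-(n+2s+2)}$ (and, as a by-product, re-derives the sharp size estimate of (b) at infinity). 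Alternatively, one can establish companion pointwise bounds~$|\nabla_x\mathcal H(x,t)|\le C|x|^{-1}(t|x|^{-(n+2s)}\vee t^s|x|^{-(n+2s)})$ and~$|D_x^2\mathcal H(x,t)|\le C|x|^{-2}(\cdots)$ by redoing, one differentiation order higher, the Fourier ``piecewise'' analysis used to prove Theorem~\ref{th properties of h}, and then integrate against~$e^{-t}\,dt$ as in (b).

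Finally, (d) is immediate from (b): near the origin~$\mathcal K$ is~$O(|x|^{2-n})$ for~$n\ge3$ (respectively~$O(1+|\ln|x||)$ for~$n=2$, $O(1)$ for~$n=1$), whose~$q$-th power is locally integrable precisely for~$q<\frac{n}{n-2}$ when~$n\ge3$ and for every~$q<\infty$ when~$n\le2$, while away from the origin~$\mathcal K$ is~$O(|x|^{-(n+2s)})$, hence in~$L^q$ of the exterior of a ball for every~$q\ge1$. For (e): from~\eqref{DBSD-1} one has~$\widehat{\mathcal H(\cdot,t)}(\xi)=e^{-t(|\xi|^2+|\xi|^{2s})}$, so Fubini in~\eqref{definition of K} gives~$\widehat{\mathcal K}(\xi)=\int_0^{+\infty}e^{-t(1+|\xi|^2+|\xi|^{2s})}\,dt=(1+|\xi|^2+|\xi|^{2s})^{-1}$; since~$\mathcal K\in L^1(\mathbb R^n)$ by (d) and~$f\in L^q(\mathbb R^n)\subset\mathcal S'(\mathbb R^n)$, the convolution~$u:=\mathcal K\ast f$ is well defined by Young's inequality and satisfies~$\widehat u=\widehat{\mathcal K}\,\widehat f$, whence~$(1+|\xi|^2+|\xi|^{2s})\widehat u=\widehat f$, i.e.\ $-\Delta u+(-\Delta)^su+u=f$ in the sense of tempered distributions; if a classical solution is wanted, one upgrades this identity using the interior regularity furnished by (c) and the~$L^p$ theory for~$-\Delta$.
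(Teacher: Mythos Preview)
Your treatment of parts (a), (b), (d) and (e) follows essentially the same route as the paper: qualitative properties of~$\mathcal K$ are inherited from~$\mathcal H$; the two-sided bounds in (b) come from splitting the $t$-integral in~\eqref{definition of K} at the thresholds dictated by Theorem~\ref{th properties of h}; (d) is read off from (b); and (e) follows from~$\widehat{\mathcal K}=(1+|\xi|^2+|\xi|^{2s})^{-1}$ (the paper makes this rigorous via approximation of~$f$ by Schwartz functions, which is the same content as your distributional identity).

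The genuine divergence is in part (c). Of the two strategies you outline, the paper carries out something close to your \emph{second} alternative, but in a very concrete form: it writes the one-dimensional Bessel representation
\[
\mathcal K(r)=\frac{(2\pi)^{n/2}}{r^{n/2-1}}\int_0^{+\infty}\!\!\int_0^{+\infty} e^{-t(1+\tau^{2s}+\tau^2)}\tau^{n/2}J_{n/2-1}(r\tau)\,d\tau\,dt,
\]
differentiates in~$r$, and after integrations by parts reduces~$\mathcal K'(r)$ to a combination of~$r^{-1}\mathcal K(r)$, $r^{-1}\!\int_0^\infty te^{-t}\mathcal H(r,t)\,dt$, and an auxiliary kernel~$\mathcal G(r)=\int_0^\infty te^{-t}\Phi(r,t,t)\,dt$ built from
\[
\Phi(x,t_1,t_2)=\frac{1}{|x|}\int_{\mathbb R^n} e^{-t_1|\xi|^{2s}-t_2|\xi|^2}|\xi|^{2s}e^{2\pi i x\cdot\xi}\,d\xi.
\]
The asymptotics of~$\Phi$ are then obtained by the same residue-theorem machinery (Hankel functions, contour rotation to~$\arg z=\pi/6$) used for~$\mathcal H$ in Appendix~A, and the pieces recombine to give~$|\mathcal K'(r)|\lesssim r^{-(n+2s+1)}$; the second derivative is handled analogously. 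So the paper does ``redo the Fourier analysis one order higher'', but channels it through explicit special-function identities rather than abstract bounds on~$\nabla_x\mathcal H$.

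Your \emph{primary} approach for (c) --- expanding the symbol~$m(\xi)$ near~$\xi=0$, isolating the leading non-smooth term~$-|\xi|^{2s}$ (whose inverse transform is~$c_*|x|^{-(n+2s)}$ away from the origin), and showing the remainder and the high-frequency part~$(1-\chi)m$ contribute a smoother, faster-decaying correction --- is a genuinely different and more conceptual route. It avoids all the Bessel/Hankel computations and gives, for free, the sharper statement~$\mathcal K(x)=c_*|x|^{-(n+2s)}+O(|x|^{-(n+\min\{4s,2+2s\})})$ together with its derivatives. The paper's approach, by contrast, is self-contained within the Fourier--Bessel framework already set up in Appendix~A and does not rely on identifying the exact leading constant; it trades conceptual economy for computational uniformity with the rest of the appendix.
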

 
 For the reader’s convenience,  we provide the detailed proofs of Theorems~\ref{th properties of h} and~\ref{th properties of k} by using Fourier analysis techniques in Appendices~\ref{th properties of h1} and~\ref{sce properties of k}.

\subsection{H\"{o}lder continuity of weak solutions}\label{sec:C^{1,alpha}-regularity of weak solution1}

We devote this section to the proof of Theorem~\ref{Theorem :holder regularity}, which will achieved by combining
$L^p$-theory and Theorem~\ref{th properties of k}. 

\subsubsection{$W^{2,p}$-regularity of weak solutions}

We shall explore the~$W^{2,p}$-regularity theory for weak solutions of linear equations,
which is a pivotal step towards the proof of the H\"{o}lder continuity for the nonlinear equation, which will be obtained combining
the $L^p$-theory and a localization trick.

\begin{Lemma}\label{lemma embedding of X^p}
Let~$p\ge1$ and~$f\in L^{p}(\mathbb{R}^n)$. 
	Let~$u$ be a solution of
	$$	-\Delta u+(-\Delta )^{s}u+u=f\qquad \text{in }\mathbb{R}^n.
	$$ 
	
	Then, $ u\in W^{2,p}(\mathbb{R}^n)$ and
	\[ \|u\|_{W^{2,p}(\mathbb{R}^n)}\leqslant C\|f\|_{L^p(\mathbb{R}^n)}, \]
for some constant~$C>0$ depending on~$n$, $s$ and~$p$.
	\end{Lemma}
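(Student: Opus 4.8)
The plan is to reduce the statement to the Bessel kernel representation $u=\mathcal{K}\ast f$ provided by Theorem~\ref{th properties of k}(e), and then to run the classical Calderón--Zygmund theory for the \emph{local} Laplacian, using $\mathcal{K}$ only through its integrability properties. First I would show that $u$ necessarily coincides with $\mathcal{K}\ast f$: by Theorem~\ref{th properties of k}(e) the function $v:=\mathcal{K}\ast f$ solves the same equation, so $w:=u-v$ solves $-\Delta w+(-\Delta)^{s}w+w=0$; applying the Fourier transform gives $\big(1+|\xi|^{2}+|\xi|^{2s}\big)\widehat{w}=0$, and since the symbol is bounded below by~$1$ this forces $\widehat{w}\equiv0$, hence $u=\mathcal{K}\ast f$. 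This only uses that $u$ is a tempered distribution, which holds in all the situations where the lemma is invoked.

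Since $\mathcal{K}\in L^{1}(\mathbb{R}^{n})$ by Theorem~\ref{th properties of k}(d) (the exponent $1$ lies in the admissible range in every dimension), Young's convolution inequality gives at once
\[
\|u\|_{L^{p}(\mathbb{R}^{n})}=\|\mathcal{K}\ast f\|_{L^{p}(\mathbb{R}^{n})}\leqslant \|\mathcal{K}\|_{L^{1}(\mathbb{R}^{n})}\,\|f\|_{L^{p}(\mathbb{R}^{n})}.
\]
To reach the second derivatives I would rewrite the equation as $-\Delta u=f-u-(-\Delta)^{s}u$ and estimate the term $(-\Delta)^{s}u=\big((-\Delta)^{s}\mathcal{K}\big)\ast f$. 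Its Fourier multiplier is
\[
m_{s}(\xi):=\frac{|\xi|^{2s}}{1+|\xi|^{2}+|\xi|^{2s}},
\]
which is bounded, behaves like $|\xi|^{2s}$ as $\xi\to0$ and like $|\xi|^{2s-2}$ as $|\xi|\to\infty$. Because $s\in(0,1)$, a direct inspection --- carried out separately in the regimes $|\xi|\leqslant1$ and $|\xi|\geqslant1$, the lack of scale invariance forcing this splitting and some care being needed at $\xi=0$, where $|\xi|^{2s}$ is not smooth --- shows that $m_{s}$ satisfies the Mikhlin--Hörmander condition, whence $\|(-\Delta)^{s}u\|_{L^{p}(\mathbb{R}^{n})}\leqslant C\|f\|_{L^{p}(\mathbb{R}^{n})}$; equivalently, one can check directly that $(-\Delta)^{s}\mathcal{K}\in L^{1}(\mathbb{R}^{n})$ --- its tail decays like $|x|^{-n-2s}$ and its singularity at the origin is of order $|x|^{-(n-2+2s)}$ (up to a logarithmic correction), which is integrable precisely because $s<1$ --- and apply Young's inequality once more.

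Combining these bounds with the equation yields $\|\Delta u\|_{L^{p}(\mathbb{R}^{n})}\leqslant \|f\|_{L^{p}(\mathbb{R}^{n})}+\|u\|_{L^{p}(\mathbb{R}^{n})}+\|(-\Delta)^{s}u\|_{L^{p}(\mathbb{R}^{n})}\leqslant C\|f\|_{L^{p}(\mathbb{R}^{n})}$, and then the classical Calderón--Zygmund estimate $\|D^{2}u\|_{L^{p}}\leqslant C\|\Delta u\|_{L^{p}}$ for the Laplacian controls the Hessian, while the first--order term is absorbed by interpolation, $\|\nabla u\|_{L^{p}}\leqslant \varepsilon\|D^{2}u\|_{L^{p}}+C_{\varepsilon}\|u\|_{L^{p}}$; summing up gives $\|u\|_{W^{2,p}(\mathbb{R}^{n})}\leqslant C\|f\|_{L^{p}(\mathbb{R}^{n})}$. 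I expect the only genuinely delicate point to be the estimate on $(-\Delta)^{s}u$: since $-\Delta+(-\Delta)^{s}$ is not scale invariant one cannot reduce to a single homogeneity, and the verification of the Mikhlin condition for $m_{s}$ (equivalently, the $L^{1}$ bound for $(-\Delta)^{s}\mathcal{K}$) must be done by a piecewise analysis near and away from the origin in frequency space, in the spirit of the arguments the paper develops in its appendices. Everything else is standard elliptic theory for the Laplacian together with the properties of $\mathcal{K}$ recorded in Theorem~\ref{th properties of k}.
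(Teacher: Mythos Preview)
Your plan is correct and very close in spirit to the paper's argument; the packaging differs slightly. The paper also writes $u=\mathcal{K}\ast f$, gets $u\in L^{p}$ from $\mathcal{K}\in L^{1}$, and then reduces everything to showing that a single kernel lies in $L^{1}$: it writes $(1+|\xi|^{2})\hat u=\hat g$ with $g=(\delta_{0}+\mathcal{K}-\mathcal{I})\ast f$ and proves $\mathcal{I}:=\mathcal{F}^{-1}\!\big(\tfrac{1+|\xi|^{2s}}{1+|\xi|^{2}+|\xi|^{2s}}\big)\in L^{1}(\mathbb{R}^{n})$, then invokes the Bessel-potential characterization $W^{2,p}=\mathcal{W}^{2,p}$. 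Since $\mathcal{I}=\mathcal{K}+(-\Delta)^{s}\mathcal{K}$ on the Fourier side, the paper's $\mathcal{I}\in L^{1}$ is equivalent (given $\mathcal{K}\in L^{1}$) to your claim that $(-\Delta)^{s}\mathcal{K}\in L^{1}$, and the paper carries out exactly the piecewise Fourier analysis you anticipated, using the Bessel-function representation and the Residue Theorem in the two regimes $t\gtrless 1$. So the ``delicate point'' you flagged is precisely where the work lies, and your heuristic on the local singularity $|x|^{-(n-2+2s)}$ and the tail $|x|^{-(n+2s)}$ matches what the detailed computation yields.

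Two small remarks. First, your Mikhlin--H\"ormander alternative only gives $1<p<\infty$, so at $p=1$ you must take the $L^{1}$-kernel route; the final Calder\'on--Zygmund step $\|D^{2}u\|_{L^{p}}\le C\|\Delta u\|_{L^{p}}$ has the same limitation, as does the paper's identification $W^{2,p}=\mathcal{W}^{2,p}$, so the endpoint $p=1$ is a little formal in either approach (and the lemma is only applied for $p>1$ anyway). Second, the paper's route via $\mathcal{W}^{2,p}$ is marginally cleaner because it avoids splitting the equation and invoking the Riesz-transform estimate separately; your route is a bit more hands-on but equally valid.
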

	
\begin{proof}
The gist of the proof relies on checking that~$u$ satisfies an equation of the type~$-\Delta u+u=g $, for some function~$g$, in order
to apply the classical Calder\'on–Zygmund regularity theory to this equation. For this, the core of the argument will be to check that~$g\in L^p(\R^n)$ (and that~$\|g\|_{L^p(\R^n)}\le C \|f\|_{L^p(\R^n)}$).

The technical details of the proof go as follows. 
{F}rom~\cite[Theorem~3, page~135]{MR290095}, we know that
we can identify the Sobolev space~$W^{2,p}(\mathbb{R}^n)$ with the space
	\begin{equation}\label{w 2,p}
		\mathcal{W}^{2,p}:=\left\{u\in L^p(\mathbb{R}^n)\;{\mbox{ s.t. }}\;\mathcal{F}^{-1}\Big((1+|\xi|^2)\hat{u}\Big)\in L^p(\mathbb{R}^n)\right\}.
	\end{equation}  
In light of this, it suffices to show that~$ u\in \mathcal{W}^{2,p} $.

	We first claim that
	\begin{equation}\label{asdfghj1234rtyeu8gt5490} u\in	L^{p}(\mathbb{R}^n).\end{equation}
To this end, we point out that, in the distributional sense,
	\[ u=\mathcal{F}^{-1}\left(\frac{1}{1+|\xi|^2+|\xi|^{2s}}\right)\ast f=\mathcal{K}\ast f ,\]
	where the kernel~$\mathcal{K}$ is given in~\eqref{definition of K}. Since~$\mathcal{ K}\in L^1(\mathbb{R}^n)$
	(thanks to~(b) of Theorem~\ref{th properties of k}), the Young's convolution inequality gives that
	\[ \|u\|_p\leqslant C_{n,s} \|f\|_p \]
	for some constant~$C_{n,s}>0$. This estaslishes~\eqref{asdfghj1234rtyeu8gt5490}.
	
	Moreover, we remark that~$u$ satisfies
		\begin{eqnarray*}&& \Big(1+|\xi|^2\Big)\hat{u}=\frac{1+|\xi|^2}{1+|\xi|^2+|\xi|^{2s}}\hat{f}=\hat{g}, \\
		{\mbox{where }}\quad && g:=\left(\delta_0+\mathcal{ K}-\mathcal{F}^{-1}\left(\frac{1+|\xi|^{2s}}{1+|\xi|^2+|\xi|^{2s}}\right)\right)\ast f.
		\end{eqnarray*}
		Here above and in what follows~$\delta_0$ denotes the Dirac's delta at~$0$.  
	
Now we define 
\begin{eqnarray*}
		\phi(\xi)&:=&\frac{1+|\xi|^{2s}}{1+|\xi|^2+|\xi|^{2s}}=\frac{1}{1+\frac{|\xi|^2}{1+|\xi|^{2s}}}
\\
{\mbox{and }} \quad 
		\mathcal{ I}(x)&:=&
		\mathcal{F}^{-1}\phi(x)
		=\int_{0}^{+\infty} e^{-t} \int_{\mathbb{R}^n}e^{-\frac{t|\xi|^2}{1+|\xi|^{2s}}}e^{2\pi ix\cdot \xi}\,d\xi \,dt.
	\end{eqnarray*}
	
We claim that
\begin{equation}\label{dow43ytoeghewoiuewty8043}\mathcal{ I} \in L^1(\mathbb{R}^n) .\end{equation}

Once the claim in~\eqref{dow43ytoeghewoiuewty8043} will be established, the proof of Lemma~\ref{lemma embedding of X^p}
can be completed as follows. One notices that~$g=(\delta_0+\mathcal{ K}-\mathcal{I})\ast f=f+\mathcal{K}\ast f-\mathcal{I}\ast f$, and therefore
$$
	\|g\|_{L^p(\R^n)}=\|f+\mathcal{ K}\ast f-\mathcal{ I}\ast f\|_{L^p(\R^n)}\leqslant \left(1+\|\mathcal{ K}\|_{L^1(\R^n)}+\|\mathcal{ I}\|_{L^1(\R^n)}\right)\|f\|_{L^p(\R^n)}.
$$
{F}rom this, one concludes that~$u\in \mathcal{W}^{2,p}$ and that~$\|u\|_{W^{2,p}(\mathbb{R}^n)}\leqslant C\|f\|_{L^p(\mathbb{R}^n)}$, as
desired.

Hence, to complete the proof of Lemma~\ref{lemma embedding of X^p}, we now focus on the proof of~\eqref{dow43ytoeghewoiuewty8043}.
For this,  we define, for every~$\kappa$, $t_1$, $t_2>0$ and~$x\in\mathbb{R}^n$,
$$
		\mathcal{J}(x,t_1,\kappa,t_2):=\int_{\mathbb{R}^n}e^{-\frac{t_1|\xi|^2}{\kappa+t_2|\xi|^{2s}}}e^{2\pi ix\cdot \xi}\,d\xi
$$
and we observe that
	\begin{equation}\label{scaling  j}
			\mathcal{J}(x,t,1,1)
			=t^{-\frac{n}{2}}\mathcal{J}\left(t^{-\frac{1}{2}}x,1,1,t^{-s}\right)
			=t^{-\frac{n}{2-2s}}\mathcal{J}\left(t^{-\frac{1}{2-2s}}x,1,t^{\frac{s}{1-s}},1\right).
	\end{equation}
	Also, for any~$t>0$, we notice that 
	\begin{equation}
		\begin{split}\label{split t}
			|\mathcal{J}(x,t,1,1)| 
			&\leqslant C_{n,s}\left(t^{-\frac{n}{2-2s}}\vee t^{-\frac{n}{2}}\right).
		\end{split}
	\end{equation}
	
We split~$\mathcal{I}$ as~$\mathcal{I}(x)=\mathcal{I}_1(x)+\mathcal{I}_2(x)$, where
$$			\mathcal{I}_1(x):=\int_{1}^{+\infty} e^{-t} \mathcal{J}(x,t,1,1) \,dt
\qquad{\mbox{and}}\qquad
\mathcal{I}_2(x):=
\int_{0}^{1} e^{-t} \mathcal{J}(x,t,1,1) \,dt.$$
We will now prove in {Step~1} that~$ \mathcal{I}_1\in L^1(\mathbb{R}^n)$
and in {Step~2} that~$ \mathcal{I}_2 \in L^1(\mathbb{R}^n)$. Thus, {Step~1} and {Step~2} will give the desired
claim in~\eqref{dow43ytoeghewoiuewty8043}.
	\smallskip
	
	{Step~1. } We prove that~$ \mathcal{I}_1 \in L^1(\mathbb{R}^n)$ by exploiting  the rescaling property~\eqref{scaling  j}. To this end,
	we observe that~$t^{-s}\in(0,1)$ for every~$t\in(1,+\infty)$. Thus, we pick~$\eta\in(0,1)$ and use the Fourier Inversion Theorem
	 for the radial function~$\mathcal{J}(x,1,1,\eta)$ (see e.g.~\cite[Chapter~\uppercase\expandafter{\romannumeral 2}]{MR31582}). In this way,
	 we find that
	\begin{equation}
		\begin{split}\label{sdfsdf}
			\mathcal{J}(x,1,1,\eta)
			&=\frac{(2\pi)^{\frac{n}{2}}}{|x|^{\frac{n}{2}-1}}\int_{0}^{+\infty} e^{-\frac{r^2}{1+\eta r^{2s}}} r^{\frac{n}{2}} J_{\frac{n}{2}-1}(|x|r)\, dr\\
			&=\frac{(2\pi)^{\frac{n}{2}}}{|x|^n}\int_{0}^{+\infty} e^{-\frac{t^2}{|x|^2+\eta t^{2s}|x|^{2-2s}}} t^{\frac{n}{2}} J_{\frac{n}{2}-1}(t)\, dt\\
			&=\frac{(2\pi)^{\frac{n}{2}}}{|x|^n}\int_{0}^{+\infty}e^{-\frac{t^2}{|x|^2+\eta t^{2s}|x|^{2-2s}}}\left(\frac{2t^{1+\frac{n}{2}}}{|x|^2+\eta t^{2s}|x|^{2-2s}}-\frac{2s\eta t^{1+2s+\frac{n}{2}}|x|^{2-2s}}{\left(|x|^2+\eta t^{2s}|x|^{2-2s}\right)^2}\right)J_{\frac{n}{2}}(t)\, dt,
		\end{split}
	\end{equation}
	where~$J_{v}$ denotes the Bessel function of first kind of order~$ v$.
	
We claim that
	\begin{equation}\label{claim}
		\lim\limits_{|x|\to+\infty}\sup_{\eta\in(0,1)}|x|^{n+2-2s}\mathcal{J}(x,1,1,\eta)=0.
	\end{equation}
Indeed, from~\eqref{sdfsdf} it follows that
$$
		|x|^{n+2-2s}\mathcal{J}(x,1,1,\eta)
		={(2\pi)^{\frac{n}{2}}}\text{ Re }\int_{0}^{+\infty}e^{-\frac{t^2}{|x|^2+\eta t^{2s}|x|^{2-2s}}}\left(\frac{2t^{1+\frac{n}{2}}}{|x|^{2s}+\eta t^{2s}}-\frac{2s\eta t^{1+2s+\frac{n}{2}}}{\left(|x|^{2s}+\eta t^{2s}\right)^2}\right)H^{1}_{\frac{n}{2}}(t)\, dt
$$		where~$ H^{(1)}_{\frac{n}{2}}(z)$ is the Bessel function of the third kind and Re~$A$ denotes the real part of~$A$.
		
	 We  consider the straight line
	 $$L_1:=\left\{z\in\mathbb{C}:\arg z=\frac{\pi}{6}\right\}$$ and, 
	using the Residue Theorem, we find that
	$$
			|x|^{n+2-2s}\mathcal{J}(x,1,1,\eta)
			=2{(2\pi)^{\frac{n}{2}}}\text{ Re }\int_{L_1} e^{-\frac{z^2}{|x|^2+\eta z^{2s}|x|^{2-2s}}}\left(\frac{z^{1+\frac{n}{2}}}{|x|^{2s}+\eta z^{2s}}-\frac{s\eta z^{1+2s+\frac{n}{2}}}{\left(|x|^{2s}+\eta z^{2s}\right)^2}\right)H^{1}_{\frac{n}{2}}(z)\, dz.
$$
 Furthermore, we observe that, for any~$\eta\in(0,1)$ and~$|x|>1$,  
	\begin{equation}
		\begin{split}\label{H11}
			&\left|\int_{L_1} e^{-\frac{z^2}{|x|^2+\eta z^{2s}|x|^{2-2s}}}\left(\frac{z^{1+\frac{n}{2}}}{|x|^{2s}+\eta z^{2s}}-\frac{s\eta z^{1+2s+\frac{n}{2}}}{\left(|x|^{2s}+\eta z^{2s}\right)^2}\right)H^{1}_{\frac{n}{2}}(z)\, dz\right|\\
			=&\left|\int_{0}^{+\infty}e^{-\frac{r^2e^{i\frac{\pi}{3}}}{|x|^2+\eta r^{2s}e^{i\frac{s\pi}{3}}|x|^{2-2s}}}\left(\frac{(re^{i\frac{\pi}{6}})^{1+\frac{n}{2}}}{|x|^{2s}+\eta (re^{i\frac{\pi}{6}})^{2s}}-\frac{s\eta (re^{i\frac{\pi}{6}})^{1+2s+\frac{n}{2}}}{\left(|x|^{2s}+\eta (re^{i\frac{\pi}{6}})^{2s}\right)^2}\right) H^{(1)}_{\frac{n}{2}}\left(re^{i\frac{\pi}{6}}\right)e^{i\frac{\pi}{6}}\, dr\right|\\
			\leqslant&\int_{0}^{+\infty}\left(\frac{r^{1+\frac{n}{2}}(|x|^{2s}+2r^{2s})}{|x|^{4s}}+\frac{sr^{1+2s+\frac{n}{2}}(|x|^{4s}+4r^{4s})}{|x|^{8s}}\right) \left|H^{(1)}_{\frac{n}{2}}\left(re^{i\frac{\pi}{6}}\right)\right|\, dr.\\
		\end{split}
	\end{equation}
	
	Employing~\eqref{estimate of H}, we  see that
	\begin{equation}\label{H1}
		\left|H^{(1)}_{\frac{n}{2}}\left(re^{i\frac{\pi}{6}}\right)\right|=\left|-ie^{-i\frac{n\pi}{4}}\int_{-\infty}^{+\infty}e^{ire^{i\frac{\pi}{6}}\frac{e^t+e^{-t}}{2}} e^{-\frac{n}{2}t}\, dt\right|\leqslant 2\int_{0}^{+\infty}e^{-\frac{r}{4}e^t}e^{\frac{nt}{2}}\,dt.
	\end{equation}
	By combining~\eqref{H11} with~\eqref{H1}, one obtains that, if~$\eta\in(0,1)$ and~$|x|>1$,
	\begin{equation*}
		\left|\int_{L_1} e^{-\frac{z^2}{|x|^2+\eta z^{2s}|x|^{2-2s}}}\left(\frac{z^{1+\frac{n}{2}}}{|x|^{2s}+\eta z^{2s}}-\frac{s\eta z^{1+2s+\frac{n}{2}}}{\left(|x|^{2s}+\eta z^{2s}\right)^2}\right)H^{(1)}_{\frac{n}{2}}(z)\, dz\right|
		\leqslant 
		2c_{n,s}\left(|x|^{-2s}+|x|^{-4s}\right)
	\end{equation*}
	for some constant~$c_{n,s}$. As a result, for any~$\epsilon>0$, there exists~$M>0$ independent of~$\eta$ such that, for every~$|x|>M$ and~$\eta\in(0,1)$,
	\[ \left|\int_{L_1} e^{-\frac{z^2}{|x|^2+\eta z^{2s}|x|^{2-2s}}}\left(\frac{z^{1+\frac{n}{2}}}{|x|^{2s}+\eta z^{2s}}-\frac{s\eta z^{1+2s+\frac{n}{2}}}{\left(|x|^{2s}+\eta z^{2s}\right)^2}\right)H^{(1)}_{\frac{n}{2}}(z)\, dz\right|<\frac{\epsilon}{2(2\pi)^{\frac{n}{2}}} \]
	which establishes~\eqref{claim}, as desired.
	
	By combining~\eqref{scaling  j} with ~\eqref{claim}, we know that  there exists~$M>0$ depending on~$n$ and~$s$ such that,
	when~$ t>1 $ and~$|x|>Mt^{\frac{1}{2}}$,	\begin{equation*}
		0\leqslant	\left|\mathcal{J}(x,t,1,1)\right|<\frac{t^{1-s}}{|x|^{n+2-2s}} .
	\end{equation*}
	Owing to this and~\eqref{split t}, we conclude that, for every~$|x|>M$,
	\begin{equation*}
		\mathcal{I}_1(x)=\int_{1}^{+\infty} e^{-t} \mathcal{J}(x,t,1,1)\,dt\leqslant \int_{1}^{\left|\frac{x}{M}\right|^{2}}e^{-t}\frac{t^{1-s}}{|x|^{n+2-2s}}\, dt+C_{n,s}\int_{\left|\frac{x}{M}\right|^{2}}^{+\infty}e^{-t}t^{-\frac{n}{2}}\, dt\leqslant c_1 \frac{1}{|x|^{n+2-2s}},
	\end{equation*}
	and, for every~$ |x|\leqslant M$,
	\begin{equation*}
		\mathcal{I}_1(x)=\int_{1}^{+\infty} e^{-t} \mathcal{J}(x,t,1,1)\,dt\leqslant C_{n,s} \int_{\left|\frac{x}{M}\right|^{2}}^{+\infty}e^{-t}t^{-\frac{n}{2}}\, dt\leqslant c_2\frac{1}{|x|^{n-2+2s}}.
	\end{equation*}
{F}rom the last two formulas we obtain that~$\mathcal{ I}_1\in L^1(\mathbb{R}^n)$, as desired.
	\smallskip
	
{Step~2.  } We now prove that~$ \mathcal{I}_2\in L^1(\mathbb{R}^n) $. This will be a byproduct of the rescaling property~\eqref{scaling  j}. The full argument goes as follows.
	Since~$t\in(0,1)$, one has that~$t^{\frac{s}{1-s}}\in(0,1)$. We thus 
	use again the Fourier Inversion Theorem for the radial function~$\mathcal{J}(x,1,\eta,1)$, finding that
	\begin{equation*}
		\begin{split}
			\mathcal{J}(x,1,\eta,1)&=\int_{\mathbb{R}^n}e^{-\frac{|\xi|^2}{\eta+|\xi|^{2s}}}e^{2\pi ix\cdot \xi}\,d\xi\\
			&=\frac{(2\pi)^{\frac{n}{2}}}{|x|^n}\int_{0}^{+\infty}e^{-\frac{t^2}{\eta|x|^2+ t^{2s}|x|^{2-2s}}}\left(\frac{2t^{1+\frac{n}{2}}}{\eta|x|^2+ t^{2s}|x|^{2-2s}}-\frac{2s t^{1+2s+\frac{n}{2}}|x|^{2-2s}}{\left(\eta|x|^2+ t^{2s}|x|^{2-2s}\right)^2}\right)J_{\frac{n}{2}}(t)\, dt.
		\end{split}
	\end{equation*}
We now claim that 
	\begin{equation}\label{claim 2}
		\lim\limits_{|x|\to+\infty}\sup_{\eta\in(0,1)}|x|^{n+1-s}\mathcal{J}(x,1,\eta,1)=0.
	\end{equation}
	Indeed, using a similar argument as in the proof of {Step~1}, for any~$\eta\in(0,1)$ and~$|x|>1$, one has
	\begin{equation*}
		\begin{split}
			\left||x|^{n+1-s}\mathcal{J}(x,1,\eta,1)\right|&\leqslant2(2\pi)^{\frac{n}{2}}\left|\int_{L_1} e^{-\frac{z^2}{\eta|x|^2+ z^{2s}|x|^{2-2s}}}\left(\frac{z^{1+\frac{n}{2}}}{\eta|x|^{1+s}+ z^{2s}|x|^{1-s}}-\frac{s z^{1+2s+\frac{n}{2}}}{\left(\eta|x|^{\frac{1+3s}{2}}+ z^{2s}|x|^{\frac{1-s}{2}}\right)^2}\right)H^{1}_{\frac{n}{2}}(z)\, dz\right|\\
			&\leqslant 2(2\pi)^{\frac{n}{2}}\int_{0}^{+\infty}\left(\frac{6r^{1+\frac{n}{2}-2s}}{|x|^{1-s}}+\frac{17sr^{1-2s+\frac{n}{2}}}{|x|^{1-s}}\right) \left|H^{(1)}_{\frac{n}{2}}\left(re^{i\frac{\pi}{6}}\right)\right|\, dr. 
		\end{split}
	\end{equation*}
	
	Owing to~\eqref{H1}, for any~$\eta\in(0,1)$ and~$|x|>1$, we infer that
$$		\left|\int_{L_1} e^{-\frac{z^2}{\eta|x|^2+ z^{2s}|x|^{2-2s}}}\left(\frac{z^{1+\frac{n}{2}}}{\eta|x|^{2s}+ z^{2s}|x|^{1-s}}-\frac{s z^{1+2s+\frac{n}{2}}}{\left(\eta|x|^{\frac{1+3s}{2}}+ z^{2s}|x|^{\frac{1-s}{2}}\right)^2}\right)H^{1}_{\frac{n}{2}}(z)\, dz\right|
		\leqslant 
		c_{n,s}|x|^{s-1}.
$$
	Hence, for any~$\epsilon>0$, there exists~$M>0$ independent of~$\eta$ such that, for every~$|x|>M$ and~$\eta\in(0,1)$,
	\[ \left|\int_{L_1} e^{-\frac{z^2}{\eta|x|^2+ z^{2s}|x|^{2-2s}}}\left(\frac{z^{1+\frac{n}{2}}}{\eta|x|^{2s}+ z^{2s}|x|^{1-s}}-\frac{s z^{1+2s+\frac{n}{2}}}{\left(\eta|x|^{\frac{1+3s}{2}}+ z^{2s}|x|^{\frac{1-s}{2}}\right)^2}\right)H^{1}_{\frac{n}{2}}(z)\, dz\right|<\frac{\epsilon}{2(2\pi)^{\frac{n}{2}}}, \]
	which establishes~\eqref{claim 2}.
	
	{F}rom~\eqref{scaling  j} and~\eqref{claim 2},  we can find~$M>0$, depending
	 only on~$n$ and~$s$, such that, if~$|x|>Mt^{\frac{1}{2-2s}}$
	and~$ t\in(0,1)$,
	\begin{equation*}
		0\leqslant	\left|\mathcal{J}(x,t,1,1)\right|<\frac{t^{\frac{1}{2}}}{|x|^{n+1-s}}.
	\end{equation*}
	Owing to this and~\eqref{split t}, we find that, for every~$|x|>M$,
\begin{equation*}
	\mathcal{I}_2(x)=\int_{0}^{1} e^{-t} \mathcal{J}(x,t,1,1)\,dt\leqslant \int_{0}^{\left|\frac{x}{M}\right|^{2-2s}}e^{-t}\frac{t^{\frac{1}{2}}}{|x|^{n+1-s}}\, dt	\leqslant c_1 \frac{1}{|x|^{n+1-s}},
\end{equation*}
and, for every~$|x|\leqslant M$,
\begin{equation*}
	\mathcal{I}_2(x)=\int_{0}^{1} e^{-t} \mathcal{J}(x,t,1,1)\,dt\leqslant\int_{0}^{\left|\frac{x}{M}\right|^{2-2s}}e^{-t}\frac{t^{\frac{1}{2}}}{|x|^{n+1-s}}\, dt+ C_{n,s} \int_{\left|\frac{x}{M}\right|^{2-2s}}^{1}e^{-t}t^{-\frac{n}{2-2s}}\, dt\leqslant c_2\frac{1}{|x|^{n-1+s}}.
\end{equation*}
Combining the last two formulas, one deduces that~$\mathcal{I}_2\in L^1(\mathbb{R}^n)$ as well.
\end{proof}

\subsubsection{$C^{0,\mu}$-regularity of weak solutions}\label{subsec:C^{1,alpha}-regularity of weak solution1}

We dedicate this part to show the $C^{0,\mu}$-regularity of weak solutions based on  Lemma~\ref{lemma embedding of X^p} and the usual iteration technique.

For this, we first make the following observation:

\begin{Lemma}\label{STAHNMD}
For all~$t\in\left(-\infty,\frac{n}2\right)$, let
$$ \psi(t):=\frac{nt}{n-2t}.$$

Then, for every~$t_0\in\left(\frac{n}2-\frac{n}{2p},\frac{n}2\right)$
there exists~$j_0=j_0(t_0)\in\N$ with~$j_0\ge1$ such that
$$ \stackrel{{j_0 {\mbox{ times}}}}{\psi\circ\dots\circ\psi}(t_0)
\ge\frac{n}2>\stackrel{{j {\mbox{ times}}}}{\psi\circ\dots\circ\psi}(t_0)\qquad{\mbox{for all~$j<j_0$}}.$$
\end{Lemma}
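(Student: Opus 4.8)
The plan is to \emph{linearize} $\psi$ by means of the reciprocal substitution. The elementary facts I will use are the following: on $\left(-\infty,\tfrac n2\right)$ one has $\psi'(t)=\tfrac{n^2}{(n-2t)^2}>0$, so $\psi$ is strictly increasing there; for $t\in\left(0,\tfrac n2\right)$ one has $\psi(t)-t=\tfrac{2t^2}{n-2t}>0$ and $\psi(t)\to+\infty$ as $t\to\left(\tfrac n2\right)^-$; the only fixed point of $\psi$ is $t=0$; and, crucially,
\[
\frac1{\psi(t)}=\frac{n-2t}{nt}=\frac1t-\frac2n ,
\]
so that $\psi$ is conjugate, through $t\mapsto 1/t$, to the translation $\sigma\mapsto\sigma-\tfrac2n$. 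Note that $p>1$ gives $\tfrac n2-\tfrac n{2p}>0$, so every admissible $t_0$ already lies in $\left(0,\tfrac n2\right)$; this positivity is all the argument really needs (the sharper lower bound only serves to bound $j_0$ in terms of $n$ and $p$ in the applications).

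Define $t_0$ as given and $t_j:=\psi(t_{j-1})$ for $j\ge 1$, as long as $t_{j-1}<\tfrac n2$ (so that $\psi$ may legitimately be applied). Then an induction based on the displayed identity yields
\[
\frac1{t_j}=\frac1{t_0}-\frac{2j}{n},\qquad\text{equivalently}\qquad t_j=\frac{n t_0}{\,n-2jt_0\,},
\]
a positive quantity precisely when $j<\tfrac n{2t_0}$. A one-line computation then shows, within that range, that $t_j<\tfrac n2\iff j<\tfrac{n-2t_0}{2t_0}$ and $t_j\ge\tfrac n2\iff j\ge\tfrac{n-2t_0}{2t_0}$. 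This makes
\[
j_0:=\left\lceil\frac{n-2t_0}{2t_0}\right\rceil
\]
the natural candidate, and $j_0\ge1$ because $t_0<\tfrac n2$ forces $\tfrac{n-2t_0}{2t_0}>0$.

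It remains to check that the orbit $t_0,\dots,t_{j_0}$ is genuinely well defined, i.e.\ that the recursion does not hit $\tfrac n2$ before step $j_0$. For $j<j_0$ one has $j<\tfrac{n-2t_0}{2t_0}$ (using $\lceil x\rceil-1<x$ when $x\notin\mathbb{Z}$, and $\lceil x\rceil=x$ when $x\in\mathbb{Z}$), hence $j<\tfrac{n-2t_0}{2t_0}<\tfrac n{2t_0}$, so inductively $t_j$ is given by the formula above and satisfies $0<t_j<\tfrac n2$; thus the recursion reaches step $j_0$ staying inside $\left(0,\tfrac n2\right)$. Finally $\tfrac{n-2t_0}{2t_0}\le j_0<\tfrac{n-2t_0}{2t_0}+1=\tfrac n{2t_0}$, so $t_{j_0}$ is still given by the same formula and satisfies $t_{j_0}\ge\tfrac n2$. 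This proves the claim; the only care needed is the routine bookkeeping with the ceiling function, including the borderline case $\tfrac{n-2t_0}{2t_0}\in\mathbb{N}$ (where $j_0=\tfrac{n-2t_0}{2t_0}$ and $t_{j_0}=\tfrac n2$), and I expect this to be the sole — and entirely harmless — subtlety.

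If one prefers to avoid explicit formulas, a softer variant runs as follows: by the monotonicity facts the sequence $t_j$ is strictly increasing as long as it remains in $\left(0,\tfrac n2\right)$; were it to stay there for all $j$, it would converge to some $L\in\left(0,\tfrac n2\right]$, but $L<\tfrac n2$ would force $L=\psi(L)$, impossible since $\psi$ has no fixed point other than $0$, while $L=\tfrac n2$ is excluded because $\psi(t)\to+\infty$ as $t\to(\tfrac n2)^-$. Hence some iterate reaches $\tfrac n2$, and one takes $j_0$ to be the first such index. In either approach the genuinely substantive move is simply recognising that $t\mapsto 1/t$ turns $\psi$ into a shift; everything after that is arithmetic.
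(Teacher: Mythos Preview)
Your proof is correct. The primary route you take—linearizing $\psi$ through the reciprocal substitution $t\mapsto 1/t$, which turns the iteration into the arithmetic progression $1/t_j=1/t_0-2j/n$—is genuinely different from the paper's argument. The paper proceeds by contradiction: assuming all iterates stay below $n/2$, the sequence is increasing and bounded, hence converges to some $\ell$; passing to the limit in the recursion forces $\ell=\psi(\ell)$, i.e.\ $\ell=0$, contradicting $\ell\ge t_0>0$. Your ``softer variant'' is essentially this argument, in fact slightly more careful since you separate out the boundary case $\ell=n/2$ explicitly.

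What your main approach buys is constructivity: you obtain the closed formula $t_j=\tfrac{nt_0}{n-2jt_0}$ and hence the explicit index $j_0=\lceil (n-2t_0)/(2t_0)\rceil$, together with a clean bound $j_0<n/(2t_0)$. The paper's approach is shorter and needs no bookkeeping with the ceiling, but yields only existence of $j_0$. For the application in the paper (a bootstrap with finitely many steps whose number need only be finite and depend on $n$, $p$), either version suffices, though your explicit bound would let one track the dependence of constants on $n$ and $p$ more precisely.
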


\begin{proof}
Denote by
$$ \psi_j(t_0):= \stackrel{{j {\mbox{ times}}}}{\psi\circ\dots\circ\psi}(t_0)$$
and suppose by contradiction that, for all~$j\ge1$,
$$ \psi_j(t_0)<\frac{n}2.$$
 
Also, notice that~$\psi(t)\ge t$, and therefore, for all~$j\ge1$,
we have that~$  \psi_{j+1}(t_0)\ge \psi_j(t_0)$.
Hence, the following limit exists
$$ \ell:=\lim_{j\to+\infty}\psi_j(t_0)$$
and moreover, by construction, $\ell\in\left[t_0,\frac{n}2\right]$.

In addition,
$$ \psi_{j+1}(t_0)=\frac{n\,\psi_j(t_0)}{n-2\psi_j(t_0)}$$
and thus, taking the limit in~$j$,
$$\ell=\frac{n\ell}{n-2\ell}.$$
Solving for~$\ell$, we find that~$\ell=0$, which gives the desired contradiction.
\end{proof}

\begin{proof}[Proof of Theorem~\ref{Theorem :holder regularity}]
We consider a sequence of cut off functions~$ \phi_j\in C^\infty_0(\mathbb{R}^n)  $ for any~$j\geqslant 1$ satisfying 
\begin{equation}\label{nbvrer438tphiuno}
		\phi_j \equiv 1 \text{  in } B_{1/2^{2j-2}},\qquad
		\text{supp}(\phi_j)\subset B_{1/2^{2j-3}} \qquad{\mbox{and}}\qquad
		0\leqslant \phi_j\leqslant 1 \text{  in } \mathbb{R}^n.
	\end{equation}
Let~$u_j$ solve
\begin{equation}\label{equazioneujei}
	-\Delta u_j+(-\Delta )^{s}u_j+u_j=\phi_j u^p\qquad \text{in }\mathbb{R}^n. 
\end{equation}
Then 
\begin{equation*}
	-\Delta (u-u_j)+(-\Delta )^{s}(u-u_j)+(u-u_j)=(1-\phi_j) u^p\qquad \text{in }\mathbb{R}^n. 
\end{equation*}
Moreover,
\begin{equation*}
	u-u_j=\mathcal{ K}\ast \Big((1-\phi_j) u^p\Big)
\end{equation*}
where~$\mathcal{ K}$ is given by~\eqref{definition of K}.  

Let now~$q_0$ be either equal to the critical exponent~$\frac{2n}{n-2}$ if~$n>2$ or any real number in~$(p,+\infty)$ if~$n=2$. Let also~$\theta:=q_0/(q_0-p)>1$.

Moreover, we set~$\gamma_1:=q_{0}/p$. We observe that~$\gamma_1>
\frac{n}2-\frac{n}{2p}$, thanks to~\eqref{duiwegfufvckuqfhoq496712123hew}.
Hence, in light of Lemma~\ref{STAHNMD}, we can define
$$ j_\star:=\begin{dcases} 0 & {\mbox{ if }}\gamma_1\ge\frac{n}2,\\
j_0(\gamma_1) &{\mbox{ if }}\gamma_1<\frac{n}2,
\end{dcases}$$
and, for every~$j\in\{0,\dots,j_\star\}$, 
$$\gamma_{j+1}:=\stackrel{{j {\mbox{ times}}}}{\psi\circ\dots\circ\psi}(\gamma_1).$$
In this way, we have that
\begin{equation}\label{inviewof3i2r6gfkjt987654}
\gamma_{j_\star+1}\ge\frac{n}2.
\end{equation}
Also, for every~$j\in\{0,\dots,j_\star\}$, we set~$q_j:=p\gamma_{j+1}$.

In this setting, we claim that, for every~$j\in\{1,\dots,j_\star+1\}$,
\begin{equation}\label{baseinduz904}
\| u_j\|_{W^{2,\gamma_j}(\R^n)}\le C_j\,\Big( \|u\|^{p^{j}}_{H^1(\mathbb{R}^n)}+\|u\|^{p}_{H^1(\mathbb{R}^n)}\Big).\end{equation}
The proof of this claim is by induction over~$j$.

We first check that~\eqref{baseinduz904} holds true when~$j=1$.
For this sake, we notice that~$\gamma_1=q_0/p$, and therefore,
since~$u\in L^{q_0}(\mathbb{R}^n)$, one has that~$\phi_1 u^p\in L^{\gamma_1}(\mathbb{R}^n)$.
{F}rom this and Lemma~\ref{lemma embedding of X^p},
applied to the equation for~$u_1$ in~\eqref{equazioneujei},
it then follows that~$ u_1\in W^{2,\gamma_1}(\mathbb{R}^n)$ and 
\begin{equation}\label{yq738rtgfefgwuty}
\|u_1\|_{W^{2,\gamma_1}(\mathbb{R}^n)} 
\le C \|\phi_1 u^p\|_{L^{\gamma_1}(\mathbb{R}^n)}
= C \|\phi_1 u^p\|_{L^{\gamma_1}(B_2)}
\leqslant C
\|u\|^{p}_{L^{q_0}(B_2)}
\le C \|u\|^{p}_{H^1(\mathbb{R}^n)}
,\end{equation}
up to renaming~$C>0$.
The last inequality in~\eqref{yq738rtgfefgwuty} uses 
the Sobolev Embedding (see also~\cite[Corollary 9.11]{MR2759829} when~$n=2$).
This establishes~\eqref{baseinduz904} when~$j=1$.

Now we suppose that~\eqref{baseinduz904} holds true for all the indexes in~$\{1,\dots,j\}$ with~$j\le j_\star$ and we prove it for the index~$j+1$.
For this, we observe that, since~$\gamma_j<n/2$ by construction,
we have that~$q_j=\frac{n\gamma_j}{n-2\gamma_j}$ and~$\gamma_{j+1}=q_j/p$.

Moreover, owing to the H\"{o}lder inequality (used here with exponents~$q_0/p$ and~$\theta$)
and the smoothness of~$\mathcal{ K}$ away from the origin and its decay
(recall~(a) and~(b) of Theorem~\ref{th properties of k}), we have that,
for every~$x\in B_{1/2^{2j-1}}$,
\begin{equation*}
|u(x)-u_j(x)|=\Big|\mathcal{ K}\ast \Big((1-\phi_j) u^p\Big)(x)\Big|\leqslant\|\mathcal{ K}\|_{L^\theta(\mathbb{R}^n\setminus B_{1/2^{2j-2}})}\|u\|^p_{{L^{q_0}(\mathbb{R}^n)}}\leqslant C\|u\|^p_{L^{q_0}(\mathbb{R}^n)}.
\end{equation*}
This and the triangle inequality give that
\begin{equation*}
\|u\|_{L^{q_j}( B_{1/2^{2j-1}})}\le C \|u\|^p_{L^{q_0}(\mathbb{R}^n)}+
\|u_j\|_{L^{q_j}( B_{1/2^{2j-1}})}.
\end{equation*}
{F}rom this and the Sobolev Embedding, we deduce that
\begin{equation*}
\|u\|_{L^{q_j}( B_{1/2^{2j-1}})}\le C\left( \|u\|^p_{L^{q_0}(\mathbb{R}^n)}+
\|u_j\|_{W^{2,\gamma_j}( \R^n)}\right),
\end{equation*}
up to renaming~$C>0$, depending on $n$, $s$, $p$ and~$j$.
Hence, using the inductive assumption,
\begin{equation*}
\|u\|_{L^{q_j}( B_{1/2^{2j-1}})}\le C\left( \|u\|^p_{L^{q_0}(\mathbb{R}^n)}+
\|u\|^{p^{j}}_{H^1(\mathbb{R}^n)}+\|u\|^{p}_{H^1(\mathbb{R}^n)}
\right).
\end{equation*}
Using again the Sobolev Embedding and renaming~$C$ once more,
\begin{equation*}
\|u\|_{L^{q_j}( B_{1/2^{2j-1}})}\le C\left( 
\|u\|^{p^{j}}_{H^1(\mathbb{R}^n)}+\|u\|^{p}_{H^1(\mathbb{R}^n)}
\right).
\end{equation*}

Hence, we have that~$\phi_{j+1} u^p\in L^{\gamma_{j+1}}(\mathbb{R}^n) $
and
\begin{eqnarray*}&& \|\phi_{j+1}u^p\|_{L^{\gamma_{j+1}}(\mathbb{R}^n)}\leqslant C \|u\|^p_{L^{q_{j}}(B_{1/(2^{2j-1})})} \leqslant C\left(\|u\|^{p^j}_{H^1(\mathbb{R}^n)}+\|u\|^{p}_{H^1(\mathbb{R}^n)}\right)^p
\\&&\qquad\qquad
\le C\left(\|u\|^{p^{j+1}}_{H^1(\mathbb{R}^n)}+\|u\|^{p^2}_{H^1(\mathbb{R}^n)}\right)
\le C\left(\|u\|^{p^{j+1}}_{H^1(\mathbb{R}^n)}+\|u\|^{p}_{H^1(\mathbb{R}^n)}\right).\end{eqnarray*}
We can therefore use
Lemma~\ref{lemma embedding of X^p}
for the equation~\eqref{equazioneujei} for~$u_{j+1}$.
In this way, we obtain
that~$ u_{j+1}\in W^{2,\gamma_{j+1}}(\mathbb{R}^n)$ and
\begin{equation*}
 \|u_{j+1}\|_{W^{2,\gamma_{j+1}}(\mathbb{R}^n)}\leqslant C\|\phi_{j+1}u^p\|_{L^{\gamma_{j+1}}(\mathbb{R}^n)}\leqslant  C\left(\|u\|^{p^{j+1}}_{H^1(\mathbb{R}^n)}+\|u\|^{p}_{H^1(\mathbb{R}^n)}\right),
\end{equation*}
for some constant~$C>0$, depending on $n$, $s$, $p$ and~$j$.
This completes the proof of the claim in~\eqref{baseinduz904}. 

Using~\eqref{baseinduz904} with~$j_\star+1$, we find that
\begin{equation}\label{qwww}
\| u_{j_\star+1}\|_{W^{2,\gamma_{j_\star+1}}(\R^n)}\le C\,
\Big( \|u\|^{p^{{j_\star+1}}}_{H^1(\mathbb{R}^n)}+\|u\|^{p}_{H^1(\mathbb{R}^n)}\Big).\end{equation}

Now, in view of~\eqref{inviewof3i2r6gfkjt987654}, we distinguish two cases,
either~$\gamma_{j_\star+1}> {n}/{2}$
or~$\gamma_{j_\star+1}= {n}/{2}$.

If~$\gamma_{j_\star+1}> {n}/{2}$
we use the Sobolev Embedding Theorem, by choosing
\begin{equation*}
	\mu\in\left(0,\,\min\left\{1, 2-\frac{n}{\gamma_{j_0+1}}\right\}\right),
\end{equation*}
and we see that~$u_{j_\star+1}\in C^{0,\mu}(\mathbb{R}^n) $. In particular, from~\eqref{qwww}, it follows that
\begin{equation}\label{vvvdfv}
	\|u_{j_\star+1}\|_{C^{0,\mu}(\mathbb{R}^n)}\leqslant C\left(\|u\|^{p^{j_\star+1}}_{H^1(\mathbb{R}^n)}+\|u\|^{p}_{H^1(\mathbb{R}^n)}\right).
\end{equation}
Moreover, using the smoothness of~$\mathcal{ K}$ away from
the origin, when~$|x|<{1}/{2^{2j_\star+1}}$ one has that
\begin{equation*}
	\begin{split}
		&|\nabla(u-u_{j_\star+1})|\leqslant \int_{\mathbb{R}^{n}}|\nabla\mathcal{ K}(x-y)||(1-\phi_{{j_\star+1}}(y))u^p(y)|\, dy\\
		&\qquad\leqslant \|\mathcal{ \nabla K}\|_{L^\theta(\R^n\setminus
		B_{{1}/({2^{2j_\star+1}})})}\|u\|^p_{L^{q_0}(\mathbb{R}^n)}\leqslant C\|u\|^p_{L^{q_0}(\mathbb{R}^n)}.
	\end{split}
\end{equation*}
By combining this and~\eqref{vvvdfv}, we deduce that, for any~$x$, $y\in B_{\frac{1}{2^{2j_\star+1}}}$ with~$x\neq y$,
\begin{equation*}
	\begin{split}
		\frac{|u(x)-u(y)|}{|x-y|^\mu}&=\frac{|(u-u_{j_\star+1})(x)-(u-u_{j_\star+1})(y)+u_{j_\star+1}(x)-u_{j_\star+1}(y)|}{|x-y|^\mu}\\
		&\leqslant \sup\limits_{\xi\in B_{\frac{1}{2^{2j_\star+1}}}}|\nabla (u-u_{j_\star+1})(\xi)||x-y|^{1-\mu}+\|u_{j_\star+1}\|_{C^{0,\mu}(\mathbb{R}^n)}\\
		&\leqslant C \left(\|u\|^{p^{j_\star+1}}_{H^1(\mathbb{R}^n)}+\|u\|^{p}_{H^1(\mathbb{R}^n)}\right)
	\end{split}
\end{equation*}
and  
$$	|u(x)|\leqslant |u(x)-u_{j_\star+1}(x)|+|u_{j_\star+1}(x)|\leqslant C
\left(\|u\|^{p^{j_\star+1}}_{H^1(\mathbb{R}^n)}+\|u\|^{p}_{H^1(\mathbb{R}^n)}\right).
$$
The ball~$B_{\frac{1}{2^{2j_\star+1}}}$ is centred at the origin, but we may arbitrarily move it around~$\mathbb{R}^n$. Covering~$\mathbb{R}^n$ with these balls, we conclude that~$u\in C^{0,\mu}(\mathbb{R}^n)$ for some~$\mu\in(0,1)$.

This completes the desired result when~$\gamma_{j_\star+1}> {n}/{2}$
and we now focus on the case~$\gamma_{j_\star+1}={n}/{2}$.

Recalling~\eqref{w 2,p} and using~\cite[Theorem~3, page~135]{MR290095}, we see that~$u_{j_\star+1}\in \mathcal{W}^{2,\frac{n}{2}} $. Furthermore,
owing to~\cite[formula~(40), page~135]{MR290095}, we know that~$ u_{j_\star+1}\in  \mathcal{W}^{1+\frac{4}{5},\frac{n}{2}}$,
where
$$
\mathcal{W}^{1+\frac{4}{5},\frac{n}{2}}:=\left\{u\in L^{\frac{n}{2}}(\mathbb{R}^n)\;{\mbox{ s.t. }}\;\mathcal{F}^{-1}\left( (1+|\xi|^2)^{\frac{1+4/5}{2}}\hat{u}\right)\in L^{\frac{n}{2}}(\mathbb{R}^n)\right\}.
$$
Thus, from~\cite[Theorem~3.2]{MR3002595}, it follows that~$u_{j_\star+1}\in L^{5n}(\mathbb{R}^n)$ and
\begin{equation}\label{5n}
	\|u_{j_\star+1}\|_{L^{5n}(\mathbb{R}^n)}\leqslant C	\|u_{j_\star+1}\|_{W^{2,\frac{n}{2}}(\mathbb{R}^n)},
\end{equation}
for some constant~$C>0$. 

Using the decay properties of~$\mathcal{ K}$ as given by Theorem~\ref{th properties of k}, we obtain that,
for any~$x\in B_{1/(2^{2j_\star+1})}$,
\begin{equation}\label{u-u_j_0+1}
	|u-u_{j_\star+1}|=\left|\mathcal{ K}\ast \Big((1-\phi_{j_0+1}) u^p\Big)\right|\leqslant\|\mathcal{ K}\|_{L^{\theta}(\R^n\setminus B_{1/(2^{2j_\star+1})})}\|u\|^p_{{L^{q_{0}}(\mathbb{R}^n)}}\leqslant C\|u\|^p_{L^{q_{0}}(\mathbb{R}^n)}.
\end{equation}
By combining~\eqref{qwww}, \eqref{5n} and~\eqref{u-u_j_0+1}, we have that
$$	\|u\|_{L^{5n}(B_{1/(2^{2j_\star+1})})} \leqslant C \left(\|u\|^{{p}^{j_\star+1}}_{H^1(\mathbb{R}^n)}+\|u\|^{p}_{H^1(\mathbb{R}^n)}\right).
$$
This gives that~$\phi_{j_\star+2} u^p\in L^{5n/p}(\mathbb{R}^n) $
and therefore,
by Lemma~\ref{lemma embedding of X^p}, we deduce that~$ u_{j_\star+2}\in W^{2,\frac{5n}{p}}(\mathbb{R}^n)$. Since~$p<\frac{n+2}{n-2}$, we observe that~$\frac{5n}{p}>\frac{n}{2}$. Hence, using the Sobolev Embedding Theorem, we have that~$u_{j_\star+2}\in C^{0,\mu}(\mathbb{R}^n) $, 
with
$$
	\mu\in\left(0\,\min\left\{1, 2-\frac{p}5\right\}\right),
$$
and 
\begin{equation}\label{vvvdfvgv}
	\|u_{j_\star+2}\|_{C^{0,\mu}(\mathbb{R}^n)}\leqslant C
	\|u_{j_\star+2}\|_{W^{2,5n/p}(\mathbb{R}^n)} \leqslant C
	\left(\|u\|^{p^{j_\star+2}}_{H^1(\mathbb{R}^n)}+\|u\|^{p}_{H^1(\mathbb{R}^n)}\right).
\end{equation}

Moreover, by the smoothness of~$\mathcal{ K}$,  for every~$|x|<{1}/{2^{2j_\star+3}}$, we have that
\begin{equation*}
		|\nabla(u-u_{j_\star+2})|\leqslant \|\mathcal{ \nabla K}\|_{L^\theta(
		\R^n\setminus B_{{1}/({2^{2j_\star+3}})})}\|u\|^p_{L^{q_0}(\mathbb{R}^n)}\leqslant C\|u\|^p_{L^{q_0}(\mathbb{R}^n)}.
\end{equation*}
By combining this and~\eqref{vvvdfvgv}, we have that,
for any~$x$, $y\in B_{\frac{1}{2^{2j_\star+3}}}$ with~$x\neq y$,
$$		\frac{|u(x)-u(y)|}{|x-y|^\mu}
		\leqslant C\left(\|u\|^{p^{j_\star+2}}_{H^1(\mathbb{R}^n)}+\|u\|^{p}_{H^1(\mathbb{R}^n)}\right)
$$
and  
$$	|u(x)|\leqslant |u(x)-u_{j_\star+2}(x)|+|u_{j_\star+2}(x)|\leqslant C
\left(\|u\|^{p^{j_\star+1}}_{H^1(\mathbb{R}^n)}+\|u\|^{p}_{H^1(\mathbb{R}^n)}\right).
$$
	  The ball~$B_{\frac{1}{2^{2j_0+3}}}$ is centred at the origin, but we may arbitrarily move it around~$\mathbb{R}^n$. Covering~$\mathbb{R}^n$ with these balls, we  conclude that~$u\in C^{0,\mu}(\mathbb{R}^n)$ for some~$\mu\in(0,1)$.
	  This completes the proof of Theorem~\ref{Theorem :holder regularity}.
\end{proof}

\subsection{$C^{1,\alpha}$-regularity of weak solutions}\label{sec:C^{1,alpha}-regularity of weak solution2}
We aim here to establish the 
$ C^{1,\alpha} $-regularity of weak solutions of~\eqref{Maineq} based on the $L^p$-theory and the smoothness of the kernel~$\mathcal{ K}$ defined by~\eqref{definition of K}.

\begin{proof}[Proof of Theorem~\ref{th:regularity}]
We consider the cut off function~$ \phi_1\in C^\infty_0(\mathbb{R}^n) $ satisfying 
the properties in~\eqref{nbvrer438tphiuno} with~$j=1$.
	Let~$u_1$ solve
	\begin{equation*}
		-\Delta u_1+(-\Delta )^{s}u_1+u_1=\phi_1 u^p\qquad \text{in }\mathbb{R}^n. 
	\end{equation*}
	Then 
	\begin{equation*}
		-\Delta (u-u_1)+(-\Delta )^{s}(u-u_1)+(u-u_1)=(1-\phi_1) u^p\qquad \text{in }\mathbb{R}^n. 
	\end{equation*}
	Moreover, we deduce that 
	$$	u-u_1=\mathcal{ K}\ast \Big((1-\phi_1) u^p\Big).
	$$
	
	We recall that~$u\in L^\infty(\mathbb{R}^n)$, thanks to
Theorem~\ref{Theorem :holder regularity}.

Now, owing to the H\"{o}lder inequality and the smoothness of~$\mathcal{ K}$ away from the origin and its decay
(recall Theorem~\ref{th properties of k}), we have that, for every~$x\in B_{{1}/{2}}$,
	\begin{equation*}
		|u(x)-u_1(x)|=\Big|\mathcal{ K}\ast \Big((1-\phi_1) u^p\Big)(x)\Big|
		\leqslant\|\mathcal{ K}\|_{L^1(\R^n\setminus B_{1/2})}\|u\|^p_{L^\infty(\mathbb{R}^n)}\leqslant c_{n,s,p}\|u\|^p_{L^\infty(\mathbb{R}^n)},
	\end{equation*}
	\begin{equation*}
			|\nabla(u-u_1)(x)|\leqslant \int_{\mathbb{R}^{n}}|\nabla\mathcal{ K}(x-y)||(1-\phi_1(y))u^p(y)|\, dy
			\leqslant \|\mathcal{ \nabla K}\|_{L^1(\R^n\setminus B_{1/2})}\|u\|^p_{L^\infty(\mathbb{R}^n)}\leqslant c_{n,s,p}\|u\|^p_{L^\infty(\mathbb{R}^n)}
	\end{equation*}
and 
	\begin{equation}\label{u-u_1 D^2}
		|D^2(u-u_1)(x)|\leqslant \int_{\mathbb{R}^{n}}|{D^2 K}(x-y)||(1-\phi_1(y))u^p(y)|\, dy\leqslant \|{ D^2 K}\|_{L^1(\R^n\setminus B_{1/2})}\|u\|^p_{L^\infty(\mathbb{R}^n)}\leqslant c_{n,s}\|u\|^p_{L^\infty(\mathbb{R}^n)}.
	\end{equation}
 In the light of these estimates, we can focus on the regularity of~$u_1$.
 For this, we observe that, for any~$q\geqslant 1$,
	\begin{equation}\label{q}
		\|\phi_1 u^p\|_{L^q(\R^n)}
		=\left(\int_{\mathbb{R}^n}|\phi_1(x) u^p(x)|^q\, dx\right)^{1/q}
		\leqslant \left(\frac{\pi^{\frac{n}{2}}}{\frac{n}{2}\Gamma(\frac{n}{2})}\right)^{1/q}\|u\|^p_{L^\infty(\mathbb{R}^n)}
		\leqslant \left(1+\frac{\pi^{\frac{n}{2}}}{\frac{n}{2}\Gamma(\frac{n}{2})}\right)\|u\|^p_{L^\infty(\mathbb{R}^n)}.
	\end{equation}
	That is, $\phi_1 u^p\in L^q(\mathbb{R}^n)$ for any~$q\geqslant 1$. {F}rom Lemma~\ref{lemma embedding of X^p}, it then follows that~$ u_1\in W^{2,q}(\mathbb{R}^n)$ for any~$q\geqslant 1$, and in particular one can take any~$q>n$. Accordingly, 
from the Sobolev Embedding, we deduce that~$ u_1\in C^{1,1-\frac{n}{q}}(\mathbb{R}^n) $.

Thus, denoting by~$\alpha:=1-\frac{n}{q}$, from Lemma~\ref{lemma embedding of X^p} and~\eqref{q},   
	it follows that   
	\begin{equation*}
		\|u_1\|_{C^{1,\alpha}(\mathbb{R}^n)}\leqslant C_{n} \|u_1\|_{W^{2,q}(\mathbb{R}^n)}\leqslant C_{n,s} \|\phi_1 u^p\|_{L^q(\R^n)}  \leqslant C_{n,s}\|u\|^p_{L^\infty(\mathbb{R}^n)}
	\end{equation*}
	for some constant~$C_{n,s}$ independent of~$\alpha$.
	
	{F}rom this and~\eqref{u-u_1 D^2}, we deduce that for any~$x$, $y\in B_{1/2}$ with~$x\neq y$,
	\begin{equation*}
		\begin{split}
			\frac{|Du(x)-Du(y)|}{|x-y|^\alpha}&=\frac{|D(u-u_1)(x)-D(u-u_1)(y)+Du_1(x)-Du_1(y)|}{|x-y|^\alpha}\\
			&\leqslant \sup\limits_{\xi\in B_{1/4}}|D^2(u-u_1)(\xi)||x-y|^{1-\alpha}+\|u_1\|_{C^{1,\alpha}(\mathbb{R}^n)}\\
			&\leqslant c_{n,s,p} \|u\|^p_{L^\infty(\mathbb{R}^n)}.
		\end{split}
	\end{equation*}
Therefore, we have that~$u\in C^{1,\alpha}(\overline{B_{1/2}}) $
for any~$\alpha\in(0,1)$.

We point out that the ball~$B_{1/2}$ is centred at the origin, but we may arbitrarily move it around~$\mathbb{R}^n$. Covering~$\mathbb{R}^n$ with these balls, we conclude that~$u\in C^{1,\alpha}(\mathbb{R}^n)$ for any~$\alpha\in(0,1)$. This completes the proof of Theorem~\ref{th:regularity}.
\end{proof}

\subsection{$C^{2,\alpha}$-regularity of weak solutions}\label{sec:C2}

The goal of this section is to establish the $C^{2,\alpha}$-regularity result for
solutions of problem~\eqref{Maineq},
as stated in Theorem~\ref{th C^2,alpha interior without boundary condition}.
For this, we 
combine a suitable truncation argument for
the solution~$ u $ with the $ C^{1,\alpha} $-regularity argument.

We point out that Theorem~\ref{th C^2,alpha interior without boundary condition} can be obtained by appropriately  modifying~\cite[Theorem~1.6]{SVWZ23}. For the convenience of the reader, we sketch the proof in the following subsections.

To begin with, we introduce some notations.
For~$\alpha\in(0,1)$, 
$k\in{\mathbb{N}}$, $x_0\in B_{3/4}$ and~$R\in\left(0,\frac1{20}\right)$,
we denote the interior norms as follows:
	\begin{eqnarray*}	[u]_{\alpha;B_{R}(x_0)}&:=&\sup\limits_{x,y\in B_{R}(x_0)}\frac{|u(x)-u(y)|}{|x-y|^\alpha},\\
		|u|^\prime_{k;B_{R}(x_0)}&:=&\sum\limits_{j=0}\limits^{k}R^j\|D^j u\|_{L^\infty(B_{R}(x_0))}\\
{\mbox{and }}\quad
|u|^\prime_{k,\alpha;B_{R}(x_0)}&:=&|u|^\prime_{k;B_{R}(x_0)}+R^{k+\alpha}[D^ku]_{\alpha;B_{R}(x_0)}.
	\end{eqnarray*}

\subsubsection{{A mollifier technique and a truncation argument}}\label{sub A priori C{2,alpha}-estimate}

Let~$u\in H^1(\mathbb{R}^n)$ solve~\eqref{Maineq},
and let~$\eta_\epsilon$ be a standard mollifier. For every~$x\in\mathbb{R}^n$,
$R\in\left(0,\frac1{20}\right)$ and~$\epsilon\in(0,R)$, we denote by
\begin{equation*}
	u_\epsilon(x):=(\eta_\epsilon\ast u)(x)= \int_{|y|\leqslant \epsilon} \eta_\epsilon(y)u(x-y)\, dy.
\end{equation*}
Also, we set\begin{equation*}
 g_\epsilon:=\eta_\epsilon \ast (-u+u^p).
\end{equation*}
Then, we have that
\begin{equation}\label{u epsilon}
	-\Delta u_\epsilon+(-\Delta)^s u_\epsilon= g_\epsilon \qquad \text{  in } B_1.
\end{equation}

Moreover, the following regularity estimates on~$u_\epsilon$ and~$g_\epsilon$ follow as a direct consequence of their definitions:
 
\begin{Lemma}\label{pro u epsilon1} Let~$u\in L^\infty(\R^n)$.
Then, $u_\epsilon\in L^\infty(\mathbb{R}^n)$ and	$$
		\|u_\epsilon\|_{L^\infty(\mathbb{R}^n)}\leqslant \|u\|_{L^\infty(\mathbb{R}^n)}.
	$$
	
If in addition~$u\in C^1(\mathbb{R}^n)$, then, for every~$y\in\R^n$,
	\begin{equation*}
		\|g_\epsilon\|_{C^{\alpha}(\overline{B_{1}(y) })}\leqslant c_p\left(\|u\|_{C^1(\mathbb{R}^n)}+\|u\|_{C^1(\mathbb{R}^n)}^p\right).	
	\end{equation*}
\end{Lemma}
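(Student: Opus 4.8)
The statement is elementary once one unwinds the definitions, so the proof is a short computation rather than a conceptual argument. The plan is to estimate $u_\epsilon = \eta_\epsilon \ast u$ and $g_\epsilon = \eta_\epsilon \ast (-u + u^p)$ directly, using only Young's convolution inequality for the $L^\infty$ bound and the elementary identity $D(\eta_\epsilon \ast f) = \eta_\epsilon \ast (Df)$ together with $\|\eta_\epsilon\|_{L^1(\mathbb{R}^n)} = 1$ for the Hölder bound.

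\medskip

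\emph{The $L^\infty$ bound on $u_\epsilon$.} First I would write, for every $x \in \mathbb{R}^n$,
\[
|u_\epsilon(x)| = \left| \int_{|y| \le \epsilon} \eta_\epsilon(y)\, u(x-y)\, dy \right|
\le \|u\|_{L^\infty(\mathbb{R}^n)} \int_{|y| \le \epsilon} \eta_\epsilon(y)\, dy
= \|u\|_{L^\infty(\mathbb{R}^n)},
\]
since $\eta_\epsilon \ge 0$ and integrates to $1$. Taking the supremum over $x$ gives the first assertion.

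\medskip

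\emph{The Hölder bound on $g_\epsilon$.} Assume now $u \in C^1(\mathbb{R}^n)$. Since $t \mapsto t^p$ is $C^1$ on $\mathbb{R}$ (here one uses $p > 1$, so the derivative $p\,t^{p-1}$ extends continuously by $0$ at the origin, or one simply works with $|u|^{p-1}u$ as in Definition~\ref{defweaksol123}), the chain rule gives that $-u + u^p \in C^1(\mathbb{R}^n)$ with
\[
\nabla(-u + u^p) = -\nabla u + p\, u^{p-1} \nabla u,
\]
hence $\|-u + u^p\|_{C^1(\mathbb{R}^n)} \le c_p\big(\|u\|_{C^1(\mathbb{R}^n)} + \|u\|_{C^1(\mathbb{R}^n)}^p\big)$ for a constant $c_p$ depending only on $p$. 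Now $g_\epsilon = \eta_\epsilon \ast (-u + u^p)$, and since convolution with $\eta_\epsilon$ commutes with differentiation and $\|\eta_\epsilon\|_{L^1} = 1$, Young's inequality yields $\|g_\epsilon\|_{C^1(\mathbb{R}^n)} \le \|-u+u^p\|_{C^1(\mathbb{R}^n)}$. Finally, on any ball $\overline{B_1(y)}$ a $C^1$ function is automatically $C^\alpha$ with $[g_\epsilon]_{\alpha; B_1(y)} \le (\mathrm{diam})^{1-\alpha}\|\nabla g_\epsilon\|_{L^\infty} \le 2^{1-\alpha}\|\nabla g_\epsilon\|_{L^\infty}$, so that $\|g_\epsilon\|_{C^\alpha(\overline{B_1(y)})} \le c_p\big(\|u\|_{C^1(\mathbb{R}^n)} + \|u\|_{C^1(\mathbb{R}^n)}^p\big)$, up to relabelling $c_p$. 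This completes the proof.

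\medskip

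\emph{Main obstacle.} There is essentially no obstacle: the only mild subtlety is the differentiability of $t \mapsto |t|^{p-1}t$ at $t = 0$, which is harmless because $p > 1$ guarantees $C^1$ regularity there. Everything else is a direct application of Young's inequality and the commutation of mollification with differentiation.
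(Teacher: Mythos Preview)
Your proof is correct and is precisely the detailed verification that the paper omits: the paper simply states that the lemma ``follow[s] as a direct consequence of their definitions'' and provides no argument at all. Your computation---Young's inequality for the $L^\infty$ bound, the chain rule plus $\|\eta_\epsilon\|_{L^1}=1$ for the $C^1$ bound on $g_\epsilon$, and the trivial embedding $C^1\hookrightarrow C^\alpha$ on balls---is exactly the intended content.
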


	We now use a cut off argument for~$u_\epsilon$ to get the a $C^{2,\alpha}$-estimate for~$u_\epsilon$. 
	Consider a cut off function~$ \phi\in C^\infty_0(\mathbb{R}^n)$ satisfying 
	\begin{equation*}
			\phi \equiv 1 \text{  in } B_{{3}/{2}},\qquad
			\text{supp}(\phi)\subset B_{2}\qquad{\mbox{and}}\qquad
			0\leqslant \phi\leqslant 1 \text{  in } \mathbb{R}^n
	\end{equation*}
	and let 
$$
		\phi^R(x):=\phi\left(\frac{x-x_0}{R}\right).
$$
	We point out that 
	\[ B_{4R}(x_0)\subset B_1 \quad{\mbox{ and }}\quad \text{supp}(\phi^R)\subset B_{2R}(x_0). \]
	
	With this notation, one obtains the following result:
	
	\begin{Lemma}\label{lemma v epsilon}(\cite[Lemma~5.4]{SVWZ23})
		Let~$\alpha\in(0,1)$ and~$ g\in C^\alpha_{\rm loc}(B_1) $.
		Let~$ u\in C^{2,\alpha}(\overline{B_1})\cap L^\infty(\mathbb{R}^n)$ be a solution of 
		\begin{equation*}
			-\Delta u+(-\Delta)^s u= g \qquad \text{  in } B_1.
		\end{equation*}
		
		Then, there exists~$ \psi\in C^{\alpha}( B_{R}(x_0))$ such that~$ v:=\phi^R u $ satisfies 
		\begin{equation*}
			-\Delta v+(-\Delta)^s v= \psi \qquad \text{  in } B_1.
		\end{equation*}
	
	In particular, 
		\begin{equation}\label{estimate psi 2}
			R^2|\psi|^\prime_{0,\alpha;B_{R}(x_0)}\leqslant  C_{n,s}\left(R^2| g|^\prime_{0,\alpha;B_{R}(x_0)}+\|u\|_{L^\infty(\mathbb{R}^n)}\right),
		\end{equation}
		for some positive constant~$C_{n,s}$.
		\end{Lemma}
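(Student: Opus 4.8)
The plan is to produce~$\psi$ by a direct computation of~$-\Delta v+(-\Delta)^s v$, using the Leibniz rule for~$-\Delta$ and the nonlocal ``integration-by-parts'' identity for~$(-\Delta)^s$, and then to exploit the crucial structural fact that, by the choice of~$\phi$, one has~$\phi^R\equiv1$ on~$B_{3R/2}(x_0)$, hence on a full~$R$-scale neighbourhood of~$B_R(x_0)$. This localises every commutator term away from~$B_R(x_0)$ and leaves there only~$g$ plus one explicitly computable and easily estimated nonlocal term.

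In detail, I would first observe that~$v=\phi^R u\in C^{2,\alpha}(\mathbb{R}^n)$ has support in~$B_{2R}(x_0)$ with~$\overline{B_{2R}(x_0)}\subset B_1$, so that~$\psi:=-\Delta v+(-\Delta)^s v$ is a well-defined bounded continuous function and the identity~$-\Delta v+(-\Delta)^s v=\psi$ holds on~$\mathbb{R}^n$ by construction; the real content is the regularity and the bound~\eqref{estimate psi 2} on~$B_R(x_0)$. Next, for~$x\in B_R(x_0)$ I would use~$\phi^R\equiv1$, $\nabla\phi^R\equiv0$, $\Delta\phi^R\equiv0$ near~$x$ to get~$-\Delta v(x)=-\Delta u(x)$, and I would rewrite
\begin{equation*}
(-\Delta)^s v(x)=c_{n,s}\,\mathrm{P.V.}\!\int_{\mathbb{R}^n}\frac{u(x)-\phi^R(y)u(y)}{|x-y|^{n+2s}}\,dy=(-\Delta)^s u(x)+h(x),\qquad h(x):=c_{n,s}\!\int_{\mathbb{R}^n}\frac{\bigl(1-\phi^R(y)\bigr)u(y)}{|x-y|^{n+2s}}\,dy,
\end{equation*}
where the principal value is needed only for the first summand, while the integral defining~$h$ is absolutely convergent for~$x\in B_R(x_0)$ because~$1-\phi^R$ is supported in~$\{|y-x_0|\geqslant 3R/2\}\subset\{|x-y|\geqslant R/2\}$. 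Since~$u$ solves~$-\Delta u+(-\Delta)^s u=g$ in~$B_1$, this yields~$\psi=g+h$ on~$B_R(x_0)$, so in particular~$\psi\in C^\alpha(B_R(x_0))$, because~$g\in C^\alpha(\overline{B_R(x_0)})$ and~$h$ is smooth there.

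The remaining task is the quantitative control of~$h$. For~$x\in B_R(x_0)$ the kernel~$|x-y|^{-n-2s}$ and its~$x$-derivatives are integrated only over~$\{|x-y|\geqslant R/2\}$, so differentiating under the integral sign and pulling out the~$u(y)$ factor via~$\|u\|_{L^\infty(\mathbb{R}^n)}$ gives~$\|h\|_{L^\infty(B_R(x_0))}\leqslant C_{n,s}R^{-2s}\|u\|_{L^\infty(\mathbb{R}^n)}$ and~$\|\nabla h\|_{L^\infty(B_R(x_0))}\leqslant C_{n,s}R^{-2s-1}\|u\|_{L^\infty(\mathbb{R}^n)}$ (the exponents of~$R$ being dictated by scaling). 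Hence~$[h]_{\alpha;B_R(x_0)}\leqslant C_{n,s}R^{1-\alpha}\|\nabla h\|_{L^\infty(B_R(x_0))}$, so that~$|h|^\prime_{0,\alpha;B_R(x_0)}\leqslant C_{n,s}R^{-2s}\|u\|_{L^\infty(\mathbb{R}^n)}$ and therefore~$R^2|h|^\prime_{0,\alpha;B_R(x_0)}\leqslant C_{n,s}R^{2-2s}\|u\|_{L^\infty(\mathbb{R}^n)}\leqslant C_{n,s}\|u\|_{L^\infty(\mathbb{R}^n)}$, using~$R<1$ and~$2-2s>0$. Combining this with~$\psi=g+h$ produces~\eqref{estimate psi 2}. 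I expect the only genuinely delicate point to be the legitimacy of splitting~$(-\Delta)^s v$ as above together with the matching principal-value bookkeeping near~$x$; this becomes routine once one records that~$\phi^R$ is identically~$1$ on an~$R$-scale ball around each~$x\in B_R(x_0)$, after which everything reduces to scaling-driven estimation of the constants.
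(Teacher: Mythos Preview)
Your argument is correct. The paper does not supply its own proof of this lemma; it simply cites \cite[Lemma~5.4]{SVWZ23}, so there is nothing in the paper to compare against at the level of technique. Your direct computation---using that~$\phi^R\equiv1$ on~$B_{3R/2}(x_0)$ to reduce~$\psi$ to~$g+h$ on~$B_R(x_0)$, with~$h$ the absolutely convergent tail integral---is the natural and standard route, and your scaling bounds~$\|h\|_{L^\infty(B_R(x_0))}\leqslant C_{n,s}R^{-2s}\|u\|_{L^\infty(\mathbb{R}^n)}$ and~$\|\nabla h\|_{L^\infty(B_R(x_0))}\leqslant C_{n,s}R^{-2s-1}\|u\|_{L^\infty(\mathbb{R}^n)}$ are exactly what one expects and yield~\eqref{estimate psi 2} as you describe. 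One small remark: to justify that~$(-\Delta)^s u(x)$ is pointwise well defined for~$x\in B_R(x_0)$ you implicitly use both~$u\in C^{2}$ near~$x$ (for the local part of the principal value) and~$u\in L^\infty(\mathbb{R}^n)$ (for the tail), which are exactly the hypotheses available; it may be worth saying this explicitly.
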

		
As a consequence of  Lemma~\refeq{lemma v epsilon},	setting~$v_\epsilon:=\phi^R u_\epsilon$,
we have that there exists~$ \psi_\epsilon\in C^{\alpha}(B_{R}(x_0)) $ such that~$ v_\epsilon $ satisfies  
	\begin{equation*}
		-\Delta v_\epsilon+(-\Delta)^s v_\epsilon= \psi_\epsilon \qquad \text{  in } B_1.
	\end{equation*}
	In particular, employing~\eqref{estimate psi 2}, one finds that
	\begin{equation}
		\begin{split}\label{eq:psi epsilon}
			R^2|\psi_\epsilon|^\prime_{0,\alpha;B_{R}(x_0)}&\leqslant  C_{n,s} \left(R^2| g_\epsilon|^\prime_{0,\alpha;B_{R}(x_0)}+ \|u_\epsilon\|_{L^\infty(\mathbb{R}^n)}\right).
		\end{split}
	\end{equation}

We also observe that, since~$v_\epsilon\in C^\infty_0( B_{2R}(x_0))$, for all~$\delta>0$,
		there exists~$ C_{\delta}>0  $ such that  
		\begin{equation*}
			\begin{split}
			&	R^2|(-\Delta)^s v_\epsilon|^\prime_{0,\alpha;B_{R}(x_0)}= R^2\|(-\Delta)^s v_\epsilon\|_{L^\infty(B_{R}(x_0))} +R^{2+\alpha}[(-\Delta)^s v_\epsilon]_{\alpha; B_{R}(x_0)}\\
				&\qquad\leqslant C_{n,s}
				|u_\epsilon|^\prime_{2,\alpha_0;B_{2R}(x_0)}
				\leqslant\delta |u_\epsilon|^\prime_{2,\alpha;B_{2R}(x_0)} +C_\delta \|u_\epsilon\|_{L^\infty(B_{2R}(x_0))},
			\end{split}
		\end{equation*}
		where 
		\begin{equation*}
			\alpha_0:=\begin{cases}
				0\qquad & \alpha<2-2s,\\
				\alpha-(1-s) &\alpha \geqslant 2-2s.
			\end{cases}
		\end{equation*}
		Thus, combining this with~\cite[Theorem~4.6]{GTbook}, one deduces that for all~$\delta>0$
	there exists~$ C_{\delta}>0  $ such that  
	\begin{equation*}
		\begin{split}
			|v_\epsilon|^\prime_{2,\alpha;B_{R/2}(x_0)}&\leqslant C\left(\|v_\epsilon\|_{L^\infty(B_{R}(x_0))}+R^2\left(|\psi_\epsilon|^\prime_{0,\alpha;B_{R}(x_0)}+|(-\Delta)^s v_\epsilon|^\prime_{0,\alpha;B_{R}(x_0)}\right)\right)\\
			&\leqslant C\bigg(\|v_\epsilon\|_{L^\infty(B_{R}(x_0))}+R^2|\psi_\epsilon|^\prime_{0,\alpha;B_{R}(x_0)}
			+\delta |u_\epsilon|^\prime_{2,\alpha;B_{2R}(x_0)}+C_\delta \|u_\epsilon\|_{L^\infty(B_{2R}(x_0))}\bigg). 
		\end{split}
	\end{equation*}
	
	Therefore, for every~$x_0\in B_{{3}/{4}}$, recalling the definition of~$v_\epsilon$ and exploting~\eqref{eq:psi epsilon} and Lemma~\ref{pro u epsilon1}, we conclude that for all~$ \delta>0$, there exists~$C_\delta$ such that 
	\begin{equation}\label{final equation 2}
		\begin{split}
			&|u_\epsilon|^\prime_{2,\alpha;B_{R/2}(x_0)}=|v_\epsilon|^\prime_{2,\alpha;B_{R/2}(x_0)}\\ 
			& \qquad\leqslant C \left(R^2|\psi_\epsilon|^\prime_{0,\alpha;B_{R}(x_0)}+\delta |u_\epsilon|^\prime_{2,\alpha;B_{2R}(x_0)}+C_\delta \|u\|_{L^\infty(B_{2R}(x_0))}\right)\\
			&\qquad\le C
			 \left(R^2| g_\epsilon|^\prime_{0,\alpha;B_{R}(x_0)}+ \|u_\epsilon\|_{L^\infty(\mathbb{R}^n)}
			 +\delta |u_\epsilon|^\prime_{2,\alpha;B_{2R}(x_0)}+C_\delta \|u\|_{L^\infty(B_{2R}(x_0))}
			 \right)
			\\
			&\qquad\leqslant C \left(\|u\|_{C^1(\mathbb{R}^n)}+\|u\|_{C^1(\mathbb{R}^n)}^p+\|u\|_{L^\infty(\mathbb{R}^n)}+\delta |u_\epsilon|^\prime_{2,\alpha;B_{2R}(x_0)}+C_\delta \|u\|_{L^\infty(B_{2R}(x_0))}\right)\\
		&\qquad	\leqslant C\left(\|u\|_{C^1(\mathbb{R}^n)}+\|u\|_{C^1(\mathbb{R}^n)}^p+C_\delta \|u\|_{L^\infty(\mathbb{R}^n)}+\delta |u_\epsilon|^\prime_{2,\alpha;B_{2R}(x_0)}\right),
		\end{split}
	\end{equation} for some~$ C>0$, depending on~$n$, $s$, $p$ and~$\alpha $.

\subsubsection{Interior $C^{2,\alpha}$-regularity}\label{sec:Proof of Theorem1.5}

The estimate in~\eqref{final equation 2},
coupled with the following statement, will allow us to 
obtain that the~$C^{2,\alpha}$-norm of~$u_\epsilon$ is bounded uniformly in some ball with respect to~$\epsilon$ and thus use Arzel\`a-Ascoli Theorem to complete the proof of Theorem~\ref{th C^2,alpha interior without boundary condition}.
The technical details go as follows:

\begin{Proposition}\label{pro final estimate}(\cite[Proposition~4.3]{SVWZ23})
	Let~$y\in \mathbb{R}^n$, $ d>0$ and~$u\in C^{2,\alpha}(B_{d}(y))$. Suppose that, for any~$\delta > 0 $, there
	exists~$\Lambda_\delta> 0$ such that, for any~$x\in B_{d}(y)$ and any~$r\in(0, d-|x-y|]$, we have that
	\begin{equation}\label{diuwetifgaslfg385730364793-05hrgtfedsxaz}
		|u|^\prime_{2,\alpha;B_{r/8}(x)}\leqslant \Lambda_\delta+ \delta |u|^\prime_{2,\alpha;B_{r/2}(x)}.
	\end{equation}
	
	Then, there exist constants~$\delta_0$, $C > 0$, depending only on~$n$, $\alpha$ and~$ d$, such that
$$
		\|u\|_{C^{2,\alpha}(B_{d/8}(y))}\leqslant C\Lambda_{\delta_0}.
$$
\end{Proposition}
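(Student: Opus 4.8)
The plan is to run a Gilbarg--Trudinger--type argument with distance-weighted interior norms, the point being that, after passing to a suitable global quantity, the term $\delta\,|u|'_{2,\alpha;B_{r/2}(x)}$ of \eqref{diuwetifgaslfg385730364793-05hrgtfedsxaz} can be absorbed into the left-hand side. First I would translate so that $y=0$ and, in order to work with quantities that are finite a priori, restrict attention to the slightly smaller ball $\Omega:=B_{7d/8}$, on whose closure $u$ is genuinely of class $C^{2,\alpha}$. For $x\in\Omega$ write $d_x:=\tfrac78 d-|x|=\operatorname{dist}(x,\partial\Omega)$; since $d_x<d-|x|$, the hypothesis applies on every ball $B_r(x)$ with $r\le d_x$, so, setting $\rho:=r/8$, \eqref{diuwetifgaslfg385730364793-05hrgtfedsxaz} reads
\[ |u|'_{2,\alpha;B_{\rho}(x)}\le \Lambda_\delta+\delta\,|u|'_{2,\alpha;B_{4\rho}(x)}\qquad\text{for every }x\in\Omega\text{ and }0<\rho\le\tfrac{d_x}{8}. \]
I would also record the elementary fact that $\rho\mapsto|u|'_{2,\alpha;B_\rho(x)}$ is nondecreasing, which is immediate from the definition of the interior norm and is used repeatedly.

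Next I would introduce the weighted quantity $\Phi:=\sup_{x\in\Omega}|u|'_{2,\alpha;B_{d_x/16}(x)}$, which is finite because $u\in C^{2,\alpha}(\overline{\Omega})$, and prove the comparison
\[ |u|'_{2,\alpha;B_{d_x/4}(x)}\le C_0\,\Phi\qquad\text{for all }x\in\Omega, \]
with $C_0=C_0(n,\alpha)$. This is a patching lemma for H\"older seminorms: one covers $B_{d_x/4}(x)$ by a number $N(n)$ of balls $B_{3d_x/64}(z_i)$ with $z_i\in B_{d_x/4}(x)$ and with Lebesgue number comparable to $d_x$; since $d_{z_i}\ge\tfrac34 d_x$, each such ball lies in $B_{d_{z_i}/16}(z_i)$, so by monotonicity $|u|'_{2,\alpha;B_{3d_x/64}(z_i)}\le\Phi$; then the $L^\infty$-norms of $u$, $Du$, $D^2u$ over $B_{d_x/4}(x)$ are controlled by the maxima over the pieces (the scale ratio $\tfrac{d_x/4}{3d_x/64}$ being an absolute constant), and $[D^2u]_{\alpha;B_{d_x/4}(x)}$ is controlled by splitting a pair of points according to whether they lie in a common $B_{3d_x/64}(z_i)$ or their distance is comparable to $d_x$, the latter case estimated crudely by $2\|D^2u\|_{L^\infty}/(\text{distance})^\alpha$.

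Combining the recursion at scale $\rho=d_x/16\le d_x/8$ with this comparison gives, for every $x\in\Omega$,
\[ |u|'_{2,\alpha;B_{d_x/16}(x)}\le\Lambda_\delta+\delta\,|u|'_{2,\alpha;B_{d_x/4}(x)}\le\Lambda_\delta+\delta C_0\,\Phi, \]
and taking the supremum over $x\in\Omega$ yields $\Phi\le\Lambda_\delta+\delta C_0\,\Phi$. Choosing $\delta_0:=\tfrac1{2C_0}$, which depends only on $n$ and $\alpha$, and absorbing — legitimate precisely because $\Phi<\infty$ — gives $\Phi\le 2\Lambda_{\delta_0}$. Finally, since $\overline{B_{d/8}}\subset\Omega$ with $d_x\ge\tfrac34 d$ for $x\in\overline{B_{d/8}}$, one has $B_{d/32}(x)\subset B_{d_x/16}(x)$ there, whence $|u|'_{2,\alpha;B_{d/32}(x)}\le\Phi\le2\Lambda_{\delta_0}$; covering $\overline{B_{d/8}}$ by finitely many such balls and applying the patching lemma once more — this time to convert the weighted local pieces into the ordinary norm $\|u\|_{C^{2,\alpha}(\overline{B_{d/8}})}$, which costs a constant depending on $n$, $\alpha$ and $d$ — produces $\|u\|_{C^{2,\alpha}(B_{d/8}(y))}\le C(n,\alpha,d)\Lambda_{\delta_0}$, as desired.

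The genuinely delicate point is the patching lemma, and in particular ensuring that the far-apart-points contribution to the H\"older seminorm is dominated by $\Phi$ with a constant depending only on $n$ and $\alpha$ (so that it is neither lost nor itself absorbed) and that all the geometric ratios involved are scale invariant; the rest is bookkeeping of constants together with the choice $\delta_0=1/(2C_0)$. An essentially equivalent route is to phrase everything in terms of the Gilbarg--Trudinger weighted interior norms $|u|^{*}_{2,\alpha;\Omega}$ and to invoke the corresponding interpolation and patching facts from \cite{GTbook}.
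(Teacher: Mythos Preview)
The paper does not supply its own proof of this proposition; it is quoted verbatim from \cite[Proposition~4.3]{SVWZ23} and used as a black box. Consequently there is no in-paper argument to compare your proposal against.

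That said, your outline is a correct and standard way to prove such an absorption statement. The key ingredients---introducing a distance-weighted global quantity $\Phi$ that is finite by the a priori $C^{2,\alpha}$ regularity on a compactly contained subdomain, the patching lemma to compare $|u|'_{2,\alpha;B_{d_x/4}(x)}$ with $\Phi$ up to a dimensional constant, and the choice $\delta_0=1/(2C_0)$ to absorb---are all sound, and your verification that the scale ratios appearing in the covering argument are absolute constants is the right point to emphasize. The alternative you mention at the end, namely working directly with the Gilbarg--Trudinger starred norms $|u|^{*}_{2,\alpha;\Omega}$ and invoking the interpolation machinery of \cite[Chapter~6]{GTbook}, is indeed an equivalent route and is likely close in spirit to what the cited reference does.
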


	\begin{proof}[Proof of Theorem~\ref{th C^2,alpha interior without boundary condition}]
		We will use Proposition~\ref{pro final estimate} with~$d:=1/10 $, so that, for every~$ y\in B_{1/2}$,
		$$
			B_{d}(y)\subset B_{3/4} \quad \text{and}\quad  d/4<\frac{1}{20}.	$$

Moreover, we notice that the estimate in~\eqref{final equation 2} tells us that	
formula~\eqref{diuwetifgaslfg385730364793-05hrgtfedsxaz}
is verified in our setting with~$u$ replaced by~$u_\varepsilon$, $r:=4R$ and
$$
\Lambda_\delta:=
\|u\|_{C^1(\mathbb{R}^n)}+\|u\|_{C^1(\mathbb{R}^n)}^p+C_\delta \|u\|_{L^\infty(\mathbb{R}^n)}.
$$
Therefore, we are in a position of exploiting
Proposition~\ref{pro final estimate}, 
thus obtaining that, for every~$y\in B_{1/2}$,
$$
			\|u_\epsilon\|_{C^{2,\alpha}(\overline{B_{1/80}(y) })}\leqslant C\left(\|u\|_{C^1(\mathbb{R}^n)}+\|u\|_{C^1(\mathbb{R}^n)}^p+C_{\delta} \|u\|_{L^\infty(\mathbb{R}^n)}\right).
$$	
		
{F}rom the Arzel\`{a}-Ascoli Theorem, we obtain that~$ u\in C^{2,\alpha}(\overline{B_{1/80}(y)})$, for every~$y\in B_{1/2}$, and
		\begin{equation*}
			\|u\|_{C^{2,\alpha}(\overline{B_{1/80}(y)})}\leqslant C\left(\|u\|_{C^1(\mathbb{R}^n)}+\|u\|_{C^1(\mathbb{R}^n)}^p+ \|u\|_{L^\infty(\mathbb{R}^n)}\right).
		\end{equation*}
Hence, a covering argument and Theorem~\ref{th:regularity},
give that		\begin{equation*}
			\begin{split}
				\|u\|_{C^{2,\alpha}(\overline{B_{1/2}})}&\leqslant C\left(\|u\|_{C^1(\mathbb{R}^n)}+\|u\|_{C^1(\mathbb{R}^n)}^p+ \|u\|_{L^\infty(\mathbb{R}^n)}\right)\\
				&\leqslant C\left(\|u\|_{L^\infty(\mathbb{R}^n)}+\|u\|^p_{L^\infty(\mathbb{R}^n)}+\left(\|u\|_{L^\infty(\mathbb{R}^n)}+\|u\|^p_{L^\infty(\mathbb{R}^n)}\right)^p+ \|u\|_{L^\infty(\mathbb{R}^n)}\right),
			\end{split}
		\end{equation*}
		where the constant~$ C>0 $ depends on~$ n$, $s$, $\alpha$ and~$p $.
		
		 The ball~$B_1$ is centered at the origin, but we may arbitrarily move it around~$\mathbb{R}^n$. Covering~$\mathbb{R}^n$ with these balls, we
		 obtain the desired result.
	\end{proof}
	{

\section{Qualitative properties of positive solutions}\label{sec: Qualitative properties of positive solution}

In this section, we are concerned with the positivity, the decay at infinity and the radial symmetry of classical solutions of~\eqref{Maineq}, as stated in Theorem~\ref{th main theorem}. 

\subsection{Power-type decay of classical positive solutions}

In this part, we shall apply the Maximum Principle and  comparison arguments to obtain the 
decay at infinity of classical positive solutions of~\eqref{Maineq}.

\subsubsection{Existence of classical positive solutions}

We devote this part to establish the existence of classical positive solutions.

	\begin{Theorem}\label{th: u>0}
	Let~$n\geqslant2$ and~$s\in(0,1)$.
	
	Then, problem~\eqref{Maineq} has a classical solution, which satisfies~$u> 0$  in~$\mathbb{R}^n$. Moreover, 
	\begin{equation}\label{dioe3yr2fgufegvwj}
	\lim_{|x|\to+\infty}u(x)= 0.\end{equation}
\end{Theorem}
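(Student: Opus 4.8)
The plan is to produce the desired solution by combining the existence statement of Theorem~\ref{th existence} with the regularity theory of Section~\ref{sec: C^alpha-regularity}, and then to promote nonnegativity to strict positivity by exploiting the positivity of the Bessel kernel~$\mathcal{K}$ established in Theorem~\ref{th properties of k}. First I would take the nontrivial, nonnegative weak solution~$u\in H^1(\mathbb{R}^n)$ of~\eqref{Maineq} furnished by Theorem~\ref{th existence}; recall that, by Lemma~\ref{lemma:pivotal}, the critical point of~$\mathscr{F}^+$ produced there is in fact a weak solution of~\eqref{Maineq} in the sense of Definition~\ref{defweaksol123}, and that~$u\geqslant0$ makes the nonlinearity~$|u|^{p-1}u$ coincide with~$u^p$. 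By Theorem~\ref{Theorem :holder regularity} we have~$u\in C^{0,\mu}(\mathbb{R}^n)\subset L^\infty(\mathbb{R}^n)$; then Theorem~\ref{th:regularity} upgrades this to~$u\in C^{1,\alpha}(\mathbb{R}^n)$ and, simultaneously, yields~$\lim_{|x|\to+\infty}u(x)=0$, which is~\eqref{dioe3yr2fgufegvwj}; finally Theorem~\ref{th C^2,alpha interior without boundary condition} gives~$u\in C^{2,\alpha}(\mathbb{R}^n)$. Hence~$u$ is a classical solution of~\eqref{Maineq}, nonnegative and not identically zero.

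It then remains to show that~$u>0$ throughout~$\mathbb{R}^n$. The key point is the integral representation~$u=\mathcal{K}\ast u^p$. To obtain it, I would first note that~$u^p\in L^2(\mathbb{R}^n)$, since~$|u|^{2p}\leqslant\|u\|_{L^\infty(\mathbb{R}^n)}^{2(p-1)}\,|u|^2$ and~$u\in L^2(\mathbb{R}^n)$. By part~(e) of Theorem~\ref{th properties of k}, the function~$v:=\mathcal{K}\ast u^p$ solves~$-\Delta v+(-\Delta)^s v+v=u^p$ in~$\mathbb{R}^n$; moreover~$v\in H^1(\mathbb{R}^n)$, because on the Fourier side~$\widehat v=(1+|\xi|^2+|\xi|^{2s})^{-1}\widehat{u^p}$ and~$(1+|\xi|^2)^{1/2}(1+|\xi|^2+|\xi|^{2s})^{-1}\leqslant1$. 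Consequently~$w:=u-v\in H^1(\mathbb{R}^n)$ is a weak solution of~$-\Delta w+(-\Delta)^s w+w=0$; testing this equation against~$w$ itself yields~$\|w\|_s^2=0$, hence~$w\equiv0$ and~$u=\mathcal{K}\ast u^p$. Now, since~$u$ is continuous, nonnegative and~$u\not\equiv0$, the function~$u^p$ is continuous, nonnegative and strictly positive on a nonempty open set; as~$\mathcal{K}>0$ everywhere in~$\mathbb{R}^n$ by part~(a) of Theorem~\ref{th properties of k}, we conclude that~$u(x)=\int_{\mathbb{R}^n}\mathcal{K}(x-y)\,u^p(y)\,dy>0$ for every~$x\in\mathbb{R}^n$. (Alternatively, once~$u\in C^2(\mathbb{R}^n)$ is known, strict positivity follows from a strong maximum principle for~$-\Delta+(-\Delta)^s$ applied at a would-be interior zero of~$u$, where the nonlocal term is strictly negative unless~$u\equiv0$.)

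The step I expect to be the main obstacle is the rigorous justification of the identity~$u=\mathcal{K}\ast u^p$: one must check that~$u^p$ lies in a Lebesgue class to which part~(e) of Theorem~\ref{th properties of k} applies, that~$\mathcal{K}\ast u^p$ is regular and integrable enough (e.g.\ in~$H^1(\mathbb{R}^n)$) for the energy identity to be licit, and that the resulting homogeneous problem for~$w$ admits only the trivial solution in this class, i.e.\ a Liouville-type uniqueness statement for~$-\Delta+(-\Delta)^s+1$. Everything else is a direct application of the results already established in the previous sections.
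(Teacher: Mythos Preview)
Your proposal is correct. The regularity and decay parts coincide exactly with the paper's argument: invoke Theorem~\ref{th existence} for a nontrivial nonnegative weak solution, then bootstrap through Theorems~\ref{Theorem :holder regularity}, \ref{th:regularity} and~\ref{th C^2,alpha interior without boundary condition} to obtain~$u\in C^{2,\alpha}(\mathbb{R}^n)$ and~\eqref{dioe3yr2fgufegvwj}.

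For strict positivity, however, the paper takes the shorter route you mention only as an alternative. It argues directly at a hypothetical zero: if~$u(x_0)=0$, then~$x_0$ is a global minimum of the nonnegative function~$u$, so~$\Delta u(x_0)\geqslant0$ while~$(-\Delta)^s u(x_0)<0$ (strictly, because~$u\not\equiv0$); plugging into the equation yields~$0=u^p(x_0)=-\Delta u(x_0)+(-\Delta)^s u(x_0)+u(x_0)<0$, a contradiction. This is a three-line argument requiring only the~$C^2$ regularity already established. Your primary route via~$u=\mathcal{K}\ast u^p$ is more elaborate---you must verify~$u^p\in L^2$, that~$\mathcal{K}\ast u^p\in H^1$, and the Liouville-type uniqueness for~$w$---but it is sound, and it has the virtue of delivering the integral representation explicitly, which the paper will in any case need (and rederive) later in Section~\ref{sec: Qualitative properties of positive solution} for the moving-planes argument.
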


\begin{proof}
	{F}rom Theorems~\ref{th existence} and~\ref{th C^2,alpha interior without boundary condition}, 
we have that there exists a classical nonnegative solution of~\eqref{Maineq}.	
	
Hence, we can now focus on the proving that~$u>0$ in~$\mathbb{R}^n$. 
For this,
we argue by contradiction and we assume that there exists a global minimum point~$x_0\in\mathbb{R}^n$ at which~$u(x_0)=0$.
Accordingly, we have that~$\Delta u(x_0)\geqslant 0$
and~$(-\Delta)^s u(x_0)<0$. As a result, we deduce from~\eqref{Maineq} that
$$
	 	0=u^p(x_0)=-\Delta u(x_0)+(-\Delta)^s u(x_0)+u(x_0)< 0,
$$
which is a contradiction.

Finally, from Theorem~\ref{th:regularity}, we also have~\eqref{dioe3yr2fgufegvwj}.
\end{proof}

\subsubsection{Power-type decay of classical positive solutions}

In this part, we exploit suitable barriers constructed using the Bessel kernel~$\mathcal{ K}$ to establish the following result:

\begin{Theorem}\label{th:decay}
	Let~$n\geqslant 2$ and~$s\in(0,1)$. Let~$u$ be a positive classical solution of~\eqref{Maineq}. 
	
	Then, there exist constants~$0<C_1\leqslant C_2$ such that, for every~$ |x|\geqslant1$,
	\[ \frac{C_1}{|x|^{n+2s}}\leqslant u(x)\leqslant \frac{C_2}{|x|^{n+2s}}. \]
\end{Theorem}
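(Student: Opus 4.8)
The plan is to obtain the upper and lower bounds separately, in both cases comparing $u$ with suitable multiples of the Bessel kernel~$\mathcal{K}$, whose decay at infinity is exactly of order~$|x|^{-(n+2s)}$ by part~(b) of Theorem~\ref{th properties of k}. The starting point is the representation $u = \mathcal{K}\ast(-u + u^p + u) = \mathcal{K}\ast u^p$ — more precisely, since $-\Delta u + (-\Delta)^s u + u = u^p$ and $u^p \in L^q(\mathbb{R}^n)$ for suitable $q$ (recall $u$ is bounded and in $H^1$, hence $u^p\in L^1\cap L^\infty$), part~(e) of Theorem~\ref{th properties of k} gives $u = \mathcal{K}\ast u^p$.

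\emph{Upper bound.} First I would use the already established decay $\lim_{|x|\to+\infty}u(x)=0$ (Theorem~\ref{th:regularity}) to fix $R_0\ge 1$ with $u(x)^{p-1}\le \tfrac12$ for $|x|\ge R_0$, so that for such $x$ one has $u^p(x)\le\tfrac12 u(x)$. Then I would seek a supersolution of the form $w(x):=M\,\mathcal{K}(x)$ on $\{|x|\ge R_0\}$: since $-\Delta w+(-\Delta)^s w + w = M\,\delta_0 = 0$ away from the origin (using part~(e) applied to an approximation, or directly that $\mathcal{K}$ solves the homogeneous equation in $\mathbb{R}^n\setminus\{0\}$), and $u^p\le \tfrac12 u$, one checks that $w-u$ satisfies $-\Delta(w-u)+(-\Delta)^s(w-u)+(w-u)\ge -u^p + u = u(1-u^{p-1})\ge 0$ outside $B_{R_0}$ — wait, this needs care because the operator is nonlocal. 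The cleaner route is: pick $M$ large enough that $M\,\mathcal{K}(x)\ge u(x)$ on $\overline{B_{R_0}}$ (possible since $\mathcal{K}$ is bounded below there by part~(b)) and such that $w$ is a supersolution of the linearized inequality $-\Delta v + (-\Delta)^s v + \tfrac12 v \ge 0$ in $\{|x|>R_0\}$, which holds since $w\ge 0$ and $-\Delta w+(-\Delta)^sw = 0$ there. Then apply the maximum principle for the mixed operator on the exterior domain (the nonlocal term forces one to prescribe $w\ge u$ on all of $B_{R_0}$, which we have arranged) to conclude $u\le w = M\mathcal{K}$ everywhere, and invoke part~(b) to get $u(x)\le C_2|x|^{-(n+2s)}$ for $|x|\ge 1$.

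\emph{Lower bound.} By Theorem~\ref{th: u>0} we know $u>0$, hence $u\ge c_0>0$ on $\overline{B_1}$ for some $c_0$. From $u=\mathcal{K}\ast u^p\ge \int_{B_1}\mathcal{K}(x-y)u^p(y)\,dy\ge c_0^p\int_{B_1}\mathcal{K}(x-y)\,dy$ and, for $|x|\ge 2$, the lower bound on $\mathcal{K}$ from part~(b) together with $|x-y|\le |x|+1\le\tfrac32|x|$, one directly gets $u(x)\ge c_0^p\,|B_1|\cdot \tfrac{C_3}{(3|x|/2)^{n+2s}}\ge C_1 |x|^{-(n+2s)}$, and the range $1\le|x|\le 2$ is handled by positivity and continuity of $u$. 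This direction is essentially immediate once the convolution representation is in hand.

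\emph{Main obstacle.} The delicate point is the upper bound: the comparison argument must be carried out for the \emph{nonlocal} operator $-\Delta+(-\Delta)^s$, so a naive interior maximum principle on $\mathbb{R}^n\setminus B_{R_0}$ is not available — one needs a maximum principle for mixed local/nonlocal operators on exterior domains with the "boundary" datum prescribed on the whole complement $B_{R_0}$, and one must ensure $\mathcal{K}$ genuinely solves $(-\Delta+(-\Delta)^s+\mathrm{Id})\mathcal{K}=0$ pointwise in $\mathbb{R}^n\setminus\{0\}$ (which follows from differentiating under the integral in~\eqref{definition of K}, justified by the smoothness and decay in parts~(a),(c) of Theorem~\ref{th properties of k}). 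Getting the constant $M$ to simultaneously dominate $u$ on $B_{R_0}$ and keep $M\mathcal{K}$ a supersolution of the inequality with the favorable zeroth-order sign is the crux; absorbing the $u^p$ term via $u^p\le \tfrac12 u$ on the exterior region is what makes the sign work out.
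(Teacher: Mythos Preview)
Your lower bound is correct and in fact cleaner than the paper's argument: once you have the convolution identity $u=\mathcal{K}\ast u^p$ from Theorem~\ref{th properties of k}(e), the pointwise lower bound on~$\mathcal{K}$ gives the result directly. The paper instead builds a subsolution $\omega=\mathcal{K}\ast\chi_{B_{1/2}}$ and runs a maximum-principle comparison with $u-\beta\omega$; your convolution estimate bypasses this.

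The upper bound, however, has a genuine gap. You claim that $w=M\mathcal{K}$ satisfies $-\Delta w+(-\Delta)^s w=0$ in $\{|x|>R_0\}$, and hence $-\Delta w+(-\Delta)^s w+\tfrac12 w=\tfrac12 w\ge 0$ there. But $\mathcal{K}$ is the fundamental solution of $-\Delta+(-\Delta)^s+\mathrm{Id}$, so away from the origin one has $(-\Delta+(-\Delta)^s)\mathcal{K}=-\mathcal{K}$, not $0$. Consequently $-\Delta w+(-\Delta)^s w+\tfrac12 w=-\tfrac12 w\le 0$: your $w$ is a \emph{sub}solution of the inequality you need, and the comparison with $u$ (which also satisfies $-\Delta u+(-\Delta)^s u+\tfrac12 u\le 0$ on the exterior region) yields no information.

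The paper repairs exactly this point by introducing the modified kernel $\mathcal{K}_{1/2}:=\mathcal{F}^{-1}\big((\tfrac12+|\xi|^2+|\xi|^{2s})^{-1}\big)$, which has the same $|x|^{-(n+2s)}$ decay (Lemma~\ref{lemma K_a}). The barrier $v=\mathcal{K}_{1/2}\ast\chi_{B_{1/2}}$ then satisfies $-\Delta v+(-\Delta)^s v+\tfrac12 v=0$ in $\mathbb{R}^n\setminus B_1$ on the nose, and the maximum-principle comparison with $\gamma u$ goes through. So your strategy is right in spirit---match the zeroth-order coefficient to the one forced by $u^{p-1}\le\tfrac12$---but you need the kernel for the operator with coefficient $\tfrac12$, not the original~$\mathcal{K}$.
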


To prove this result, we start with the following two lemmata, which construct suitable subsolutions and supersolutions.
We start with the analysis of the subsolution:

\begin{Lemma}\label{lemma subsection}
There exists a  function~$\omega\in C^{1,\alpha}(\mathbb{R}^n)$ satisfying
	\begin{equation}\label{x>1}
		\begin{cases}
	-\Delta \omega+(-\Delta)^s \omega+\omega=0 \quad \text{in }{\mathbb{R}}^n\setminus B_1,\\
	\omega>0\quad \text{in }{\mathbb{R}}^n, \\
	\displaystyle \lim_{ |x|\to +\infty}\omega(x)= 0  
\end{cases}
	\end{equation}
and, for every~$ |x|>1$,
\begin{equation}\label{omega }
	\omega(x)\geqslant \frac{c_1}{|x|^{n+2s}},
\end{equation}
	for some constant~$c_1>0$.
\end{Lemma}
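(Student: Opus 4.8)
The plan is to build $\omega$ essentially from the Bessel kernel $\mathcal K$ itself, since $\mathcal K$ already solves $-\Delta\mathcal K+(-\Delta)^s\mathcal K+\mathcal K=\delta_0$ by (e) of Theorem~\ref{th properties of k}, hence solves the homogeneous equation away from the origin, and it has exactly the right decay $\mathcal K(x)\sim|x|^{-(n+2s)}$ for $|x|\ge1$ by (b). The only defect of $\mathcal K$ is that it blows up at the origin (like $|x|^{2-n}$, or a logarithm when $n=2$), so it is not $C^{1,\alpha}$ there; we fix this by interpolating with a smooth function near $B_1$. Concretely, I would fix a cutoff $\chi\in C^\infty_0(B_1)$ with $\chi\equiv1$ on $B_{1/2}$ and set
$$\omega:=\mathcal K\ast(1-\chi)\mathcal K \ \text{ up to normalisation}, \quad\text{or more simply}\quad \omega:=\mathcal K\ast f,$$
where $f:=(1-\chi)\,\mathcal K + \chi$ (any fixed nonnegative, bounded, compactly supported ``filler'' on $B_1$ works). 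Then $f\in L^q(\mathbb R^n)$ for all admissible $q$ by (d) of Theorem~\ref{th properties of k}, $f>0$, and $f\equiv\mathcal K$ outside $B_1$.

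The key steps, in order, are: (i) By (e) of Theorem~\ref{th properties of k}, $\omega=\mathcal K\ast f$ solves $-\Delta\omega+(-\Delta)^s\omega+\omega=f$ in $\mathbb R^n$; since $f\equiv0$ on $\mathbb R^n\setminus B_1$ would be false (we have $f=\mathcal K\neq 0$ there), I instead take $f$ supported in $B_1$, e.g. $f:=\chi\ge0$, $f\not\equiv0$, $f\in C^\infty_0(B_1)$. Then $\omega$ solves the homogeneous equation in $\mathbb R^n\setminus B_1$, giving the first line of~\eqref{x>1}. (ii) Positivity $\omega>0$ in $\mathbb R^n$: since $\mathcal K>0$ (by (a)) and $f\ge0$, $f\not\equiv0$, the convolution $\omega=\mathcal K\ast f>0$ everywhere. (iii) Decay: since $f$ is supported in $B_1$, for $|x|\ge1$ we can write $\omega(x)=\int_{B_1}\mathcal K(x-y)f(y)\,dy$ and use the two-sided bound $\mathcal K(x-y)\asymp|x-y|^{-(n+2s)}\asymp|x|^{-(n+2s)}$ valid for $|x|\ge1$, $|y|\le1$ — this comes from (b) of Theorem~\ref{th properties of k} together with $|x|/2\le|x-y|\le 2|x|$ when $|x|\ge2$, and a compactness/continuity argument on the annulus $1\le|x|\le2$. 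This yields both~\eqref{omega } and $\lim_{|x|\to+\infty}\omega(x)=0$. (iv) Regularity $\omega\in C^{1,\alpha}(\mathbb R^n)$: since $f\in C^\infty_0$, in particular $f\in L^q$ for all $q$, Lemma~\ref{lemma embedding of X^p} gives $\omega\in W^{2,q}(\mathbb R^n)$ for all $q\ge1$, hence $\omega\in C^{1,\alpha}(\mathbb R^n)$ for all $\alpha\in(0,1)$ by Sobolev embedding (this is exactly the argument already used in the proof of Theorem~\ref{th:regularity}, replacing $\phi_1u^p$ by $f$).

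The main obstacle — really the only point requiring care — is the decay estimate~\eqref{omega }, specifically getting it uniform down to $|x|=1$ rather than just asymptotically. For $|x|$ large this is immediate from $\mathcal K(x-y)\ge C_3|x-y|^{-(n+2s)}$; for $|x|$ in the compact range $[1,2]$ one observes that $x\mapsto\omega(x)$ is continuous and strictly positive (by step (ii)), so it is bounded below by a positive constant there, which can be absorbed into $c_1$ after comparing with $|x|^{-(n+2s)}$, itself bounded on $[1,2]$. A small additional check is that $\mathcal K(x-y)$ is genuinely comparable to $\mathcal K(x)$ for $|y|\le1$: when $|x|\ge2$ this follows from monotonicity of $\mathcal K$ in $r=|x|$ (part (a)) sandwiched between $\mathcal K(2|x|)$ and $\mathcal K(|x|/2)$ and the bound in (b), and when $1\le|x|<2$ one again uses positivity and continuity on a compact set. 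No scaling is available (the equation is not scale invariant), but since everything here happens at unit scale this causes no trouble. I would close by remarking that, alternatively, one may simply take $\omega:=\mathcal K$ on $\{|x|\ge 1/2\}$ and patch it with any smooth positive extension inside, verifying that the patched function is a genuine (weak, hence classical by the regularity theory) subsolution across $\partial B_{1/2}$ — but the convolution construction above is cleaner as it produces an exact solution on $\mathbb R^n\setminus B_1$ with no subsolution bookkeeping.
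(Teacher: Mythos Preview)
Your proposal is correct and takes essentially the same approach as the paper: the paper sets $\omega:=\mathcal{K}\ast\chi_{B_{1/2}}$ (the characteristic function rather than a smooth cutoff), then checks the equation via Theorem~\ref{th properties of k}(e), positivity from $\mathcal{K}>0$, $C^{1,\alpha}$-regularity via Lemma~\ref{lemma embedding of X^p} and Sobolev embedding, and the lower bound directly from Theorem~\ref{th properties of k}(b). Your split into $|x|\ge2$ versus $1\le|x|\le2$ for the lower bound is slightly more careful than the paper's one-line estimate, but the arguments are otherwise the same.
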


\begin{proof}
We consider the function~$\omega:=\mathcal{ K}\ast \chi_{ B_{1/2}}$,
where~$\chi_{B_{1/2}}$ is the characteristic function of the  ball~$B_{1/2}$.

	It is immediate to check that~$\omega>0$ solves
	$$ -\Delta \omega+(-\Delta)^s \omega+\omega=  \chi_{ B_{1/2}}\quad \text{in }{\mathbb{R}}^n, $$
	and therefore the equation in~\eqref{x>1} is satisfied. 
	
Also, in light of the smoothness of~$\mathcal{ K}$ (see e.g. (c) of Theorem~\ref{th properties of k}), we have that~$\omega\in C^{2}(\mathbb{R}^n\setminus B_{1})$. 

In particular, since~$\chi_{B_{1/2}}\in L^q(\mathbb{R}^n) $ for all~$q\geqslant 1$, we have that~$\omega\in   W^{2,q}(\mathbb{R}^n)$ for all~$q\geqslant 1$
(by Lemma~\ref{lemma embedding of X^p}), which implies that~$\omega\in C^{1,\alpha}(\mathbb{R}^n)$ for any~$\alpha\in(0,1)$. 
As a result, the limit in~\eqref{x>1} is also satisfied.
	
	We now check~\eqref{omega }.
Using the lower bound on~$\mathcal{K}$
	 (see e.g.~(b) of Theorem~\ref{th properties of k}), for any~$|x|>1$, one has that 
	\begin{equation*}
			\omega(x)=\int_{B_{1/2}}\mathcal{ K}(x-y)\,dy\geqslant c
			\int_{B_{1/2}}\frac{dy}{|x-y|^{n+2s}}
\geqslant c\int_{B_{1/2}}\frac{dy}{(|x|+|y|)^{n+2s}}
\geqslant \frac{c}{|x|^{n+2s}},
	\end{equation*} up to renaming~$c>0$.
	As a consequence of this, we obtain~\eqref{omega }, as desired.
\end{proof}

The supersolution is constructed exploting
Theorem~\ref{th properties of k} with a parameter~$a>0$ in place of~$1$:

\begin{Lemma}\label{lemma K_a}
	Let~$n\geqslant 1$ and~$s\in(0,1)$. Let~$a>0$ and  
	\[{\mathcal{ K}_a}:=\mathcal{F}^{-1}\left(\frac{1}{a+|\xi|^2+|\xi|^{2s}}\right). \]
	
	Then,
	\begin{itemize}
		\item[(a)] $\mathcal{K}_a$ is positive, radially symmetric, smooth in~$\mathbb{R}^n\setminus \left\{0\right\}$
		and nonincreasing with respect to~$r=|x|$. 
		\item[(b)] There exist positive constants~$C_1$ and~$C_2$ such that,
		 if~$|x|\geqslant 1$,
$$
\frac{C_1}{|x|^{n+2s}}	\leqslant\mathcal{K}_a(x)\leqslant \frac{1}{C_1|x|^{n+2s}}$$ and, if~$|x|\leqslant 1$,
$$\frac{C_2}{|x|^{n-2}}\leqslant\mathcal{K}_a(x)\leqslant\frac1{C_2}
\begin{cases}
			|x|^{2-n} \quad &{\mbox{ if }}n\geqslant 3,\\
			1+|\ln|x|| &{\mbox{ if }}n=2,\\
			1+|x| &{\mbox{ if }}n=1.
		\end{cases}
$$
		\item[(c)] There exists a positive constant~$C$ such that, if~$|x|\geqslant 1$,
$$
			|\nabla\mathcal{K}_a(x)|\leqslant \frac{C}{|x|^{n+2s+1}}\qquad \mbox{and} \qquad |D^2\mathcal{ K}_a(x)|\leqslant \frac{C}{|x|^{n+2s+2}} .
$$
\item[(d)] If~$n\geqslant3$, then~$\mathcal{K}_a\in L^q(\mathbb{R}^n)$ for all~$q\in [1,\frac{n}{n-2})$. If~$n=1$, $2$, then~$\mathcal{K}_a\in L^q(\mathbb{R}^n)$ for all~$q\in [1,+\infty)$.
		\item[(e)] There exists a positive constant~$C$ such that, if~$|x|\geqslant 1$,
		 $$
			\mathcal{K}_a(x)\geqslant \frac{C}{|x|^{n+2s}}.
$$	\end{itemize}
\end{Lemma}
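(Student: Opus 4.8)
The plan is to read off each item from the corresponding property of the heat kernel $\mathcal{H}$ (Theorem~\ref{th properties of h}) and of the Bessel kernel $\mathcal{K}=\mathcal{K}_1$ (Theorem~\ref{th properties of k}), the bridge being the subordination identity. Since
$$\frac{1}{a+|\xi|^2+|\xi|^{2s}}=\int_0^{+\infty}e^{-at}\,e^{-t(|\xi|^2+|\xi|^{2s})}\,dt$$
and $\int_{\mathbb{R}^n}\mathcal{H}(x,t)\,dx=1$ for every $t>0$, Tonelli's theorem lets one interchange this $t$-integral with the Fourier inversion, so that
$$\mathcal{K}_a(x)=\int_0^{+\infty}e^{-at}\,\mathcal{H}(x,t)\,dt$$
in $L^1(\mathbb{R}^n)$ and, thanks to the bounds on $\mathcal{H}$, also pointwise for $x\neq 0$; in particular $\mathcal{K}_a\in L^1(\mathbb{R}^n)$. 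From this representation item~(a) is almost immediate: nonnegativity, radial symmetry and monotonicity in $r=|x|$ pass termwise from $\mathcal{H}$, while strict positivity holds since, for each fixed $x\ne 0$, the lower bounds in Theorem~\ref{th properties of h} (the case $|x|=1$ being reached from $|x|>1$ by radial monotonicity) force $\mathcal{H}(x,\cdot)>0$ on a set of positive measure. The only part of~(a) not visible from the representation is the smoothness of $\mathcal{K}_a$ in $\mathbb{R}^n\setminus\{0\}$, and I would obtain it as for $\mathcal{K}$ in Theorem~\ref{th properties of k}, namely by replaying the Fourier-analytic argument of Appendix~\ref{sce properties of k} with $a$ in place of $1$, since that argument never uses the value of the zeroth-order coefficient (equivalently, via the resolvent identity $\mathcal{K}_a=\mathcal{K}+(1-a)\,\mathcal{K}\ast\mathcal{K}_a$, whose last term has symbol $\bigl((a+|\xi|^2+|\xi|^{2s})(1+|\xi|^2+|\xi|^{2s})\bigr)^{-1}$, decaying like $|\xi|^{-4}$ and hence strictly more regular than $\mathcal{K}$).

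For the size estimates, when $|x|\ge 1$ the upper bound in~(b) follows by inserting $\mathcal{H}(x,t)\le C_1\,\dfrac{t\vee t^s}{|x|^{n+2s}}$ into the representation, since $\int_0^{+\infty}e^{-at}(t\vee t^s)\,dt$ is finite (with value depending on $n$, $s$, $a$). For the matching lower bound — which is exactly the content of~(e) — I would retain only the range $1<t<|x|^{2s}$, where $\mathcal{H}(x,t)\ge C_2\,\dfrac{t}{|x|^{n+2s}}$, so that $\mathcal{K}_a(x)\ge\dfrac{C_2}{|x|^{n+2s}}\int_1^{|x|^{2s}}e^{-at}t\,dt$; the last integral exceeds a fixed positive constant once $|x|$ is large, and on the compact annulus $\{1\le|x|\le M\}$ the bound $\mathcal{K}_a\ge c>0$ comes from the continuity and positivity established in~(a). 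When $|x|\le 1$ I would split $\int_0^{+\infty}e^{-at}\mathcal{H}(x,t)\,dt$ over the ranges $(0,|x|^2)$, $(|x|^2,|x|^{2s})$, $(|x|^{2s},1)$ and $(1,+\infty)$, using on each an appropriate upper bound for $\mathcal{H}$ (respectively $C_1t^s/|x|^{n+2s}$ on the first, $C_1t^{-n/2}$ on the middle two, $C_1t^{-n/(2s)}$ on the last), which integrates to the asserted $|x|^{2-n}$, $1+|\ln|x||$ or bounded behavior according as $n\ge 3$, $n=2$ or $n=1$; the lower bound for $|x|\le 1$ comes from $\mathcal{H}(x,t)\ge C_2 e^{\pi|x|^2/t}t^{-n/2}\ge C_2 t^{-n/2}$ on $(|x|^2,|x|^{2s})$, save when $n=1$, where the claim reduces to $\mathcal{K}_a\ge\mathrm{const}$ and follows from radial monotonicity and $\mathcal{K}_a>0$ on $\partial B_1$. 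This mirrors, mutatis mutandis, the computation for $\mathcal{K}$ in Appendix~\ref{sce properties of k}. Item~(d) is then read off~(b): near the origin $\mathcal{K}_a$ is controlled by $|x|^{2-n}$ when $n\ge 3$ (which lies in $L^q_{\mathrm{loc}}$ precisely for $q<n/(n-2)$) and is bounded or logarithmic when $n\le 2$, while at infinity it is controlled by the integrable tail $|x|^{-(n+2s)}$.

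The hard part is item~(c). The subordination formula transfers sizes for free, but Theorem~\ref{th properties of h} records no gradient control on $\mathcal{H}$, so it cannot be differentiated to yield~(c); and the crude way of estimating the convolution in the resolvent identity loses the extra power of $|x|$, because convolving an $L^1$ function against a kernel with tail $|x|^{-(n+2s)}$ does not by itself improve that tail — one has to use the radial symmetry of $\mathcal{K}$ (so that $\int_{B_R}\nabla\mathcal{K}=0$) to cancel the naive leading term. The cleanest route to~(c) is therefore to repeat, with $a$ in place of $1$, the contour-integral estimates that prove part~(c) of Theorem~\ref{th properties of k} in Appendix~\ref{sce properties of k}; alternatively one can push the resolvent identity through with that cancellation built in. Either way, the behavior near the origin, where $\mathcal{K}$ is singular, is the delicate point, while everything else is a mechanical transcription of the arguments already established for $\mathcal{K}$.
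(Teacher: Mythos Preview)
Your proposal is correct and follows the same approach as the paper: the paper does not give a separate proof of this lemma, merely stating before it that ``the supersolution is constructed exploiting Theorem~\ref{th properties of k} with a parameter~$a>0$ in place of~$1$,'' i.e., one replays the proofs in Appendices~\ref{th properties of h1} and~\ref{sce properties of k} with $e^{-at}$ in place of $e^{-t}$. Your detailed outline matches those appendix computations (the subordination formula for~(a), the $t$-splitting for~(b), (d), (e), and the contour-integral argument of Lemma~\ref{sommothness of K} for~(c)); the resolvent identity you mention as an alternative for smoothness and for~(c) is a nice observation not appearing in the paper.
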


Exploting the kernel given by Lemma~\ref{lemma K_a}, we are able to construct
the supersolution as follows:

\begin{Lemma}\label{lemma supersection}
	There exists~$v\in C^{1,\alpha}(\mathbb{R}^n)$ satisfying
	\begin{equation*}
		\begin{cases}
			-\Delta v+(-\Delta)^s v+\frac{1}{2}v=0 \qquad \text{in } {\mathbb{R}}^n\setminus B_1,\\
		v>0 \quad \text{in }{\mathbb{R}}^n,\\
		\displaystyle\lim_{|x|\to +\infty}	v(x)= 0.
		\end{cases}
			\end{equation*}	
	Also, when~$ |x|>1$ we have that
$$
		0< v(x)\leqslant \frac{c_2}{|x|^{n+2s}},
$$	for some constant~$c_2>0$.
\end{Lemma}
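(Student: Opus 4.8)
\emph{Plan.} The strategy is to reproduce the construction of the subsolution in Lemma~\ref{lemma subsection}, but with the Bessel kernel~$\mathcal{K}$ replaced by the modified kernel~$\mathcal{K}_a$ of Lemma~\ref{lemma K_a} corresponding to the choice~$a=\tfrac12$. Concretely, I would set
$$ v:=\mathcal{K}_{1/2}\ast\chi_{B_{1/2}}, $$
where~$\chi_{B_{1/2}}$ is the characteristic function of~$B_{1/2}$. Since, by definition, $\mathcal{K}_{1/2}=\mathcal{F}^{-1}\big((\tfrac12+|\xi|^2+|\xi|^{2s})^{-1}\big)$, taking Fourier transforms shows that~$v$ is a solution of
$$ -\Delta v+(-\Delta)^s v+\tfrac12 v=\chi_{B_{1/2}}\qquad\text{in }\mathbb{R}^n, $$
and because~$\chi_{B_{1/2}}$ vanishes on~$\mathbb{R}^n\setminus B_{1/2}\supset\mathbb{R}^n\setminus B_1$, the equation in the statement is satisfied. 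The strict positivity~$v>0$ throughout~$\mathbb{R}^n$ is immediate, since~$\mathcal{K}_{1/2}>0$ everywhere (part~(a) of Lemma~\ref{lemma K_a}) and it is convolved against a nonnegative, nontrivial function.

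For the regularity, I would first note that~$\mathcal{K}_{1/2}\in L^1(\mathbb{R}^n)$ by part~(d) of Lemma~\ref{lemma K_a}, so Young's convolution inequality gives~$v\in L^q(\mathbb{R}^n)$ for every~$q\in[1,+\infty)$, as~$\chi_{B_{1/2}}\in L^q(\mathbb{R}^n)$ for all such~$q$. Rewriting the equation in the form~$-\Delta v+(-\Delta)^s v+v=\chi_{B_{1/2}}+\tfrac12 v$, the right-hand side now lies in~$L^q(\mathbb{R}^n)$ for all~$q\geqslant1$, so Lemma~\ref{lemma embedding of X^p} yields~$v\in W^{2,q}(\mathbb{R}^n)$ for all~$q\geqslant1$. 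Taking~$q>n$ and invoking the Sobolev embedding, we conclude~$v\in C^{1,\alpha}(\mathbb{R}^n)$ for every~$\alpha\in(0,1)$; in particular~$v\in L^\infty(\mathbb{R}^n)$.

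It remains to establish the bound~$0<v(x)\leqslant c_2|x|^{-(n+2s)}$ for~$|x|>1$, for which I would distinguish two regimes. If~$|x|\geqslant2$ and~$y\in B_{1/2}$, then~$|x-y|\geqslant|x|-\tfrac12\geqslant\tfrac{|x|}{2}\geqslant1$, hence the upper bound in part~(b) of Lemma~\ref{lemma K_a} gives
$$ v(x)=\int_{B_{1/2}}\mathcal{K}_{1/2}(x-y)\,dy\leqslant\int_{B_{1/2}}\frac{C}{|x-y|^{n+2s}}\,dy\leqslant\frac{C\,2^{n+2s}\,|B_{1/2}|}{|x|^{n+2s}}. $$
If instead~$1<|x|<2$, the kernel estimate does not control~$|x-y|$ from below (it may be as small as~$\tfrac12$), so I would simply use~$v\in L^\infty(\mathbb{R}^n)$ together with~$|x|<2$ to write~$v(x)\leqslant\|v\|_{L^\infty(\mathbb{R}^n)}\leqslant2^{n+2s}\|v\|_{L^\infty(\mathbb{R}^n)}|x|^{-(n+2s)}$. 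Choosing~$c_2$ as the maximum of the two resulting constants gives the claimed bound, and the limit~$\lim_{|x|\to+\infty}v(x)=0$ follows at once from it. The only mildly delicate points are this case split near~$|x|=1$ and ensuring that the bootstrap of Lemma~\ref{lemma embedding of X^p} tolerates the coefficient~$\tfrac12$ (which is why one moves~$\tfrac12 v$ to the right-hand side); everything else is parallel to Lemma~\ref{lemma subsection}.
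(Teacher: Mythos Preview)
Your proposal is correct and follows essentially the same route as the paper: both define~$v:=\mathcal{K}_{1/2}\ast\chi_{B_{1/2}}$, obtain~$C^{1,\alpha}$-regularity by moving~$\tfrac12 v$ to the right-hand side and invoking Lemma~\ref{lemma embedding of X^p}, and derive the decay from the pointwise bounds on~$\mathcal{K}_{1/2}$ in Lemma~\ref{lemma K_a}. The only cosmetic difference is that the paper handles the decay for all~$|x|>1$ in one stroke via~$|x-y|\geqslant|x|-\tfrac12$ (the bound~$\mathcal{K}_{1/2}(z)\leqslant C|z|^{-(n+2s)}$ holds for all~$|z|\geqslant\tfrac12$ once the two regimes in Lemma~\ref{lemma K_a}(b) are combined), whereas you split into~$|x|\geqslant2$ and~$1<|x|<2$; both are fine.
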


\begin{proof}
	We consider the function~$v:=\mathcal{K}_{1/2}\ast \chi_{B_{1/2}}$, where~$\mathcal{K}_{1/2}$ 
	is defined in Lemma~\ref{lemma K_a} with~$a:=1/2$.
	
	Thus, $v>0$ and it satisfies
$$
		-\Delta v+(-\Delta)^s v+\frac{1}{2}v=\chi_{B_{1/2}}\qquad \text{in }\mathbb{R}^n.
$$

Moreover, from Lemma~\ref{lemma K_a}, when~$|x|>1$ we have that
\begin{equation*}
	v(x)=\int_{B_{1/2}}\mathcal{ K}_{1/2}(x-y)\, dy\leqslant c
	\int_{B_{1/2}}\frac{dy}{|x-y|^{n+2s}} 
	\leqslant c \int_{B_{1/2}}\frac{dy}{(|x|-1/2)^{n+2s}}
	\leqslant \frac{ c}{|x|^{n+2s}},
\end{equation*}
for some positive constant~$c$, that may change from step to step. 

Furthermore, using the smoothness of~$\mathcal{ K}_{1/2}$ (see e.g.~(a) of Lemma~\ref{lemma K_a}), we have that~$v\in C^{2}(\mathbb{R}^n\setminus B_{1})$. 
We claim that~$v\in C^{1,\alpha}(\mathbb{R}^n)$ for any~$\alpha\in(0,1)$. 

Indeed, we point out that~$\chi_{ B_{1/2}}\in L^q(\mathbb{R}^n)$ for all~$q\geqslant 1$ and also we see that
\[  \left(\frac12+|\xi|^2+|\xi|^{2s}\right)\mathcal{F}(v)(\xi)=\mathcal{F}({\chi_{ B_{1/2}}})(\xi).\]
Then, 
\[ \Big(1+|\xi|^2+|\xi|^{2s}\Big)\mathcal{F}(v)(\xi)=\frac{1+|\xi|^2+|\xi|^{2s}}{(1/2+|\xi|^2+|\xi|^{2s})}\mathcal{F}({\chi_{ B_{1/2}}})(\xi)=\hat{g}, \]
where~$g:=\left(\delta+\frac{1}{2}\mathcal{ K}_{1/2}\right)\ast {\chi_{ B_{1/2}}}$. 

Since~$\mathcal{ K}_{1/2}\in L^1(\mathbb{R}^n)$, we have that~$g\in L^q(\mathbb{R}^n)$ for all~$q\ge1$ and 
\[ 	-\Delta v+(-\Delta)^s v+v=g\qquad \text{in }\mathbb{R}^n. \]
These facts and Lemma~\ref{lemma embedding of X^p} imply that~$ v\in W^{2,q}(\mathbb{R}^n) $ for all~$q\ge1$.
By the Sobolev Embedding, we obtain that~$v\in C^{1,\alpha}(\mathbb{R}^n)$ for any~$\alpha\in(0,1)$ and also
\begin{equation*}\lim_{|x|\to+\infty}v(x)= 0.\qedhere\end{equation*}
\end{proof}

\begin{proof}[Proof of Theorem~\ref{th:decay}]
We consider the function~$\omega$ given by Lemma~\ref{lemma subsection}. Utilizing the positivity and continuity of~$u$ and~$\omega$ in~$\mathbb{R}^n$, we can find a constant~$\beta>0$ such that~$h:=u-\beta\omega> 0$ in~$\overline{B_{1}}$. 
	
	Furthermore, from Theorem~\ref{th: u>0} and Lemma~\ref{lemma subsection}, we see that 
	\begin{equation*}
		\begin{cases}
		-\Delta h+(-\Delta )^{s}h\geqslant -h \quad \text{ in }{\mathbb{R}}^n\setminus B_1,	\\
		\displaystyle \lim_{|x|\to +\infty }h(x)= 0.
	\end{cases}
	\end{equation*}
	
We claim that
\begin{equation}\label{di329ru32fuewigfegw}
h(x)\geqslant 0\qquad{\mbox{ for all~$|x|>1$.}}\end{equation}
Indeed, for the sake of contradiction, we  assume that there exists a global strictly negative minimum
point~$x_0\in \mathbb{R}^n\setminus B_{1}$. Since~$h\in C^{2}(\mathbb{R}^n\setminus B_{1})$, we have~$\Delta h(x_0)\geqslant 0$
and~$(-\Delta)^s h(x_0)\le0$, and thus 
\[  0<-h(x_0)\leqslant-\Delta h(x_0)+(-\Delta )^{s}h(x_0)\leqslant 0.\]
This contradiction implies~\eqref{di329ru32fuewigfegw}.

{F}rom~\eqref{di329ru32fuewigfegw}, we have that, for every~$ |x|>1$,
\[ u(x)\geqslant \beta\omega(x)\geqslant \frac{c_1}{|x|^{n+2s}}. \]
This establishes the bound from below in Theorem~\ref{th:decay}.

We now focus on the bound from above. Owing to~\eqref{dioe3yr2fgufegvwj}, we can find some~$R>1$ such that
$$
-\Delta u+(-\Delta )^{s}u+\frac{1}{2}u\leqslant 0\qquad \text{in } \mathbb{R}^n\setminus B_{R}.
$$
In this case, we make use of the function~$v$ given by Lemma~\ref{lemma supersection}.
{F}rom the positivity and continuity of~$u$ and~$v$ in~$\mathbb{R}^n$, there exists~$\gamma>0$ such that~$g:=v-\gamma u> 0$ in~$\overline{B_{R}}$. 

In view of Theorem~\ref{th: u>0} and Lemma~\ref{lemma supersection}, one has that
\begin{equation*}
	\begin{cases}
		-\Delta g+(-\Delta )^{s}g\geqslant -\frac{1}{2}g \quad \text{ in }{\mathbb{R}}^n\setminus B_R,	\\
		\displaystyle \lim_{|x|\to +\infty }g(x)= 0.
	\end{cases}
\end{equation*}
We claim that
\begin{equation}\label{oyfouewgfwo4573485692}
g(x)\geqslant 0\qquad{\mbox{ for all~$|x|>R$.}}\end{equation}
For this, by contradiction, we assume that there exists a global strictly
negative minimum point~$x_1\in \mathbb{R}^n\setminus B_{R}$. Since~$g\in C^{2}(\mathbb{R}^n\setminus B_{1})$, we have that~$\Delta g(x_1)\geqslant 0$
and~$(-\Delta)^s g(x_1)\le0$, and hence 
\[  0<-\frac{1}{2}g(x_1)\leqslant-\Delta g(x_1)+(-\Delta )^{s}g(x_1)\leqslant 0.\]
This contradiction gives~\eqref{oyfouewgfwo4573485692}.

In turn, \eqref{oyfouewgfwo4573485692} implies that, for all~$|x|>R$,
\[ u(x)\leqslant \gamma v(x)\leqslant \frac{c_2}{|x|^{n+2s}}. \]
Thus, the continuity of~$u$ allows us to complete the proof of the bound from
above in Theorem~\ref{th:decay}. 
\end{proof}

\subsection{Radial symmetry of positive solutions}

Our aim is now to prove the radial symmetry of the positive solution found in  Theorem~\ref{th: u>0}. The main statement of this section is the following:

\begin{Theorem}\label{th symmetric}
Let~$n\geqslant 2$ and~$s\in(0,1)$. 

Then, all classical positive solutions of~\eqref{Maineq} are radially symmetric.
\end{Theorem}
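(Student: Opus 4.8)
The strategy is the method of moving planes for the mixed operator $-\Delta+(-\Delta)^s$, in its "direct" integro‑differential form (no extension problem is available for this operator anyway). Since $-\Delta$ and $(-\Delta)^s$ are rotationally invariant, it suffices to show that, in the direction $e_1$, the solution has a hyperplane of symmetry $\{x_1=\Lambda\}$ and is non‑decreasing in $x_1$ for $x_1<\Lambda$; applying this to $u$ rotated in every direction $e\in S^{n-1}$ and intersecting the resulting symmetry hyperplanes — a classical fact — then gives that $u$ is radially symmetric, and radially decreasing, about one point. Throughout I use that $u$ is a positive classical (hence bounded) solution of~\eqref{Maineq} with $u(x)\to0$ as $|x|\to+\infty$ (Theorems~\ref{th: u>0} and~\ref{th:decay}). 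For $\lambda\in\R$ write $x=(x_1,x')$, $\Sigma_\lambda:=\{x_1<\lambda\}$, $x^\lambda:=(2\lambda-x_1,x')$, $u_\lambda(x):=u(x^\lambda)$ and $w_\lambda:=u_\lambda-u$; then $w_\lambda$ is $C^2$, bounded, antisymmetric about $\{x_1=\lambda\}$, satisfies $w_\lambda\to0$ as $|x|\to+\infty$ (because $|x^\lambda|\to+\infty$ there), and solves $-\Delta w_\lambda+(-\Delta)^s w_\lambda+w_\lambda=c_\lambda\,w_\lambda$ in $\R^n$, where $c_\lambda(x):=p\int_0^1(tu_\lambda(x)+(1-t)u(x))^{p-1}\,dt$ obeys $0\le c_\lambda\le p\|u\|_{L^\infty(\R^n)}^{p-1}$ and, at every point where $w_\lambda(x)<0$, the sharper bound $c_\lambda(x)\le p\,u(x)^{p-1}$.

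The one genuinely new ingredient is a pointwise estimate at a negative minimum of $w_\lambda$. If $x_0\in\Sigma_\lambda$ and $w_\lambda(x_0)=\min_{\Sigma_\lambda}w_\lambda<0$, then folding the principal‑value integral defining $(-\Delta)^s w_\lambda(x_0)$ over $\Sigma_\lambda$ via the antisymmetry of $w_\lambda$, and using $|x_0-y|\le|x_0-y^\lambda|$ together with $w_\lambda(y)\ge w_\lambda(x_0)$ for $y\in\Sigma_\lambda$, one obtains
\[
(-\Delta)^s w_\lambda(x_0)\;\le\; 2c_{n,s}\,w_\lambda(x_0)\int_{\Sigma_\lambda}\frac{dy}{|x_0-y^\lambda|^{n+2s}}\;=\;2c_{n,s}\,\kappa_{n,s}\,\big(\lambda-(x_0)_1\big)^{-2s}\,w_\lambda(x_0)\;<\;0,
\]
with $\kappa_{n,s}:=\int_{\{z_1>1\}}|z|^{-n-2s}\,dz\in(0,+\infty)$. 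Since moreover $-\Delta w_\lambda(x_0)\le0$ at a minimum, evaluating the linearized equation at $x_0$ and dividing by $w_\lambda(x_0)<0$ gives the crucial inequality
\[
c_\lambda(x_0)\;\ge\;1+2c_{n,s}\,\kappa_{n,s}\,\big(\lambda-(x_0)_1\big)^{-2s}\;>\;1 .
\]
From here the argument is the classical one. Step 1 (starting the plane): if $\lambda\ll0$ and $w_\lambda\not\ge0$ in $\Sigma_\lambda$, its negative minimum is attained at an interior $x_0$ with $|x_0|\ge|\lambda|$; since $w_\lambda(x_0)<0$ forces $c_\lambda(x_0)\le p\,u(x_0)^{p-1}\le p\,(\sup_{|y|\ge|\lambda|}u(y))^{p-1}<1$ for $\lambda$ negative enough, this contradicts the inequality above, so $w_\lambda\ge0$ in $\Sigma_\lambda$ for all sufficiently negative $\lambda$. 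Step 2: put $\Lambda:=\sup\{\mu\in\R:\ w_\lambda\ge0\text{ in }\Sigma_\lambda\ \forall\lambda\le\mu\}$; by Step 1 $\Lambda>-\infty$, and $\Lambda<+\infty$ since letting $\lambda\to+\infty$ in $u(2\lambda-x_1,x')\ge u(x_1,x')$ and using $u\to0$ would force $u\le0$; by continuity $w_\Lambda\ge0$ in $\Sigma_\Lambda$. Step 3: if $w_\Lambda\not\equiv0$ in $\Sigma_\Lambda$, then $w_\Lambda>0$ in $\Sigma_\Lambda$ — otherwise $w_\Lambda(x_0)=0$ at some $x_0\in\Sigma_\Lambda$, and the folded representation gives $(-\Delta)^s w_\Lambda(x_0)=-c_{n,s}\int_{\Sigma_\Lambda}(|x_0-y|^{-n-2s}-|x_0-y^\Lambda|^{-n-2s})\,w_\Lambda(y)\,dy<0$, which together with $-\Delta w_\Lambda(x_0)\le0$ and $c_\Lambda(x_0)w_\Lambda(x_0)=0$ contradicts the equation at $x_0$.

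The crux is Step 4: if $w_\Lambda>0$ in $\Sigma_\Lambda$, the plane can be pushed past $\Lambda$, contradicting maximality. Fix $R>0$ so large (using $u\to0$) that $p\,u(x)^{p-1}<1$ for $|x|\ge R$, and pick $\tau,\varepsilon_0>0$ with $\tau+\varepsilon_0$ so small that $2c_{n,s}\,\kappa_{n,s}\,(\tau+\varepsilon_0)^{-2s}>p\|u\|_{L^\infty(\R^n)}^{p-1}$. Since $w_\Lambda$ is continuous and positive on the compact set $\overline{B_R}\cap\{x_1\le\Lambda-\tau\}\subset\Sigma_\Lambda$, there is $\delta_0>0$ with $w_\lambda\ge\delta_0/2>0$ on it for all $\lambda\in[\Lambda,\Lambda+\varepsilon_0]$. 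If, for such a $\lambda$, $w_\lambda$ had a negative minimum at $x_0\in\Sigma_\lambda$, then $x_0\notin\overline{B_R}\cap\{x_1\le\Lambda-\tau\}$; also $x_0\notin\Sigma_\lambda\setminus\overline{B_R}$, since there $c_\lambda(x_0)\le p\,u(x_0)^{p-1}<1$ contradicts $c_\lambda(x_0)>1$; and $x_0$ cannot lie in the thin strip $\overline{B_R}\cap\{\Lambda-\tau<x_1<\lambda\}$, since there $\lambda-(x_0)_1<\tau+\varepsilon_0$, whence $c_\lambda(x_0)\ge 1+2c_{n,s}\kappa_{n,s}(\lambda-(x_0)_1)^{-2s}>p\|u\|_{L^\infty(\R^n)}^{p-1}\ge c_\lambda(x_0)$, a contradiction. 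Hence $w_\lambda\ge0$ in $\Sigma_\lambda$ for every $\lambda\in[\Lambda,\Lambda+\varepsilon_0]$, contradicting the definition of $\Lambda$. Therefore $w_\Lambda\equiv0$, i.e.\ $u$ is symmetric about $\{x_1=\Lambda\}$, and from $w_\lambda\ge0$ in $\Sigma_\lambda$ for $\lambda<\Lambda$ one reads off that $u$ is non‑decreasing in $x_1$ on $(-\infty,\Lambda]$. Running the procedure in every direction then yields Theorem~\ref{th symmetric}. I expect Step 4, together with the maximum‑principle lemmata it relies on — the folded representation of $(-\Delta)^s w_\lambda$ at a sign‑changing extremum, its coupling with $-\Delta w_\lambda\le0$ and the equation, the narrow‑strip threshold for $\tau+\varepsilon_0$ (where the nonlocal term must beat the bounded potential $c_\lambda$), and the strong maximum principle of Step 3 — to be the main obstacle; one must also keep track that $c_\lambda$ is only controlled by $p\,u(x)^{p-1}$ where $w_\lambda(x)<0$, which is precisely where that bound is used.
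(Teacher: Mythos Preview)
Your argument is correct and complete; the folded estimate for $(-\Delta)^s w_\lambda$ at a negative minimum, the resulting lower bound $c_\lambda(x_0)\ge 1+2c_{n,s}\kappa_{n,s}(\lambda-(x_0)_1)^{-2s}$, and the three-region dichotomy in Step~4 all go through as you say.

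However, your route is genuinely different from the paper's. The paper does not argue pointwise at a minimum; instead it uses the \emph{integral form} of the moving planes, based on the representation $u=\mathcal K\ast u^p$ via the Bessel kernel developed in Theorem~\ref{th properties of k}. The key engine there is Lemma~\ref{lemma 6.2} (an $L^q$ bound for the integral operator with kernel $\mathcal K$), which feeds into the estimate $\|u_\lambda-u\|_{L^q(\Sigma_\lambda^-)}\le c\|u_\lambda\|^{p-1}_{L^{q(p-1)}(\Sigma_\lambda^-)}\|u_\lambda-u\|_{L^q(\Sigma_\lambda^-)}$; starting the plane and the continuation step both amount to making the prefactor $c\|u_\lambda\|^{p-1}_{L^{q(p-1)}(\Sigma_\lambda^-)}$ less than~$1$ (for $|\lambda|$ large, or for the bad set of small measure near the critical position). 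By contrast, you work in the differential framework, exploiting the \emph{narrow-region principle} intrinsic to $(-\Delta)^s$: near the hyperplane the coefficient $(\lambda-(x_0)_1)^{-2s}$ blows up, which substitutes for the smallness of measure in the integral approach. Your method is more elementary in that it requires no kernel estimates and no $L^q$ machinery for this step (only $C^2$ regularity and the decay $u\to0$), and it delivers radial monotonicity as a by-product. The paper's method, on the other hand, reuses the Bessel-kernel toolbox already built for regularity, so it fits more organically into the paper's architecture. Both the strong maximum principle (your Step~3) and the paper's Lemma~\ref{lemma u-lambda} are essentially the same computation.
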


To establish Theorem~\ref{th symmetric}, we will exploit the moving planes method.
To begin with, we recall some notation.
For any~$\lambda\in\R$, we set
\begin{eqnarray*}
\Sigma_{\lambda}&:=&\left\{x\in\mathbb{R}^n \;{\mbox{ s.t. }}\; x_1>\lambda \right\}\\ {\mbox{and }}\qquad 
T_\lambda&:=&\left\{x\in\mathbb{R}^n \;{\mbox{ s.t. }}\;x_1=\lambda \right\}.
\end{eqnarray*}
Moreover, for any~$x=(x_1,x_2,\dots,x_n)\in\R^n$, we set~$x^\lambda:=\left(2\lambda-x_1,x_2,\dots,x_n\right)$ and~$u_\lambda(x):=u(x^\lambda)$.

With this notation, we first establish the following:

\begin{Lemma}\label{lemma u-lambda}
	If~$u\geqslant u_\lambda$ and~$u\not\equiv u_\lambda$  in~$\Sigma_\lambda$, then~$ u> u_{\lambda}$ in~$\Sigma_{\lambda}$.
\end{Lemma}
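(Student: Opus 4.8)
The plan is to prove a strong maximum principle for the antisymmetric function $w:=u-u_\lambda$. First I would record two structural facts. Since both $-\Delta$ and $(-\Delta)^s$ are invariant under the isometry $x\mapsto x^\lambda$ (a reflection across the hyperplane $T_\lambda$), the reflected function $u_\lambda$ solves the same equation~\eqref{Maineq} as $u$; subtracting, $w$ satisfies
$$-\Delta w+(-\Delta)^s w+w=u^p-u_\lambda^p\qquad\text{in }\mathbb{R}^n.$$
Moreover, by construction $w$ is antisymmetric, $w(x^\lambda)=-w(x)$, so the hypothesis $w\geq0$ in $\Sigma_\lambda$ forces $w\leq0$ in $\mathbb{R}^n\setminus\overline{\Sigma_\lambda}$ and $w=0$ on $T_\lambda$. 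Note also that $w\in C^{2}(\mathbb{R}^n)$ and $w$ is bounded with $w(x)\to0$ as $|x|\to+\infty$, thanks to Theorems~\ref{th:regularity} and~\ref{th C^2,alpha interior without boundary condition}.

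I would then argue by contradiction: suppose $w(x_0)=0$ at some $x_0\in\Sigma_\lambda$. Since $w\geq0$ in the open set $\Sigma_\lambda$ and vanishes at $x_0$, the point $x_0$ is an interior minimum of $w$ relative to $\Sigma_\lambda$, whence $\nabla w(x_0)=0$ and $D^2w(x_0)\geq0$; in particular $-\Delta w(x_0)\leq0$. Because $w$ and $\nabla w$ vanish at $x_0$, one has $w(y)=O(|y-x_0|^2)$ near $x_0$, so $y\mapsto w(y)\,|x_0-y|^{-n-2s}$ is integrable near $x_0$; together with the decay of $w$ at infinity this shows $(-\Delta)^sw(x_0)$ is given by the absolutely convergent integral
$$(-\Delta)^s w(x_0)=-c_{n,s}\int_{\mathbb{R}^n}\frac{w(y)}{|x_0-y|^{n+2s}}\,dy.$$

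Next I would fold this integral over $\Sigma_\lambda$, changing variables $y\mapsto y^\lambda$ on $\mathbb{R}^n\setminus\Sigma_\lambda$ and using $w(y^\lambda)=-w(y)$, to get
$$(-\Delta)^s w(x_0)=-c_{n,s}\int_{\Sigma_\lambda}w(y)\left(\frac{1}{|x_0-y|^{n+2s}}-\frac{1}{|x_0-y^\lambda|^{n+2s}}\right)dy.$$
For $x_0,y\in\Sigma_\lambda$ the elementary inequality $|x_0-y|<|x_0-y^\lambda|$ holds (the first coordinates satisfy $x_{0,1}-\lambda>0$ and $y_1-\lambda>0$, so $(x_{0,1}-\lambda)+(y_1-\lambda)>|x_{0,1}-y_1|$), hence the bracket is strictly positive; since $w\geq0$ and $w\not\equiv0$ in $\Sigma_\lambda$, the integral is strictly positive and thus $(-\Delta)^sw(x_0)<0$. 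Adding this to $-\Delta w(x_0)\leq0$ yields $-\Delta w(x_0)+(-\Delta)^sw(x_0)<0$, whereas evaluating the equation for $w$ at $x_0$ (where $w(x_0)=0$ and $u^p(x_0)-u_\lambda^p(x_0)=0$) gives $-\Delta w(x_0)+(-\Delta)^sw(x_0)=0$, a contradiction. Hence $w>0$ throughout $\Sigma_\lambda$, which is the assertion.

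The main point to be careful about is analytic bookkeeping rather than a genuine obstacle: justifying the pointwise formula for $(-\Delta)^sw(x_0)$ (absolute convergence near $x_0$ via the quadratic vanishing of $w$, and at infinity via its decay) and the folding of the singular integral, together with the distance inequality. It is worth emphasizing that the nonlocal term is exactly what makes the argument immediate, since it is strictly negative at an interior zero of a nonnegative antisymmetric function; for the purely local operator one would instead need a Hopf-type boundary argument.
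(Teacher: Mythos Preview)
Your proof is correct and follows essentially the same route as the paper: both argue by contradiction at a touching point $x_0\in\Sigma_\lambda$, use that $-\Delta w(x_0)\le0$ at an interior minimum, and fold the fractional Laplacian integral across $T_\lambda$ via the antisymmetry $w(y^\lambda)=-w(y)$ together with $|x_0-y|<|x_0-y^\lambda|$ to force a sign contradiction with the equation. Your write-up is a bit more explicit about the absolute convergence of the singular integral (through the quadratic vanishing of $w$ at $x_0$) and deduces the strict inequality $(-\Delta)^s w(x_0)<0$ directly from $w\not\equiv0$, whereas the paper reaches the chain $0\le\cdots\le0$ and then infers $w\equiv0$; the content is the same.
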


\begin{proof}
	Suppose by contradiction that there exists~$x_0\in \Sigma_{\lambda}$ such that~$ u(x_0)= u_{\lambda}(x_0)$. In the light of Theorem~\ref{th C^2,alpha interior without boundary condition}, we know that~$u\in C^2(\mathbb{R}^n)$. We thus have~$-\Delta( u(x_0)-u_{\lambda}(x_0))\leqslant 0$, and therefore 
	\begin{equation*}
		\begin{split}
			0&=-\Delta( u(x_0)-u_{\lambda}(x_0))+(-\Delta)^s( u(x_0)-u_{\lambda}(x_0))+( u(x_0)-u_{\lambda}(x_0))\\
			&=\int_{ \R^n}\frac{-( u(y)-u_{\lambda}(y))}{|x_0-y|^{n+2s}}\, dy\\
			&=\int_{ \Sigma_\lambda}\frac{-( u(y)-u_{\lambda}(y))}{|x_0-y|^{n+2s}}\, dy
			+\int_{ \R^n\setminus \Sigma_\lambda}\frac{-( u(y)-u_{\lambda}(y))}{|x_0-y|^{n+2s}}\, dy\\
			&\leqslant \int_{ \Sigma_{\lambda}}\frac{-( u(y)-u_{\lambda}(y))}{|x_0-y|^{n+2s}}\, dy+\int_{ \Sigma_{\lambda}}\frac{-( u_\lambda(y)-u(y))}{|x_0-y^\lambda|^{n+2s}}\, dy\\
			&= \int_{ \Sigma_{\lambda}}\big( u(y)-u_{\lambda}(y)\big)
		\left(	\frac1{|x_0-y^\lambda|^{n+2s}}-
\frac{1}{|x_0-y|^{n+2s}}\right)\, dy\\&\le0,
		\end{split}
	\end{equation*}
which implies that~$u\equiv u_\lambda$  in~$\Sigma_\lambda$. This is in contradiction with the assumption and therefore the desired result
is established.
\end{proof}

Now, we define
$$\lambda_0:=\sup\Big\{\lambda\in\R\; {\mbox{ s.t. }}\; u(x)>u_\lambda(x) \text{ for all } x\in \Sigma_\lambda \Big\}.$$

\begin{Lemma}\label{leu7563}
We have that~$\lambda_0>-\infty$.
\end{Lemma}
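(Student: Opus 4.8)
The statement to be proved is the opening step of the moving planes method, so it suffices to show that the set
\[
S:=\big\{\lambda\in\mathbb{R}\ :\ u(x)>u_\lambda(x)\ \text{ for all }x\in\Sigma_\lambda\big\}
\]
is nonempty, since then $\lambda_0=\sup S>-\infty$. The plan is to push the reflecting hyperplane $T_\lambda$ far to the left: the sharp decay $u(x)\sim|x|^{-(n+2s)}$ from Theorem~\ref{th:decay} makes the reflected function $u_\lambda$ uniformly small on $\Sigma_\lambda$, and this, combined with the zeroth order term in the equation and the nonlocal structure, will force $u-u_\lambda\ge0$ there.

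\textbf{Preliminaries.} First I would record that, since the reflection across $T_\lambda$ is an orthogonal map under which $-\Delta$, $(-\Delta)^s$, the identity and $t\mapsto t^p$ are all invariant, the function $u_\lambda(x)=u(x^\lambda)$ is again a classical solution of~\eqref{Maineq}. Hence, setting $w_\lambda:=u-u_\lambda\in C^2(\mathbb{R}^n)$ (using the $C^{2,\alpha}$-regularity of Theorem~\ref{th C^2,alpha interior without boundary condition}), one has
\[
-\Delta w_\lambda+(-\Delta)^s w_\lambda+w_\lambda=u^p-u_\lambda^p\qquad\text{in }\mathbb{R}^n .
\]
Moreover $w_\lambda$ is antisymmetric across $T_\lambda$, it vanishes on $T_\lambda$, and since $|x^\lambda|\geqslant|x|$ for $x\in\Sigma_\lambda$ whenever $\lambda<0$, the limit $\lim_{|x|\to+\infty}u(x)=0$ (Theorem~\ref{th: u>0}) gives $w_\lambda(x)\to0$ as $|x|\to+\infty$ with $x\in\overline{\Sigma_\lambda}$.

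\textbf{Core claim: $w_\lambda\geqslant0$ in $\Sigma_\lambda$ for $\lambda$ very negative.} I would argue by contradiction. If $w_\lambda$ were negative somewhere in $\Sigma_\lambda$, the facts above force a negative interior minimum at some $x^0\in\Sigma_\lambda$, where $-\Delta w_\lambda(x^0)\leqslant0$. For the nonlocal term, splitting $\mathbb{R}^n$ into $\Sigma_\lambda$ and its reflection and exploiting the antisymmetry of $w_\lambda$, exactly as in the proof of Lemma~\ref{lemma u-lambda}, one gets
\[
(-\Delta)^s w_\lambda(x^0)=c_{n,s}\,{\rm P.V.}\!\int_{\Sigma_\lambda}\!\big(w_\lambda(x^0)-w_\lambda(y)\big)\!\left(\frac{1}{|x^0-y|^{n+2s}}-\frac{1}{|x^0-y^\lambda|^{n+2s}}\right)dy+2c_{n,s}\,w_\lambda(x^0)\!\int_{\Sigma_\lambda}\!\frac{dy}{|x^0-y^\lambda|^{n+2s}} .
\]
Since $x^0$ minimises $w_\lambda$ on $\Sigma_\lambda$ and $|x^0-y|\leqslant|x^0-y^\lambda|$ there, the first integral is $\leqslant0$, while the last one is a strictly positive constant. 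Plugging $-\Delta w_\lambda(x^0)\leqslant0$ and this into the equation, writing $u^p(x^0)-u_\lambda^p(x^0)=p\,\theta^{p-1}w_\lambda(x^0)$ with $\theta$ between $u(x^0)$ and $u_\lambda(x^0)$, and dividing by $w_\lambda(x^0)<0$, one obtains $p\,\theta^{p-1}>1$, hence $u\big((x^0)^\lambda\big)=u_\lambda(x^0)>\theta>p^{-1/(p-1)}$. But $(x^0)^\lambda$ lies in the half-space $\{z_1\leqslant\lambda\}$, so $|(x^0)^\lambda|\geqslant|\lambda|$, and then the upper bound of Theorem~\ref{th:decay} gives $u\big((x^0)^\lambda\big)\leqslant C_2|\lambda|^{-(n+2s)}$ once $|\lambda|\geqslant1$; this contradicts $u\big((x^0)^\lambda\big)>p^{-1/(p-1)}$ as soon as $|\lambda|\geqslant R_0$ for a suitable $R_0=R_0(n,s,p,C_2)$. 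This proves the claim for every $\lambda\leqslant-R_0$.

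\textbf{Conclusion.} To upgrade $w_\lambda\geqslant0$ to the strict inequality needed for $S\neq\varnothing$, take $\lambda:=-R_0-1$. If $w_\lambda\not\equiv0$ in $\Sigma_\lambda$, then Lemma~\ref{lemma u-lambda} yields $u>u_\lambda$ in $\Sigma_\lambda$, i.e.\ $\lambda\in S$. Otherwise $w_\lambda\equiv0$ in $\Sigma_\lambda$, so $u$ is symmetric across $T_\lambda$; then the same dichotomy for $\lambda':=-R_0-2$ cannot again give $w_{\lambda'}\equiv0$, for $u$ symmetric across both $T_\lambda$ and $T_{\lambda'}$ would be periodic in $x_1$ with period $2|\lambda-\lambda'|\neq0$, contradicting $\lim_{|x|\to+\infty}u(x)=0$. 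Hence $\lambda'\in S$. In either case $S\neq\varnothing$, so $\lambda_0\geqslant-R_0-2>-\infty$. I expect the only delicate point to be the bookkeeping in the nonlocal computation at the minimum point — justifying the principal-value splitting and the antisymmetric reflection (legitimate thanks to $u\in C^2(\mathbb{R}^n)$) and extracting the crucial additive $1$ which, together with the pointwise decay of $u$, closes the contradiction.
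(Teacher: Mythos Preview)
Your proof is correct but follows a genuinely different route from the paper's. The paper proves Lemma~\ref{leu7563} by an \emph{integral} estimate: using the representation $u=\mathcal{K}\ast u^p$ and the mapping properties of the Bessel kernel (Lemma~\ref{lemma 6.2}), it derives the inequality
\[
\|u_\lambda-u\|_{L^q(\Sigma_\lambda^-)}\leqslant c\,\|u_\lambda\|^{p-1}_{L^{q(p-1)}(\Sigma_\lambda^-)}\,\|u_\lambda-u\|_{L^{q}(\Sigma_\lambda^-)},
\]
and then observes that $\|u_\lambda\|_{L^{q(p-1)}(\Sigma_\lambda^-)}\to0$ as $\lambda\to-\infty$, forcing $|\Sigma_\lambda^-|=0$. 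You instead argue \emph{pointwise}, in the spirit of the direct moving planes method for nonlocal operators: assuming a negative interior minimum of $w_\lambda$, you split $(-\Delta)^s w_\lambda$ via antisymmetry, extract the strict sign, and combine it with the zeroth-order term to get $p\,\theta^{p-1}>1$, which contradicts the power decay of Theorem~\ref{th:decay} at the reflected point $(x^0)^\lambda$.

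Both approaches are valid here, and each has its advantages. The paper's method needs only that $u\in L^m(\mathbb{R}^n)$ for large $m$ (a soft consequence of $u\in H^1\cap L^\infty$), not the sharp $|x|^{-(n+2s)}$ decay, so it is more robust to the precise asymptotics; on the other hand it relies on the Bessel-kernel machinery (Theorem~\ref{th properties of k} and Lemma~\ref{lemma 6.2}). Your argument is more elementary once the decay estimate is in hand---it bypasses the kernel estimates entirely for this step---but it does use Theorem~\ref{th:decay} in an essential way. Two minor points of phrasing: the integral $\int_{\Sigma_\lambda}|x^0-y^\lambda|^{-(n+2s)}\,dy$ is a strictly positive \emph{number} depending on $x^0$ and $\lambda$, not a constant; and the mean value theorem gives $\theta\leqslant u_\lambda(x^0)$ rather than strict inequality, but this is enough since you have $\theta>p^{-1/(p-1)}$ strictly.
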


To check this, we will need the following estimate:

\begin{Lemma}\label{lemma 6.2}
Let~$q>r>\frac{n}{2}$. Then, for any measurable set~$\Omega\subset\mathbb{R}^n$, there exists~$c>0$, depending on~$n$ and~$s$, such that, for all measurable functions~$g:\Omega\to\R$, 
$$
		\left\|\int_{\Omega}\mathcal{ K}(\cdot-y)g(y)\, dy\right\|_{L^q(\Omega)}\leqslant c \,\|g\|_{L^r(\Omega)}.
$$
\end{Lemma}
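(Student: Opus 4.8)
The plan is to read the integral operator as convolution against the Bessel kernel~$\mathcal{K}$, extended trivially from~$\Omega$ to~$\R^n$, and then to apply Young's convolution inequality; the only genuine point to verify is that~$\mathcal{K}$ belongs to the Lebesgue space dictated by the Young scaling relation, and this is precisely where the hypothesis~$r>\frac n2$ enters.

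In detail, I may assume~$g\in L^r(\Omega)$, the inequality being trivial otherwise, and I set~$\widetilde g:=g\chi_\Omega$, so that~$\|\widetilde g\|_{L^r(\R^n)}=\|g\|_{L^r(\Omega)}$ and, for every~$x\in\R^n$,
$$ \int_{\Omega}\mathcal{K}(x-y)\,g(y)\,dy=(\mathcal{K}\ast\widetilde g)(x). $$
Next I choose the exponent~$t$ by the relation~$\frac1t+\frac1r=1+\frac1q$, that is~$\frac1t=1-\frac1r+\frac1q$. Since~$q>r$ we get~$\frac1t<1$, hence~$t\in(1,+\infty)$, and in particular~$t\geqslant1$, so that Young's inequality will be applicable once we know~$\mathcal{K}\in L^t(\R^n)$.

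To check that~$\mathcal{K}\in L^t(\R^n)$ I invoke part~(d) of Theorem~\ref{th properties of k}. If~$n=1$ or~$n=2$, then~$\mathcal{K}\in L^t(\R^n)$ for every~$t\in[1,+\infty)$ and there is nothing to prove. If~$n\geqslant3$, part~(d) requires~$t<\frac{n}{n-2}$; but
$$ \frac1t=1-\frac1r+\frac1q>1-\frac1r>1-\frac2n=\frac{n-2}{n}, $$
where the first inequality uses~$\frac1q>0$ and the second uses~$r>\frac n2$. Hence~$t<\frac{n}{n-2}$ and~$\mathcal{K}\in L^t(\R^n)$. Young's convolution inequality then yields
$$ \|\mathcal{K}\ast\widetilde g\|_{L^q(\R^n)}\leqslant\|\mathcal{K}\|_{L^t(\R^n)}\,\|\widetilde g\|_{L^r(\R^n)}=\|\mathcal{K}\|_{L^t(\R^n)}\,\|g\|_{L^r(\Omega)}, $$
and restricting the left-hand side to~$\Omega$ (which does not increase the norm) gives the assertion with~$c:=\|\mathcal{K}\|_{L^t(\R^n)}$.

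I do not anticipate a real obstacle: the argument is a one-line application of Young's inequality, the substance being only the bookkeeping that the dual exponent~$t$ falls inside the integrability window of Theorem~\ref{th properties of k}(d), which is guaranteed exactly by~$r>\frac n2$. The one caveat is that~$t$, and hence~$c=\|\mathcal{K}\|_{L^t(\R^n)}$, depends also on~$q$ and~$r$; since these exponents are fixed throughout the application of the lemma, this dependence can be absorbed into the constant, consistently with the statement.
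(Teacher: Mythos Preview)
Your proof is correct and in fact more direct than the paper's. The paper proceeds in two steps: first it obtains an~$L^\infty$ bound via H\"older's inequality (using~$\mathcal{K}\in L^{r/(r-1)}$, which requires~$r>n/2$), then an~$L^r$ bound via Young's inequality with~$\mathcal{K}\in L^1$, and finally interpolates between~$L^\infty$ and~$L^r$ to land in~$L^q$. You bypass the interpolation step entirely by applying Young's inequality once with the exponent~$t$ determined by the scaling relation~$\frac1t+\frac1r=1+\frac1q$, and you correctly verify that~$t$ falls in the admissible window~$[1,\frac{n}{n-2})$ of Theorem~\ref{th properties of k}(d). Your approach is cleaner; the paper's version has the mild advantage that its constant depends only on~$r$ (through~$\gamma=r/(r-1)$) and not on~$q$, whereas yours depends on both through~$t$, but as you note this is immaterial for the application. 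Both proofs implicitly rely on~$r>1$ (needed for Young and for the conjugate exponent to make sense), which is automatic here since~$r>n/2\geqslant1$.
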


The proof of Lemma~\ref{lemma 6.2} follows the same line as the one
of~\cite[Lemma~5.2]{MR3002595}. We provide here the details for the facility
of the reader, since some exponents need to be adjusted to our setting.

\begin{proof}[Proof of Lemma~\ref{lemma 6.2}]
Notice that if~$\|g\|_{L^r(\Omega)}$ is unbounded, the claim is obviously true.
Hence, we can suppose that~$\|g\|_{L^r(\Omega)}<+\infty$.

	{F}rom~(d) of Theorem~\ref{th properties of k}
	we know that if~$n\geqslant 3$, then~$\mathcal{ K}\in L^{\gamma}(\mathbb{R}^n)$ for~$\gamma\in[1,\frac{n}{n-2})$ and if~$n=2$, then~$\mathcal{ K}\in L^{\gamma}(\mathbb{R}^n)$ for~$\gamma\in[1,+\infty)$.
	
	We point out that, since~$r>n/2$, setting~$\gamma:=\frac{r}{r-1}$, we
	see that~$\gamma\in\left(1,\frac{n}{n-2}\right)$ when~$n\ge3$.
	Thus, using the H\"{o}lder inequality
	with exponents~$\gamma$ and~$r$, we have that, for every~$x\in \Omega$,
	\[ \left|\int_{\Omega}\mathcal{ K}(x-y)g(y)\, dy\right|\leqslant \|\mathcal{ K}\|_{L^\gamma(\mathbb{R}^n)} \|g\|_{L^{r}(\Omega)}\leqslant C\|g\|_{L^{r}(\Omega)}. \]
	In addition, by Young's convolution inequality,
	\[ \left\|\int_{\Omega}\mathcal{ K}(\cdot-y)g(y)\, dy\right\|_{L^{r}(\Omega)}\leqslant \|\mathcal{ K}\|_{L^1(\mathbb{R}^n)} \|g\|_{L^{r}(\Omega)}\leqslant C\|g\|_{L^{r}(\Omega)}. \]
	 As a result,  we  conclude that 
	 \begin{equation*}
	 	\begin{split}
	 		&\left\|\int_{\Omega}\mathcal{ K}(\cdot-y)g(y)\, dy\right\|_{L^q(\Omega)}\\
	 		=\;&\left(\int_{\Omega}\left(\int_{\Omega}\mathcal{ K}(x-y)g(y)\, dy\right)^{q-r}\left(\int_{\Omega}\mathcal{ K}(x-y)g(y)\, dy\right)^{r}\, dx\right)^{1/q}\\
	 		\leqslant\;& \left\|\int_{\Omega}\mathcal{ K}(x-y)g(y)\, dy\right\|^{1-r/q}_{L^\infty(\Omega)}\,
	 		\left\|\int_{\Omega}\mathcal{ K}(x-y)g(y)\, dy\right\|^{r/q}_{L^r(\Omega)}\\
	 		\leqslant\;& C\|g\|_{L^{r}(\Omega)},
	 	\end{split}
	 \end{equation*}
as desired.
\end{proof}

We will also need the following intermediate estimate:

\begin{Lemma}
Let~$q>\max\{n, 2/(p-1)\}$. Let
\begin{equation}\label{defsigmalambdameno}
\Sigma_{\lambda}^-:=\left\{x\in\Sigma_{\lambda}\;{\mbox{ s.t. }}\;u(x)<u_\lambda(x)\right\}. \end{equation}

Then,
\begin{equation}\label{893r2fgewkfgwekugt43ipoiuytr}
 	\|u_\lambda-u\|_{L^q(\Sigma_\lambda^-)}\leqslant c\|u_\lambda\|^{p-1}_{L^{q(p-1)}(\Sigma_\lambda^-)}\|u_\lambda-u\|_{L^{q}(\Sigma_\lambda^-)}.
\end{equation}
\end{Lemma}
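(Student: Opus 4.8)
The plan is to rewrite the left-hand side of \eqref{893r2fgewkfgwekugt43ipoiuytr} through the Bessel kernel $\mathcal{K}$ of \eqref{definition of K}, exploit the radial monotonicity of $\mathcal{K}$ under the reflection across $T_\lambda$ to localize the estimate on $\Sigma_\lambda^-$, and then close it with the convolution bound of Lemma~\ref{lemma 6.2}. First, since $u\in L^\infty(\mathbb{R}^n)\cap H^1(\mathbb{R}^n)$ we have $u^p\in L^{q_0}(\mathbb{R}^n)$ for some $q_0\in[1,+\infty)$, so by (e) of Theorem~\ref{th properties of k} (arguing as for the representation $u-u_1=\mathcal{K}\ast((1-\phi_1)u^p)$ in the proof of Theorem~\ref{th:regularity}) one has $u=\mathcal{K}\ast(u^p)$. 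Because $\mathcal{K}$ is radially symmetric and $x\mapsto x^\lambda$ is an isometry, $\mathcal{K}(x^\lambda-y)=\mathcal{K}(x-y^\lambda)$, and the change of variables $y\mapsto y^\lambda$ gives $u_\lambda=\mathcal{K}\ast(u_\lambda^p)$ as well. Subtracting the two representations, splitting the integral over $\Sigma_\lambda$ and $\mathbb{R}^n\setminus\Sigma_\lambda$, and applying $y\mapsto y^\lambda$ on the second piece (so that $u^p(y^\lambda)=u_\lambda^p(y)$ and $u_\lambda^p(y^\lambda)=u^p(y)$), we obtain, for every $x\in\Sigma_\lambda$,
\[
u_\lambda(x)-u(x)=\int_{\Sigma_\lambda}\Big(\mathcal{K}(x-y)-\mathcal{K}(x-y^\lambda)\Big)\big(u_\lambda^p(y)-u^p(y)\big)\,dy.
\]

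For $x,y\in\Sigma_\lambda$ one has $|x-y^\lambda|^2-|x-y|^2=4(x_1-\lambda)(y_1-\lambda)>0$, so, by the monotonicity of $\mathcal{K}$ with respect to $r=|x|$ (part (a) of Theorem~\ref{th properties of k}), $0\le\mathcal{K}(x-y)-\mathcal{K}(x-y^\lambda)\le\mathcal{K}(x-y)$. Since $u_\lambda^p-u^p\le0$ on $\Sigma_\lambda\setminus\Sigma_\lambda^-$, the corresponding contributions to the integral are nonpositive and may be discarded, giving for $x\in\Sigma_\lambda^-$
\[
0<u_\lambda(x)-u(x)\le\int_{\Sigma_\lambda^-}\mathcal{K}(x-y)\,\big(u_\lambda^p(y)-u^p(y)\big)\,dy.
\]
On $\Sigma_\lambda^-$ we have $0<u<u_\lambda$, and the elementary inequality $a^p-b^p\le p\,a^{p-1}(a-b)$ for $a\ge b\ge0$ (mean value theorem, using $p>1$) yields $u_\lambda^p-u^p\le p\,u_\lambda^{p-1}(u_\lambda-u)$ there, so that
\[
0<u_\lambda(x)-u(x)\le p\int_{\Sigma_\lambda^-}\mathcal{K}(x-y)\,u_\lambda^{p-1}(y)\,\big(u_\lambda(y)-u(y)\big)\,dy,\qquad x\in\Sigma_\lambda^-.
\]

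Finally, since $q>n$ we may apply Lemma~\ref{lemma 6.2} with $\Omega=\Sigma_\lambda^-$ and the Lebesgue exponents $q$ and $r:=q/2$ (which satisfy $q>r>n/2$) to $g:=u_\lambda^{p-1}(u_\lambda-u)$, and then use H\"older's inequality with exponents $q$ and $q$ (so that $\tfrac1{q/2}=\tfrac1q+\tfrac1q$) together with $\|u_\lambda^{p-1}\|_{L^q(\Sigma_\lambda^-)}=\|u_\lambda\|^{p-1}_{L^{q(p-1)}(\Sigma_\lambda^-)}$ to get
\[
\|u_\lambda-u\|_{L^q(\Sigma_\lambda^-)}\le c\,\|g\|_{L^{q/2}(\Sigma_\lambda^-)}\le c\,\|u_\lambda\|^{p-1}_{L^{q(p-1)}(\Sigma_\lambda^-)}\,\|u_\lambda-u\|_{L^q(\Sigma_\lambda^-)},
\]
which is precisely \eqref{893r2fgewkfgwekugt43ipoiuytr} (the factor $p$ being absorbed into $c$). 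I expect the only delicate points to be the reflection identity for $u_\lambda-u$ and the sign bookkeeping when restricting to $\Sigma_\lambda^-$; everything else is routine. The hypothesis $q>2/(p-1)$ — equivalently $q(p-1)>2$ — is not needed for the inequality itself, but, combined with the power-type decay of $u$ from Theorem~\ref{th:decay}, it guarantees that $\|u_\lambda\|^{p-1}_{L^{q(p-1)}(\Sigma_\lambda^-)}$ is finite and tends to $0$ as $\lambda\to-\infty$, which is what makes \eqref{893r2fgewkfgwekugt43ipoiuytr} useful in the moving planes scheme.
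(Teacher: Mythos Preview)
Your proof is correct and follows essentially the same route as the paper's: the integral representation $u=\mathcal{K}\ast u^p$, the reflection identity for $u_\lambda-u$ over $\Sigma_\lambda$, the sign argument restricting to $\Sigma_\lambda^-$, the mean-value bound $u_\lambda^p-u^p\le p\,u_\lambda^{p-1}(u_\lambda-u)$, the application of Lemma~\ref{lemma 6.2} with $r=q/2$, and the final H\"older step all match the paper exactly (the paper writes $\mathcal{K}(x^\lambda-y)$ where you write $\mathcal{K}(x-y^\lambda)$, but these coincide by radial symmetry). Your closing remark that $q>2/(p-1)$ is not needed for the inequality itself but only for the finiteness of the norms and the subsequent moving-planes argument is also in line with the paper, which records the finiteness separately at the end of its proof.
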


\begin{proof}
Owing to the fact that~$\mathcal{ K}$ is radial symmetric (recall~(a)
of Theorem~\ref{th properties of k}), we deduce that 
	\begin{eqnarray*}
			u(x)&=&\int_{\mathbb{R}^n}\mathcal{ K}(x-y) u^p(y)\, dy
			= \int_{\Sigma_\lambda}\mathcal{ K}(x-y) u^p(y)\, dy+\int_{\Sigma_\lambda}\mathcal{ K}(x^\lambda-y) u^p_\lambda(y)\, dy\\
{\mbox{and }}\qquad
	u_\lambda(x)&=&\int_{\mathbb{R}^n}\mathcal{ K}(x-y) u_\lambda^p(y)\, dy
		= \int_{\Sigma_\lambda}\mathcal{ K}(x-y) u^p_\lambda(y)\, dy+\int_{\Sigma_\lambda}\mathcal{ K}(x^\lambda-y) u^p(y)\, dy.
\end{eqnarray*} 
Moreover, we point out that~$|y-x^\lambda|> |y-x|$ for every~$x$, $y\in\Sigma_\lambda$. Collecting these pieces of information and
recalling that~$\mathcal{ K}$ is nonincreasing with respect to~$r=|x|$,
we conclude that, for every~$ x\in\Sigma_\lambda$, 
\begin{equation*}
	\begin{split}
		u_\lambda(x)-u(x)&=\int_{\Sigma_\lambda}\left(\mathcal{ K}(x-y)-\mathcal{ K}(x^\lambda-y)\right) \left(u^p_\lambda(y)-u^p(y)\right)\, dy\\
		&\leqslant \int_{\Sigma^-_\lambda}\left(\mathcal{ K}(x-y)-\mathcal{ K}(x^\lambda-y)\right) \left(u^p_\lambda(y)-u^p(y)\right)\, dy\\
		&\leqslant \int_{\Sigma^-_\lambda}\mathcal{ K}(x-y) \left(u^p_\lambda(y)-u^p(y)\right)\, dy.
	\end{split}
\end{equation*}
Therefore, by the positivity of~$\mathcal{ K}$ and the Mean Value Theorem, we  derive that 
\begin{equation*}
		|u_\lambda(x)-u(x)|\leqslant\int_{\Sigma^-_\lambda}\mathcal{ K}(x-y) \left|u^p_\lambda(y)-u^p(y)\right|\, dy
		\leqslant p\int_{\Sigma^-_\lambda}\mathcal{ K}(x-y)u^{p-1}_\lambda(y) \left|u_\lambda(y)-u(y)\right|\, dy.
\end{equation*}

Since~$q>\max\{n, 2/(p-1)\}$, we can use Lemma~\ref{lemma 6.2} with~$r:=q/2$, $\Omega:=\Sigma_\lambda^-$ and~$g:=u^{p-1}_\lambda (u_\lambda-u)$ and we find that
$$
	\|u_\lambda-u\|_{L^q(\Sigma_\lambda^-)}
	\le p\left\|\int_{\Sigma^-_\lambda}\mathcal{ K}(\cdot-y)u^{p-1}_\lambda(y) \left|u_\lambda(y)-u(y)\right|\, dy\right\|_{L^q(\Sigma_\lambda^-)}
	\leqslant c \left\|u^{p-1}_\lambda (u_\lambda-u)\right\|_{L^{q/2}(\Sigma_\lambda^-)},
$$
for some~$c>0$ depending on~$n$, $s$ and~$p$.

{F}rom this and the H\"{o}lder inequality, we obtain~\eqref{893r2fgewkfgwekugt43ipoiuytr}.

We point out that, for all~$m\in[2,+\infty)$,
\begin{eqnarray*}
\|u\|_{L^m(\R^n)}^m= \int_{\R^n}|u(x)|^m\,dx
\le\|u\|_{L^\infty(\R^n)}^{m-2}\int_{\R^n}|u(x)|^2\,dx<+\infty,
\end{eqnarray*}
thanks to Theorem~\ref{Theorem :holder regularity}. This gives that the norms
in~\eqref{893r2fgewkfgwekugt43ipoiuytr} are finite.
\end{proof}

\begin{proof}[Proof of Lemma~\ref{leu7563}]
We recall the definition of~$\Sigma_\lambda^-$ in~\eqref{defsigmalambdameno}
and we
observe that, for every~$m\in[2,+\infty)$,
\begin{eqnarray*}&&
\lim_{\lambda\to-\infty}\|u_\lambda\|^{m}_{L^{m}(\Sigma_\lambda^-)}
=\lim_{\lambda\to-\infty}\int_{\Sigma_\lambda^-} |u_\lambda(x)|^m\,dx\\
&&\qquad=\lim_{\lambda\to-\infty}
\int_{\{x_1>\lambda\}\cap \{u(x^\lambda)>u(x)\}} |u(x^\lambda)|^m\,dx
\le\lim_{\lambda\to-\infty}\int_{\{y_1<\lambda\}} |u(y)|^m\,dy=0.
\end{eqnarray*}
Thus, there exists~$\lambda^-\in\R$ sufficiently negative 
such that~$c\|u_\lambda\|^{p-1}_{L^{q(p-1)}(\Sigma_\lambda^-)}\le1/2$
for all~$\lambda\le\lambda^-$. Therefore, using this information into~\eqref{893r2fgewkfgwekugt43ipoiuytr}, we find that,
for all~$\lambda\le\lambda^-$,
$$
	\|u_\lambda-u\|_{L^q(\Sigma_\lambda^-)}\leqslant 
	c\|u_\lambda\|^{p-1}_{L^{q(p-1)}(\Sigma_\lambda^-)}
	\|u_\lambda-u\|_{L^{q}(\Sigma_\lambda^-)}\le
	\frac{1}{2}\|u_\lambda-u\|_{L^{q}(\Sigma_\lambda^-)}.
$$
This implies that, for all~$\lambda\le\lambda^-$,
we have that~$|\Sigma_\lambda^-|=0$, and therefore,
for all~$x\in\Sigma_\lambda$, we see that~$u(x)\geqslant u_\lambda(x)$.

Now, from Lemma~\ref{lemma u-lambda} it follows that,
for all~$\lambda\le\lambda^-$,
\begin{equation}\label{esdfghj098765432eirguoi0}
{\mbox{either~$ u> u_{\lambda}$  in~$\Sigma_{\lambda}$
or~$u\equiv u_\lambda$ in~$\R^n$.}}\end{equation}

We claim that
\begin{equation}\label{esdfghj098765432eirguoi}
{\mbox{if~$u\equiv u_{\lambda_1}\equiv u_{\lambda_2}$ in~$\R^n$ for some~$\lambda_1$, $\lambda_2\in\R$,
then~$\lambda_1=\lambda_2$.}}
\end{equation}
Indeed, suppose that~$\lambda_2>\lambda_1$ and let~$T:=2(\lambda_2-\lambda_1)$. Then, for all~$x\in\R^n$,
$$ u(x+Te_1)=u((x+Te_1)^{\lambda_2})
=u(2\lambda_2 e_1-(x+Te_1))=u(2\lambda_1 e_1-x)=u(x).
$$
That is, $u$ is periodic in the first coordinate of period~$T$, violating
the fact that~$u$ is non-trivial and with finite~$L^2$-norm.
This establishes~\eqref{esdfghj098765432eirguoi}.

{F}rom~\eqref{esdfghj098765432eirguoi0} and~\eqref{esdfghj098765432eirguoi},
we thereby conclude that~$ u> u_{\lambda}$  in~$\Sigma_{\lambda}$,
for all~$\lambda\le\lambda^-$ (up to taking~$\lambda^-$ smaller if needed).
This implies that~$\lambda_0\ge \lambda^-$, as desired.
\end{proof}

\begin{Lemma}\label{leu75632}
We have that~$\lambda_0<+\infty$.
\end{Lemma}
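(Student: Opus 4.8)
The plan is to argue by contradiction: suppose $\lambda_0 = +\infty$. By definition of $\lambda_0$ this means that for every $\lambda \in \R$ we have $u(x) > u_\lambda(x)$ for all $x \in \Sigma_\lambda$; equivalently, for every $\lambda$ and every $x$ with $x_1 > \lambda$ we have $u(x) > u(2\lambda - x_1, x_2, \dots, x_n)$. Fixing a point $y = (y_1, \dots, y_n)$ and an arbitrary $z_1 < y_1$, choosing $\lambda := \tfrac{y_1+z_1}{2}$ (so that $y \in \Sigma_\lambda$ and $y^\lambda = (z_1, y_2, \dots, y_n)$) forces $u(y_1, y', ) > u(z_1, y')$ whenever $z_1 < y_1$. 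Thus $u$ would be strictly increasing in the $x_1$-variable on all of $\R^n$. This is incompatible with the decay $\lim_{|x|\to+\infty} u(x) = 0$ established in Theorem~\ref{th: u>0} (or Theorem~\ref{th:decay}): along the line $x' = 0$, monotonicity in $x_1$ together with positivity would give $u(0,\dots,0) < \lim_{t\to+\infty} u(t,0,\dots,0) = 0$, contradicting $u > 0$. Hence $\lambda_0 < +\infty$.

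Concretely, the steps are: (i) negate the conclusion to get $u(x) > u_\lambda(x)$ on $\Sigma_\lambda$ for all $\lambda \in \R$; (ii) translate this into strict monotonicity of $t \mapsto u(t, x')$ on $\R$ for each fixed $x' \in \R^{n-1}$, by the reflection bookkeeping above; (iii) evaluate along any fixed horizontal line and pass to the limit $t \to +\infty$, using $\lim_{|x|\to+\infty} u(x) = 0$ from Theorem~\ref{th: u>0}, to deduce $u \le 0$ somewhere, contradicting $u > 0$ from the same theorem.

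I expect no serious obstacle here: this is a short soft argument, and all the inputs (positivity and vanishing at infinity of $u$, continuity from Theorem~\ref{th C^2,alpha interior without boundary condition}) are already available. The only point requiring a line of care is step (ii) — making sure that the family of inequalities $\{u > u_\lambda \text{ on } \Sigma_\lambda\}_{\lambda \in \R}$ genuinely yields monotonicity at \emph{every} pair $z_1 < y_1$, which it does because the map $\lambda \mapsto$ (reflection of $y_1$ across $\lambda$) sweeps out all values less than $y_1$ as $\lambda$ ranges over $(-\infty, y_1)$.
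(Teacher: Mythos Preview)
Your approach is essentially the paper's, but step (i) overclaims: from $\lambda_0 = \sup S = +\infty$, where $S := \{\lambda : u > u_\lambda \text{ on } \Sigma_\lambda\}$, you only get that $S$ is unbounded above, i.e.\ that there exists a sequence $\lambda_k \to +\infty$ with $\lambda_k \in S$. The paper never shows that $S$ is an interval, so the deduction that $S = \R$ --- and hence the full strict monotonicity of $t \mapsto u(t,x')$ in step (ii) --- is not justified as written.

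The good news is that step (iii) does not actually need full monotonicity. Take any $\lambda_k \in S$ with $\lambda_k \to +\infty$ and evaluate the inequality at $x = 2\lambda_k e_1 \in \Sigma_{\lambda_k}$; since $(2\lambda_k e_1)^{\lambda_k} = 0$ this gives $u(2\lambda_k e_1) > u(0)$, and sending $k \to \infty$ using the decay from Theorem~\ref{th: u>0} yields $0 \ge u(0)$, contradicting positivity. With this minor fix your argument becomes exactly the paper's proof.
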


\begin{proof}
Suppose by contradiction that there exists a
sequence~$\lambda_k\to+\infty$ such that~$u(x)>u_{\lambda_k}(x)$ for all~$x\in\Sigma_{\lambda_k}$. Then, thanks to~\eqref{dioe3yr2fgufegvwj},
$$0=\lim_{k\to+\infty} u(2\lambda_k e_1)\ge 
\lim_{k\to+\infty} u((2\lambda_k e_1)^{\lambda_k})=u(0).
$$
This is a contradiction with the fact that~$u>0$ in~$\R^n$, in light of Theorem~\ref{th: u>0}.
\end{proof}

In light of Lemma~\ref{leu7563}, we have that
$$ \Big\{\lambda\in\R\; {\mbox{ s.t. }}\; u(x)>u_\lambda(x) \text{ for all } x\in \Sigma_\lambda \Big\}\neq \varnothing $$
and moreover
\begin{equation}\label{asdfghjqwertyui12345678}
{\mbox{$u(x)\ge u_{\lambda_0}(x)$ for all~$x\in\Sigma_{\lambda_0}$.}}
\end{equation}

With this preliminary work, we can prove the following:

\begin{Lemma}\label{lemma:201ry}
We have that~$u_{\lambda_0}(x)=u(x)$ for all~$x\in\Sigma_{\lambda_0}$.
\end{Lemma}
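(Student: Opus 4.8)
The plan is to run the standard closing step of the moving planes method, arguing by contradiction. Suppose that $u\not\equiv u_{\lambda_0}$ in $\Sigma_{\lambda_0}$. Since $u\geqslant u_{\lambda_0}$ in $\Sigma_{\lambda_0}$ by~\eqref{asdfghjqwertyui12345678}, Lemma~\ref{lemma u-lambda} upgrades this to the strict inequality $u>u_{\lambda_0}$ in $\Sigma_{\lambda_0}$. I would then show that this forces $u>u_\lambda$ in $\Sigma_\lambda$ for all $\lambda$ in a right neighbourhood of $\lambda_0$ (with at most one exception), contradicting the maximality of $\lambda_0$.

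The quantitative engine is estimate~\eqref{893r2fgewkfgwekugt43ipoiuytr}: once we know that $c\|u_\lambda\|^{p-1}_{L^{q(p-1)}(\Sigma_\lambda^-)}\leqslant\tfrac12$, then~\eqref{893r2fgewkfgwekugt43ipoiuytr} gives $\|u_\lambda-u\|_{L^q(\Sigma_\lambda^-)}=0$ (the norm being finite, as already observed); since on $\Sigma_\lambda^-$ one has $u_\lambda-u>0$ pointwise, this yields $|\Sigma_\lambda^-|=0$, i.e. $u\geqslant u_\lambda$ in $\Sigma_\lambda$. Thus the crux is to prove that $\|u_\lambda\|_{L^{q(p-1)}(\Sigma_\lambda^-)}\to0$ as $\lambda\downarrow\lambda_0$. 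I would obtain this via the change of variables $y:=x^\lambda$, which gives $\|u_\lambda\|_{L^{q(p-1)}(\Sigma_\lambda^-)}^{q(p-1)}=\int_{\widetilde\Sigma_\lambda^-}u(y)^{q(p-1)}\,dy$, where $\widetilde\Sigma_\lambda^-:=\{y\in\R^n:\ y_1<\lambda,\ u(y^\lambda)<u(y)\}$, followed by dominated convergence: the function $u^{q(p-1)}$ is integrable on $\R^n$ (since $q(p-1)>2$ and $u\in L^2(\R^n)\cap L^\infty(\R^n)$, or alternatively by the decay in Theorem~\ref{th:decay}), and $\chi_{\widetilde\Sigma_\lambda^-}\to0$ a.e. as $\lambda\downarrow\lambda_0$. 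The latter is where the strict inequality enters: if $y_1>\lambda_0$ then $y\notin\{y_1<\lambda\}$ for $\lambda$ close enough to $\lambda_0$, while if $y_1<\lambda_0$ then $y^{\lambda_0}\in\Sigma_{\lambda_0}$, so $u(y^{\lambda_0})>u_{\lambda_0}(y^{\lambda_0})=u(y)$ and, by continuity of $\lambda\mapsto u(y^\lambda)$, $u(y^\lambda)>u(y)$ for $\lambda$ near $\lambda_0$, i.e. $y\notin\widetilde\Sigma_\lambda^-$.

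With $\|u_\lambda\|_{L^{q(p-1)}(\Sigma_\lambda^-)}\to0$ in hand, there is $\delta>0$ such that $u\geqslant u_\lambda$ in $\Sigma_\lambda$ for every $\lambda\in(\lambda_0,\lambda_0+\delta)$. Applying Lemma~\ref{lemma u-lambda} again, for each such $\lambda$ either $u>u_\lambda$ in $\Sigma_\lambda$ or $u\equiv u_\lambda$ in $\R^n$; but the second alternative can hold for at most one value of $\lambda$, since $u\equiv u_{\lambda_1}\equiv u_{\lambda_2}$ with $\lambda_1\neq\lambda_2$ would make $u$ periodic in $x_1$, against $u\in L^2(\R^n)\setminus\{0\}$ (this is exactly the reasoning behind~\eqref{esdfghj098765432eirguoi}). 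Hence $u>u_\lambda$ in $\Sigma_\lambda$ for all but at most one $\lambda\in(\lambda_0,\lambda_0+\delta)$, contradicting the definition of $\lambda_0$ as a supremum. Therefore $u_{\lambda_0}\equiv u$ in $\Sigma_{\lambda_0}$, as claimed.

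The main obstacle is the limit $\|u_\lambda\|_{L^{q(p-1)}(\Sigma_\lambda^-)}\to0$: the set $\Sigma_\lambda^-$ is unbounded and does not become null merely by continuity in $\lambda$, so one must genuinely combine the strict inequality on $\Sigma_{\lambda_0}$ with the decay/integrability of $u$ at infinity in order to run the dominated convergence argument. Everything else is the routine bookkeeping of the moving planes scheme.
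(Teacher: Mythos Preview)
Your proposal is correct and follows the same overall scheme as the paper: argue by contradiction, upgrade~\eqref{asdfghjqwertyui12345678} to a strict inequality via Lemma~\ref{lemma u-lambda}, feed~\eqref{893r2fgewkfgwekugt43ipoiuytr} with the smallness of $\|u_\lambda\|_{L^{q(p-1)}(\Sigma_\lambda^-)}$ for $\lambda$ slightly past $\lambda_0$, deduce $|\Sigma_\lambda^-|=0$, and close with Lemma~\ref{lemma u-lambda} together with the periodicity obstruction~\eqref{esdfghj098765432eirguoi}.

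The one genuine difference is in how the smallness of $\|u_\lambda\|_{L^{q(p-1)}(\Sigma_\lambda^-)}$ is obtained. The paper splits $\Sigma_\lambda^-$ into $\Sigma_\lambda^-\cap B_{R_0}$ and $\Sigma_\lambda^-\setminus B_{R_0}$: outside the ball the contribution is made $\leqslant 1/3$ uniformly in $\lambda\in[\lambda_0,\lambda_0+1]$ by choosing $R_0$ large (using the integrability of $u$), while inside the ball the strict inequality $u>u_{\lambda_0}$ on the compact set $\overline{\Sigma_{\lambda_0+\epsilon}}\cap\overline{B_{R_0}}$ forces $\Sigma_\lambda^-\cap B_{R_0}$ into a thin slab of measure $O(\epsilon)$. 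You instead change variables $y=x^\lambda$ and run dominated convergence with $u^{q(p-1)}\in L^1(\R^n)$ as majorant, using the strict inequality to get $\chi_{\widetilde\Sigma_\lambda^-}\to 0$ a.e. Your argument is a cleaner packaging of the same two ingredients (tail integrability plus pointwise stability coming from strict inequality), and it avoids the auxiliary parameters $R_0$ and $\epsilon$; the paper's version, on the other hand, is slightly more explicit and yields a concrete $\epsilon$ directly. Both are standard and equivalent in strength.
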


\begin{proof}
Suppose by contradiction that~$u\not\equiv u_{\lambda_0}$ in~$\Sigma_{\lambda_0}$. 
Then, thanks to~\eqref{asdfghjqwertyui12345678} and Lemma~\ref{lemma u-lambda}, we have that
\begin{equation}\label{mnbvcxzkjhgfd987654}
u> u_{\lambda_0}\qquad {\mbox{ in }}\Sigma_{\lambda_0}.\end{equation}
	
	We claim that there exists~$\epsilon>0$, depending on~$n$, $s$, $p$ and~$u$,
	such that
	 \begin{equation}\label{nthbrver685vr38icr6tb8fny}
	 {\mbox{$u\geqslant u_{\lambda}$ in~$\Sigma_{\lambda}$,
	 for all~$\lambda \in [\lambda_0,\lambda_0+\epsilon)$.}}\end{equation}
	Indeed, we recall the definition of~$\Sigma_\lambda^-$ in~\eqref{defsigmalambdameno} 
and that, for any~$q>\max\{n, 2/(p-1)\}$,
the estimate in~\eqref{893r2fgewkfgwekugt43ipoiuytr} holds true, namely
	 \begin{equation}\label{eq: lambda0}
		\|u_\lambda-u\|_{L^q(\Sigma_\lambda^-)}\leqslant c \|u_\lambda\|^{p-1}_{L^{q(p-1)}(\Sigma_\lambda^-)}\|u_\lambda-u\|_{L^{q}(\Sigma_\lambda^-)}.
	\end{equation}

Now we pick~$R_{0}>0$ large enough such that, for all~$\lambda\in[\lambda_0,\lambda_0+1]$, 
\begin{equation}\label{fuerfgegileagwo4y584396}
c\|u_\lambda\|^{p-1}_{L^{q(p-1)}(\Sigma_\lambda^-\cap (\R^n\setminus B_{R_0}))}\leqslant \frac{1}{3}. \end{equation}

Moreover, from~\eqref{mnbvcxzkjhgfd987654}
we deduce that,
for~$\epsilon>0$ small enough,
we can find~$c_\epsilon>0$  such that 
\[ u(x)- u_{\lambda_0}(x)\geqslant c_\epsilon \qquad \text{for every }x\in \Sigma_{\lambda_0+\epsilon}\cap B_{R_0}.\]
In addition, by the continuity of~$u$, up to taking~$\epsilon$ smaller,
we have that, for all~$\lambda\in[\lambda_0,\lambda_0+\epsilon)$,
\[ u(x)\geqslant u_{\lambda}(x) \qquad \text{for every } x\in \Sigma_{\lambda_0+\epsilon}\cap B_{R_0}.\]
This implies that, for all~$\lambda\in[\lambda_0,\lambda_0+\delta_\epsilon)$,
$$\Sigma_\lambda^- \cap B_{R_0}\subset (\Sigma_\lambda\setminus \Sigma_{\lambda_0+\epsilon}) \cap B_{R_0}. $$
Thus, for all~$\lambda\in[\lambda_0,\lambda_0+\epsilon)$, we have that
\[ |\Sigma_\lambda^- \cap B_{R_0}|\leqslant |(\Sigma_\lambda\setminus \Sigma_{\lambda_0+\epsilon} )\cap B_{R_0}|\leqslant c_{n}R_0^{n-1}\epsilon. \]
As a result, up to taking~$\epsilon$ even smaller, we find that, for all~$\lambda\in[\lambda_0,\lambda_0+\delta_{\epsilon_0})$,
\[ c \|u_\lambda\|^{p-1}_{L^{q(p-1)}(\Sigma_\lambda^- \cap B_{R_0})}\leqslant \frac{1}{3}. \]

As a consequence of this and~\eqref{fuerfgegileagwo4y584396},
for all~$\lambda\in[\lambda_0,\lambda_0+\delta_{\epsilon_0})$,
$$ c \|u_\lambda\|^{p-1}_{L^{q(p-1)}(\Sigma_\lambda^- )}\leqslant \frac{2}{3}.$$
Pluggin this information into~\eqref{eq: lambda0}, we thereby obtain that
$$		\|u_\lambda-u\|_{L^q(\Sigma_\lambda^-)}\leqslant \frac23\|u_\lambda-u\|_{L^{q}(\Sigma_\lambda^-)}.$$
This, in turn, implies that~$|\Sigma_\lambda^-|=0$ for all~$\lambda\in[\lambda_0,\lambda_0+\delta_{\epsilon_0})$, which gives the desired claim in~\eqref{nthbrver685vr38icr6tb8fny}.

Owing to~\eqref{nthbrver685vr38icr6tb8fny} and Lemma~\ref{lemma u-lambda},
we conclude that, for all~$\lambda\in[\lambda_0,\lambda_0+\delta_{\epsilon_0})$,
either~$u> u_{\lambda}$ in~$\Sigma_{\lambda}$
or~$u\equiv u_\lambda$ in~$\R^n$.
This and~\eqref{esdfghj098765432eirguoi}
give that the second possibility cannot occur, and therefore, for all~$\lambda\in[\lambda_0,\lambda_0+\delta_{\epsilon_0})$,
we have that~$u> u_{\lambda}$ in~$\Sigma_{\lambda}$.

This is a contradiction with the definition of~$\lambda_0$ and thus the desired
result is established.
\end{proof}

\begin{proof}[Proof of Theorem~\ref{th symmetric}]
	By translation, we may suppose that~$\lambda_0=0$. Hence, we have that~$u$ is symmetric with respect to the~$x_1$-axis, i.e. $u(x_1,x')=u(-x_1,x')$. Using the same approach in any arbitrary direction, we obtain that~$u$ is radially symmetric (see e.g.~\cite[Lemma~8.2.1]{ZZLIB} for full details of this standard argument).
\end{proof}

\begin{proof}[Proof of Theorem~\ref{th main theorem}]
Theorem~\ref{th main theorem} now follows from Theorems~\ref{th: u>0}, \ref{th:decay} and~\ref{th symmetric}.
\end{proof}

\begin{appendix}

\section{Properties of the heat kernel}\label{th properties of h1}

	In this section, we focus on the fundamental properties of the heat kernel
	introduced in~\eqref{DBSD-1}, with the aim of establishing Theorem~\ref{th properties of h}. For this, we observe that the main step to establish Theorem~\ref{th properties of h} 
	is to investigate the inverse Fourier transform and the
	asymptotic behavior of~$\mathcal{ H}$ by scaling techniques.
	We thus need to overcome the additional difficulties caused by the
	fact that the operator that we take into account is not scale invariant, and therefore our analysis
cannot rely entirely on either the purely classical or the purely fractional counterparts, as given in~\cite{MR119247,MR3002595}.

To start with,  
 let us define a ``two-scales" function  for~$s\in(0,1)$, $t_1$, $t_2>0$ and~$x\in\mathbb{R}^n$ as 
\begin{equation}\label{definition of H}
	\mathcal{H}(x,t_1,t_2):=\int_{\mathbb{R}^n}e^{-(t_1|\xi|^{2s}+t_2|\xi|^{2})}e^{2\pi ix\cdot \xi}\,d\xi,
\end{equation} which satisfies the following rescaling properties
\begin{equation}\label{scaling  2}
		\mathcal{H}(x,t,t)=t^{-\frac{n}{2s}}\mathcal{H}(t^{-\frac{1}{2s}}x,1,t^{1-\frac{1}{s}})
		=t^{-\frac{n}{2}}\mathcal{H}(t^{-\frac{1}{2}}x,t^{1-s},1).
\end{equation}
We notice that
if~$t\in(1,+\infty)$, then~$t^{1-\frac{1}{s}}\in(0,1)$, while
if~$t\in(0,1)$, then~$t^{1-s}\in(0,1)$.
As a consequence of this, we shall take~$2s$-scaling for~$t\in(1,+\infty)$ and
take~$2$-scaling for~$t\in(0,1)$ in order to discuss the properties of the heat kernel~$\mathcal{ H}$.

\subsection{Nonnegativity of heat kernel }\label{subsec:Non-negativity  of kernel h}

We perform some auxiliary analysis on the kernel~$\mathcal{ H}(x,t)$ defined in~\eqref{DBSD-1}.
For this sake, we recall a result contained in~\cite[Lemma~A.2]{MR3002595}.

\begin{Lemma}\label{convolution}
	If~$f$ and~$g \in L^1(\mathbb{R}^n)$ are radially symmetric, nonnegative and decreasing in~$r=|x|$, then~$f\ast g$ is radially symmetric and decreasing in~$r$. 
\end{Lemma}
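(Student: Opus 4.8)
The plan is to establish the two assertions separately: radial symmetry of $f\ast g$ is immediate from invariance under rotations, while the monotonicity will follow from a reflection (Steiner-type) symmetrization argument comparing the values of $f\ast g$ at two points lying at different distances from the origin along a common ray.

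\textbf{Radial symmetry.} For any rotation $R\in O(n)$, the substitution $y=Rz$ (whose Jacobian is $1$) together with the radiality of $f$ and $g$ gives
$$(f\ast g)(Rx)=\int_{\mathbb{R}^n}f(Rx-y)\,g(y)\,dy=\int_{\mathbb{R}^n}f\big(R(x-z)\big)\,g(Rz)\,dz=\int_{\mathbb{R}^n}f(x-z)\,g(z)\,dz=(f\ast g)(x).$$
Hence $f\ast g$ depends on $x$ only through $|x|$, and we may write $(f\ast g)(x)=F(|x|)$.

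\textbf{Radial monotonicity.} The plan is to fix $0\le r_1<r_2$ and a unit vector $e$, and to set $x_1:=r_1e$, $x_2:=r_2e$, $c:=\tfrac12(x_1+x_2)$. Let $\sigma$ be the orthogonal reflection across the hyperplane $H:=\{z\in\mathbb{R}^n:(z-c)\cdot e=0\}$, so that $\sigma$ is an isometry with $\sigma x_1=x_2$ and $\sigma x_2=x_1$, and let $H^+:=\{z:(z-c)\cdot e<0\}$ be the half-space bounded by $H$; a direct computation shows that $H^+$ contains the origin and $x_1$, and that $H^+=\{z:|z-x_1|<|z-x_2|\}$. Starting from
$$F(r_1)-F(r_2)=\int_{\mathbb{R}^n}\big(f(x_1-y)-f(x_2-y)\big)\,g(y)\,dy,$$
I would split the integral over $H^+$ and its complement, and in the piece over $\mathbb{R}^n\setminus H^+$ perform the change of variables $y=\sigma z$; since $\sigma$ is an isometry fixing $H$ with $\sigma x_1=x_2$, one has $|x_1-\sigma z|=|x_2-z|$ and $|x_2-\sigma z|=|x_1-z|$, whence by radiality $f(x_1-\sigma z)=f(x_2-z)$ and $f(x_2-\sigma z)=f(x_1-z)$. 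Recombining the two pieces yields the identity
$$F(r_1)-F(r_2)=\int_{H^+}\big(f(x_1-y)-f(x_2-y)\big)\big(g(y)-g(\sigma y)\big)\,dy.$$
On $H^+$ the first factor is nonnegative because $|x_1-y|\le|x_2-y|$ there and $f$ is nonincreasing, while the second factor is nonnegative because $|\sigma y|^2-|y|^2=2\big((c-y)\cdot e\big)(r_1+r_2)\ge0$ on $H^+$ and $g$ is nonincreasing. Therefore $F(r_1)\ge F(r_2)$, which is the claimed monotonicity.

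\textbf{On the technical points and the main obstacle.} All the manipulations above are legitimate once one notes, via Young's convolution inequality, that $f\ast g\in L^1(\mathbb{R}^n)$, so for a.e.\ $x$ the integrand $y\mapsto f(x-y)g(y)$ is integrable; the nonnegativity of $f$ and $g$ then allows the splitting and recombination of integrals, and $\sigma$ has unit Jacobian. I do not expect any genuine difficulty here: the only point requiring care is that the displayed monotonicity inequality is \emph{a priori} obtained only for almost every pair $(r_1,r_2)$, so that the conclusion is that $f\ast g$ coincides almost everywhere with a radially nonincreasing function --- and in every application of this lemma in the present paper the relevant functions are smooth, so even this subtlety is absent. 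If one prefers to avoid the reflection identity altogether, an equivalent route is to insert the layer-cake representation $f=\int_0^{\infty}\chi_{\{f>\tau\}}\,d\tau$ (and likewise for $g$) and use Fubini to reduce to the case $f=\chi_{B_a}$, $g=\chi_{B_b}$ with $B_a,B_b$ centred at the origin, for which $(\chi_{B_a}\ast\chi_{B_b})(x)=|B_a\cap B_b(x)|$ is plainly nonincreasing in $|x|$ (again by the same reflection argument, now applied to two balls).
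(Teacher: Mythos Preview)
Your argument is correct. The radial symmetry is immediate, and the reflection (folding) identity you derive for $F(r_1)-F(r_2)$ is the standard way to establish the monotonicity; the sign analysis on $H^+$ is accurate, and your remarks about integrability and the a.e.\ caveat are appropriate. The alternative layer-cake reduction to characteristic functions of balls is also valid.

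As for comparison with the paper: the paper does not give its own proof of this lemma. It simply quotes the result from \cite[Lemma~A.2]{MR3002595} and moves on. So your write-up supplies a self-contained justification where the paper relies on an external reference; there is nothing further to compare.
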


Then, we have:

\begin{Lemma}\label{lemma  H1}
	Let~$n\geqslant 1$ and~$s\in(0,1)$. Let~$\mathcal{ H}$ be defined as in~\eqref{DBSD-1}. 
	
Then, 
$\mathcal{H}$ is nonnegative, radially symmetric, and nonincreasing  with respect to~$r=|x|$.
\end{Lemma}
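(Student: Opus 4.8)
The plan is to exploit the product structure of the Fourier symbol of the operator $-\Delta+(-\Delta)^s$. Since
$$e^{-t(|\xi|^2+|\xi|^{2s})}=e^{-t|\xi|^{2}}\,e^{-t|\xi|^{2s}},$$
and each factor belongs to $L^1(\R^n)\cap L^2(\R^n)$, the kernel $\mathcal{H}(\cdot,t)$ introduced in~\eqref{DBSD-1} is the convolution
$$\mathcal{H}(\cdot,t)=G_t\ast P_t,\qquad\text{where }\; G_t:=\mathcal{F}^{-1}\big(e^{-t|\xi|^{2}}\big)\;\text{ and }\; P_t:=\mathcal{F}^{-1}\big(e^{-t|\xi|^{2s}}\big).$$
Here $G_t$ is the classical Gaussian heat kernel and $P_t$ is the heat kernel associated with the fractional Laplacian $(-\Delta)^s$. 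The idea is to check that each of the two factors is nonnegative, radially symmetric, nonincreasing in $r=|x|$, and belongs to $L^1(\R^n)$, and then to invoke Lemma~\ref{convolution}. The very same argument applies verbatim to the two-scales function in~\eqref{definition of H}, writing $\mathcal{H}(\cdot,t_1,t_2)=P_{t_1}\ast G_{t_2}$.

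For the Gaussian factor one has the explicit expression $G_t(x)=(4\pi t)^{-n/2}\,e^{-|x|^2/(4t)}$, which is manifestly nonnegative, radially symmetric, strictly decreasing in $r$, and of unit mass; in particular $G_t\in L^1(\R^n)$. For the fractional factor $P_t$, these properties are classical, but for completeness they can be obtained via Bochner's subordination: there is a probability measure $\mu_t$ on $(0,+\infty)$ (the law at time $t$ of the $s$-stable subordinator) such that $e^{-t|\xi|^{2s}}=\int_0^{+\infty}e^{-\sigma|\xi|^2}\,d\mu_t(\sigma)$, whence, by Fubini,
$$P_t(x)=\int_0^{+\infty}G_\sigma(x)\,d\mu_t(\sigma).$$
Being a superposition of Gaussians with nonnegative weights, $P_t$ is nonnegative, radially symmetric and nonincreasing in $r$, and $\|P_t\|_{L^1(\R^n)}=\int_0^{+\infty}\|G_\sigma\|_{L^1(\R^n)}\,d\mu_t(\sigma)=1$, so that $P_t\in L^1(\R^n)$; alternatively, these facts may be quoted from~\cite{MR3002595}.

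With the two ingredients in hand, Lemma~\ref{convolution} applied with $f=G_t$ and $g=P_t$ yields that $\mathcal{H}(\cdot,t)=G_t\ast P_t$ is radially symmetric and nonincreasing with respect to $r=|x|$; its nonnegativity is immediate, since it is the convolution of two nonnegative functions. The only step that is not pure bookkeeping is the nonnegativity and radial monotonicity of the purely fractional kernel $P_t$ — this is the point at which one uses the subordination representation above (or, equivalently, known properties of the $2s$-stable semigroup); everything else reduces to an application of Lemma~\ref{convolution}.
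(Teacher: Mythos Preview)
Your argument is correct and is in fact more direct than the paper's own proof. The paper does not factor the symbol; instead it adapts the central-limit-type argument of~\cite{MR3002595}: it builds a sequence $g_k^a:=f_k\ast g^a$, where $f_k$ is the rescaled $k$-fold self-convolution of an explicit radially decreasing probability density $f_s$ whose Fourier transform satisfies $\mathcal{F}(f_s)(\xi)=1-c|\xi|^{2s}(1+o(1))$ near the origin, and $g^a$ is a Gaussian. One then checks that $\mathcal{F}(g_k^a)(\xi)\to e^{-c|\xi|^{2s}-a|\xi|^2}$ pointwise and in $\mathcal{S}'$, so that $g_k^a\to\mathcal{H}(\cdot,c,a)$ in $\mathcal{S}'$; nonnegativity and radial monotonicity of each $g_k^a$ (via Lemma~\ref{convolution}) then pass to the limit, and scaling covers all $t>0$.

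By contrast, you split $\mathcal{H}(\cdot,t)=G_t\ast P_t$ and import the needed properties of the purely fractional kernel $P_t$ either from~\cite{MR3002595} or from Bochner subordination. This is shorter and more conceptual, and it makes transparent why $\mathcal{H}$ inherits the desired properties: the convolution of two nonnegative, radially decreasing $L^1$ functions has the same structure, exactly Lemma~\ref{convolution}. The paper's route has the virtue of being self-contained (it does not assume the subordination representation or the structure of the $2s$-stable kernel), at the cost of a longer approximation argument. Both proofs ultimately hinge on Lemma~\ref{convolution} for the monotonicity; the difference is only in how the nonnegativity of the fractional piece is obtained.
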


\begin{proof}
	We point out that, being the Fourier transform of a radially symmetric function, $\mathcal{H}(x,t)$ is radially symmetric in~$x$.  To prove the nonnegativity of~$\mathcal{H}$,
	we adapt the arguments in~\cite{MR3002595}. 
	
	For all~$s\in(0,1]$, we define the radially symmetric nonnegative functions~$f_s$ as
	\begin{equation*}
		f_s(x):= A_s\left(\frac{1}{|x|^{n+2s}}\,\chi_{\mathbb{R}^n\setminus B_{1}}(x)+\chi_{ B_{1}}(x)\right),
	\end{equation*}
	and, for any~$a>0$,
	\begin{equation*}
		g^a(x):= A_1a^{-\frac{n}{2}}\pi^{\frac{n}{2}}e^{-\frac{\pi^2}{a}|x|^2},
	\end{equation*}
	where~$A_s$ and~$A_1$ are such that~$\int_{\mathbb{R}^n}f_s(x)\, dx=1$ and~$\int_{\mathbb{R}^n}g^a(x)\, dx=1$, respectively.
	
	In this way, we have that 
	\begin{equation*}
		\begin{split}
			\mathcal{F}(f_s)(\xi)&=1+\int_{\mathbb{R}^n}\left(\cos(2\pi\xi\cdot x)-1\right)f_s(x)\, dx\\
			&=1+A_s|\xi|^{2s}\int_{|y|\geqslant|\xi|}\frac{\cos\left(2\pi y_1\right)-1}{|y|^{n+2s}}\,dy+A_s|\xi|^{-n}\int_{|y|\leqslant|\xi|}{\cos\left(2\pi y_1\right)-1}\,dy.
		\end{split}
	\end{equation*}
	Let also 
	\[ c:=-A_s\int_{\mathbb{R}^n}\frac{\cos\left(2\pi y_1\right)-1}{|y|^{n+2s}}\,dy. \]
	Then, it is immediate to check that
	$$\mathcal{F}(f_s)(\xi)=1-c|\xi|^{2s}(1+\omega(\xi)), $$
	where~$\omega(\xi)\to 0$ if~$\xi\to 0$. 
	
	We now define, for all~$k\in\mathbb{N}$,
	\[ f_k(x):=k^{\frac{n}{2s}}(f_s\ast f_s\ast \cdots \ast f_s)(k^{\frac{1}{2s}}x) ,\]
	the convolution product being taken~$k$ times. 
	
	By the properties of the Fourier transform with respect to the convolution product, one has that, for all~$\xi\in\mathbb{R}^n$,
	\begin{equation}\label{fk}
		\mathcal{F}(f_k)(\xi)=\left(\mathcal{F}(f_s)\left(\frac{\xi}{k^{1/(2s)}}\right)\right)^k=\left(1-c\frac{|\xi|^{2s}}{k}\left(1+\omega\left(\frac{\xi}{k^{1/(2s)}}\right)\right)\right)^k.
	\end{equation}

	We define, for all~$k\in \mathbb{N}$,
	\[ g_k^a(x):=f_k\ast g^a(x) \]
	and, by combining~\eqref{fk} and the fact that~$ \mathcal{F}(g^a)(\xi)=e^{-a|\xi|^2}$, we see that
	\[ \mathcal{F}(g_k^a)(\xi)= \left(1-c\frac{|\xi|^{2s}}{k}\left(1+\omega\left(\frac{\xi}{k^{1/(2s)}}\right)\right)\right)^ke^{-a|\xi|^2}.\]
	We note that the right-hand-side of the above equation converges to~$e^{-c|\xi|^{2s}-a|\xi|^{2}}$ pointwise as~$k\to+\infty$. Furthermore, since, for all~$\xi\in\mathbb{R}^n$,
	 \[ |\mathcal{F}(g_k^a)(\xi)|\leqslant \|g_k^a\|_{L^1(\mathbb{R}^n)}=1,
	 \]
	this convergence also holds in~$\mathcal{S}'(\mathbb{R}^n)$, that is,
	as~$k\to+\infty$,
	\[ \mathcal{F}(g_k^a)(\xi)\to e^{-c|\xi|^{2s}-a|\xi|^{2}}\qquad \text{in } \mathcal{S}'(\mathbb{R}^n). \]
	As a result, taking the inverse Fourier transform, we see that~$g_k^a$ converges in~$\mathcal{S}'(\mathbb{R}^n)$ to~$\mathcal{H}(x,c,a)$ for any~$a>0$. 
	
	Since~$g_k^a$ is nonnegative for all~$k$ and~$a>0$, we deduce the nonnegativity of~$\mathcal{H}(x,c,a)$. 
	
	By scaling and exploiting the continuity of~$\mathcal{H}(x,c,a)$ with respect to~$x$, one derives the nonnegativity of~$\mathcal{H}(x,t,t)$ for all~$t>0$. 

	The monotonicity of the heat kernel~$\mathcal{ H}$ can be also deduced by the fact that~$f_s$ and~$g^a$ are nonincreasing in~$r=|x|$ and Lemma~\ref{convolution}.
\end{proof}

\subsection{Asymptotic formulae for the heat kernel }\label{subsec:Bound on the kernel h}

To obtain the bounds on~$\mathcal{ H}$, we point out two  asymptotic formulae for~$\mathcal{ H}(x,t_1,t_2)$ as defined in~\eqref{definition of H}. These asymptotics are described in the next Lemmata~\ref{lemma 2s limit} and~\ref{lemma 2 limit}.

\begin{Lemma}\label{lemma 2s limit}
Let~$n\geqslant 1$, $ s\in(0,1)$, $\eta\in(0,1)$ and~$\mathcal{H}(x,1,\eta)$ be as defined by~\eqref{definition of H}.

Then, for any~$ \epsilon>0$, there exists~$M>0$ independent of~$\eta$
such that, for every~$|x|>M $ and~$\eta\in(0,1)$,
$$
	\left||x|^{n+2s}\mathcal{H}(x,1,\eta)-2^{n+2s}\pi^{\frac{n}{2}-1}s\sin(\pi s)\Gamma\left(\frac{n}{2}+s\right)\Gamma(s)\right|<\epsilon.
$$
That is,
$$	\lim\limits_{|x|\to+\infty}|x|^{n+2s}\mathcal{H}(x,1,\eta)=2^{n+2s}\pi^{\frac{n}{2}-1}s\sin(\pi s)\Gamma\left(\frac{n}{2}+s\right)\Gamma(s),
$$
and the limit is uniform with respect to~$\eta\in(0,1)$.
\end{Lemma}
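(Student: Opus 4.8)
The plan is to follow, with the simpler weight $e^{-(r^{2s}+\eta r^2)}$ in place of $e^{-r^2/(1+\eta r^{2s})}$, the contour argument already carried out in Step~1 of the proof of Lemma~\ref{lemma embedding of X^p} (i.e. the derivation of \eqref{sdfsdf}--\eqref{claim}): reduce $\mathcal H(x,1,\eta)$ to a one--dimensional Bessel integral by the radial Fourier Inversion Theorem, integrate by parts once to trade $J_{n/2-1}$ for the better--behaved $J_{n/2}$, rotate the contour onto the ray $L_1=\{\arg z=\pi/6\}$ where the Hankel function $H^{(1)}_{n/2}$ enjoys the decay bound \eqref{H1}, and then pass to the limit $|x|\to+\infty$ by dominated convergence.

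Concretely, I would first write, using that $\xi\mapsto e^{-(|\xi|^{2s}+\eta|\xi|^2)}$ is radial,
$$\mathcal H(x,1,\eta)=\frac{(2\pi)^{n/2}}{|x|^{n/2-1}}\int_0^{+\infty}e^{-(r^{2s}+\eta r^2)}\,r^{n/2}J_{n/2-1}(|x|r)\,dr=\frac{(2\pi)^{n/2}}{|x|^{n}}\int_0^{+\infty}e^{-\phi_x(t)}\,t^{n/2}J_{n/2-1}(t)\,dt,$$
with $\phi_x(t):=|x|^{-2s}t^{2s}+\eta|x|^{-2}t^2$, the second equality following from $t=|x|r$. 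Integrating by parts via $\frac{d}{dt}\big(t^{n/2}J_{n/2}(t)\big)=t^{n/2}J_{n/2-1}(t)$ (the boundary terms vanish, at $0$ since $t^{n/2}J_{n/2}(t)=O(t^{n})$ and at $+\infty$ since $\phi_x(t)\ge|x|^{-2s}t^{2s}\to+\infty$), I obtain
$$|x|^{n+2s}\mathcal H(x,1,\eta)=(2\pi)^{n/2}\int_0^{+\infty}\big(2s\,t^{2s-1}+2\eta|x|^{2s-2}t\big)e^{-\phi_x(t)}\,t^{n/2}J_{n/2}(t)\,dt.$$
Writing $J_{n/2}=\mathrm{Re}\,H^{(1)}_{n/2}$ on $(0,+\infty)$ and rotating onto $L_1$ exactly as in the passage leading to \eqref{claim} --- the arc at infinity vanishes because $\mathrm{Re}\,\phi_x(z)\ge\tfrac12|x|^{-2s}|z|^{2s}$ on the sector $\arg z\in[0,\pi/6]$, and the arc at $0$ vanishes since $t^{2s-1}t^{n/2}|H^{(1)}_{n/2}(t)|=O(t^{2s-1})$ there --- I get a representation of $|x|^{n+2s}\mathcal H(x,1,\eta)$ as a convergent integral over $L_1$ whose integrand, by \eqref{H1}, is dominated by an $r$--integrable function \emph{uniformly in $|x|>1$ and $\eta\in(0,1)$} (using $|e^{-\phi_x(z)}|\le1$ and $0\le\eta|x|^{2s-2}\le1$ on $L_1$). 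Since $\phi_x(re^{i\pi/6})\to0$ and $\eta|x|^{2s-2}\to0$ as $|x|\to+\infty$, uniformly in $\eta$ because $2s-2<0$, dominated convergence gives $\lim_{|x|\to+\infty}|x|^{n+2s}\mathcal H(x,1,\eta)=2s(2\pi)^{n/2}\int_0^{+\infty}t^{n/2+2s-1}J_{n/2}(t)\,dt$, the right--hand side being the value that the rotated contour integral computes, with the limit uniform in $\eta$.

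It then remains to evaluate the Weber--Schafheitlin integral $\int_0^{+\infty}t^{\mu-1}J_{n/2}(t)\,dt=\dfrac{2^{\mu-1}\Gamma\!\big(\tfrac{n}{2}+s\big)}{\Gamma(1-s)}$ with $\mu=\tfrac n2+2s$; combining this with the factor $2s(2\pi)^{n/2}$ and the reflection formula $\Gamma(s)\Gamma(1-s)=\pi/\sin(\pi s)$ gives precisely $2^{n+2s}\pi^{n/2-1}s\sin(\pi s)\Gamma\!\big(\tfrac n2+s\big)\Gamma(s)$ (the numerical factors check out with the $(2\pi)^{n/2}$ normalization fixed above). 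Rewriting the uniform limit in $\epsilon$--$M$ form is then immediate from the uniform majorant.

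\emph{Main obstacle.} The delicate point is that the limiting integral $\int_0^{+\infty}t^{n/2+2s-1}J_{n/2}(t)\,dt$ is only conditionally convergent --- in fact not absolutely convergent for $n\ge3$, or for $n=2$ with $s\ge\tfrac14$, since $J_{n/2}(t)\sim\sqrt{2/(\pi t)}\cos(\cdots)$ --- so one cannot legitimately exchange limit and integral directly on the positive real axis. This is exactly why the rotation onto $L_1$ is indispensable: there $H^{(1)}_{n/2}$ decays (double--)exponentially by \eqref{H1}, producing a genuine $x$-- and $\eta$--independent integrable majorant, and the Weber--Schafheitlin value is precisely the analytic continuation that this $L_1$--integral represents. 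The second thing to watch is the uniformity in $\eta\in(0,1)$: it is harmless because $\eta$ enters only through the nonnegative term $\eta|\xi|^2$, hence on the relevant contours only through the factors $|e^{-\eta|x|^{-2}z^2}|\le1$ and $0\le\eta|x|^{2s-2}\le1$, the latter tending to $0$ since $2s<2$. All the contour--deformation estimates are formally identical to those already performed for \eqref{claim} and \eqref{H1}, so no new analytic input is required.
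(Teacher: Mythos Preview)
Your proposal is correct and follows the same route as the paper's proof: radial Fourier inversion, integration by parts to $J_{n/2}$, rotation to $L_1=\{\arg z=\pi/6\}$, and the Hankel bound \eqref{estimate of H} to produce an $\eta$-uniform integrable majorant. The only differences are cosmetic --- the paper spells out the dominated-convergence step as an explicit $\epsilon$-splitting over $[0,R]\cup[R,\infty)$ and evaluates the limiting $L_1$-integral by a further rotation to $L_2=\{\arg z=\pi/2\}$ (reducing to the Mellin transform of $K_{n/2}$) rather than citing Weber--Schafheitlin; just be aware that your displayed real-axis integral $\int_0^\infty t^{n/2+2s-1}J_{n/2}(t)\,dt$ actually diverges (not merely fails to converge absolutely) whenever $n+4s>3$, so it must, as your obstacle paragraph indicates, be read as shorthand for the convergent $L_1$-integral it analytically continues.
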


	\begin{proof}
		 {F}rom the definition of~$\mathcal{H}(x,t_1,t_2)$, one has that
$$
			\mathcal{H}(x,1,\eta)=\int_{\mathbb{R}^n}e^{-|\xi|^{2s}-\eta|\xi|^{2}}e^{2\pi ix\cdot \xi}\,d\xi.
$$	By using the Fourier Inversion Theorem for radial functions, see e.g.~\cite[Chapter~\uppercase\expandafter{\romannumeral 2}]{MR31582}, and the Bessel functions in~\cite{MR58756}, it is immediate to obtain the $1$-dimensional integral representation of~$\mathcal{H}(x,1,\eta)$, namely that
		\begin{equation}\label{k(r)}
		\mathcal{H}(x,1,\eta)=\frac{(2\pi)^{\frac{n}{2}}}{|x|^{\frac{n}{2}-1}}\int_{0}^{+\infty} e^{-r^{2s}-\eta r^{2}} r^{\frac{n}{2}} J_{\frac{n}{2}-1}(|x|r)\, dr,
	\end{equation}
where~$J_{\frac{n}{2}-1}$ denotes the Bessel function of first kind of order~$ \frac{n}{2}-1$.

We also recall that, for all~$ z\in \mathbb{C}$,
	\begin{equation*}
		\frac{d}{dz}\left(z^{\frac{n}{2}}J_{\frac{n}{2}}(z)\right)=z^{\frac{n}{2}}J_{\frac{n}{2}-1}(z) 
	\end{equation*} 
	and \begin{equation*}
		J_{\frac{n}{2}}(z)\to \sqrt{\frac{2}{\pi z}}\cos \left(z-\frac{n\pi}{4}-\frac{\pi}{4}\right) \qquad {\mbox{when~$|z|\to+\infty\;$  and~$\;\arg z<\frac{\pi}{2}$.}}
	\end{equation*}
	Consequently,
	\begin{equation}
		\begin{split}\label{representation of H}
			\mathcal{H}(x,1,\eta)&=\frac{(2\pi)^{\frac{n}{2}}}{|x|^{\frac{n}{2}-1}}\int_{0}^{+\infty} e^{-r^{2s}-\eta r^{2}} r^{\frac{n}{2}} J_{\frac{n}{2}-1}(|x|r)\, dr\\
			&=\frac{(2\pi)^{\frac{n}{2}}}{|x|^n}\int_{0}^{+\infty} e^{-\frac{t^{2s}}{|x|^{2s}}-\eta\frac{t^{2}}{|x|^{2}}} \frac{d}{dt}\left(t^{\frac{n}{2}} J_{\frac{n}{2}}(t)\right)\, dt\\
			&={(2\pi)^{\frac{n}{2}}}{|x|^{-n}} e^{-\frac{t^{2s}}{|x|^{2s}}-\eta\frac{t^{2}}{|x|^{2}}} t^{\frac{n}{2}} J_{\frac{n}{2}}(t)\bigg|_{t=0}^{t=+\infty}\\
			&\qquad +{(2\pi)^{\frac{n}{2}}}{|x|^{-n}}\int_{0}^{+\infty} e^{-\frac{t^{2s}}{|x|^{2s}}-\eta\frac{t^{2}}{|x|^{2}}} \left(\frac{2st^{\frac{n}{2}+2s-1}}{|x|^{2s}}+\eta\frac{2t^{\frac{n}{2}+1}}{|x|^{2}}\right) J_{\frac{n}{2}}(t)\, dt\\
			&={(2\pi)^{\frac{n}{2}}}{|x|^{-n}}\int_{0}^{+\infty} e^{-\frac{t^{2s}}{|x|^{2s}}-\eta\frac{t^{2}}{|x|^{2}}} \left(\frac{2st^{\frac{n}{2}+2s-1}}{|x|^{2s}}+\eta\frac{2t^{\frac{n}{2}+1}}{|x|^{2}}\right) J_{\frac{n}{2}}(t)\, dt.
		\end{split}
	\end{equation}
		
	As a result,
	\begin{equation}
		\begin{split}\label{H(x,1,eta)}
			|x|^{n+2s}	\mathcal{H}(x,1,\eta)&={(2\pi)^{\frac{n}{2}}}\int_{0}^{+\infty} e^{-\frac{t^{2s}}{|x|^{2s}}-\eta\frac{t^{2}}{|x|^{2}}} \left({2st^{\frac{n}{2}+2s-1}}+2\eta{t^{\frac{n}{2}+1}}{|x|^{2s-2}}\right) J_{\frac{n}{2}}(t)\, dt\\
			&= {(2\pi)^{\frac{n}{2}}} \text{Re}\int_{0}^{+\infty} e^{-\frac{t^{2s}}{|x|^{2s}}-\eta\frac{t^{2}}{|x|^{2}}} \left({2st^{\frac{n}{2}+2s-1}}+2\eta{t^{\frac{n}{2}+1}}{|x|^{2s-2}}\right) H^{(1)}_{\frac{n}{2}}(t)\, dt,
		\end{split}
	\end{equation}
	where~$ H^{(1)}_{\frac{n}{2}}(z)$ is the Bessel function of the third kind 
	and Re~$A$ denotes the real part of~$A$.  
	
	We now choose
	a straight line~{$L_1$} running from~$0$ to~$+\infty$ in the upper half-plane and making a~$\frac{\pi}{6}$ angle with the positive real axis, that is
	\begin{equation}\label{definition of L1}
		L_1:=\left\{z\in\mathbb{C} : \arg z=\frac{\pi}{6}\right\}.
	\end{equation}
	Furthermore, applying the Residue Theorem, we find that 
	\begin{equation}
		\begin{split}\label{L1}
			&\int_{0}^{+\infty} e^{-\frac{t^{2s}}{|x|^{2s}}-\eta\frac{t^{2}}{|x|^{2}}} \left({2st^{\frac{n}{2}+2s-1}}+2\eta{t^{\frac{n}{2}+1}}{|x|^{2s-2}}\right) H^{(1)}_{\frac{n}{2}}(t)\, dt\\
			=&\int_{L_1}e^{-\frac{z^{2s}}{|x|^{2s}}-\eta\frac{z^{2}}{|x|^{2}}} \left({2sz^{\frac{n}{2}+2s-1}}+2\eta{z^{\frac{n}{2}+1}}{|x|^{2s-2}}\right) H^{(1)}_{\frac{n}{2}}(z)\, dz\\
			&\quad - \lim\limits_{R\to+\infty}\int_{C_R}e^{-\frac{z^{2s}}{|x|^{2s}}-\eta\frac{z^{2}}{|x|^{2}}} \left({2sz^{\frac{n}{2}+2s-1}}+2\eta{z^{\frac{n}{2}+1}}{|x|^{2s-2}}\right) H^{(1)}_{\frac{n}{2}}(z)\, dz,
		\end{split}
	\end{equation}
	where~$C_R$ denotes the arc
	$$C_R:=\left\{z\in \mathbb{C}\;{\mbox{ s.t. }}\; z=Re^{i\theta}, \;{\mbox{ with }}\theta\in\left(0,\frac{\pi}{6}\right)\right\}.$$
	
We claim that, for every~$|x|>1$ and~$\eta\in(0,1)$,
	\begin{equation}\label{R infty 0}
		\lim\limits_{R\to+\infty}\int_{C_R}e^{-\frac{z^{2s}}{|x|^{2s}}-\eta\frac{z^{2}}{|x|^{2}}} \left({2sz^{\frac{n}{2}+2s-1}}+2\eta{z^{\frac{n}{2}+1}}{|x|^{2s-2}}\right) H^{(1)}_{\frac{n}{2}}(z)\, dz=0.
	\end{equation}
Indeed, since~$z\in C_R$, one has that 
	\begin{equation}
		\begin{split}\label{step 1 estimate of H1}
			&\left|\int_{C_R}e^{-\frac{z^{2s}}{|x|^{2s}}-\eta\frac{z^{2}}{|x|^{2}}} \left({2sz^{\frac{n}{2}+2s-1}}+2\eta{z^{\frac{n}{2}+1}}{|x|^{2s-2}}\right) H^{(1)}_{\frac{n}{2}}(z)\, dz\right|\\
			=\;&\left|\int_{0}^{\frac{\pi}{6}}e^{-\frac{R^{2s}e^{i2s\theta}}{|x|^{2s}}-\eta\frac{R^{2}e^{i2\theta}}{|x|^{2}}} \left({2s(Re^{i\theta})^{\frac{n}{2}+2s-1}}+2\eta{(Re^{i\theta})^{\frac{n}{2}+1}}{|x|^{2s-2}}\right) H^{(1)}_{\frac{n}{2}}(Re^{i\theta})iRe^{i\theta}\, d\theta\right|\\
			\leqslant\;&\int_{0}^{\frac{\pi}{6}}e^{-\frac{R^{2s}\cos2s\theta}{|x|^{2s}}-\eta\frac{R^{2}\cos2\theta}{|x|^{2}}} \left({2sR^{\frac{n}{2}+2s}}+2\eta{R^{\frac{n}{2}+2}}{|x|^{2s-2}}\right) \left|H^{(1)}_{\frac{n}{2}}(Re^{i\theta})\right|\, d\theta\\
			\leqslant\;&e^{-\frac{R^{2s}}{2|x|^{2s}}-\eta\frac{R^{2}}{2|x|^{2}}} \left({2sR^{\frac{n}{2}+2s}}+2\eta{R^{\frac{n}{2}+2}}{|x|^{2s-2}}\right) \int_{0}^{\frac{\pi}{6}}\left|H^{(1)}_{\frac{n}{2}}(Re^{i\theta})\right|\, d\theta.
		\end{split}
	\end{equation}
	Also, from the fact that 
$$
		H^{(1)}_{\frac{n}{2}}(z)\to \sqrt{\frac{2}{\pi z}}e^{i(z-\frac{n\pi}{4}-\frac{\pi}{4})}\qquad {\mbox{when~$|z|\to+\infty\;$ and~$\;\arg z<\frac{\pi}{2}$,}}
$$	one obtains that
	\begin{equation}\label{estimate of H1}
		\lim_{R\to+\infty}\left|H^{(1)}_{\frac{n}{2}}(Re^{i\theta})\right|\leqslant
		\lim_{R\to+\infty} \left|\sqrt{\frac{2}{\pi R e^{i\theta}}}\right|= 0.
	\end{equation}
By combining~\eqref{estimate of H1} with~\eqref{step 1 estimate of H1}, we conclude that 
$$
		\lim_{R\to+\infty}\left|\int_{C_R}e^{-\frac{z^{2s}}{|x|^{2s}}-\eta\frac{z^{2}}{|x|^{2}}} \left({2sz^{\frac{n}{2}+2s-1}}+2\eta{z^{\frac{n}{2}+1}}{|x|^{2s-2}}\right) H^{(1)}_{\frac{n}{2}}(z)\, dz\right| = 0,
$$	which implies~\eqref{R infty 0}, as desired. 
	
	As a  consequence,	recalling~\eqref{H(x,1,eta)}, and inserting~\eqref{R infty 0} into~\eqref{L1}, we find that
	\begin{equation}
		\begin{split}\label{x H(x,1,eta)}
			&	|x|^{n+2s}	\mathcal{H}(x,1,\eta)
			= {(2\pi)^{\frac{n}{2}}} \text{Re}\int_{L_1} e^{-\frac{z^{2s}}{|x|^{2s}}-\eta\frac{z^{2}}{|x|^{2}}} \left({2sz^{\frac{n}{2}+2s-1}}+2\eta{z^{\frac{n}{2}+1}}{|x|^{2s-2}}\right) H^{(1)}_{\frac{n}{2}}(z)\, dz\\
			=\;&(2\pi)^{\frac{n}{2}}\text{Re}\int_{0}^{+\infty}e^{-\frac{\left(re^{i\frac{\pi}{6}}\right)^{2s}}{|x|^{2s}}-\eta\frac{\left(re^{i\frac{\pi}{6}}\right)^{2}}{|x|^{2}}} \left({2s\left(re^{i\frac{\pi}{6}}\right)^{\frac{n}{2}+2s-1}}+2\eta{\left(re^{i\frac{\pi}{6}}\right)^{\frac{n}{2}+1}}{|x|^{2s-2}}\right) H^{(1)}_{\frac{n}{2}}\left(re^{i\frac{\pi}{6}}\right)e^{i\frac{\pi}{6}}\, dr.
		\end{split}
	\end{equation}
	
Now we claim that for any~$ \epsilon>0$  there exists~$M>0$ independent of~$\eta$ such that, for every~$|x|>M$ and~$\eta\in(0,1)$,
	\begin{equation*}
		\left||x|^{n+2s}	\mathcal{H}(x,1,\eta)- (2\pi)^{\frac{n}{2}}\text{Re}\int_{L_1} {2sz^{\frac{n}{2}+2s-1}} H^{(1)}_{\frac{n}{2}}\left(z\right)\, dz\right|<\epsilon.
	\end{equation*}
	Indeed, 
	we first make use of the properties of~$ H^{(1)}_{\frac{n}{2}}\left(z\right) $ on~$L_1$ defined in~\eqref{definition of L1} to prove that the integral in the right-hand-side of~\eqref{x H(x,1,eta)} is uniformly bounded with respect to~$|x|>1$ and~$\eta\in(0,1)$. 
	
Let~$z\in L_1$, from the definition of~$ H^{(1)}_{\frac{n}{2}}\left(z\right) $
(see e.g.~\cite[page~21]{MR58756}), one has 
	\begin{equation}
		\begin{split}\label{estimate of H}
			&\left|H^{(1)}_{\frac{n}{2}}\left(z\right)\right|=\left|H^{(1)}_{\frac{n}{2}}\left(re^{i\frac{\pi}{6}}\right)\right|=\left|-ie^{-i\frac{n\pi}{4}}\int_{-\infty}^{+\infty}e^{ire^{i\frac{\pi}{6}}\frac{e^t+e^{-t}}{2}} e^{-\frac{nt}{2}}\, dt\right|\\
			&\qquad\leqslant \int_{0}^{+\infty}e^{-\frac{r}{4}\left(e^t+e^{-t}\right)}\left(e^{\frac{nt}{2}}+e^{\frac{-nt}{2}}\right)\,dt\leqslant 2\int_{0}^{+\infty}e^{-\frac{r}{4}e^t}e^{\frac{nt}{2}}\,dt.
		\end{split}
	\end{equation}
	Furthermore, we  derive that, for every~$\eta\in(0,1)$ and~$|x|>1$,
	\begin{equation}\label{uniform bounded in eta}
		\begin{split}
			&	\left|\int_{0}^{+\infty}e^{-\frac{\left(re^{i\frac{\pi}{6}}\right)^{2s}}{|x|^{2s}}-\eta\frac{\left(re^{i\frac{\pi}{6}}\right)^{2}}{|x|^{2}}} \left({2s\left(re^{i\frac{\pi}{6}}\right)^{\frac{n}{2}+2s-1}}+2\eta{\left(re^{i\frac{\pi}{6}}\right)^{\frac{n}{2}+1}}{|x|^{2s-2}}\right) H^{(1)}_{\frac{n}{2}}\left(re^{i\frac{\pi}{6}}\right)e^{i\frac{\pi}{6}}\, dr\right|\\
			\leqslant \;& \int_{0}^{+\infty} \left({2sr^{\frac{n}{2}+2s-1}}+2 r^{\frac{n}{2}+1}\right) \left|H^{(1)}_{\frac{n}{2}}\left(re^{i\frac{\pi}{6}}\right)\right|\, dr\\
			\leqslant\; & 4s\int_{0}^{+\infty} e^{\frac{nt}{2}} \int_{0}^{+\infty}e^{-\frac{r}{4}e^t} {r^{\frac{n}{2}+2s-1}} \, dr  \,dt+ 4\int_{0}^{+\infty} e^{\frac{nt}{2}} \int_{0}^{+\infty}e^{-\frac{r}{4}e^t} {r^{\frac{n}{2}+1}} \, dr  \,dt\\
			\leqslant\;& s4^{\frac{n}{2}+2s+1}\Gamma(\frac{n}{2}+2s)\int_{0}^{+\infty}e^{-2st}\,dt+ 4^{\frac{n}{2}+2}\Gamma(\frac{n}{2}+2)\int_{0}^{+\infty}e^{-2t}\,dt\\
			\leqslant \;& 2^{n+4s+1}\Gamma(\frac{n}{2}+2s)+2^{n+3}\Gamma(\frac{n}{2}+2).
		\end{split}
	\end{equation}
By combining~\eqref{x H(x,1,eta)} with~\eqref{uniform bounded in eta}, one has
	\begin{equation}
		\begin{split}\label{A1 A2}
			&\left||x|^{n+2s}	\mathcal{H}(x,1,\eta)-(2\pi)^{\frac{n}{2}}\text{Re}\int_{L_1} {2sz^{\frac{n}{2}+2s-1}} H^{(1)}_{\frac{n}{2}}\left(z\right)\,dz\right|\\
			=\;&	\left|{(2\pi)^{\frac{n}{2}}} \text{Re}\int_{L_1} \left(e^{-\frac{z^{2s}}{|x|^{2s}}-\eta\frac{z^{2}}{|x|^{2}}}-1\right) {2sz^{\frac{n}{2}+2s-1}} H^{(1)}_{\frac{n}{2}}(z)+e^{-\frac{z^{2s}}{|x|^{2s}}-\eta\frac{z^{2}}{|x|^{2}}}2\eta{z^{\frac{n}{2}+1}}{|x|^{2s-2}}H^{(1)}_{\frac{n}{2}}(z)\, dz\right|\\
			\leqslant \;&\left|{(2\pi)^{\frac{n}{2}}} \text{Re}\int_{L_1} \left(e^{-\frac{z^{2s}}{|x|^{2s}}-\eta\frac{z^{2}}{|x|^{2}}}-1\right) {2sz^{\frac{n}{2}+2s-1}} H^{(1)}_{\frac{n}{2}}(z)\, dz\right|+(2\pi)^{\frac{n}{2}}2^{n+3}\Gamma\left(\frac{n}{2}+2\right)|x|^{2s-2}\\
			=:\;&A_1+A_2.
		\end{split}
	\end{equation}
	
	Let us estimate the term~$A_1$. For some~$R>0$ large enough, one has
	\begin{equation}
		\begin{split}\label{A11}
			A_1&\leqslant (2\pi)^{\frac{n}{2}}\int_{0}^{+\infty}\left|e^{-\frac{\left(re^{i\frac{\pi}{6}}\right)^{2s}}{|x|^{2s}}-\eta\frac{\left(re^{i\frac{\pi}{6}}\right)^{2}}{|x|^{2}}}-1\right|{2sr^{\frac{n}{2}+2s-1}}\left|H^{(1)}_{\frac{n}{2}}\left(re^{i\frac{\pi}{6}}\right)\right|\, dr\\
			&\leqslant (2\pi)^{\frac{n}{2}}\int_{0}^{R}\left|e^{-\frac{\left(re^{i\frac{\pi}{6}}\right)^{2s}}{|x|^{2s}}-\eta\frac{\left(re^{i\frac{\pi}{6}}\right)^{2}}{|x|^{2}}}-1\right|{2sr^{\frac{n}{2}+2s-1}}\left|H^{(1)}_{\frac{n}{2}}\left(re^{i\frac{\pi}{6}}\right)\right|\, dr\\
			&\qquad + (2\pi)^{\frac{n}{2}}\int_{R}^{+\infty}\left|e^{-\frac{\left(re^{i\frac{\pi}{6}}\right)^{2s}}{|x|^{2s}}-\eta\frac{\left(re^{i\frac{\pi}{6}}\right)^{2}}{|x|^{2}}}-1\right|{2sr^{\frac{n}{2}+2s-1}}\left|H^{(1)}_{\frac{n}{2}}\left(re^{i\frac{\pi}{6}}\right)\right|\, dr,\\
			&=:B_1^{R}+B_2^{R}.
		\end{split}
	\end{equation}
	{F}rom~\eqref{uniform bounded in eta},	we observe that 
	\begin{equation*}
		\begin{split}
			B_2^{R}&\leqslant (2\pi)^{\frac{n}{2}}\int_{R}^{+\infty}{2sr^{\frac{n}{2}+2s-1}}\left|H^{(1)}_{\frac{n}{2}}\left(re^{i\frac{\pi}{6}}\right)\right|\, dr\leqslant (2\pi)^{\frac{n}{2}}4s\int_{0}^{+\infty} e^{\frac{nt}{2}} \int_{R}^{+\infty}e^{-\frac{r}{4}e^t} {r^{\frac{n}{2}+2s-1}} \, dr  \,dt\\
			&\leqslant (2\pi)^{\frac{n}{2}}4s\int_{0}^{+\infty} e^{\frac{nt}{2}} \int_{R}^{+\infty}e^{-\frac{r}{4}(t+1)} {r^{\frac{n}{2}+2s-1}} \, dr  \,dt\leqslant (2\pi)^{\frac{n}{2}}4s\int_{0}^{+\infty} e^{\frac{nt}{2}-\frac{Rt}{4}} \int_{R}^{+\infty}e^{-\frac{r}{4}} {r^{\frac{n}{2}+2s-1}} \, dr  \,dt\\&\leqslant\frac{(2\pi)^{\frac{n}{2}}s2^{n+4s+4}\Gamma(\frac{n}{2}+2s)}{R-2n}.
		\end{split}
	\end{equation*}
	As a result, for any~$\epsilon>0$, taking \begin{equation}\label{R_0}
		R_0:=\frac{4(2\pi)^{\frac{n}{2}}s2^{n+4s+4}\Gamma(\frac{n}{2}+2s)}{\epsilon}+2n,
	\end{equation}
one has that
	\begin{equation}\label{B2}
		B_2^{R_0}\leqslant \frac{\epsilon}{4}.
	\end{equation}
	We now turn to estimating~$B_1^{R_0}$. For this, we take~$R_0$ as in~\eqref{R_0} and we remark that
	\begin{equation*}
		\begin{split}
			B_1^{R_0}&\leqslant(2\pi)^{\frac{n}{2}}\int_{0}^{R_0}\bigg(\left|e^{-\frac{r^{2s}\cos\frac{s\pi}{3}}{|x|^{2s}}-\eta\frac{r^{2}\cos\frac{\pi}{3}}{|x|^{2}}}\cos\left(-\frac{r^{2s}\cos\frac{s\pi}{3}}{|x|^{2s}}-\eta\frac{r^{2}\cos\frac{\pi}{3}}{|x|^{2}}\right)-1\right|\\
			&\qquad+ \left|\sin\left(-\frac{r^{2s}\cos\frac{s\pi}{3}}{|x|^{2s}}-\eta\frac{r^{2}\cos\frac{\pi}{3}}{|x|^{2}}\right)\right|\bigg){2sr^{\frac{n}{2}+2s-1}}\left|H^{(1)}_{\frac{n}{2}}\left(re^{i\frac{\pi}{6}}\right)\right|\, dr.
		\end{split}
	\end{equation*}
	Note also that, for any~$r<R_0$,  $|x|>R_0^2$ large enough and~$\eta\in(0,1)$, 
	\begin{equation}
		\begin{split}\label{R_0 estimate}
			&\left|e^{-\frac{r^{2s}\cos\frac{s\pi}{3}}{|x|^{2s}}-\eta\frac{r^{2}\cos\frac{\pi}{3}}{|x|^{2}}}\cos\left(-\frac{r^{2s}\cos\frac{s\pi}{3}}{|x|^{2s}}-\eta\frac{r^{2}\cos\frac{\pi}{3}}{|x|^{2}}\right)-1\right|+\left|\sin\left(\frac{r^{2s}\cos\frac{s\pi}{3}}{|x|^{2s}}+\eta\frac{r^{2}}{2|x|^{2}}\right)\right|\\
			\leqslant \;&1-e^{-\frac{R_0^{2s}\cos\frac{s\pi}{3}}{|x|^{2s}}-\frac{R_0^{2}}{2|x|^{2}}}\cos\left(\frac{R_0^{2s}}{|x|^{2s}}+\frac{R_0^{2}}{2|x|^{2}}\right)+\sin\left(\frac{R_0^{2s}}{|x|^{2s}}+\frac{R_0^{2}}{2|x|^{2}}\right).
		\end{split}
	\end{equation}
	By combining~\eqref{uniform bounded in eta} with ~\eqref{R_0 estimate}, we see that for every~$\epsilon>0$, there exists~$T_{R_0,\epsilon}>0$  such that, if~$|x|>T_{R_0,\epsilon}$,
$$
		1-e^{-\frac{R_0^{2s}\cos\frac{s\pi}{3}}{|x|^{2s}}-\frac{R_0^{2}}{2|x|^{2}}}\cos\left(\frac{R_0^{2s}}{|x|^{2s}}+\frac{R_0^{2}}{2|x|^{2}}\right)+\sin\left(\frac{R_0^{2s}}{|x|^{2s}}+\frac{R_0^{2}}{2|x|^{2}}\right)\leqslant \frac{\epsilon}{2^{n+4s+3}(2\pi)^{\frac{n}{2}}\Gamma(\frac{n}{2}+2s)},
$$	which implies that 
	\begin{equation}\label{B3}
		B_1^{R_0}<\frac{\epsilon}{4}.
	\end{equation}
	Recalling~\eqref{A11}, by  combining~\eqref{B2} with~\eqref{B3}, we deduce that,
	for every~$\epsilon>0$, there exists~$M_1>0$,
	depending only on~$n$, $s$ and~$\epsilon$,
	such that, for every~$|x|>M_1$,
	\[ A_1\leqslant \frac{\epsilon}{2}. \]
	
	Regarding~$A_2$, from~\eqref{A1 A2}, we see that, for every~$\epsilon>0$, there exists~$M_2>0$, depending only on~$n$, $s$ and~$\epsilon$,  such that, for every~$|x|>M_2$,
	\[ A_2\leqslant \frac{\epsilon}{2}. \]
	 We take~$M:=\max\left\{M_1,M_2\right\}$ and we use~\eqref{A1 A2} to find that,  for every~$|x|>M$,
$$
		\left||x|^{n+2s}	\mathcal{H}(x,1,\eta)-(2\pi)^{\frac{n}{2}}\text{Re}\int_{L_1} {2sz^{\frac{n}{2}+2s-1}} H^{(1)}_{\frac{n}{2}}\left(z\right)\,dz\right|<\epsilon.
$$	That is,
	\begin{equation}\label{L21}
		\lim\limits_{|x|\to+\infty}|x|^{n+2s}	\mathcal{H}(x,1,\eta)=(2\pi)^{\frac{n}{2}}\text{Re}\int_{L_1} {2sz^{\frac{n}{2}+2s-1}} H^{(1)}_{\frac{n}{2}}\left(z\right)\,dz,
	\end{equation}
	where the limit is uniform with respect to~$\eta$.
	
	Applying the Residue Theorem again to the region composed of~$L_1$ and the straight line
	\begin{equation}\label{L2}
		L_2:=\left\{z\in\mathbb{C}\;{\mbox{ s.t. }}\;\arg z=\frac{\pi}{2}\right\},
	\end{equation}
 we obtain that 
$$
		\int_{L_1} {2sz^{\frac{n}{2}+2s-1}} H^{(1)}_{\frac{n}{2}}\left(z\right)\,dz=\int_{L_2} {2sz^{\frac{n}{2}+2s-1}} H^{(1)}_{\frac{n}{2}}\left(z\right)\,dz- \lim\limits_{R\to+\infty}\int_{C_R} {2sz^{\frac{n}{2}+2s-1}} H^{(1)}_{\frac{n}{2}}\left(z\right)\,dz,
$$	where $$
	C_R:=\left\{z\in\mathbb{C}\;{\mbox{ s.t. }} z=Re^{i\theta},\;{\mbox{ with }} \theta\in\left(\frac{\pi}{6},\frac{\pi}{2}\right)\right\}.$$ 
	
	We now claim that 
	\[  \lim\limits_{R\to+\infty}\int_{C_R} {2sz^{\frac{n}{2}+2s-1}} H^{(1)}_{\frac{n}{2}}\left(z\right)\,dz=0.\]
	Indeed, for~$R>0$ large enough, we have that
	\begin{equation*}
		\begin{split}
		\lim_{R\to+\infty}	\left|\int_{C_R} {2sz^{\frac{n}{2}+2s-1}} H^{(1)}_{\frac{n}{2}}\left(z\right)\,dz\right|&=\lim_{R\to+\infty}\left|\int_{\frac{\pi}{6}}^{\frac{\pi}{2}}iRe^{i\theta}2s\left(Re^{i\theta}\right)^{\frac{n}{2}+2s-1} H^{(1)}_{\frac{n}{2}}\left(Re^{i\theta}\right)\,d\theta\right|\\
			&\leqslant \lim_{R\to+\infty}\int_{\frac{\pi}{6}}^{\frac{\pi}{2}}2sR^{\frac{n}{2}+2s} 2\int_{0}^{+\infty}e^{-\frac{R}{4}e^t}e^{\frac{nt}{2}}\,dt\,d\theta\\
			&\leqslant\lim_{R\to+\infty} \frac{4\pi s}{3}R^{\frac{n}{2}+2s}\int_{0}^{+\infty}e^{-\frac{R}{4}(t+1)}e^{\frac{nt}{2}}\,dt=0.
		\end{split}
	\end{equation*}
	Hence, recalling~\eqref{L21} and exploiting the modified
Bessel function of the third kind~$K(x)$ (see e.g.~\cite[page~5]{MR58756}), 
	one finds that
	\begin{equation*}
		\begin{split}
			\lim\limits_{|x|\to+\infty}|x|^{n+2s}	\mathcal{H}(x,1,\eta)&=(2\pi)^{\frac{n}{2}}\text{Re}\int_{L_2} {2sz^{\frac{n}{2}+2s-1}} H^{(1)}_{\frac{n}{2}}\left(z\right)\,dz\\&=(2\pi)^{\frac{n}{2}}\text{Re}\int_{0}^{+\infty}2sr^{\frac{n}{2}+2s-1}e^{i\frac{\pi n}{4}}e^{i\pi s}H^{(1)}_{\frac{n}{2}}\left(ir\right)\,dr\\
			&=(2\pi)^{\frac{n}{2}}\text{Re}\int_{0}^{+\infty}2sr^{\frac{n}{2}+2s-1}(-i)e^{i\pi s}\frac{2}{\pi}K_{\frac{n}{2}}(r)\, dr\\
			&	=2^{\frac{n}{2}+2}\pi^{\frac{n}{2}-1}s\sin(\pi s)\int_{0}^{+\infty}r^{\frac{n}{2}+2s-1}K_{\frac{n}{2}}(r)\, dr\\
			&=2^{n+2s}\pi^{\frac{n}{2}-1}s\sin(\pi s)\Gamma\left(\frac{n}{2}+s\right)\Gamma(s).
		\end{split}
	\end{equation*}
	This last integral is evaluated on~\cite[page~51]{MR58756}, allowing us to complete the proof of Lemma~\ref{lemma 2s limit}. 
\end{proof}

\begin{Lemma}\label{lemma 2 limit}
	Let~$n\geqslant 1$, $s\in(0,1)$, $\eta\in(0,1)$ and~$\mathcal{H}(x,\eta,1)$ be as defined by~\eqref{definition of H}.
	
	Then,
	for any~$ \epsilon>0$, there exists~$M>0$ independent of~$\eta$ such that, for every~$|x|>M$ and~$\eta\in(0,1)$, 
$$
		\left||x|^{n+2s}\mathcal{H}(x,\eta,1)-2^{n+2s}\pi^{\frac{n}{2}-1}\eta s\sin(\pi s)\Gamma\left(\frac{n}{2}+s\right)\Gamma(s)\right|<\epsilon.
$$ That is
\begin{equation}\label{t<1}
	\lim\limits_{|x|\to+\infty}|x|^{n+2s}\mathcal{H}(x,\eta,1)=2^{n+2s}\pi^{\frac{n}{2}-1}\eta s\sin(\pi s)\Gamma\left(\frac{n}{2}+s\right)\Gamma(s),
\end{equation} 
and the limit is uniform with respect to~$\eta\in(0,1)$.
\end{Lemma}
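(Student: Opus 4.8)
The plan is to follow, essentially line by line, the argument already carried out for Lemma~\ref{lemma 2s limit}: the only structural change is that the coefficient~$\eta$ now multiplies the~$|\xi|^{2s}$-term, which is the one governing the leading-order decay, rather than the~$|\xi|^{2}$-term. First I would record the~$1$-dimensional radial representation obtained from the Fourier Inversion Theorem for radial functions (see~\cite[Chapter~II]{MR31582} and the Bessel function identities in~\cite{MR58756}):
$$\mathcal{H}(x,\eta,1)=\frac{(2\pi)^{\frac{n}{2}}}{|x|^{\frac{n}{2}-1}}\int_{0}^{+\infty} e^{-\eta r^{2s}-r^{2}}\, r^{\frac{n}{2}} J_{\frac{n}{2}-1}(|x|r)\, dr.$$
Then, exactly as in~\eqref{representation of H}, I would substitute~$t=|x|r$, use the identity~$\frac{d}{dt}\big(t^{\frac{n}{2}}J_{\frac{n}{2}}(t)\big)=t^{\frac{n}{2}}J_{\frac{n}{2}-1}(t)$ and integrate by parts, noticing that the boundary terms vanish (at~$t=0$ because~$t^{\frac n2}J_{\frac n2}(t)\to0$, at~$t=+\infty$ because of the Gaussian factor). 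This produces the exact analogue of~\eqref{H(x,1,eta)}, namely
$$|x|^{n+2s}\mathcal{H}(x,\eta,1)={(2\pi)^{\frac{n}{2}}}\,\mathrm{Re}\int_{0}^{+\infty} e^{-\eta\frac{t^{2s}}{|x|^{2s}}-\frac{t^{2}}{|x|^{2}}}\left(2s\eta\,t^{\frac{n}{2}+2s-1}+2\,t^{\frac{n}{2}+1}|x|^{2s-2}\right) H^{(1)}_{\frac{n}{2}}(t)\, dt,$$
with~$H^{(1)}_{\frac n2}$ the Bessel function of the third kind; here the roles of the two summands, compared with~\eqref{H(x,1,eta)}, are interchanged up to the harmless overall factor~$\eta$.

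Next I would deform the contour from the positive real axis to the half-line~$L_1$ of~\eqref{definition of L1}, checking that the circular arc~$C_R$ contributes~$0$ in the limit~$R\to+\infty$; this is verbatim the estimate~\eqref{R infty 0}, based on~\eqref{estimate of H1}. On~$L_1$, the bound~\eqref{estimate of H} for~$H^{(1)}_{\frac n2}$ together with the computation~\eqref{uniform bounded in eta} exhibits an~$\eta$-independent integrable majorant for the integrand (this uses~$\eta\leqslant1$), so dominated convergence applies as~$|x|\to+\infty$: the exponential factor tends to~$1$, the term~$2\,t^{\frac n2+1}|x|^{2s-2}$ tends to~$0$ since~$2s-2<0$, and we obtain, uniformly in~$\eta\in(0,1)$,
$$\lim_{|x|\to+\infty}|x|^{n+2s}\mathcal{H}(x,\eta,1)=(2\pi)^{\frac n2}\,\eta\,\mathrm{Re}\int_{L_1}2s\,z^{\frac n2+2s-1}H^{(1)}_{\frac n2}(z)\,dz,$$
the quantitative~$\epsilon$-statement following from the same splitting~$\int_{0}^{R_0}+\int_{R_0}^{+\infty}$ used in~\eqref{A11}--\eqref{B3}. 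Finally, rotating the contour further to~$L_2$ (angle~$\pi/2$) as in~\eqref{L2}, discarding the arc contribution, expressing the resulting integral through the modified Bessel function of the third kind~$K_{\frac n2}$ and invoking the evaluation of~$\int_0^{+\infty}r^{\frac n2+2s-1}K_{\frac n2}(r)\,dr$ from~\cite[page~51]{MR58756}, one arrives at~$\eta\,2^{n+2s}\pi^{\frac n2-1}s\sin(\pi s)\Gamma\!\left(\frac n2+s\right)\Gamma(s)$, which is precisely~\eqref{t<1}.

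I do not expect any genuinely new difficulty, since each step transcribes the corresponding step in the proof of Lemma~\ref{lemma 2s limit}. The only point demanding (minor) care is the uniformity in~$\eta\in(0,1)$: unlike in Lemma~\ref{lemma 2s limit}, the limiting value here depends on~$\eta$, so one must confirm that the error bounds are uniform in~$\eta$. This is immediate, because~$\eta$ enters the leading term only as an overall factor bounded by~$1$, and in the exponential it appears multiplied by~$|x|^{-2s}\leqslant1$; hence the majorizations of~\eqref{uniform bounded in eta}, \eqref{B2} and~\eqref{B3} remain valid with~$\eta$ simply factored out of the dominant term, while the subdominant contribution~$O(|x|^{2s-2})$ does not depend on~$\eta$ at all.
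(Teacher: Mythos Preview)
Your proposal is correct and follows essentially the same approach as the paper: the paper also writes the radial representation, integrates by parts to obtain the formula you display, deforms the contour to~$L_1$, splits the error as~$Q_1+Q_2$ with the same~$\int_0^{R}+\int_R^{+\infty}$ decomposition for~$Q_1$ (your~\eqref{A11}--\eqref{B3} analogue, labeled~$Y_1^{R_1}+Y_2^{R_1}$ there), and then rotates to~$L_2$ for the final evaluation via~$K_{n/2}$. Your remark that the uniformity in~$\eta$ is immediate because~$\eta\leqslant1$ enters only as a multiplicative factor in the leading term and as a harmless coefficient in the exponential is exactly the mechanism the paper uses.
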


\begin{proof}
	As  in the  proof of Lemma~\ref{lemma 2s limit},  first of all, the 1-dimensional integral representation of~$\mathcal{H}(x,\eta,1)$ follows from~\eqref{representation of H}, that is
$$
		\mathcal{H}(x,\eta,1)
		={(2\pi)^{\frac{n}{2}}}{|x|^{-n}}\int_{0}^{+\infty} e^{-\eta\frac{t^{2s}}{|x|^{2s}}-\frac{t^{2}}{|x|^{2}}} \left(\eta\frac{2st^{\frac{n}{2}+2s-1}}{|x|^{2s}}+\frac{2t^{\frac{n}{2}+1}}{|x|^{2}}\right) J_{\frac{n}{2}}(t)\, dt.
$$	It thus follows that 
	\begin{equation*}
		\begin{split}
			|x|^{n+2s}	\mathcal{H}(x,\eta,1)&={(2\pi)^{\frac{n}{2}}}\int_{0}^{+\infty} e^{-\eta\frac{t^{2s}}{|x|^{2s}}-\frac{t^{2}}{|x|^{2}}} \left({2\eta st^{\frac{n}{2}+2s-1}}+2{t^{\frac{n}{2}+1}}{|x|^{2s-2}}\right) J_{\frac{n}{2}}(t)\, dt\\
			&= {(2\pi)^{\frac{n}{2}}} \text{Re}\int_{0}^{+\infty} e^{-\eta\frac{t^{2s}}{|x|^{2s}}-\frac{t^{2}}{|x|^{2}}} \left({2\eta st^{\frac{n}{2}+2s-1}}+2{t^{\frac{n}{2}+1}}{|x|^{2s-2}}\right) H^{(1)}_{\frac{n}{2}}(t)\, dt.
		\end{split}
	\end{equation*}
	Exploiting  the Residue Theorem once again, and referring to~\eqref{L1} and~\eqref{R infty 0}, we infer that 
	\begin{equation*}
		\begin{split}
			&|x|^{n+2s}	\mathcal{H}(x,\eta,1)
			= {(2\pi)^{\frac{n}{2}}} \text{Re}\int_{L_1} e^{-\eta\frac{z^{2s}}{|x|^{2s}}-\frac{z^{2}}{|x|^{2}}} \left({2\eta sz^{\frac{n}{2}+2s-1}}+2{z^{\frac{n}{2}+1}}{|x|^{2s-2}}\right) H^{(1)}_{\frac{n}{2}}(z)\, dz\\
			=\;&(2\pi)^{\frac{n}{2}}\text{Re}\int_{0}^{+\infty}e^{-\eta\frac{\left(re^{i\frac{\pi}{6}}\right)^{2s}}{|x|^{2s}}-\frac{\left(re^{i\frac{\pi}{6}}\right)^{2}}{|x|^{2}}} \left({2\eta s\left(re^{i\frac{\pi}{6}}\right)^{\frac{n}{2}+2s-1}}+2{\left(re^{i\frac{\pi}{6}}\right)^{\frac{n}{2}+1}}{|x|^{2s-2}}\right) H^{(1)}_{\frac{n}{2}}\left(re^{i\frac{\pi}{6}}\right)e^{i\frac{\pi}{6}}\, dr.
		\end{split}
	\end{equation*}
	
	Moreover, owing to~\eqref{uniform bounded in eta}, we observe that,  for every~$\eta\in(0,1)$,
	\begin{equation}
		\begin{split}\label{Q1 Q2}
			&\left||x|^{n+2s}	\mathcal{H}(x,\eta,1)-(2\pi)^{\frac{n}{2}}\text{Re}\int_{L_1} {2\eta sz^{\frac{n}{2}+2s-1}} H^{(1)}_{\frac{n}{2}}\left(z\right)\,dz\right|\\
			=\;&	\left|{(2\pi)^{\frac{n}{2}}} \text{Re}\int_{L_1} \left(e^{-\eta\frac{z^{2s}}{|x|^{2s}}-\frac{z^{2}}{|x|^{2}}}-1\right) {2\eta sz^{\frac{n}{2}+2s-1}} H^{(1)}_{\frac{n}{2}}(z)+e^{-\eta\frac{z^{2s}}{|x|^{2s}}-\frac{z^{2}}{|x|^{2}}}2{z^{\frac{n}{2}+1}}{|x|^{2s-2}}H^{(1)}_{\frac{n}{2}}(z)\, dz\right|\\
			\leqslant\; &\left|{(2\pi)^{\frac{n}{2}}} \text{Re}\int_{L_1} \left(e^{-\eta\frac{z^{2s}}{|x|^{2s}}-\frac{z^{2}}{|x|^{2}}}-1\right) {2\eta sz^{\frac{n}{2}+2s-1}} H^{(1)}_{\frac{n}{2}}(z)\, dz\right|+(2\pi)^{\frac{n}{2}}2^{n+3}\Gamma\left(\frac{n}{2}+2\right)|x|^{2s-2}\\
			=:\;&Q_1+Q_2.
		\end{split}
	\end{equation}
	
	We now focus on the term~$Q_1$. For some~$R>0$ large enough, one deduces that
		\begin{equation}
		\begin{split}\label{Q1}
			Q_1&\leqslant (2\pi)^{\frac{n}{2}}\int_{0}^{+\infty}\left|e^{-\eta\frac{\left(re^{i\frac{\pi}{6}}\right)^{2s}}{|x|^{2s}}-\frac{\left(re^{i\frac{\pi}{6}}\right)^{2}}{|x|^{2}}}-1\right|{2\eta sr^{\frac{n}{2}+2s-1}}\left|H^{(1)}_{\frac{n}{2}}\left(re^{i\frac{\pi}{6}}\right)\right|\, dr\\
			&= (2\pi)^{\frac{n}{2}}\int_{0}^{R}\left|e^{-\eta\frac{\left(re^{i\frac{\pi}{6}}\right)^{2s}}{|x|^{2s}}-\frac{\left(re^{i\frac{\pi}{6}}\right)^{2}}{|x|^{2}}}-1\right|{2\eta sr^{\frac{n}{2}+2s-1}}\left|H^{(1)}_{\frac{n}{2}}\left(re^{i\frac{\pi}{6}}\right)\right|\, dr\\
			&\qquad + (2\pi)^{\frac{n}{2}}\int_{R}^{+\infty}\left|e^{-\eta\frac{\left(re^{i\frac{\pi}{6}}\right)^{2s}}{|x|^{2s}}-\frac{\left(re^{i\frac{\pi}{6}}\right)^{2}}{|x|^{2}}}-1\right|{2\eta sr^{\frac{n}{2}+2s-1}}\left|H^{(1)}_{\frac{n}{2}}\left(re^{i\frac{\pi}{6}}\right)\right|\, dr\\
			&=:Y_1^{R}+Y_2^{R}.
		\end{split}
	\end{equation}
	As for~$Y_2^R$, 	we notice that, utilizing~\eqref{uniform bounded in eta}, for every~$\eta\in(0,1)$,
$$
		Y_2^R\leqslant (2\pi)^{\frac{n}{2}}\int_{R}^{+\infty}{2sr^{\frac{n}{2}+2s-1}}\left|H^{(1)}_{\frac{n}{2}}\left(re^{i\frac{\pi}{6}}\right)\right|\, dr\leqslant\frac{(2\pi)^{\frac{n}{2}}s2^{n+4s+4}\Gamma(\frac{n}{2}+2s)}{R-2n}.
$$	Therefore, for any~$\epsilon>0$, picking 
	\begin{equation}\label{R_1}
	R_1:=\frac{4(2\pi)^{\frac{n}{2}}s2^{n+4s+4}\Gamma(\frac{n}{2}+2s)}{\epsilon}+2n,
	\end{equation}
one finds that
	\begin{equation}\label{Y2}
		Y_2^{R_1}\leqslant \frac{\epsilon}{4}.
	\end{equation}
	As regards~$Y_1^{R_1}$, if~$R_1$ is as defined in~\eqref{R_1} we have that
	\begin{equation*}
		\begin{split}
			Y_1^{R_1}&\leqslant(2\pi)^{\frac{n}{2}}\int_{0}^{R_1}\bigg(\left|e^{-\eta\frac{r^{2s}\cos\frac{s\pi}{3}}{|x|^{2s}}-\frac{r^{2}\cos\frac{\pi}{3}}{|x|^{2}}}\cos\left(-\eta\frac{r^{2s}\cos\frac{s\pi}{3}}{|x|^{2s}}-\frac{r^{2}\cos\frac{\pi}{3}}{|x|^{2}}\right)-1\right|\\
			&\qquad+ \left|\sin\left(-\eta\frac{r^{2s}\cos\frac{s\pi}{3}}{|x|^{2s}}-\frac{r^{2}\cos\frac{\pi}{3}}{|x|^{2}}\right)\right|
			{2 sr^{\frac{n}{2}+2s-1}}\left|H^{(1)}_{\frac{n}{2}}\left(re^{i\frac{\pi}{6}}\right)\right|\, dr.
		\end{split}
	\end{equation*}
	We notice that, for any~$r<R_1$, $|x|>R_1^2$ large enough and~$\eta\in(0,1)$, 
	\begin{equation}
		\begin{split}\label{R_1 estimate}
			&\left|e^{-\eta\frac{r^{2s}\cos\frac{s\pi}{3}}{|x|^{2s}}-\frac{r^{2}\cos\frac{\pi}{3}}{|x|^{2}}}\cos\left(-\eta\frac{r^{2s}\cos\frac{s\pi}{3}}{|x|^{2s}}-\frac{r^{2}\cos\frac{\pi}{3}}{|x|^{2}}\right)-1\right|+\left|\sin\left(\eta\frac{r^{2s}\cos\frac{s\pi}{3}}{|x|^{2s}}+\frac{r^{2}}{2|x|^{2}}\right)\right|\\
			\leqslant \;&1-e^{-\frac{R_1^{2s}\cos\frac{s\pi}{3}}{|x|^{2s}}-\frac{R_1^{2}}{2|x|^{2}}}\cos\left(\frac{R_1^{2s}}{|x|^{2s}}+\frac{R_1^{2}}{2|x|^{2}}\right)+\sin\left(\frac{R_1^{2s}}{|x|^{2s}}+\frac{R_1^{2}}{2|x|^{2}}\right),
		\end{split}
	\end{equation}
	Utilizing~\eqref{R_1 estimate}, we  obtain that for every~$\epsilon>0$, there exists~$T_{R_1,\epsilon}>0$  such that, for every~$|x|>T_{R_1,\epsilon}$,
$$
		1-e^{-\frac{R_1^{2s}\cos\frac{s\pi}{3}}{|x|^{2s}}-\frac{R_1^{2}}{2|x|^{2}}}\cos\left(\frac{R_1^{2s}}{|x|^{2s}}+\frac{R_1^{2}}{2|x|^{2}}\right)+\sin\left(\frac{R_1^{2s}}{|x|^{2s}}+\frac{R_1^{2}}{2|x|^{2}}\right)\leqslant \frac{\epsilon}{2^{n+4s+3}(2\pi)^{\frac{n}{2}}\Gamma(\frac{n}{2}+2s)}.
$$	As a consequence of this and~\eqref{uniform bounded in eta}, one obtains that
	\begin{equation}\label{Y3}
		Y_1^{R_1}<\frac{\epsilon}{4}.
	\end{equation}
	Recalling~\eqref{Q1}, by combining~\eqref{Y2} with~\eqref{Y3}, we deduce that
	for every~$\epsilon>0$, there exists~$M_3>0$, depending
	only on~$n$, $s$ and~$\epsilon$, such that, for every~$|x|>M_3$,
	\[ Q_1\leqslant \frac{\epsilon}{2}. \]
	
Regarding~$Q_2$, thanks to~\eqref{Q1 Q2}, we  see that 	for every~$\epsilon>0$, there exists~$M_4>0$, depending only on~$n$, $s$ and~$\epsilon$,
such that, for every~$|x|>M_4$,
	\[ Q_2\leqslant \frac{\epsilon}{2}. \]
Taking~$M:=\max\left\{M_3, M_4\right\}$, it follows from~\eqref{Q1 Q2} that,  for every~$|x|>M$ and~$\eta\in(0,1)$,
$$
		\left||x|^{n+2s}	\mathcal{H}(x,\eta,1)-(2\pi)^{\frac{n}{2}}\text{Re}\int_{L_1} {2\eta sz^{\frac{n}{2}+2s-1}} H^{(1)}_{\frac{n}{2}}\left(z\right)\,dz\right|<\epsilon.
$$	That is,
	\begin{equation*}
		\lim\limits_{|x|\to+\infty}|x|^{n+2s}	\mathcal{H}(x,\eta,1)=(2\pi)^{\frac{n}{2}}\text{Re}\int_{L_1} {2\eta sz^{\frac{n}{2}+2s-1}} H^{(1)}_{\frac{n}{2}}\left(z\right)\,dz,
	\end{equation*}
and the limit is uniform with respect to~$\eta$.

As a consequence of this, 
	by using the modified
	Bessel function of the third kind~$K(x)$ again (see e.g.~\cite[page~5]{MR58756}), 
	it is not difficult  to check that
	\begin{equation*}
		\begin{split}
			\lim\limits_{|x|\to+\infty}|x|^{n+2s}	\mathcal{H}(x,\eta,1)&=(2\pi)^{\frac{n}{2}}\text{Re}\int_{L_2} {2\eta sz^{\frac{n}{2}+2s-1}} H^{(1)}_{\frac{n}{2}}\left(z\right)\,dz\\
			&=2^{n+2s}\pi^{\frac{n}{2}-1}\eta s\sin(\pi s)\Gamma\left(\frac{n}{2}+s\right)\Gamma(s).
		\end{split}
	\end{equation*}
	Thus, we obtain~\eqref{t<1}, as desired.
\end{proof}

\subsection{Bounds on the heat kernel }\label{subsec:Proof of Lemma{lemma h b}}

In this section, we establish the following result related to the upper and lower bounds on the heat  kernel~$\mathcal{ H}$ by using the two asymptotic formulae in Lemmata~\ref{lemma 2s limit} and~\ref{lemma 2 limit}.

\begin{Lemma}\label{lemma h b}
	Let~$n\geqslant 1$ and~$s\in(0,1)$. Let~$\mathcal{H}$ be as defined in~\eqref{DBSD-1}.
	
	 Then,
	there exist positive constants~$C_1$ and~$C_2$ such that
	\begin{equation}\label{decay of H11 }
		\mathcal{H}(x,t)\leqslant C_1 \left\{\frac{t}{|x|^{n+2s}}\vee \frac{t^s}{|x|^{n+2s}}\right\}\wedge \left\{ t^{-\frac{n}{2s}}\wedge  t^{-\frac{n}{2}}\right\};
	\end{equation}
	\begin{equation}\label{decay of H22 }
		\text{and } \quad		\mathcal{H}(x,t)\geqslant C_2\begin{cases}
			\frac{t}{|x|^{n+2s}} \qquad&\mbox{if }\; 1<t<|x|^{2s} ;\\
			e^{\frac{\pi|x|^2}{t}}t^{-\frac{n}{2}} \qquad&\mbox{if }\; |x|^2<t<|x|^{2s}<1,
		\end{cases}
	\end{equation}
where~$a\wedge b:=\min\left\{a,b\right\}$ and~$a\vee b:=\max\left\{a,b\right\}$.
\end{Lemma}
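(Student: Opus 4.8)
The plan is to derive both inequalities from the rescaling identities for the two-scales kernel $\mathcal{H}(x,t_1,t_2)$ introduced in~\eqref{definition of H}. Since $\mathcal{H}(x,t)=\mathcal{H}(x,t,t)$, these identities reduce bounds on $\mathcal{H}(x,t)$ for $t>1$ (respectively $0<t<1$) to bounds on the auxiliary kernel $\mathcal{H}(y,1,\eta)$ (respectively $\mathcal{H}(y,\eta,1)$) with a parameter $\eta\in(0,1)$: indeed for $t>1$ one has $\mathcal{H}(x,t)=t^{-n/(2s)}\mathcal{H}\big(t^{-1/(2s)}x,1,t^{1-1/s}\big)$ with $t^{1-1/s}\in(0,1)$, and for $0<t<1$ one has $\mathcal{H}(x,t)=t^{-n/2}\mathcal{H}\big(t^{-1/2}x,t^{1-s},1\big)$ with $t^{1-s}\in(0,1)$. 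In both cases the parameter lies in $(0,1)$, which is exactly the range in which Lemmata~\ref{lemma 2s limit} and~\ref{lemma 2 limit} provide asymptotics that are \emph{uniform} in the parameter; this uniformity is what makes the reduction work.

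First I would record three uniform-in-$\eta$ facts about the auxiliary kernels, valid for all $\eta\in(0,1)$. \emph{(i) A size bound at the origin.} Since $\mathcal{H}(\cdot,1,\eta)\ge 0$ (by Lemma~\ref{lemma H1}, whose proof applies verbatim to $\mathcal{H}(\cdot,t_1,t_2)$), one has $\mathcal{H}(y,1,\eta)\le\int_{\mathbb{R}^n}e^{-|\xi|^{2s}-\eta|\xi|^2}\,d\xi\le\int_{\mathbb{R}^n}e^{-|\xi|^{2s}}\,d\xi=:C_0<+\infty$, and likewise $\mathcal{H}(y,\eta,1)\le\int_{\mathbb{R}^n}e^{-|\xi|^2}\,d\xi=\pi^{n/2}$. \emph{(ii) A global decay bound.} Taking $\epsilon=1$ in Lemmata~\ref{lemma 2s limit} and~\ref{lemma 2 limit} and using $\eta<1$ to control the limiting constants, there is $M>0$ (independent of $\eta$) with $|y|^{n+2s}\mathcal{H}(y,1,\eta)\le C$ and $|y|^{n+2s}\mathcal{H}(y,\eta,1)\le C$ for $|y|>M$; combining this with \emph{(i)} on $\{0<|y|\le M\}$ (where $|y|^{n+2s}\le M^{n+2s}$) yields $\mathcal{H}(y,1,\eta)\le C|y|^{-(n+2s)}$ and $\mathcal{H}(y,\eta,1)\le C|y|^{-(n+2s)}$ for all $y\ne 0$. \emph{(iii) A positive lower bound on compact sets.} The kernels $\mathcal{H}(\cdot,1,\cdot)$ and $\mathcal{H}(\cdot,\cdot,1)$ extend continuously to $\mathbb{R}^n\times[0,1]$ (dominated convergence), with $\mathcal{H}(\cdot,1,0)$ the $2s$-stable density and $\mathcal{H}(\cdot,0,1)$ a Gaussian, both strictly positive on $\mathbb{R}^n$; for $\eta\in(0,1)$ each kernel is the convolution of a probability density with the strictly positive Gaussian $\mathcal{F}^{-1}(e^{-\eta|\xi|^2})$ or $\mathcal{F}^{-1}(e^{-|\xi|^2})$, hence is strictly positive everywhere. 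By compactness it follows that $\inf_{\{1\le|y|\le M\}\times[0,1]}\mathcal{H}(y,1,\eta)>0$ and $\inf_{\{|y|\le 1\}\times[0,1]}\mathcal{H}(y,\eta,1)>0$; together with the lower asymptotic of Lemma~\ref{lemma 2s limit} (applied with $\epsilon$ equal to half of its limiting constant) this gives a constant $c>0$ such that $\mathcal{H}(y,1,\eta)\ge c|y|^{-(n+2s)}$ for all $|y|\ge 1$ and $\mathcal{H}(y,\eta,1)\ge c$ for all $|y|\le 1$, uniformly in $\eta\in(0,1)$.

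Granting \emph{(i)}--\emph{(iii)}, the upper bound becomes bookkeeping. For $t>1$, the $2s$-rescaling together with \emph{(i)} and \emph{(ii)} gives $\mathcal{H}(x,t)\le C\,t^{-n/(2s)}$ and $\mathcal{H}(x,t)\le C\,t^{-n/(2s)}\,|t^{-1/(2s)}x|^{-(n+2s)}=C\,t\,|x|^{-(n+2s)}$. For $0<t<1$, the $2$-rescaling gives $\mathcal{H}(x,t)\le C\,t^{-n/2}$ and $\mathcal{H}(x,t)\le C\,t^{-n/2}\,|t^{-1/2}x|^{-(n+2s)}=C\,t^{s}\,|x|^{-(n+2s)}$. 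It remains to check that these agree, up to a constant, with the claimed bound: when $t>1$ one has $t\vee t^s=t$ and, since $n/(2s)>n/2$, also $t^{-n/(2s)}\wedge t^{-n/2}=t^{-n/(2s)}$, so the claimed right-hand side equals $C_1\,(t\,|x|^{-(n+2s)})\wedge t^{-n/(2s)}$; when $0<t<1$ one has $t\vee t^s=t^s$ and $t^{-n/(2s)}\wedge t^{-n/2}=t^{-n/2}$, so it equals $C_1\,(t^s\,|x|^{-(n+2s)})\wedge t^{-n/2}$; both match. The value $t=1$ falls under either regime by continuity.

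For the lower bound there are two regimes. If $1<t<|x|^{2s}$, set $y:=t^{-1/(2s)}x$; then $|y|=t^{-1/(2s)}|x|>1$, so by the $2s$-rescaling and \emph{(iii)} we get $\mathcal{H}(x,t)=t^{-n/(2s)}\mathcal{H}(y,1,t^{1-1/s})\ge c\,t^{-n/(2s)}|y|^{-(n+2s)}=c\,t\,|x|^{-(n+2s)}$. If $|x|^2<t<|x|^{2s}<1$, set $y:=t^{-1/2}x$; then $t<1$ and $|y|=t^{-1/2}|x|<1$, so by the $2$-rescaling and \emph{(iii)} we get $\mathcal{H}(x,t)=t^{-n/2}\mathcal{H}(y,t^{1-s},1)\ge c\,t^{-n/2}$, and since $e^{\pi|x|^2/t}=e^{\pi|y|^2}\le e^{\pi}$ this can be rewritten as $\mathcal{H}(x,t)\ge c\,e^{-\pi}\,e^{\pi|x|^2/t}\,t^{-n/2}$, i.e. the claimed bound with $C_2:=c\,e^{-\pi}$. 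The only genuinely delicate point is fact \emph{(iii)}: one must ensure that the positive lower bounds on the compact parameter sets are uniform all the way down to $\eta=0$, which is why one passes to the continuous extension of the kernels and invokes the everywhere-positivity of the pure stable and Gaussian densities (the latter being elementary, the former classical); once this is in place, everything else is tracking exponents and identifying which terms the operations $\vee$ and $\wedge$ select in each regime.
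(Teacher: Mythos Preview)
Your proof is correct. The upper bound argument is essentially the paper's: both combine the trivial Fourier-side estimate $\mathcal{H}(y,1,\eta)\le C_0$, $\mathcal{H}(y,\eta,1)\le\pi^{n/2}$ with the uniform tail asymptotics of Lemmata~\ref{lemma 2s limit} and~\ref{lemma 2 limit}, and then unwind via the rescaling identities~\eqref{scaling  2}.

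The lower bound is where your approach genuinely differs. For the regime $|x|^2<t<|x|^{2s}<1$ the paper completes the square in the Fourier integral to obtain the explicit factorization
\[
\mathcal{H}(x,t)=e^{-\pi|x|^2/t}\,t^{-n/2}\int_{\mathbb{R}^n}e^{-|y-i\pi xt^{-1/2}|^2}e^{-t^{1-s}|y|^{2s}}\,dy,
\]
and shows the last integral is close to $\frac{\omega_n\Gamma(n/2)}{2}$ for small $|x|$ (this is~\eqref{H}). Your route---extending $(\eta,y)\mapsto\mathcal{H}(y,\eta,1)$ continuously to $\eta\in[0,1]$, observing strict positivity at the endpoints (Gaussian and stable densities) and in between (convolution with a Gaussian), and then invoking compactness on $\{|y|\le1\}\times[0,1]$---is softer but perfectly valid, since in this regime $e^{\pi|x|^2/t}\le e^\pi$ is harmless. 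Your compactness argument also neatly handles the annulus $1\le|y|\le M$ in the first regime $1<t<|x|^{2s}$, where the paper simply cites~\eqref{t>1 x>m} (which, as written, gives the bound only for $|x|>Mt^{1/(2s)}$). What the paper's explicit computation buys is the sharper asymptotic~\eqref{H} with an identified constant; what your argument buys is brevity and a uniform treatment of both lower-bound regimes.
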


\begin{proof}
We observe that,	by the definition of~$\mathcal{H}$ in~\eqref{DBSD-1}, for every~$ t>0$,
	\begin{equation}\label{h1}
		\mathcal{ H}(x,t)\leqslant c_1\left\{t^{-\frac{n}{2}}\wedge t^{-\frac{n}{2s}}\right\}  
	\end{equation} 
	for some constant~$c_1$ depending only on~$n$ and~$s$.

In addition,	in the light of Lemma~\ref{lemma 2s limit}, we know that there exists~$M>0$ independent of~$\eta$ such that,
for all~$\eta\in(0,1)$ and~$ |x|>M$,
	\begin{equation}\label{alpha 1}
		\frac{\alpha}{2}<|x|^{n+2s}\mathcal{H}(x,1,\eta)<2\alpha,
	\end{equation}
	where~$\alpha=2^{n+2s}\pi^{\frac{n}{2}-1}s\sin(\pi s)\Gamma(\frac{n}{2}+s)\Gamma(s)$.
	
	Furthermore, for every~$t\in(1,+\infty)$, one has that~$t^{1-\frac{1}{s}}\in(0,1)$. Hence, by combining~\eqref{scaling  2} with~\eqref{alpha 1}, one concludes that
$$
		\frac{\alpha}{2}<	|t^{-\frac{1}{2s}}x|^{n+2s}\mathcal{H}(t^{-\frac{1}{2s}}x,1,t^{1-\frac{1}{s}})<2\alpha \quad \text{if } |x|>Mt^{\frac{1}{2s}}.
$$
That is,
$$
		\frac{\alpha t^{\frac{n}{2s}+1}}{2|x|^{n+2s}}<	\mathcal{H}(t^{-\frac{1}{2s}}x,1,t^{1-\frac{1}{s}})<\frac{2\alpha t^{\frac{n}{2s}+1}}{|x|^{n+2s}} \quad \text{if } |x|>Mt^{\frac{1}{2s}}.
$$	Exploiting~\eqref{scaling  2}, one sees that, if~$ t>1$ and~$|x|>Mt^{\frac{1}{2s}}$,
	\begin{equation}\label{t>1 x>m}
		\frac{\alpha t}{2|x|^{n+2s}}<	\mathcal{H}(x,t)<\frac{2\alpha t}{|x|^{n+2s}}.
	\end{equation}
Similarly, for every~$t\in(0,1)$, we have that~$t^{1-s}\in(0,1)$. 
In this way, using the nonnegativity of~$\mathcal{ H}$, by combining~\eqref{scaling  2} with Lemma~\ref{lemma 2 limit},   we know that there exists~$M>0$ independent of~$\eta$ such that, for every~$\eta\in(0,1)$, 
$t\in(0,1)$ and~$ |x|>Mt^{\frac{1}{2}}$,
	\begin{equation}\label{t<1 x>m}
			0\leqslant	\mathcal{H}(x,t)<\frac{2\alpha t^s}{|x|^{n+2s}}.
		\end{equation}
Gathering~\eqref{h1}, \eqref{t>1 x>m} and~\eqref{t<1 x>m}, we obtain~\eqref{decay of H11 }, as desired.

Let us now  estimate~\eqref{decay of H22 }.  We first claim that for any~$\epsilon>0$, there exists~$\delta\in(0,1)$ such that, for every~$|x|<\delta$ and~$|x|^2\leqslant t\leqslant |x|^{2s}$,  
\begin{equation}\label{H}
	\left|	\frac{\mathcal{H}(x,t)}{e^{-\pi\frac{|x|^2}{t}}t^{-\frac{n}{2}}} -\frac{\omega_n\Gamma(\frac{n}{2})}{2}\right|<\epsilon.
\end{equation}
Indeed, 
from the definition of~$\mathcal{H}$ it follows that 
\begin{equation*}
	\begin{split}&
		\mathcal{H}(x,t)= \int_{\mathbb{R}^n}e^{-t(|\xi|^{2s}+|\xi|^{2})+2\pi ix\cdot \xi}\,d\xi
		=e^{-\frac{\pi|x|^2}{t}}\int_{\mathbb{R}^n}e^{-\left(t^{\frac{1}{2}}\xi-i\pi xt^{-\frac{1}{2}}\right)^2}e^{-t|\xi|^{2s}}\,d\xi\\
		&\qquad=e^{-\frac{\pi|x|^2}{t}}t^{-\frac{n}{2}}\int_{\mathbb{R}^n}e^{-|y-i\pi xt^{-\frac{1}{2}}|^2}e^{-t^{1-s}|y|^{2s}}\,dy.
	\end{split}
\end{equation*}
Accordingly, recalling the fact that~$|x|^2\leqslant t\leqslant |x|^{2s}$ and  taking~$\delta=\left(\frac{\epsilon}{\omega_{n}\Gamma(\frac{n}{2}+s) e^{\pi^2}}\right)^{\frac{1}{2s(1-s)}}$,  one  derives that
	\begin{equation*}
		\begin{split}
			&\left|	\frac{\mathcal{H}(x,t)}{e^{-\pi\frac{|x|^2}{t}}t^{-\frac{n}{2}}} -\int_{\mathbb{R}^n}e^{-|y-i\pi xt^{-\frac{1}{2}}|^2}
			\, dy\right|=\left|\int_{\mathbb{R}^n}e^{-|y-i\pi xt^{-\frac{1}{2}}|^2}\left(1-e^{-t^{1-s}|y|^{2s}}\right)\,dy\right|\\
			&\qquad\qquad\leqslant \int_{\mathbb{R}^n}e^{-|y|^2+\pi^2}t^{1-s}|y|^{2s}\,dy\leqslant \omega_{n}\Gamma\left(\frac{n}{2}+s\right) t^{1-s}e^{\pi^2} \\
			&\qquad \qquad\leqslant \omega_{n}\Gamma\left(\frac{n}{2}+s\right) |x|^{2s(1-s)}e^{\pi^2}\leqslant\epsilon. 
		\end{split}
	\end{equation*}
Subsequently, applying the Residue Theorem, we observe that
	\begin{equation*}
\int_{\mathbb{R}^n}e^{-|y-i\pi xt^{-\frac{1}{2}}|^2}
			\, dy=\int_{\mathbb{R}^n-i\pi xt^{-\frac{1}{2}}}e^{-|z|^2}
			\, dz= \int_{\mathbb{R}^n}e^{-|z|^2}
			\, dz=\frac{\omega_n\Gamma(\frac{n}{2})}{2}
	\end{equation*}
	and we obtain~\eqref{H}, as desired.
	
	Taking now~$\epsilon:=\frac{\omega_n\Gamma(\frac{n}{2})}{4}$ in~\eqref{H}
	and utilizing~\eqref{t>1 x>m}, we establish the desired result in~\eqref{decay of H22 }.
\end{proof}

\section{Properties of the Bessel kernel}\label{sce properties of k}

The main aim of this section is to prove Theorem~\ref{th properties of k} by exploiting the properties of the heat kernel~$\mathcal{ H}$ defined in~\eqref{DBSD-1}.

To start with, in view of the definition of~$\mathcal{ K}$, we observe that,
from the nonnegativity, radial symmetry  and monotonicity of~$\mathcal{H}(\cdot,t)$ for all~$t>0$, it follows that the kernel~$\mathcal{ K}$ is nonnegative,  radially symmetric  and nonincreasing in~$r=|x|$.

\subsection{Decay of the Bessel kernel}\label{subsec:Decay of the kernel k}

In this part, our goal  is to 
employ the lower and upper bounds on the heat kernel~$\mathcal{ H}$
(see e.g.~\eqref{decay of H11 } and~\eqref{decay of H22 }) to obtain the decay of~$\mathcal{K}$, as given by the following Lemmata~\ref{proposition t>1} and~\ref{decay of K_2}.

\begin{Lemma}\label{proposition t>1}
	Let~$n\geqslant 1$ and~$s\in(0,1)$. Let~$\mathcal{K}$ be as defined in~\eqref{definition of K}.
	
	Then, if~$|x|\geqslant 1$,
	\begin{equation}\label{decay of K_1}
		\mathcal{K}(x)\leqslant \frac{c_1}{|x|^{n+2s}}
	\end{equation}
and, if~$|x|\leqslant 1$,
$$
	 \mathcal{K}(x)\leqslant c_2\begin{cases}
		|x|^{2-n}\qquad &\text{ if } n\geqslant 3,\\
		1+|\ln|x|| &\text{ if } n=2,\\
		1 & \text{ if }n=1,
	\end{cases}
$$	for some positive constants~$c_1$ and~$c_2$ depending on~$n$ and~$s.$ 		
\end{Lemma}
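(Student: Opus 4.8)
The plan is to estimate $\mathcal{K}(x)=\int_0^{+\infty}e^{-t}\,\mathcal{H}(x,t)\,dt$ by splitting the integral according to the size of $t$ relative to the geometrically meaningful thresholds $|x|^2$ and $|x|^{2s}$, and then inserting the bounds on $\mathcal{H}$ from Lemma~\ref{lemma h b} on each piece.

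For the regime $|x|\geqslant 1$: here $1\leqslant |x|^{2s}\leqslant |x|^2$, so I would write
$$\mathcal{K}(x)=\int_0^{|x|^{2s}}e^{-t}\mathcal{H}(x,t)\,dt+\int_{|x|^{2s}}^{|x|^2}e^{-t}\mathcal{H}(x,t)\,dt+\int_{|x|^2}^{+\infty}e^{-t}\mathcal{H}(x,t)\,dt.$$
On the first interval $t<|x|^{2s}$, and since $t$ can be either smaller or larger than $1$, the upper bound~\eqref{decay of H11 } gives $\mathcal{H}(x,t)\leqslant C_1\{t\vee t^s\}/|x|^{n+2s}$; integrating $e^{-t}\{t\vee t^s\}$ against $dt$ over $(0,+\infty)$ yields a finite constant, so this piece is $\leqslant c/|x|^{n+2s}$. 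On the two remaining intervals $t\geqslant|x|^{2s}\geqslant 1$ the exponential weight $e^{-t}$ is already at most $e^{-|x|^{2s}}$ which decays faster than any power, and using the crude bound $\mathcal{H}(x,t)\leqslant C_1 t^{-n/2}$ (or even just $\mathcal{H}(x,t)\leqslant c$) one sees these contributions are exponentially small, hence also $\leqslant c/|x|^{n+2s}$. This establishes~\eqref{decay of K_1}.

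For the regime $|x|\leqslant 1$, now $|x|^2\leqslant|x|^{2s}\leqslant 1$, and I would split as $\int_0^{|x|^2}+\int_{|x|^2}^{1}+\int_1^{+\infty}$. The tail $\int_1^{+\infty}e^{-t}\mathcal{H}(x,t)\,dt$ is bounded by a constant (using $\mathcal{H}(x,t)\leqslant C_1 t^{-n/2}$, integrable at infinity after the $e^{-t}$ weight, or directly $\mathcal H(x,t)\le ct^{-n/2}$ and $n\ge1$). For the middle and small-$t$ parts the dominant behaviour comes from the bound $\mathcal{H}(x,t)\leqslant C_1\{t^{-n/2}\wedge t^{-n/(2s)}\}$; since $t\leqslant 1$ we have $t^{-n/(2s)}\geqslant t^{-n/2}$, so the relevant bound is $\mathcal{H}(x,t)\leqslant C_1 t^{-n/2}$. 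Then $\int_{|x|^2}^{1}e^{-t}t^{-n/2}\,dt\leqslant\int_{|x|^2}^{1}t^{-n/2}\,dt$, which is the classical Bessel-potential computation: it gives $\sim|x|^{2-n}$ when $n\geqslant 3$, $\sim 1+|\ln|x||$ when $n=2$, and $\lesssim 1$ when $n=1$. The small-$t$ piece $\int_0^{|x|^2}e^{-t}\mathcal{H}(x,t)\,dt$ should be controlled by the other part of the min, i.e. by the upper bound $\mathcal H(x,t)\le C_1\{t\vee t^s\}/|x|^{n+2s}$; since there $t\le |x|^2\le|x|^{2s}\le1$ one has $t\vee t^s=t^s$ and $\int_0^{|x|^2}t^s|x|^{-n-2s}\,dt\sim|x|^{2+2s}|x|^{-n-2s}=|x|^{2-n}$, which is consistent with (indeed dominated by, up to constants) the target. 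Collecting the three pieces yields the stated small-$|x|$ bound.

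The main obstacle, I expect, is the bookkeeping at the junctions: one must be careful that on each subinterval the correct branch of the $\vee$/$\wedge$ in~\eqref{decay of H11 } is used (whether $t\lessgtr1$ and whether $t^{-n/2}\lessgtr t^{-n/(2s)}$, which flips exactly at $t=1$), and that the borderline $n=2$ logarithm is produced and not an artificial power. A secondary subtlety is making sure the $e^{-t}$ weight is genuinely what kills the large-$t$ region in the $|x|\geqslant1$ case (the heat-kernel bound alone degrades there, so the exponential is essential), and conversely that for small $t$ the polynomial heat-kernel bound $\mathcal H\le C_1 t^{-n/2}$, not the exponential weight, does the work. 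Everything else is a routine one-dimensional integral estimate.
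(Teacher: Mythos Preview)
Your proposal is correct and follows essentially the same approach as the paper: split $\int_0^{+\infty}e^{-t}\mathcal{H}(x,t)\,dt$ at the crossover scales and insert the appropriate branch of~\eqref{decay of H11 } on each piece. The only cosmetic difference is in the $|x|\geqslant 1$ tail: the paper splits at $t=1$ and $t=|x|^{2s}$ and on $t\geqslant|x|^{2s}$ uses the polynomial inequality $t^{-n/(2s)}\leqslant t/|x|^{n+2s}$ (valid exactly there) to stay within the $|x|^{-n-2s}$ budget, whereas you invoke the exponential factor $e^{-t}\leqslant e^{-|x|^{2s}}$ to kill that region outright; both are valid.
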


\begin{proof}
{F}rom~\eqref{decay of H11 }, we derive that 
\begin{eqnarray*}
	&&	0\leqslant	\mathcal{H}(x,t)\leqslant \frac{1}{C_1} \left\{t^{-\frac{n}{2s}}\wedge \frac{t}{|x|^{n+2s}}\right\}  \qquad {\mbox{for all~$t>1$,}}\\
{\mbox{and }}\quad
&&	0\leqslant	\mathcal{H}(x,t)\leqslant \frac{1}{C_1} \left\{t^{-\frac{n}{2}}\wedge \frac{t^s}{|x|^{n+2s}}\right\} 
\qquad  {\mbox{for all~$t\in(0,1)$.}}
\end{eqnarray*}
{F}rom these formulas we conclude that, if~$|x|>1$,
	\begin{equation}
		\begin{split}\label{K_1 x>m}
				\mathcal{K}(x)&=\int_{1}^{+\infty} e^{-t} \mathcal{H}(x,t) \,dt+\int_{0}^{1} e^{-t} \mathcal{H}(x,t) \,dt\\
			&\leqslant \frac{1}{C_1} \left(\int_{1}^{\left|{x}\right|^{2s}}e^{-t}\frac{ t}{|x|^{n+2s}}\, dt+\int_{\left|{x}\right|^{2s}}^{+\infty}e^{-t}t^{-\frac{n}{2s}}\, dt+\int_{0}^{1}e^{-t}\frac{ t^s}{|x|^{n+2s}}\, dt\right)\\
			&\leqslant \frac{2\Gamma(2)+\Gamma(s+1)}{C_1|x|^{n+2s}}
		\end{split}
	\end{equation}
	and, if~$|x|\leqslant 1$,
	\begin{equation}
		\begin{split}\label{K_1 x<m}
			\mathcal{K}(x)&=\int_{1}^{+\infty} e^{-t} \mathcal{H}(x,t) \,dt+\int_{0}^{1} e^{-t} \mathcal{H}(x,t) \,dt\\
		&\leqslant \frac{1}{C_1} \left(\int_{1}^{+\infty}e^{-t}t^{-\frac{n}{2s}}\, dt+\int_{0}^{\left|{x}\right|^{2}} e^{-t} \frac{ t^s}{|x|^{n+2s}}\,dt+ \int_{\left|{x}\right|^{2}}^{1}e^{-t}t^{-\frac{n}{2}}\, dt\right)\\
			&\leqslant
			\begin{cases}
			 \frac{n+2 }{C_1(n-2)|x|^{n-2}}\qquad\qquad & {\mbox{ if }}
			 n\geqslant 3\\
			 4(1+|\ln|x|| ) & {\mbox{ if }}n=2\\
			 1+\Gamma\left(\frac{1}{2}\right)+|x| & {\mbox{ if }}n=1.
		 \end{cases}
		\end{split}
	\end{equation}
By combining~\eqref{K_1 x>m} with~\eqref{K_1 x<m}, we obtain~\eqref{decay of K_1}.
\end{proof}


We shall make use of the nonnegativity of~$\mathcal{ K}$
and~\eqref{decay of H22 }  to prove the strict positivity of~$\mathcal{K}$ and  a lower bound on the behaviour of~$\mathcal{K}$.

\begin{Lemma}\label{decay of K_2}
	Let~$n\geqslant 1 $ and~$s\in(0,1)$. Let~$\mathcal{K}$ be as in~\eqref{definition of K}.
	
	Then, $\mathcal{ K}$ is positive and 
$$
		\mathcal{K}(x)\geqslant \frac{c_3}{|x|^{n+2s}}\quad\text{ if } |x|>1\qquad \text{and} \qquad \mathcal{K}(x)\geqslant \frac{c_4}{|x|^{n-2}}\quad \text{ if } |x|\leqslant 1
$$
	for some constants~$c_3$ and~$c_4$ depending only on~$n$ and~$s.$ 
\end{Lemma}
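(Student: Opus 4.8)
The plan is to read off both the strict positivity of $\mathcal{K}$ and the two lower bounds directly from the heat‑kernel lower bound~\eqref{decay of H22 } established in Lemma~\ref{lemma h b}, by restricting the defining integral $\mathcal{K}(x)=\int_0^{+\infty}e^{-t}\mathcal{H}(x,t)\,dt$ to the $t$‑ranges in which~\eqref{decay of H22 } applies. Since~\eqref{decay of H22 } is only available for $|x|$ strictly separated from $1$, the argument naturally splits into three regimes: $|x|$ large, $|x|$ small, and a compact transition neighbourhood of the sphere $\{|x|=1\}$; in the last regime I would not estimate $\mathcal{K}$ directly but instead invoke the monotonicity of $\mathcal{K}$ in $r=|x|$ (already recorded at the start of this section) to pass a positive lower bound from the neighbouring regimes.

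For $|x|>1$ one has $|x|^{2s}>1$, so restricting the integral to $t\in(1,|x|^{2s})$ and using the first case of~\eqref{decay of H22 } gives
$\mathcal{K}(x)\geq C_2|x|^{-(n+2s)}\int_{1}^{|x|^{2s}}t e^{-t}\,dt$.
As soon as $|x|\geq 2^{1/(2s)}$ the integral is at least $\int_1^2 te^{-t}\,dt>0$, which yields $\mathcal{K}(x)\geq c_3|x|^{-(n+2s)}$ there, and in particular $\mathcal{K}(x)>0$. On the compact annulus $1\leq|x|\leq 2^{1/(2s)}$ monotonicity gives $\mathcal{K}(x)\geq \mathcal{K}(y)$ for any $y$ with $|y|=2^{1/(2s)}$, hence $\mathcal{K}(x)$ is bounded below by a positive constant there, while $|x|^{-(n+2s)}\leq 1$; combining, the bound $\mathcal{K}(x)\geq c_3|x|^{-(n+2s)}$ (with a possibly smaller $c_3$) holds for all $|x|\geq 1$.

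For $|x|<1$ note that $|x|^2<|x|^{2s}<1$ since $s<1$, so restricting to $t\in(|x|^2,|x|^{2s})$ and using the second case of~\eqref{decay of H22 } together with $e^{-t}\geq e^{-1}$ and $e^{-\pi|x|^2/t}\geq e^{-\pi}$ (valid because $t>|x|^2$) reduces the estimate to the elementary integral
$\mathcal{K}(x)\geq C_2 e^{-1-\pi}\int_{|x|^2}^{|x|^{2s}}t^{-n/2}\,dt$.
Evaluating it: for $n\geq 3$ this is comparable to $|x|^{2-n}\bigl(1-|x|^{(1-s)(n-2)}\bigr)$, hence $\gtrsim |x|^{2-n}$ once $|x|\leq 1/2$; for $n=2$ it equals $2(1-s)|\ln|x||$, which is bounded below by a positive constant for $|x|\leq1/2$; for $n=1$ it equals $2\bigl(|x|^{s}-|x|\bigr)=2|x|\bigl(|x|^{s-1}-1\bigr)\gtrsim|x|$ for $|x|\leq1/2$. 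This in particular shows $\mathcal{K}>0$ on $0<|x|<1$, and then monotonicity yields $\mathcal{K}(x)\geq\mathcal{K}(y)>0$ at $|x|=1$ and at the origin, and also handles the remaining annulus $1/2\leq|x|\leq1$ exactly as above (comparing with the value at $|y|=1$ and using that the relevant right‑hand side — $|x|^{2-n}$, $1$, or $|x|$ — is bounded there).

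I expect the only genuinely delicate point to be the bookkeeping that glues the quantitative estimate coming from~\eqref{decay of H22 }, which is only valid for $|x|$ bounded away from $1$, to the compactness/monotonicity argument near $|x|=1$, and the care needed so that the constants $c_3,c_4$ produced in the several regimes can be replaced by a single choice. The dimension‑dependent computations of the auxiliary integral $\int_{|x|^2}^{|x|^{2s}}t^{-n/2}\,dt$ are entirely routine.
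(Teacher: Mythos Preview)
Your proposal is correct and follows essentially the same route as the paper: both proofs restrict the defining integral for $\mathcal{K}$ to the $t$-ranges where the heat-kernel lower bound~\eqref{decay of H22 } applies, obtain the quantitative estimates for $|x|$ bounded away from $1$, and then invoke the monotonicity of $\mathcal{K}$ to cover the transition region and deduce strict positivity everywhere. The only cosmetic difference is in the small-$|x|$ computation: the paper performs the substitution $y=|x|^2/t$ before restricting the range of integration, whereas you bound $e^{-\pi|x|^2/t}\ge e^{-\pi}$ at the outset and integrate $t^{-n/2}$ directly, with a brief case split on $n$; both are routine.
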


\begin{proof}
{F}rom~\eqref{decay of H22 }, we know that 
$$	\mathcal{H}(x,t)\geqslant C_2
		\frac{t}{|x|^{n+2s}} \qquad \mbox{if }\; 1<t<|x|^{2s}. 
$$	Recalling the fact that~$\mathcal{H}$ is nonnegative, one  deduces that, for any~$|x|>2$,
	\begin{equation}\label{low bounded}
			\mathcal{K}(x)=\int_{0}^{+\infty} e^{-t}\mathcal{H}(x,t)\, dt\geqslant C_2\int_{1}^{\left|{x}\right|^{2s}}e^{-t}\mathcal{H}(x,t)\, dt
			\geqslant C_2\int_{1}^{2}e^{-t}\frac{ t}{|x|^{n+2s}}\, dt\geqslant \frac{ e^{-2}}{|x|^{n+2s}}. 
	\end{equation}
	
	Moreover, in light of~\eqref{decay of H22 }, for every~$|x|^2<t<|x|^{2s}<1$,
	\[ {\mathcal{H}(x,t)} >C_2 e^{-\pi\frac{|x|^2}{t}}t^{-\frac{n}{2}}. \]
	As a consequence, by the definition of~$\mathcal{K},$  one has that, for every~$|x|<\frac{1}{2}$, 
	\begin{equation}\begin{split}\label{low bound}
			&\mathcal{K}(x)\geqslant \int_{|x|^2}^{|x|^{2s}} C_2 e^{-\pi\frac{|x|^2}{t}}t^{-\frac{n}{2}} e^{-t}\, dt
			\geqslant e^{-1} C_2\int_{|x|^{2-2s}}^{1}e^{-\pi y}|x|^{-n+2}y^{\frac{n}{2}-2}\, dy\\
			&\qquad\qquad\geqslant \frac{e^{-1} C_2}{|x|^{n-2}}\int_{{\frac{1}{2}}^{2-2s}}^{1} e^{-\pi y}y^{\frac{n}{2}-2}\,dy=\frac{c_{n,s}}{|x|^{n-2}}.
		\end{split}
	\end{equation}

	In addition, we recall that~$\mathcal{K}\geqslant 0$, and~$\mathcal{K}$ is nonincreasing in~$r=|x|$. According to~\eqref{low bounded} and~\eqref{low bound}, one concludes that~$\mathcal{K}(x)>0 $ for every~$x\in\mathbb{R}^n$.  
\end{proof}

\subsection{Smoothness of the Bessel kernel}\label{subsec:Smoothness of kernel}

In this section, we focus on the smoothness of the kernel~$\mathcal{ K}$ given by~\eqref{definition of K}. These properties of~$\mathcal{ K}$  are useful in the proofs of our regularity results.

\begin{Lemma}\label{sommothness of K}
	Let~$n\ge1$ and~$s\in(0,1)$. Let~$\mathcal{K}$ be as in~\eqref{definition of K}.
	
	Then, there exists~$C>0$, depending only on~$n$ and~$s$, such that, for all~$ |x|\geqslant 1$,
	\begin{equation*}
		|\nabla\mathcal{K}(x)|\leqslant \frac{C}{|x|^{n+2s+1}}\quad \text{and} \quad |D^2\mathcal{ K}(x)|\leqslant \frac{C}{|x|^{n+2s+2}}.
	\end{equation*}
\end{Lemma}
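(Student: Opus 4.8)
The plan is to derive both estimates from the behaviour of the heat kernel~$\mathcal{ H}$ established in Appendix~\ref{th properties of h1}, by differentiating under the integral sign in~\eqref{definition of K} and by organizing the analysis through the two-scales rescaling identity~\eqref{scaling  2}. Since~$\mathcal{H}(\cdot,t)$ is radial, nonnegative and nonincreasing in~$|x|$ for every~$t>0$, the same holds for~$\mathcal{ K}$; and for a radial function~$\Phi=\Phi(|x|)$ one has~$|\nabla\Phi(|x|)|=|\Phi'(|x|)|$ and~$|D^2\Phi(|x|)|\le C\big(|\Phi''(|x|)|+|\Phi'(|x|)|/|x|\big)$, so it is enough to bound the first two radial derivatives of~$\mathcal{ K}$, hence (by~\eqref{definition of K}) the first two radial derivatives of~$\mathcal{H}(\cdot,t)$, integrated against~$e^{-t}$.

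First I would establish the ``unit-time'' derivative estimates: there exist~$C=C(n,s)>0$ and~$M\ge1$ such that, for every~$\eta\in(0,1)$ and~$|x|\ge M$,
\begin{equation*}
|\nabla_x\mathcal{H}(x,1,\eta)|+|\nabla_x\mathcal{H}(x,\eta,1)|\le\frac{C}{|x|^{n+2s+1}}\quad\text{and}\quad |D^2_x\mathcal{H}(x,1,\eta)|+|D^2_x\mathcal{H}(x,\eta,1)|\le\frac{C}{|x|^{n+2s+2}}.
\end{equation*}
These are the differentiated counterparts of Lemmata~\ref{lemma 2s limit} and~\ref{lemma 2 limit}, and I would prove them by repeating that argument: start from the one-dimensional Bessel representation~\eqref{representation of H}, differentiate in the radial variable (which is legitimate thanks to the Gaussian factor and only introduces extra polynomial weights~$r^{k}$ in the integrand, plus one power of~$|x|^{-1}$ per derivative coming from the rescaling of the Bessel argument), rotate the contour onto the ray~$L_1=\{\arg z=\pi/6\}$ of~\eqref{definition of L1} while discarding the arc at infinity exactly as in~\eqref{R infty 0}, and then use the super-exponential bound~\eqref{estimate of H} for~$|H^{(1)}_{\frac{n}{2}}(re^{i\pi/6})|$ --- together with the identical bounds for the neighbouring orders~$|H^{(1)}_{\frac{n}{2}\pm1}|$ and~$|H^{(1)}_{\frac{n}{2}\pm2}|$ along~$L_1$, which follow from the same integral representation --- to see that the resulting~$r$-integrals converge uniformly in~$\eta\in(0,1)$. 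This yields precisely the claimed powers of~$|x|$.

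Next I would transfer these bounds to general times by means of~\eqref{scaling  2}. Differentiating~$\mathcal{H}(x,t)=t^{-n/(2s)}\mathcal{H}(t^{-1/(2s)}x,1,t^{1-1/s})$ for~$t>1$, and the companion identity~$\mathcal{H}(x,t)=t^{-n/2}\mathcal{H}(t^{-1/2}x,t^{1-s},1)$ for~$t\in(0,1)$, gives~$D^k_x\mathcal{H}(x,t)=t^{-(n+k)/(2s)}(D^k\mathcal{H})(t^{-1/(2s)}x,1,t^{1-1/s})$ for~$t>1$ and~$D^k_x\mathcal{H}(x,t)=t^{-(n+k)/2}(D^k\mathcal{H})(t^{-1/2}x,t^{1-s},1)$ for~$t\in(0,1)$. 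Combining the unit-time estimates (used when~$t^{-1/(2s)}|x|\ge M$, resp.~$t^{-1/2}|x|\ge M$) with the elementary bounds~$|D^k_x\mathcal{H}(x,t)|\le C\big(t^{-(n+k)/2}\vee t^{-(n+k)/(2s)}\big)$ --- obtained by differentiating~\eqref{DBSD-1} under the integral sign and estimating~$\int_{\mathbb{R}^n}|\xi|^{k}e^{-t(|\xi|^2+|\xi|^{2s})}\,d\xi$ --- in the complementary regime (where~$t$ is comparable to or larger than~$|x|^{2s}$, resp.~$|x|^2$, so that~$t$ is large since~$|x|\ge1$), one reaches, for~$k=1,2$ and~$|x|\ge1$, a bound of exactly the same shape as the heat-kernel bound in Theorem~\ref{th properties of h}, namely~$|D^k_x\mathcal{H}(x,t)|\le C\big(\{t\vee t^{s}\}\,|x|^{-(n+2s+k)}\big)\wedge\big(t^{-(n+k)/2}\vee t^{-(n+k)/(2s)}\big)$. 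Inserting this into~$D^k_x\mathcal{K}(x)=\int_0^{+\infty}e^{-t}D^k_x\mathcal{H}(x,t)\,dt$ and carrying out the resulting elementary~$t$-integrals exactly as in the proof of Lemma~\ref{proposition t>1} (compare~\eqref{K_1 x>m}: for~$1<t<|x|^{2s}$ use the~$t\,|x|^{-(n+2s+k)}$ term, for~$t>|x|^{2s}$ and for~$t\in(0,1)$ use the~$e^{-t}$ decay against powers of~$t$) gives~$|\nabla\mathcal{K}(x)|\le C|x|^{-(n+2s+1)}$ and~$|D^2\mathcal{K}(x)|\le C|x|^{-(n+2s+2)}$ for all~$|x|\ge1$, which is the statement.

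The main obstacle is the unit-time step: one must rerun the delicate contour-shift estimates of Appendix~\ref{th properties of h1} for the differentiated kernel and check that the additional polynomial weights~$r^{k}$ do not destroy the uniformity in~$\eta$. This is routine bookkeeping once the super-exponential decay of the Bessel functions of the third kind along the ray~$L_1$ is in hand, since that decay dominates any polynomial factor; the remaining ingredients --- the scaling reductions and the one-dimensional time integrals --- are elementary.
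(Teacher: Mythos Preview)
Your proposal is correct and the key technical engine---the contour shift to the ray~$L_1$ and the super-exponential bound~\eqref{estimate of H} for the Hankel functions---is the same as in the paper. The organization, however, is genuinely different. You first establish pointwise bounds on~$D^k_x\mathcal{H}(x,t)$ for~$k=1,2$ (a differentiated analogue of Theorem~\ref{th properties of h}) by proving uniform unit-time estimates for~$D^k_x\mathcal{H}(x,1,\eta)$ and~$D^k_x\mathcal{H}(x,\eta,1)$ and then transporting them via the scaling~\eqref{scaling  2}; only afterwards do you integrate in~$t$ against~$e^{-t}$. The paper instead differentiates the Bessel representation of~$\mathcal{K}(r)$ directly, and then performs integrations by parts in both the radial Fourier variable~$\tau$ and the time variable~$t$ to rewrite~$\mathcal{K}'(r)$ as a combination of~$\mathcal{K}(r)/r$, the weighted integral~$\int_0^{+\infty}te^{-t}\mathcal{H}(r,t)\,dt$, and a new auxiliary function~$\mathcal{G}(r)=\int_0^{+\infty}te^{-t}\Phi(r,t,t)\,dt$ with~$\Phi(x,t_1,t_2)=|x|^{-1}\int_{\mathbb{R}^n}e^{-t_1|\xi|^{2s}-t_2|\xi|^{2}}|\xi|^{2s}e^{2\pi ix\cdot\xi}\,d\xi$; the asymptotics of~$\Phi$ are then obtained by the same contour argument. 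Your route is more modular and yields a reusable intermediate result (decay of~$D^k\mathcal{H}$); the paper's route avoids stating heat-kernel derivative bounds explicitly, at the cost of a somewhat intricate double integration by parts. One small point worth making explicit in your write-up: for~$1\le|x|<M$ and~$t\in((|x|/M)^2,1)$ the unit-time estimate is not directly available, but the elementary bound~$|D^k_x\mathcal{H}(x,t)|\le Ct^{-(n+k)/2}$ integrates to at worst a power~$|x|^{-(n+k-2)}$, which on the compact shell~$\{1\le|x|\le M\}$ is dominated by~$C_M|x|^{-(n+2s+k)}$.
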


\begin{proof}
	Since~$\mathcal{K}$ is radially symmetric, with a slight abuse of notation we write~$\mathcal{K}(x)=\mathcal{K}(r)$, with~$r=|x|$. Furthermore, recalling the definition of~$\mathcal{K}$ and employing~\eqref{k(r)}, we infer that 
$$
		\mathcal{K}(r)=\frac{(2\pi)^{\frac{n}{2}}}{r^{\frac{n}{2}-1}}\int_{0}^{+\infty}\int_{0}^{+\infty}e^{-t(1+\tau^{2s}+\tau^2)}\tau^{\frac{n}{2}}J_{\frac{n}{2}-1}(r\tau)\, d\tau\, dt.
$$	Taking the derivative of the expression above with respect to~$r$, one finds that 
	\begin{equation}
		\begin{split}\label{K'}
			\mathcal{K}'(r)=&\frac{-(2\pi)^{\frac{n}{2}}(n-2)}{2r}\mathcal{K}(r)+\frac{(2\pi)^{\frac{n}{2}}}{r^{\frac{n}{2}}}\int_{0}^{+\infty}\int_{0}^{+\infty}e^{-t(1+\tau^{2s}+\tau^2)}\tau^{\frac{n}{2}}J'_{\frac{n}{2}-1}(r\tau)\tau r\, d\tau\, dt\\
			:=&I_1(r)+I_2(r).
		\end{split}
	\end{equation}
Concerning~$I_1$, owing to~\eqref{decay of K_1}, one has that, for every~$r>1$,
\begin{equation*}
	|I_1(r)|\leqslant \frac{c_1(2\pi)^{\frac{n}{2}}|n-2|}{2r^{n+2s+1}}.
\end{equation*}
We now estimate~$I_2$. For this, integrating by parts in~$\tau$, we see that 
	\begin{equation*}
		\begin{split}
			I_2(r)&=\frac{(2\pi)^{\frac{n}{2}}}{r^{\frac{n}{2}}}\int_{0}^{+\infty}e^{-t(1+\tau^{2s}+\tau^2)}\tau^{\frac{n}{2}+1}J_{\frac{n}{2}-1}(r\tau)\bigg|_{\tau=0}^{\tau=+\infty}\, dt\\
			& \quad-\frac{(2\pi)^{\frac{n}{2}}}{r^{\frac{n}{2}}}\int_{0}^{+\infty}\int_{0}^{+\infty}e^{-t(1+\tau^{2s}+\tau^2)}J_{\frac{n}{2}-1}(r\tau)\left(\left(\frac{n}{2}+1\right)\tau^{\frac{n}{2}}-t(2s\tau^{2s}+2\tau^2)\tau^{\frac{n}{2}}\right)\, d\tau\, dt\\
			&=-\frac{n+2}{2r}\mathcal{K}(r)+\frac{(2\pi)^{\frac{n}{2}}}{r^{\frac{n}{2}}}\int_{0}^{+\infty}\int_{0}^{+\infty}e^{-t(1+\tau^{2s}+\tau^2)}\tau^{\frac{n}{2}}J_{\frac{n}{2}-1}(r\tau)\left(2ts\tau^{2s}+2t\tau^2\right)\, d\tau\, dt.
		\end{split}
	\end{equation*}
	Furthermore, integrating by parts in~$t$,  
	\begin{equation*}
		\begin{split}
			&2s\frac{(2\pi)^{\frac{n}{2}}}{r^{\frac{n}{2}}}\int_{0}^{+\infty}\int_{0}^{+\infty}e^{-t(1+\tau^{2s}+\tau^2)}\tau^{\frac{n}{2}}J_{\frac{n}{2}-1}(r\tau)t\tau^{2s}\, d\tau\, dt\\
			=	\;& \frac{(2\pi)^{\frac{n}{2}}}{r^{\frac{n}{2}}}\int_{0}^{+\infty}\frac{te^{-t(1+\tau^{2})}e^{-t\tau^{2s}}}{-\tau^{2s}}\tau^{\frac{n}{2}}J_{\frac{n}{2}-1}(r\tau)2s\tau^{2s}\bigg|_{t=0}^{t=+\infty}\, d\tau\\
			&\quad +2s\frac{(2\pi)^{\frac{n}{2}}}{r^{\frac{n}{2}}}\int_{0}^{+\infty}\int_{0}^{+\infty}\frac{e^{-t\tau^{2s}}}{\tau^{2s}}\left(e^{-t(1+\tau^{2})}-te^{-t(1+\tau^{2})}(1+\tau^{2})\right)
			\tau^{\frac{n}{2}+2s}J_{\frac{n}{2}-1}(r\tau)\, d\tau\, dt\\
			=\;& 2s\frac{(2\pi)^{\frac{n}{2}}}{r^{\frac{n}{2}}}\int_{0}^{+\infty}\int_{0}^{+\infty}{e^{-t\tau^{2s}}}\left(e^{-t(1+\tau^{2})}-te^{-t(1+\tau^{2})}(1+\tau^{2})\right)
			\tau^{\frac{n}{2}}J_{\frac{n}{2}-1}(r\tau)\, d\tau\, dt\\
			=\;& \frac{2s}{r}\mathcal{K}(r)-2s\frac{(2\pi)^{\frac{n}{2}}}{r^{\frac{n}{2}}}\int_{0}^{+\infty}\int_{0}^{+\infty}te^{-t(1+\tau^{2s}+\tau^2)}(1+\tau^{2})
			\tau^{\frac{n}{2}}J_{\frac{n}{2}-1}(r\tau)\, d\tau\, dt\\
			=\;&\frac{2s}{r}\mathcal{K}(r)-\frac{2s}{r}\int_{0}^{+\infty} te^{-t}\mathcal{H}(r,t,t)\, dt -2s\frac{(2\pi)^{\frac{n}{2}}}{r^{\frac{n}{2}}}\int_{0}^{+\infty}\int_{0}^{+\infty}e^{-t(1+\tau^{2s}+\tau^2)}
			\tau^{\frac{n}{2}}J_{\frac{n}{2}-1}(r\tau)t\tau^2\, d\tau\, dt.
		\end{split}
	\end{equation*}
	As a consequence,
	\begin{equation}\label{G}
		\begin{split}
			&\frac{(2\pi)^{\frac{n}{2}}}{r^{\frac{n}{2}}}\int_{0}^{+\infty}\int_{0}^{+\infty}e^{-t(1+\tau^{2s}+\tau^2)}\tau^{\frac{n}{2}}J_{\frac{n}{2}-1}(r\tau)\left(2ts\tau^{2s}+2t\tau^2\right)\, d\tau\, dt\\
			=\;&(2s-2)\frac{(2\pi)^{\frac{n}{2}}}{r^{\frac{n}{2}}}\int_{0}^{+\infty}\int_{0}^{+\infty}e^{-t(1+\tau^{2s}+\tau^2)}\tau^{\frac{n}{2}}J_{\frac{n}{2}-1}(r\tau)t\tau^{2s}\, d\tau\, dt +\frac{2}{r}\mathcal{K}(r)-\frac{2}{r}\int_{0}^{+\infty} te^{-t}\mathcal{H}(r,t,t)\, dt\\
			=:\;& (2s-2)\mathcal{G}(r)+\frac{2}{r}\mathcal{K}(r)-\frac{2}{r}\int_{0}^{+\infty} te^{-t}\mathcal{H}(r,t,t)\, dt.
		\end{split}
	\end{equation}
	{F}rom~\eqref{definition of H} and~\eqref{decay of H11 }, it follows that, for any~$r>1$,
	\begin{equation}\label{g3}
			\int_{0}^{+\infty} te^{-t}\mathcal{H}(r,t,t)\, dt=\int_{0}^{+\infty}te^{-t}\mathcal{H}(r,t)\, dt\leqslant \frac{c}{r^{n+2s}}
	\end{equation}
	where the constant~$c$ depends only on~$n$ and~$s$.
	
Let us now estimate the integral~$\mathcal{G}(r)$. For this,
for every~$t_1$, $t_2>0$ and~$x\in\mathbb{R}^n\setminus\left\{0\right\}$ we set
	$$\Phi(x,t_1,t_2):=\frac{1}{|x|}\int_{\mathbb{R}^n}e^{-t_1|\xi|^{2s}-t_2|\xi|^{2}}|\xi|^{2s} e^{2\pi ix\cdot\xi}\, d\xi.$$
	Using again the Fourier Inversion Theorem for radial functions, we see that 
	\begin{equation}\label{Phi}
		\Phi(x,t_1,t_2)=\frac{(2\pi)^{\frac{n}{2}}}{|x|^{\frac{n}{2}}}\int_{0}^{+\infty} e^{-t_1r^{2s}-t_2r^{2}} r^{\frac{n}{2}+2s} J_{\frac{n}{2}-1}(|x|r)\, dr,
	\end{equation}
where~$J_{\frac{n}{2}-1}$ denotes the Bessel function of first kind of order~$ \frac{n}{2}-1$. Furthermore, we observe that~$\Phi$ satisfies
the following scaling properties:	\begin{equation}\label{phi}
		\Phi(x,t,t)=t^{-\frac{n}{2s}-1}\Phi(t^{-\frac{1}{2s}}x,1,t^{1-\frac{1}{s}})
		=t^{-\frac{n}{2}-s}\Phi(t^{-\frac{1}{2}}x,t^{1-s},1).
\end{equation}
By the definition of~$\mathcal{G}$ in~\eqref{G}, we see that 
	\begin{equation*}
		\mathcal{G}(r)=\int_{1}^{+\infty}te^{-t}\Phi(r,t,t)\, dt+\int_{0}^{1}te^{-t}\Phi(r,t,t)\, dt
		=:\mathcal{G}_1(r)+\mathcal{G}_2(r).
	\end{equation*}
	We now focus on estimating~$\mathcal{G}_1$ and~$\mathcal{G}_2$.
	\smallskip
	
	{ Step~1. } We estimate~$\mathcal{G}_1$.
	Let us start by calculating the value of~$\lim\limits_{|x|\to+\infty}|x|^{n+2s+1}\Phi(x,1,\eta)$ for every~$\eta\in(0,1)$. {F}rom~\eqref{Phi}, we  infer that 
$$
			\Phi(x,1,\eta)=\frac{(2\pi)^{\frac{n}{2}}}{|x|^{n+2s+1}}\int_{0}^{+\infty} e^{-\frac{t^{2s}}{|x|^{2s}}-\eta\frac{t^{2}}{|x|^{2}}}t^{\frac{n}{2}+2s} J_{\frac{n}{2}-1}(t)\, dt.
$$	Thus, owing to the fact that~$ H^{(1)}_{\frac{n}{2}-1}$ is the Bessel function of the third kind, we have that
	\begin{equation*}
		|x|^{n+2s+1}\Phi(x,1,\eta)={(2\pi)^{\frac{n}{2}}}\text{Re}\int_{0}^{+\infty}e^{-\frac{t^{2s}}{|x|^{2s}}-\eta\frac{t^{2}}{|x|^{2}}}t^{\frac{n}{2}+2s} H^{(1)}_{\frac{n}{2}-1}(t)\, dt.
	\end{equation*}
Using again the Residue Theorem, we  deduce that 
	\begin{equation*}
		\begin{split}
			|x|^{n+2s+1}\Phi(x,1,\eta)
			&={(2\pi)^{\frac{n}{2}}}\text{Re}\int_{L_1}e^{-\frac{z^{2s}}{|x|^{2s}}-\eta\frac{z^{2}}{|x|^{2}}}z^{\frac{n}{2}+2s} H^{(1)}_{\frac{n}{2}-1}(z)\, dz
		\end{split}
	\end{equation*}
	where $L_1$ is defined in~\eqref{definition of L1}. 
	
We now claim that for any~$ \epsilon>0$  there exists~$M>0$ independent of~$\eta$ such that, for every~$|x|>M$ and~$\eta\in(0,1)$,
	\begin{equation*}
		\left|\,|x|^{n+2s+1}	\Phi(x,1,\eta)- (2\pi)^{\frac{n}{2}}\text{Re}\int_{L_1} {z^{\frac{n}{2}+2s}} H^{(1)}_{\frac{n}{2}-1}\left(z\right)\, dz\right|<\epsilon.
	\end{equation*}
For this purpose, we notice that, for~$R>0$ large enough,  \begin{equation*}
		\begin{split}
			&\left|\,|x|^{n+2s+1}	\Phi(x,1,\eta)-(2\pi)^{\frac{n}{2}}\text{Re}\int_{L^1} {z^{\frac{n}{2}+2s}} H^{(1)}_{\frac{n}{2}-1}\left(z\right)\,dz\right|\\
			=\;&	\left|{(2\pi)^{\frac{n}{2}}} \text{Re}\int_{L_1} \left(e^{-\frac{z^{2s}}{|x|^{2s}}-\eta\frac{z^{2}}{|x|^{2}}}-1\right) {z^{\frac{n}{2}+2s}} H^{(1)}_{\frac{n}{2}-1}(z)\, dz\right|\\
			\leqslant \;& (2\pi)^{\frac{n}{2}}\int_{0}^{R}
			\left|e^{-\frac{\left(re^{i\frac{\pi}{6}}\right)^{2s}}{|x|^{2s}}-\eta\frac{\left(re^{i\frac{\pi}{6}}\right)^{2}}{|x|^{2}}}-1\right|{r^{\frac{n}{2}+2s}}\left|H^{(1)}_{\frac{n}{2}-1}\left(re^{i\frac{\pi}{6}}\right)\right|\, dr\\
			&\qquad + (2\pi)^{\frac{n}{2}}\int_{R}^{+\infty}\left|e^{-\frac{\left(re^{i\frac{\pi}{6}}\right)^{2s}}{|x|^{2s}}-\eta\frac{\left(re^{i\frac{\pi}{6}}\right)^{2}}{|x|^{2}}}-1\right|{r^{\frac{n}{2}+2s}}\left|H^{(1)}_{\frac{n}{2}-1}\left(re^{i\frac{\pi}{6}}\right)\right|\, dr\\
			=:\;&\mathcal{B}^{R}_1+\mathcal{B}^{R}_2.
		\end{split}
	\end{equation*}
To estimate~$\mathcal{B}_2^R$, we note that, by the definition of~$ H^{(1)}_{\frac{n}{2}-1}\left(z\right) $ (see e.g.~\cite[page~21]{MR58756}), 
	\begin{equation*}
		\begin{split}
			&\mathcal{B}^{R}_2\leqslant (2\pi)^{\frac{n}{2}}\int_{R}^{+\infty}{r^{\frac{n}{2}+2s}}\left|H^{(1)}_{\frac{n}{2}-1}\left(re^{i\frac{\pi}{6}}\right)\right|\, dr\leqslant (2\pi)^{\frac{n}{2}}2\int_{0}^{+\infty} e^{\left(\frac{n}{2}-1\right)t} \int_{R}^{+\infty}e^{-\frac{r}{4}e^t} {r^{\frac{n}{2}+2s}} \, dr  \,dt\\
			&\qquad\leqslant (2\pi)^{\frac{n}{2}}2\int_{0}^{+\infty} e^{\left(\frac{n}{2}-1\right)t} \int_{R}^{+\infty}e^{-\frac{r}{4}(t+1)} {r^{\frac{n}{2}+2s}} \, dr  \,dt\\
			&\qquad\leqslant (2\pi)^{\frac{n}{2}}2\int_{0}^{+\infty} e^{\left(\frac{n}{2}-1-\frac{R}{4}\right)t} \int_{R}^{+\infty}e^{-\frac{r}{4}} {r^{\frac{n}{2}+2s}} \, dr  \,dt\leqslant\frac{(2\pi)^{\frac{n}{2}}2^{n+4s+5}\Gamma(\frac{n}{2}+2s+1)}{R-2n+2}.
		\end{split}
	\end{equation*}
	As a consequence, given~$\epsilon>0$, taking
	\begin{equation}\label{R_3}
		R_3:=\frac{2(2\pi)^{\frac{n}{2}}2^{n+4s+5}\Gamma(\frac{n}{2}+2s+1)}{\epsilon}+2n-2,
	\end{equation}
one has that
	\begin{equation}\label{B-2}
		\mathcal{B}^{R_3}_2\leqslant \frac{\epsilon}{2}.
	\end{equation}
  	We now consider~$\mathcal{B}^{R_3}_1$ where~$R_3$ is defined in~\eqref{R_3}.  Recalling~\eqref{uniform bounded in eta}, we have that
	\begin{equation*}
		\begin{split}
			\mathcal{B}^{R_3}_1&\leqslant(2\pi)^{\frac{n}{2}}\int_{0}^{R_3}\bigg(\left|e^{-\frac{r^{2s}\cos\frac{s\pi}{3}}{|x|^{2s}}-\eta\frac{r^{2}\cos\frac{\pi}{3}}{|x|^{2}}}\cos\left(-\frac{r^{2s}\cos\frac{s\pi}{3}}{|x|^{2s}}-\eta\frac{r^{2}\cos\frac{\pi}{3}}{|x|^{2}}\right)-1\right|\\
			&\qquad+ \left|\sin\left(-\frac{r^{2s}\cos\frac{s\pi}{3}}{|x|^{2s}}-\eta\frac{r^{2}\cos\frac{\pi}{3}}{|x|^{2}}\right)\right|\bigg){r^{\frac{n}{2}+2s}}\left|H^{(1)}_{\frac{n}{2}-1}\left(re^{i\frac{\pi}{6}}\right)\right|\, dr\\
			&\leqslant\frac{(2\pi)^{\frac{n}{2}}\Gamma\left(\frac{n}{2}+2s+1\right)2^{n+4s+2}}{s+1}\left(1-e^{-\frac{R_3^{2s}\cos\frac{s\pi}{3}}{|x|^{2s}}-\frac{R_3^{2}}{2|x|^{2}}}\cos\left(\frac{R_3^{2s}}{|x|^{2s}}+\frac{R_3^{2}}{2|x|^{2}}\right)+\sin\left(\frac{R_3^{2s}}{|x|^{2s}}+\frac{R_3^{2}}{2|x|^{2}}\right)\right).
		\end{split}
	\end{equation*}
	It follows that for any~$\epsilon>0$ there exists~$T_{R_3,\epsilon}>0$  such that, for every~$|x|>T_{R_3,\epsilon}$,
$$
		1-e^{-\frac{R_3^{2s}\cos\frac{s\pi}{3}}{|x|^{2s}}-\frac{R_3^{2}}{2|x|^{2}}}\cos\left(\frac{R_3^{2s}}{|x|^{2s}}+\frac{R_3^{2}}{2|x|^{2}}\right)+\sin\left(\frac{R_3^{2s}}{|x|^{2s}}+\frac{R_3^{2}}{2|x|^{2}}\right)\leqslant \frac{(s+1)\epsilon}{2^{n+4s+3}(2\pi)^{\frac{n}{2}}\Gamma(\frac{n}{2}+2s+1)},
$$
which implies that 
	\begin{equation*}
		\mathcal{B}^{R_3}_1<\frac{\epsilon}{2}.
	\end{equation*}
	By combining this and~\eqref{B-2}, we deduce that
	for any~$\epsilon>0$ there exists~$M>0$, depending only on $n$, $s$ and~$\epsilon$,  such that, for every~$|x|>M$,
	\[ \left|\,|x|^{n+2s+1}	\Phi(x,1,\eta)-(2\pi)^{\frac{n}{2}}\text{Re}\int_{L_1} {z^{\frac{n}{2}+2s}} H^{(1)}_{\frac{n}{2}-1}\left(z\right)\,dz\right|\leqslant {\epsilon}. \]
	That is,
	\begin{equation*}
		\lim\limits_{|x|\to+\infty}|x|^{n+2s+1}	\Phi(x,1,\eta)=(2\pi)^{\frac{n}{2}}\text{Re}\int_{L_1} {z^{\frac{n}{2}+2s}}, H^{(1)}_{\frac{n}{2}-1}\left(z\right)\,dz
	\end{equation*}
	where the limit is independent of~$\eta$.
	
	Moreover, applying the Residue Theorem again to the region composed of~$L^1$ and the straight line $L_2$ given in~\eqref{L2},  and using the modified
	Bessel function of the third kind~$K(x)$ (see e.g.~\cite[page~5]{MR58756}), it follows that
		\begin{equation}
		\begin{split}\label{egre}
			\lim\limits_{|x|\to+\infty}|x|^{n+2s+1}	\Phi(x,1,\eta)&=(2\pi)^{\frac{n}{2}}\text{Re}\int_{L_2} {z^{\frac{n}{2}+2s}} H^{(1)}_{\frac{n}{2}-1}\left(z\right)\,dz\\&=(2\pi)^{\frac{n}{2}}\text{Re}\int_{0}^{+\infty}ir^{\frac{n}{2}+2s}e^{i\frac{\pi n}{4}}e^{i\pi s}H^{(1)}_{\frac{n}{2}-1}\left(ir\right)\,dr\\
			&=(2\pi)^{\frac{n}{2}}\text{Re}\int_{0}^{+\infty}r^{\frac{n}{2}+2s}ie^{i\pi s}\frac{2}{\pi}K_{\frac{n}{2}-1}(r)\, dr\\
			&=-2^{n+2s}\pi^{\frac{n}{2}-1}\sin(\pi s)\Gamma\left(\frac{n}{2}+s\right)\Gamma(s+1):=-\beta.
		\end{split}
	\end{equation}
	{F}rom this, we infer that there exists~$M_1>0$, depending only on~$n $ and~$s$, such that, for every~$ |x|>M_1$,
	\begin{equation*}
		\frac{\beta}{2}<|x|^{n+2s}|\Phi(x,1,\eta)|<2\beta.
	\end{equation*}
	
	{F}rom~\eqref{phi}, we  deduce that, if~$t>1$ and~$ |x|>M_1t^{\frac{1}{2s}}$,
	\begin{equation}\label{t>1 x>m 1} 
		\frac{\beta t^{\frac{1}{2s}}}{2|x|^{n+2s+1}}<	|\Phi(x,t,t)|<\frac{2\beta t^{\frac{1}{2s}}}{|x|^{n+2s+1}} .
	\end{equation} 
	Moreover, according to the definition of~$\Phi(x,t,t)$, one has that
	\begin{equation}\label{t>1 x<m 1}
		0\leqslant	|\Phi(x,t,t)|\leqslant \Lambda_{n,s} |x|^{-1}\left(t^{-\frac{n}{2s}-1}\wedge t^{-\frac{n}{2}-s} \right).
	\end{equation}
	Owing to~\eqref{t>1 x>m 1} and~\eqref{t>1 x<m 1},   we  conclude that, {for every }$|x|>M_1$,
	\begin{equation}
		\begin{split}\label{phi_1 x>m}
			|\mathcal{G}_1(x)|&\leqslant\int_{1}^{+\infty} te^{-t} |\Phi(x,t,t)|\,dt\leqslant \int_{1}^{\left|\frac{x}{M_1}\right|^{2s}}te^{-t}\frac{2\beta t^{\frac{1}{2s}}}{|x|^{n+2s+1}}\, dt+\int_{\left|\frac{x}{M_1}\right|^{2s}}^{+\infty}te^{-t}t^{-\frac{n}{2s}-1}|x|^{-1}\Lambda_{n,s}\, dt\\
			&\leqslant \frac{2\beta\Gamma(2+\frac{1}{2s})}{|x|^{n+2s+1}}+\frac{\Lambda_{n,s}\Gamma(2)M_1^{n+2s}}{|x|^{n+2s+1}}
		\end{split}
	\end{equation}
	and,  for every~$1<|x|\leqslant M_1$,
	\begin{equation*}
		\begin{split}
			|\mathcal{G}_1(x)|\leqslant\int_{1}^{+\infty} te^{-t} |\Phi(x,t,t)|\,dt\leqslant \int_{\left|\frac{x}{M_1}\right|^{2s}}^{+\infty}te^{-t}t^{-\frac{n}{2s}-1}\Lambda_{n,s}|x|^{-1}\, dt\leqslant \frac{\Lambda_{n,s}\Gamma(2)M_1^{n+2s}}{|x|^{n+2s+1}}.
		\end{split}
	\end{equation*}
	As a consequence of this and~\eqref{phi_1 x>m}, we have that, for every~$ |x|>1$,
	\begin{equation}\label{G_1}
		|\mathcal{G}_1(x)|\leqslant \frac{1}{|x|^{n+2s+1}}\left(2\beta\,\Gamma\left(2+\frac{1}{2s}\right)+\Lambda_{n,s}\Gamma(2)M_1^{n+2s}\right).
	\end{equation}
	\smallskip
	
 {Step~2.} We now estimate the~$\mathcal{G}_2$. To this end, we remark that 
	\begin{equation*}
		\lim\limits_{|x|\to+\infty}|x|^{n+2s+1}	\Phi(x,\eta,1)=-2^{n+2s}\pi^{\frac{n}{2}-1}\sin(\pi s)\Gamma\left(\frac{n}{2}+s\right)\Gamma(s+1)=-\beta,
	\end{equation*}
	where~$\beta$ is defined in~\eqref{egre} and
	the limit is independent of~$\eta\in(0,1)$. 
	
	Recalling~\eqref{phi}, we see that there exists~$M_2>0$, depending only on~$n$ and~$s$, such that, for all~$t\in(0,1)$ and~$ |x|>M_2t^{\frac{1}{2}}$,
	\begin{equation}\label{t>1 x>m 2} 
		\frac{\beta t^{\frac{1}{2}}}{2|x|^{n+2s+1}}<	|\Phi(x,t,t)|<\frac{2\beta t^{\frac{1}{2}}}{|x|^{n+2s+1}}.
	\end{equation} 
	Owing to~\eqref{t>1 x<m 1} and ~\eqref{t>1 x>m 2},   we  conclude that, if~$1<|x|<M_2$,
	\begin{equation}
		\begin{split}\label{phi_2 x>m}
			&|\mathcal{G}_2(x)|\leqslant\int_{0}^{1} te^{-t} |\Phi(x,t,t)|\,dt\leqslant \int_{0}^{\left|\frac{x}{M_2}\right|^{2}}te^{-t}\frac{2\beta t^{\frac{1}{2}}}{|x|^{n+2s+1}}\, dt+\int_{\left|\frac{x}{M_2}\right|^{2}}^{1}te^{-t}t^{-\frac{n}{2}-s}|x|^{-1}\Lambda_{n,s}\, dt\\
			&\qquad\qquad\leqslant \frac{2\beta\Gamma(\frac{5}{2})}{|x|^{n+2s+1}}+\frac{\Lambda_{n,s}\Gamma(2)M_2^{n+2s}}{|x|^{n+2s+1}}
		\end{split}
	\end{equation}
	and, if~$|x|> M_2$,
	\begin{equation*}
		\begin{split}
			|\mathcal{G}_2(x)|\leqslant\int_{0}^{1} te^{-t} |\Phi(x,t,t)|\,dt\leqslant \int_{0}^{\left|\frac{x}{M_2}\right|^{2}}te^{-t}\frac{2\beta t^{\frac{1}{2}}}{|x|^{n+2s+1}}\, dt\leqslant \frac{2\beta\Gamma(\frac{5}{2})}{|x|^{n+2s+1}}.
		\end{split}
	\end{equation*}
	By~\eqref{phi_2 x>m}, one sees that, for every~$ |x|>1$,
	\begin{equation}\label{G_2}
		|\mathcal{G}_2(x)|\leqslant \frac{1}{|x|^{n+2s+1}}\left(2\beta\,\Gamma\left(\frac{5}{2}\right)+\Lambda_2\Gamma(2)M_2^{n+2s}\right).
	\end{equation}
\smallskip

	{Step~3.}
	Combining~\eqref{G_1} and~\eqref{G_2}, we  conclude that, for every~$|x|>1$,
	\begin{equation}\label{decay of G}
		\mathcal{G}(x)\leqslant\frac{c_{n,s}}{|x|^{n+2s+1}}.
	\end{equation}
	
	Finally, recalling~\eqref{K'}, we have that
	$$
		\mathcal{K}'(r)=\frac{-(2\pi)^{\frac{n}{2}}(n-2)}{2r}\mathcal{K}(r)-\frac{n+2}{2r}\mathcal{K}(r)+(2s-2)\mathcal{G}(r)+\frac{2}{r}\mathcal{K}(r)-\frac{2}{r}\int_{0}^{+\infty} te^{-t}\mathcal{H}(r,t,t)\, dt.
	$$
	Owing to the decay of~$\mathcal{ K}$, by combining~\eqref{g3} with~\eqref{decay of G}, we deduce that, for every~$|x|>1$,
	$$
		|\nabla\mathcal{ K}(x)|\leqslant|\mathcal{ K}'(r)|\leqslant \frac{C}{r^{n+2s+1}}=\frac{C}{|x|^{n+2s+1}}
$$
	for some constant~$C$ depending only on~$n$ and~$s$.  
	
	Using similar arguments, we  infer that, for every~$|x|>1$,
	$$
		|D^2\mathcal{ K}(x)|\leqslant\frac{C}{|x|^{n+2s+2}}
$$	for some constant~$C$ depending only on~$n$ and~$s$. 
\end{proof}

\subsection{Proof of Theorem~\ref{th properties of k}}\label{fhuoewyt9843t0987654}

We observe that, gathering
	together the results of  Sections~\ref{subsec:Decay of the kernel k}
	and~\ref{subsec:Smoothness of kernel}, we  obtain the desired claims in~(a)-(d) of Theorem~\ref{th properties of k}.
Thus, it suffices to prove  property (e) of Theorem~\ref{th properties of k}. To this end, we introduce the following two ancillary results:

\begin{Lemma} \label{FU2}Let~$\phi:\R^n\to\R$ be in the Schwartz space and~$t>0$.
	Then, the function
	$$\R^n\times\R^n\ni(x,\xi)\mapsto e^{-t(|\xi|^{2s}+|\xi|^{2})+2\pi ix\cdot \xi}\,\phi(x)$$ belongs to~$L^1(\R^n\times\R^n,\,\C)$.
\end{Lemma}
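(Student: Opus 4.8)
The plan is to reduce the claim to the elementary fact that the integrand has modulus which factorizes as a product of an integrable function of~$\xi$ alone and an integrable function of~$x$ alone, and then invoke Tonelli's theorem. First I would observe that, since~$2\pi i x\cdot\xi\in i\R$, for every~$(x,\xi)\in\R^n\times\R^n$ one has
$$
\left|e^{-t(|\xi|^{2s}+|\xi|^{2})+2\pi ix\cdot \xi}\,\phi(x)\right|
=e^{-t(|\xi|^{2s}+|\xi|^{2})}\,|\phi(x)|.
$$
The right-hand side is a nonnegative measurable function on~$\R^n\times\R^n$, so by Tonelli's theorem its integral equals the product
$$
\left(\int_{\R^n}e^{-t(|\xi|^{2s}+|\xi|^{2})}\,d\xi\right)\left(\int_{\R^n}|\phi(x)|\,dx\right),
$$
and it suffices to check that each of the two factors is finite.

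For the factor in~$\xi$, I would use that~$|\xi|^{2s}\geqslant0$ for all~$\xi$ and~$t>0$, so that~$e^{-t(|\xi|^{2s}+|\xi|^{2})}\leqslant e^{-t|\xi|^{2}}$, whence
$$
\int_{\R^n}e^{-t(|\xi|^{2s}+|\xi|^{2})}\,d\xi\leqslant\int_{\R^n}e^{-t|\xi|^{2}}\,d\xi=\left(\frac{\pi}{t}\right)^{\frac n2}<+\infty.
$$
For the factor in~$x$, I would recall that a Schwartz function is rapidly decreasing, so there exists~$C>0$, depending on~$\phi$ and~$n$, with~$|\phi(x)|\leqslant C(1+|x|)^{-n-1}$ for all~$x\in\R^n$; integrating in polar coordinates gives~$\int_{\R^n}|\phi(x)|\,dx<+\infty$. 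Combining these two bounds shows that the product above is finite, hence the integrand belongs to~$L^1(\R^n\times\R^n,\,\C)$, as desired.

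I do not expect any genuine obstacle here: the statement is purely a measurability-and-integrability bookkeeping lemma, meant to justify the application of Fubini's theorem in the subsequent computation of~$\mathcal F^{-1}$ applied to the heat semigroup symbol. The only mildly delicate point to state cleanly is the joint measurability of the integrand, which is immediate since it is continuous in~$(x,\xi)$, and the reduction of integrability to a product via Tonelli, which requires only that the modulus of the integrand has the product form noted above.
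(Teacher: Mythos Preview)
Your proof is correct and follows exactly the same approach as the paper: take the modulus to obtain the product~$e^{-t(|\xi|^{2s}+|\xi|^{2})}\,|\phi(x)|$, apply Tonelli to factor the double integral, and note that each factor is finite. The paper's version is terser (it simply writes the factorization and says ``which is finite''), but the argument is identical.
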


\begin{proof} We have that
	$$ \iint_{\R^n\times\R^n} \left|e^{-t(|\xi|^{2s}+|\xi|^{2})+2\pi ix\cdot \xi}\,\phi(x)\right|\,dx\,d\xi
	=\iint_{\R^n\times\R^n} e^{-t(|\xi|^{2s}+|\xi|^{2})}\,|\phi(x)|\,dx\,d\xi=\|\phi\|_{L^1(\R^n)}
	\int_{\R^n} e^{-t(|\xi|^{2s}+|\xi|^{2})}\,d\xi,$$ which is finite.
\end{proof}

\begin{Lemma}
	Let~$\Phi:\R^n\to\C$ be in the Schwartz space.
	Then, the function
	$$\R^n\times(0,+\infty)\ni(\xi,t)\mapsto e^{-t(1+|\xi|^{2s}+|\xi|^{2})}\,\Phi(\xi)$$ belongs to~$L^1(\R^n\times(0,+\infty),\,\C)$ and
	\begin{equation}\label{FU7} \iint_{\R^n\times(0,+\infty)} e^{-t(1+|\xi|^{2s}+|\xi|^{2})}\,\Phi(\xi)\,d\xi\,dt=
		\int_{\R^n}\frac{\Phi(\xi)}{1+|\xi|^{2s}+|\xi|^{2}}\,d\xi.\end{equation}
\end{Lemma}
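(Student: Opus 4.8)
The statement splits into an integrability claim and an identity, and both follow from an explicit computation of the $t$-integral together with the Schwartz decay of $\Phi$. The plan is to first establish integrability via Tonelli's theorem, and then deduce the identity by Fubini's theorem, which is now legitimate precisely because of the integrability just proved.

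First I would handle integrability. Since the integrand is a product of a nonnegative function of $(\xi,t)$ and $|\Phi(\xi)|$, I would apply Tonelli's theorem to write
\begin{equation*}
\iint_{\R^n\times(0,+\infty)} e^{-t(1+|\xi|^{2s}+|\xi|^{2})}\,|\Phi(\xi)|\,d\xi\,dt
=\int_{\R^n}|\Phi(\xi)|\left(\int_{0}^{+\infty}e^{-t(1+|\xi|^{2s}+|\xi|^{2})}\,dt\right)d\xi.
\end{equation*}
The inner integral is elementary: for each fixed $\xi\in\R^n$ one has $1+|\xi|^{2s}+|\xi|^{2}\geqslant 1>0$, so
\begin{equation*}
\int_{0}^{+\infty}e^{-t(1+|\xi|^{2s}+|\xi|^{2})}\,dt=\frac{1}{1+|\xi|^{2s}+|\xi|^{2}}\leqslant 1.
\end{equation*}
Hence the double integral of the absolute value is bounded by $\int_{\R^n}|\Phi(\xi)|\,d\xi=\|\Phi\|_{L^1(\R^n)}$, which is finite because $\Phi$ belongs to the Schwartz space (so, in particular, $|\Phi(\xi)|\leqslant C(1+|\xi|)^{-n-1}$, which is integrable on $\R^n$). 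This proves that $(\xi,t)\mapsto e^{-t(1+|\xi|^{2s}+|\xi|^{2})}\Phi(\xi)$ lies in $L^1(\R^n\times(0,+\infty),\,\C)$.

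With integrability in hand, I would then invoke Fubini's theorem to compute the iterated integral, integrating in $t$ first. Using again $\int_{0}^{+\infty}e^{-t(1+|\xi|^{2s}+|\xi|^{2})}\,dt=(1+|\xi|^{2s}+|\xi|^{2})^{-1}$, one obtains
\begin{equation*}
\iint_{\R^n\times(0,+\infty)} e^{-t(1+|\xi|^{2s}+|\xi|^{2})}\,\Phi(\xi)\,d\xi\,dt
=\int_{\R^n}\Phi(\xi)\left(\int_{0}^{+\infty}e^{-t(1+|\xi|^{2s}+|\xi|^{2})}\,dt\right)d\xi
=\int_{\R^n}\frac{\Phi(\xi)}{1+|\xi|^{2s}+|\xi|^{2}}\,d\xi,
\end{equation*}
which is exactly~\eqref{FU7}. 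There is no substantial obstacle here: the only point requiring a moment's care is justifying the interchange of integrals, and this is supplied verbatim by the $L^1$ bound established in the first step; the Schwartz hypothesis on $\Phi$ is used only through the crude consequence $\Phi\in L^1(\R^n)$.
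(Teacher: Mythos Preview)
Your proof is correct and follows essentially the same approach as the paper: compute the $t$-integral explicitly to obtain $(1+|\xi|^{2s}+|\xi|^{2})^{-1}$, use Tonelli to establish $L^1$-membership via the finiteness of $\int_{\R^n}|\Phi(\xi)|(1+|\xi|^{2s}+|\xi|^{2})^{-1}\,d\xi$, and then invoke Fubini for the identity. The paper's version is slightly terser (it does not spell out the bound $(1+|\xi|^{2s}+|\xi|^{2})^{-1}\leqslant 1$ or the reason $\Phi\in L^1$), but the argument is the same.
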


\begin{proof} We have that
	$$ \int_{0}^{+\infty} e^{-t(1+|\xi|^{2s}+|\xi|^{2})}\,dt=\frac1{1+|\xi|^{2s}+|\xi|^{2}}.
	$$
	Therefore, by Fubini-Tonelli's Theorem,
	$$\iint_{\R^n\times(0,+\infty)}\left|e^{-t(1+|\xi|^{2s}+|\xi|^{2})}\,\Phi(\xi)\right|\,d\xi\,dt
	=\iint_{\R^n\times(0,+\infty)} e^{-t(1+|\xi|^{2s}+|\xi|^{2})}\,|\Phi(\xi)|\,d\xi\,dt=
	\int_{\R^n}
	\frac{|\Phi(\xi)|}{1+|\xi|^{2s}+|\xi|^{2}}\,d\xi,$$
	which is finite.
\end{proof}

With this preparatory work, we can now check that the Bessel kernel is the fundamental solution of the mixed order operator~$- \Delta+  (-\Delta)^s$, as clarified by the following result:

\begin{Lemma}\label{C.6} The Fourier transform of~${\mathcal{K}}$ equals~$\frac{1}{1+|\xi|^{2s}+|\xi|^{2}}$
in the sense of distribution.
	
	More explicitly,
	for every~$\phi:\R^n\to\R$ in the Schwartz space, we have that
	\begin{equation}\label{FU9} \int_{\R^n}{\mathcal{K}}(x)\,\phi(x)\,dx=\int_{\R^n}\frac{\hat\phi(\xi)}{1+|\xi|^{2s}+|\xi|^{2}}\,d\xi.\end{equation}
\end{Lemma}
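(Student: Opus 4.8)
The plan is to verify~\eqref{FU9} by unravelling the definition of~$\mathcal{K}$ in~\eqref{definition of K}, substituting the integral representation of the heat kernel~$\mathcal{H}$ from~\eqref{DBSD-1}, and then applying Fubini's Theorem twice, once in the variables~$(x,\xi)$ and once in the variables~$(\xi,t)$. The two auxiliary lemmas (Lemma~\ref{FU2} and the one containing~\eqref{FU7}) are precisely the integrability statements that license these interchanges. First I would recall that, by definition,
\[
\int_{\R^n}\mathcal{K}(x)\,\phi(x)\,dx
=\int_{\R^n}\left(\int_0^{+\infty}e^{-t}\,\mathcal{H}(x,t)\,dt\right)\phi(x)\,dx
=\int_{\R^n}\int_0^{+\infty}\int_{\R^n}e^{-t(1+|\xi|^2+|\xi|^{2s})}e^{2\pi ix\cdot\xi}\,\phi(x)\,d\xi\,dt\,dx.
\]

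Next I would fix~$t>0$ and use Lemma~\ref{FU2} to justify swapping the~$x$- and~$\xi$-integrals, producing
\[
\int_{\R^n}\left(\int_{\R^n}e^{2\pi ix\cdot\xi}\phi(x)\,dx\right)e^{-t(1+|\xi|^2+|\xi|^{2s})}\,d\xi
=\int_{\R^n}\hat\phi(-\xi)\,e^{-t(1+|\xi|^2+|\xi|^{2s})}\,d\xi,
\]
where I must be mildly careful with the sign convention in the Fourier transform — since~$\phi$ is real and, in the radial applications used elsewhere in the paper,~$\hat\phi$ is even, this is harmless, but to be safe I would simply change variables~$\xi\mapsto-\xi$ so that~$\hat\phi(-\xi)$ becomes~$\hat\phi(\xi)$ and the exponential factor is unchanged (it depends only on~$|\xi|$). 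After this step the triple integral has been reduced to
\[
\int_0^{+\infty}\int_{\R^n}e^{-t(1+|\xi|^2+|\xi|^{2s})}\,\hat\phi(\xi)\,d\xi\,dt.
\]
Since~$\phi$ is Schwartz,~$\hat\phi$ is Schwartz as well, so the lemma yielding~\eqref{FU7} (applied with~$\Phi=\hat\phi$) both justifies the final application of Fubini and evaluates the~$t$-integral, giving exactly~$\displaystyle\int_{\R^n}\frac{\hat\phi(\xi)}{1+|\xi|^{2s}+|\xi|^2}\,d\xi$, which is~\eqref{FU9}. The statement that~$\widehat{\mathcal{K}}=(1+|\xi|^{2s}+|\xi|^2)^{-1}$ in the sense of distributions is then just a restatement of~\eqref{FU9}, since the right-hand side is precisely the pairing of the tempered distribution~$(1+|\xi|^{2s}+|\xi|^2)^{-1}$ with~$\hat\phi$.

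I do not expect any genuine obstacle here: the content is entirely in the two integrability lemmas already proved, and the only point requiring a little attention is bookkeeping with the Fourier-transform sign convention and confirming that~$e^{-t(|\xi|^2+|\xi|^{2s})}$ together with the Gaussian-type decay of~$\hat\phi$ makes every integrand absolutely integrable on the relevant product space so that Fubini applies. Once~\eqref{FU9} is in hand, property~(e) of Theorem~\ref{th properties of k} follows formally: for~$f\in L^q$ one writes~$u=\mathcal{K}\ast f$ and checks, testing against a Schwartz function and using~\eqref{FU9}, that~$\widehat{u}=\hat f/(1+|\xi|^2+|\xi|^{2s})$, i.e.~$(1+|\xi|^2+|\xi|^{2s})\widehat u=\hat f$, which is the distributional form of~$-\Delta u+(-\Delta)^su+u=f$.
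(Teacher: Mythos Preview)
Your strategy matches the paper's exactly, but there is one gap in the order-of-integration bookkeeping. After writing the triple integral in the order $d\xi\,dt\,dx$, you propose to ``fix $t>0$ and swap the $x$- and $\xi$-integrals''; however, $t$ sits between $x$ and $\xi$ in that iterated integral, so you must first interchange the $x$- and $t$-integrals. This preliminary swap is not covered by Lemma~\ref{FU2} or by~\eqref{FU7}, and your fallback claim that ``every integrand is absolutely integrable on the relevant product space'' is actually false on the full triple product $\R^n\times(0,\infty)\times\R^n$ when $n\ge2$: taking absolute values kills the oscillation, and
\[
\int_{\R^n}|\phi(x)|\,dx\cdot\int_0^{\infty}\!\!\int_{\R^n} e^{-t(1+|\xi|^2+|\xi|^{2s})}\,d\xi\,dt
=\|\phi\|_{L^1}\int_{\R^n}\frac{d\xi}{1+|\xi|^2+|\xi|^{2s}}=+\infty\quad\text{for }n\ge2.
\]

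The fix, which is precisely what the paper does, is to justify the $x\leftrightarrow t$ swap separately via Tonelli on the \emph{double} integral $\int_{\R^n}\int_0^\infty e^{-t}\mathcal{H}(x,t)\,\phi(x)\,dt\,dx$: since $\mathcal{H}(\cdot,t)\ge0$ and $\int_0^\infty e^{-t}\mathcal{H}(x,t)\,dt=\mathcal{K}(x)$ with $\mathcal{K}\in L^1(\R^n)$, one has
\[
\iint_{\R^n\times(0,\infty)} e^{-t}\,\mathcal{H}(x,t)\,|\phi(x)|\,dx\,dt\le\|\phi\|_{L^\infty(\R^n)}\,\|\mathcal{K}\|_{L^1(\R^n)}<\infty.
\]
Only after this swap can you legitimately fix $t>0$, apply Lemma~\ref{FU2} to exchange $x$ and $\xi$, and then invoke~\eqref{FU7} with $\Phi=\hat\phi$. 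With this one extra sentence inserted, your argument is complete and coincides with the paper's; your remark on the Fourier sign convention is correct and harmless since the weight is radial.
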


\begin{proof} By~\eqref{definition of K} and Fubini-Tonelli's Theorem
	(whose validity is a consequence here of Lemma~\ref{decay of K_2}), we have that
	\begin{equation*}
		\int_{\R^n}{\mathcal{K}}(x)\,\phi(x)\,dx=\int_{\R^n}\left(
		\int_{0}^{+\infty} e^{-t} \,\mathcal{H}(x,t) \,dt\right)\phi(x)\,dx=\int_{0}^{+\infty} 
		\left(\int_{\R^n}
		e^{-t} \,\mathcal{H}(x,t)\,\phi(x) \,dx\right)\,dt.
	\end{equation*}
	
	Hence, by~\eqref{DBSD-1},
	\begin{equation*}
		\int_{\R^n}{\mathcal{K}}(x)\,\phi(x)\,dx=\int_{0}^{+\infty} 
		\left(\int_{\R^n}\left(\int_{\mathbb{R}^n}e^{-t(1+|\xi|^{2s}+|\xi|^{2})+2\pi ix\cdot \xi}\,\phi(x)\,d\xi\right)\,dx\right)\,dt.
	\end{equation*}
	
	This and Fubini-Tonelli's Theorem
	(whose validity is a consequence here of Lemma~\ref{FU2}) yield that
	\begin{equation*}\begin{split}
			\int_{\R^n}{\mathcal{K}}(x)\,\phi(x)\,dx&=\int_{0}^{+\infty} 
			\left(\int_{\R^n}\left(\int_{\mathbb{R}^n}e^{-t(1+|\xi|^{2s}+|\xi|^{2})+2\pi ix\cdot \xi}\,\phi(x)\,dx\right)\,d\xi\right)\,dt\\&=
			\int_{0}^{+\infty} 
			\left(\int_{\R^n}e^{-t(1+|\xi|^{2s}+|\xi|^{2})}\,\hat\phi(\xi)\,d\xi\right)\,dt.\end{split}
	\end{equation*}
	{F}rom this and~\eqref{FU7} (utilized here with~$\Phi:=\hat\phi$) we arrive at~\eqref{FU9}, as desired.
\end{proof}

\begin{proof}[Proof of (e) of Theorem~\ref{th properties of k}]
Let~$ q\in [1,+\infty)$ and~$f\in L^q(\mathbb{R}^n)$. Since~$\mathcal{S}(\mathbb{R}^n)$ is dense in~$L^q(\mathbb{R}^n)$,  there exists a sequence~$f_m\in \mathcal{S}(\mathbb{R}^n)$ such that 
  \begin{equation}\label{vsdvds}
  		f_m\rightarrow f \quad \text{in } L^q(\mathbb{R}^n). 
  \end{equation}
  	
  	{F}rom Lemma~\ref{C.6}, it follows that the function~$u_m:=\mathcal{K}\ast f_m$ is a solution of
  	$$
  	- \Delta u_m +  (-\Delta)^s u_m+u_m = f_m \quad \text{ in }  \mathbb{R}^n.  
  	$$ Namely, for every~$\varphi\in C_c^\infty(\mathbb{R}^n)$,
  	\begin{equation}\label{bcjsdhf}
  		\int_{\mathbb{R}^n}- \Delta \varphi\, u_m + \int_{\mathbb{R}^n} (-\Delta)^s \varphi\, u_m+\int_{\mathbb{R}^n}u_m\, \varphi = \int_{\mathbb{R}^n}f_m\, \varphi.
  	\end{equation}
  		Moreover, owing to~\eqref{vsdvds}
  		and taking into account the fact that~$\mathcal{ K} \in L^1(\mathbb{R}^n)$, we infer that~$ u_m\rightarrow u:=\mathcal{ K}\ast f $  in~$ L^q(\mathbb{R}^n).$  
  		
  		By passing to the limit in~\eqref{bcjsdhf}, we conclude that 
  		the function~$u=\mathcal{K}\ast f$ is a solution of $$
  		- \Delta u +  (-\Delta)^s u+u = f \quad \text{ in }  \mathbb{R}^n.    $$ 
  	
  	As a consequence of this, by Lemmata~\ref{lemma  H1}, \ref{proposition t>1}, \ref{decay of K_2} and~\ref{sommothness of K}, we obtain~(e) of Theorem~\ref{th properties of k}.
 \end{proof}

\end{appendix}

\bibliographystyle{is-abbrv}

\bibliography{manuscript}

\begin{thebibliography}{10}
\ifx \showCODEN  \undefined \def \showCODEN #1{CODEN #1}  \fi
\ifx \showISBN   \undefined \def \showISBN  #1{ISBN #1}   \fi
\ifx \showISSN   \undefined \def \showISSN  #1{ISSN #1}   \fi
\ifx \showLCCN   \undefined \def \showLCCN  #1{LCCN #1}   \fi
\ifx \showPRICE  \undefined \def \showPRICE #1{#1}        \fi
\ifx \showURL    \undefined \def \showURL {URL }          \fi
\ifx \path       \undefined \input path.sty               \fi
\ifx \ifshowURL \undefined
     \newif \ifshowURL
     \showURLtrue
\fi

\bibitem{MR1111746}
C.~J. Amick and J.~F. Toland.
\newblock Uniqueness and related analytic properties for the {B}enjamin-{O}no
  equation---a nonlinear {N}eumann problem in the plane.
\newblock {\em Acta Math.}, 167\penalty0 (1-2):\penalty0 107--126, 1991.
\newblock \showISSN{0001-5962}.
\newblock \ifshowURL {\showURL \path|https://doi.org/10.1007/BF02392447|}\fi.

\bibitem{bingham1989regular}
N.~H. Bingham, C.~M. Goldie, and J.~L. Teugels.
\newblock {\em Regular variation}.
\newblock Number~27. Cambridge university press, 1989.

\bibitem{MR119247}
R.~M. Blumenthal and R.~K. Getoor.
\newblock Some theorems on stable processes.
\newblock {\em Trans. Amer. Math. Soc.}, 95:\penalty0 263--273, 1960.
\newblock \showISSN{0002-9947}.
\newblock \ifshowURL {\showURL \path|https://doi.org/10.2307/1993291|}\fi.

\bibitem{MR31582}
S.~Bochner and K.~Chandrasekharan.
\newblock {\em Fourier {T}ransforms}.
\newblock Annals of Mathematics Studies, No. 19. Princeton University Press,
  Princeton, NJ; Oxford University Press, London, 1949.
\newblock ix+219 pp.

\bibitem{zbMATH06282052}
K.~Bogdan, T.~Grzywny, and M.~Ryznar.
\newblock Density and tails of unimodal convolution semigroups.
\newblock {\em J. Funct. Anal.}, 266\penalty0 (6):\penalty0 3543--3571, 2014.
\newblock \showISSN{0022-1236}.

\bibitem{MR2759829}
H.~Brezis.
\newblock {\em Functional analysis, {S}obolev spaces and partial differential
  equations}.
\newblock Universitext. Springer, New York, 2011.
\newblock \showISBN{978-0-387-70913-0}.
\newblock xiv+599 pp.

\bibitem{MR1181725}
V.~Coti~Zelati and P.~H. Rabinowitz.
\newblock Homoclinic type solutions for a semilinear elliptic {PDE} on {${\bf
  R}^n$}.
\newblock {\em Comm. Pure Appl. Math.}, 45\penalty0 (10):\penalty0 1217--1269,
  1992.
\newblock \showISSN{0010-3640}.
\newblock \ifshowURL {\showURL
  \path|https://doi.org/10.1002/cpa.3160451002|}\fi.

\bibitem{MR3646773}
W.~Cygan, T.~Grzywny, and B.~Trojan.
\newblock Asymptotic behavior of densities of unimodal convolution semigroups.
\newblock {\em Trans. Amer. Math. Soc.}, 369\penalty0 (8):\penalty0 5623--5644,
  2017.
\newblock \showISSN{0002-9947}.
\newblock \ifshowURL {\showURL \path|https://doi.org/10.1090/tran/6830|}\fi.

\bibitem{MR3393677}
J.~D\'{a}vila, M.~del Pino, S.~Dipierro, and E.~Valdinoci.
\newblock Concentration phenomena for the nonlocal {S}chr\"{o}dinger equation
  with {D}irichlet datum.
\newblock {\em Anal. PDE}, 8\penalty0 (5):\penalty0 1165--1235, 2015.
\newblock \showISSN{2157-5045}.
\newblock \ifshowURL {\showURL
  \path|https://doi.org/10.2140/apde.2015.8.1165|}\fi.

\bibitem{MR3121716}
J.~D\'{a}vila, M.~del Pino, and J.~Wei.
\newblock Concentrating standing waves for the fractional nonlinear
  {S}chr\"{o}dinger equation.
\newblock {\em J. Differential Equations}, 256\penalty0 (2):\penalty0 858--892,
  2014.
\newblock \showISSN{0022-0396}.
\newblock \ifshowURL {\showURL
  \path|https://doi.org/10.1016/j.jde.2013.10.006|}\fi.

\bibitem{ZZLIB}
S.~Dipierro and E.~Valdinoci.
\newblock {\em Elliptic Partial Differential Equations from an Elementary
  Viewpoint. A Fresh Glance at the Classical Theory}.
\newblock World Scientific, 2024.

\bibitem{MR58756}
A.~Erd\'{e}lyi, W.~Magnus, F.~Oberhettinger, and F.~G. Tricomi.
\newblock {\em Higher transcendental functions. {V}ols. {I}, {II}}.
\newblock McGraw-Hill Book Co., Inc., New York-Toronto-London, 1953.
\newblock xxvi+302, xvii+396 pp.
\newblock Based, in part, on notes left by Harry Bateman.

\bibitem{MR3207007}
M.~M. Fall and E.~Valdinoci.
\newblock Uniqueness and nondegeneracy of positive solutions of
  {$(-\Delta)^su+u=u^p$} in {$\Bbb R^N$} when {$s$} is close to 1.
\newblock {\em Comm. Math. Phys.}, 329\penalty0 (1):\penalty0 383--404, 2014.
\newblock \showISSN{0010-3616}.
\newblock \ifshowURL {\showURL
  \path|https://doi.org/10.1007/s00220-014-1919-y|}\fi.

\bibitem{MR3002595}
P.~Felmer, A.~Quaas, and J.~Tan.
\newblock Positive solutions of the nonlinear {S}chr\"{o}dinger equation with
  the fractional {L}aplacian.
\newblock {\em Proc. Roy. Soc. Edinburgh Sect. A}, 142\penalty0 (6):\penalty0
  1237--1262, 2012.
\newblock \showISSN{0308-2105}.
\newblock \ifshowURL {\showURL
  \path|https://doi.org/10.1017/S0308210511000746|}\fi.

\bibitem{feynman1963quantum}
R.~Feynman and A.~Hibbs.
\newblock Quantum mechanics and path integrals.
\newblock {\em McGraw-Hill, New York}, 1965.

\bibitem{MR867665}
A.~Floer and A.~Weinstein.
\newblock Nonspreading wave packets for the cubic {S}chr\"{o}dinger equation
  with a bounded potential.
\newblock {\em J. Funct. Anal.}, 69\penalty0 (3):\penalty0 397--408, 1986.
\newblock \showISSN{0022-1236}.
\newblock \ifshowURL {\showURL
  \path|https://doi.org/10.1016/0022-1236(86)90096-0|}\fi.

\bibitem{MR3070568}
R.~L. Frank and E.~Lenzmann.
\newblock Uniqueness of non-linear ground states for fractional {L}aplacians in
  {$\Bbb{R}$}.
\newblock {\em Acta Math.}, 210\penalty0 (2):\penalty0 261--318, 2013.
\newblock \showISSN{0001-5962}.
\newblock \ifshowURL {\showURL
  \path|https://doi.org/10.1007/s11511-013-0095-9|}\fi.

\bibitem{MR3530361}
R.~L. Frank, E.~Lenzmann, and L.~Silvestre.
\newblock Uniqueness of radial solutions for the fractional {L}aplacian.
\newblock {\em Comm. Pure Appl. Math.}, 69\penalty0 (9):\penalty0 1671--1726,
  2016.
\newblock \showISSN{0010-3640}.
\newblock \ifshowURL {\showURL \path|https://doi.org/10.1002/cpa.21591|}\fi.

\bibitem{GTbook}
D.~Gilbarg and N.~S. Trudinger.
\newblock {\em Elliptic partial differential equations of second order}.
\newblock Classics in Mathematics. Springer-Verlag, Berlin, 2001.
\newblock \showISBN{3-540-41160-7}.
\newblock xiv+517 pp.
\newblock Reprint of the 1998 edition.

\bibitem{MR969899}
M.~K. Kwong.
\newblock Uniqueness of positive solutions of {$\Delta u-u+u^p=0$} in {${\bf
  R}^n$}.
\newblock {\em Arch. Rational Mech. Anal.}, 105\penalty0 (3):\penalty0
  243--266, 1989.
\newblock \showISSN{0003-9527}.
\newblock \ifshowURL {\showURL \path|https://doi.org/10.1007/BF00251502|}\fi.

\bibitem{MR1755089}
N.~Laskin.
\newblock Fractional quantum mechanics and {L}\'{e}vy path integrals.
\newblock {\em Phys. Lett. A}, 268\penalty0 (4-6):\penalty0 298--305, 2000.
\newblock \showISSN{0375-9601}.
\newblock \ifshowURL {\showURL
  \path|https://doi.org/10.1016/S0375-9601(00)00201-2|}\fi.

\bibitem{MR1948569}
N.~Laskin.
\newblock Fractional {S}chr\"{o}dinger equation.
\newblock {\em Phys. Rev. E (3)}, 66\penalty0 (5):\penalty0 056108, 7, 2002.
\newblock \showISSN{1539-3755}.
\newblock \ifshowURL {\showURL
  \path|https://doi.org/10.1103/PhysRevE.66.056108|}\fi.

\bibitem{MR2932616}
N.~Laskin.
\newblock Principles of fractional quantum mechanics.
\newblock In {\em Fractional dynamics}, pages 393--427. World Sci. Publ.,
  Hackensack, NJ, 2012.

\bibitem{MR982267}
J.~Mawhin and M.~Willem.
\newblock {\em Critical point theory and {H}amiltonian systems}, volume~74 of
  {\em Applied Mathematical Sciences}.
\newblock Springer-Verlag, New York, 1989.
\newblock \showISBN{0-387-96908-X}.
\newblock xiv+277 pp.
\newblock \ifshowURL {\showURL
  \path|https://doi.org/10.1007/978-1-4757-2061-7|}\fi.

\bibitem{MR1201323}
K.~McLeod.
\newblock Uniqueness of positive radial solutions of {$\Delta u+f(u)=0$} in
  {${\bf R}^n$}. {II}.
\newblock {\em Trans. Amer. Math. Soc.}, 339\penalty0 (2):\penalty0 495--505,
  1993.
\newblock \showISSN{0002-9947}.
\newblock \ifshowURL {\showURL \path|https://doi.org/10.2307/2154282|}\fi.

\bibitem{MR886933}
K.~McLeod and J.~Serrin.
\newblock Uniqueness of positive radial solutions of {$\Delta u+f(u)=0$} in
  {${\bf R}^n$}.
\newblock {\em Arch. Rational Mech. Anal.}, 99\penalty0 (2):\penalty0 115--145,
  1987.
\newblock \showISSN{0003-9527}.
\newblock \ifshowURL {\showURL \path|https://doi.org/10.1007/BF00275874|}\fi.

\bibitem{MR1162728}
P.~H. Rabinowitz.
\newblock On a class of nonlinear {S}chr\"{o}dinger equations.
\newblock {\em Z. Angew. Math. Phys.}, 43\penalty0 (2):\penalty0 270--291,
  1992.
\newblock \showISSN{0044-2275}.
\newblock \ifshowURL {\showURL \path|https://doi.org/10.1007/BF00946631|}\fi.

\bibitem{MR2238934}
M.~Rao, R.~Song, and Z.~Vondra\v{c}ek.
\newblock Green function estimates and {H}arnack inequality for subordinate
  {B}rownian motions.
\newblock {\em Potential Anal.}, 25\penalty0 (1):\penalty0 1--27, 2006.
\newblock \showISSN{0926-2601}.
\newblock \ifshowURL {\showURL
  \path|https://doi.org/10.1007/s11118-005-9003-z|}\fi.

\bibitem{MR2321989}
R.~Song and Z.~Vondra\v{c}ek.
\newblock Parabolic {H}arnack inequality for the mixture of {B}rownian motion
  and stable process.
\newblock {\em Tohoku Math. J. (2)}, 59\penalty0 (1):\penalty0 1--19, 2007.
\newblock \showISSN{0040-8735}.
\newblock \ifshowURL {\showURL
  \path|http://projecteuclid.org/euclid.tmj/1176734744|}\fi.

\bibitem{MR290095}
E.~M. Stein.
\newblock {\em Singular integrals and differentiability properties of
  functions}.
\newblock Princeton Mathematical Series, No. 30. Princeton University Press,
  Princeton, NJ, 1970.
\newblock xiv+290 pp.

\bibitem{SVWZ23}
X.~Su, E.~Valdinoci, Y.~Wei, and J.~Zhang.
\newblock On some regularity properties of mixed local and nonlocal elliptic
  equations.
\newblock 2023.
\newblock Preprint.

\bibitem{MR1218300}
X.~Wang.
\newblock On concentration of positive bound states of nonlinear
  {S}chr\"{o}dinger equations.
\newblock {\em Comm. Math. Phys.}, 153\penalty0 (2):\penalty0 229--244, 1993.
\newblock \showISSN{0010-3616}.
\newblock \ifshowURL {\showURL
  \path|http://projecteuclid.org/euclid.cmp/1104252679|}\fi.

\end{thebibliography}
\vfill
\end{document}